\DeclareMathAlphabet\mathbfcal{OMS}{cmsy}{b}{n}
\theoremstyle{definition}
\newtheorem{defin}{Definition}[section]
\newtheorem{thm}[defin]{Theorem}
\newtheorem{quest}[defin]{Question}
\newtheorem{conj}[defin]{Conjecture}
\newtheorem{lem}[defin]{Lemma}
\newtheorem{prop}[defin]{Proposition}
\newtheorem{cor}[defin]{Corollary}
\newtheorem*{organ}{Organization}
\newtheorem*{acknow}{Acknowledgements}
\newtheorem*{mainthm}{Main Theorem}
\newtheorem*{mainprop}{Key Observation}
\theoremstyle{remark}
\newtheorem{rem}[defin]{Remark}
\newtheorem{exam}[defin]{Example}
\newtheorem{postrem}{Postscript Remark}
\newcommand{\F}{\mathcal{F}}
\newcommand{\G}{\mathcal{G}}
\newcommand{\K}{\mathbfcal{K}}
\newcommand{\U}{\mathcal{U}}
\newcommand{\M}{\mathcal{M}}
\newcommand{\op}{\text{op}}
\newcommand{\id}{\mathrm{id}}
\newcommand{\Sets}{\textbf{Sets}}
\newcommand{\eff}{\text{eff}}
\newcommand{\des}{\text{des}}
\newcommand{\Hom}{\mathrm{Hom}}
\newcommand{\Holo}{\mathrm{Holo}}
\newcommand{\Ric}{\text{Ric}}
\newcommand{\Cpx}{\mathds{C}\mathbf{an}}
\newcommand{\Obj}{\mathrm{Obj}}
\newcommand{\Mor}{\mathrm{Mor}}
\newcommand{\Ham}{\mathrm{Ham}}
\newcommand{\sqddbar}{\sqrt{-1} \partial \bar{\partial}}
\newcommand{\intg}{{\mathrm{int}}}
\title{The moduli space of Fano manifolds with K\"ahler--Ricci solitons}
\author{Eiji Inoue}
\address{Graduate School of Mathematical Sciences, the University of Tokyo \endgraf 3-8-1 Komaba, Meguro, Tokyo 153-8914, Japan. }
\email{eijinoe@ms.u-tokyo.ac.jp}
\begin{document}

\begin{abstract}
We construct a canonical Hausdorff complex analytic moduli space of Fano manifolds with K\"ahler--Ricci solitons. 
This enlarges the moduli space of Fano manifolds with K\"ahler--Einstein metrics. 
We discover a moment map picture for K\"ahler--Ricci solitons, and give complex analytic charts on the topological space consisting of K\"ahler--Ricci solitons, by studying differential geometric aspects of this moment map. 
Some stacky words and arguments on Gromov--Hausdorff convergence help to glue them together in the holomorphic manner. 
\end{abstract}

\maketitle

\tableofcontents

\section{Introduction}

In this paper, we give a theoretical framework and method for construction of moduli space of Fano manifolds with K\"ahler--Ricci solitons. 
We begin with backgrounds and motivations for this new-type moduli space. 

In the celebrated paper \cite{FS2}, Fujiki and Schumacher constructed the complex analytic moduli spaces of all compact smooth (polarized) Calabi--Yau ($K_X \equiv 0$) and canonically polarized manifolds ($K_X > 0$), as a higher dimensional analogue of the moduli spaces of Riemann surfaces of genus $g=1$ and $g \ge 2$, respectively. 

In contrast to these cases, it is known that `the moduli space of all Fano manifolds' in a primitive sense behaves pathologically; it does not enjoy the $T_1$-separation axiom, in particular, it does not admit any nice geometric structure like a complex analytic structure. 
Indeed, the separation is obstructed by the existence of iso-trivial degenerations of Fano manifolds: there are (many) families $\mathcal{X} \to \Delta$ of Fano manifolds which is biholomorphically trivial over $\Delta^* = \Delta \setminus \{ 0 \}$ and whose central fibre $\mathcal{X}_0$ is \textit{not} biholomorphic to the general fibres; for example, small deformations of the Mukai--Umemura threefold (cf. \cite[Chapter 7]{TianBook}) and the unique $G_2$-horospherical Fano manifold with Picard number one (cf. \cite{PP} and Example \ref{horo-example}) give such examples. 
In other words, the moduli space of all Fano manifolds does not even exist in the complex analytic framework. 

Still, in view of the work of \cite{FS2}, one can imagine or hope that the existence of some `canonical metrics' on Fano manifolds may play a role to ensure the separation property of the moduli space or stack. 
For Fano manifolds, K\"ahler--Einstein metric is a candidate for such `canonical metrics'. 
However, it is known that Fano manifolds do not always admit K\"ahler--Einstein metrics, while Calabi--Yau and canonically polarized manifolds always admit K\"ahler--Einstein metrics, as observed in \cite{Mat1, Fut}. 
We must exclude `unstable' Fano manifolds in some sense. 

Detecting a geometric condition of Fano manifolds equivalent to the existence of K\"ahler--Einstein metrics was a long standing problem and a conjecture on this concern was settled as the Yau--Tian--Donaldson conjecture for these two decades. 
Recently, Chen--Donaldson--Sun and Tian \cite{CDS, Tian2} broke through this problem (for Fano manifolds): the existence of K\"ahler--Einstein metrics on a Fano manifold is equivalent to the K-stability of the Fano manifold, which is a pure algebro-geometric condition for polarized variety. 

After this breakthrough, in the spirit that the existence of canonical metrics plays a role for the separation, the (algebro-geometric) moduli space of Fano manifolds \textit{with K\"ahler--Einstein metrics} was constructed in \cite{OSS, Oda1, Oda2, LWX1} as an algebraic space within a unified theoretical framework (not defeating one by one). 
Different from the case of \cite{FS2}, even the dimension of the automorphism groups of Fano manifolds may jump along deformation of complex structures. 
So they constructed the moduli space of K\"ahler-Einstein Fano manifolds by rather different new technologies from \cite{FS2}, while sharing the same spirit with \cite{FS2} on the philosophical reason for the separation. 
After the construction of the moduli space, Li--Wang--Xu \cite{LWX2} proves the quasi-projectivity of the moduli space. 
There are also more intensive studies as \cite{SS, LiuXu} on the relation with a literal GIT construction, which a priori depends on some ad hoc data such as an embedding of varieties into a fixed projective space, for some special cases. 

This additional stability assumption `\textit{with K\"ahler--Einstein metrics}' is enjoyable for interest in particular examples: there are various important examples of Fano manifolds admitting K\"ahler--Einstein metrics. 
However, on the other hand, it is also known that there are many examples of Fano manifolds who do not admit any K\"ahler--Einstein metrics; even the one point blowing up of $\mathbb{C}P^n$. 

Philosophically, constructing moduli spaces of varieties in the schematic/complex analytic category within a unified theoretical framework can be regard as giving a (schematic/complex analytic method for) classification of them. 
From a viewpoint of MMP, Odaka and Okada conjectured in \cite{OO} that every \textit{smooth} Fano manifold with Picard number one, which is one of the final outcome of the MMP, is \textit{K-semistable}, so that they are members of the moduli space of K\"ahler--Einstein Fano manifolds and hence are classified. 
However, (infinitely) many counter-examples of this conjecture are discovered by Fujita \cite{Fujit} and Delcroix \cite{Del}. 
We face that the assumption \textit{`with K\"ahler--Einstein metrics'} is restrictive for our interest on this classification concern. 

In this paper, we extend the moduli space of Fano manifolds with K\"ahler--Einstein metrics to \textit{the moduli space of Fano manifolds with K\"ahler--Ricci solitons}. 
K\"ahler--Ricci soliton, which consists of a K\"ahler metric and a holomorphic vector field, is a natural generalization of K\"ahler--Einstein metrics from the viewpoint of K\"ahler--Ricci flow. 
The uniqueness of K\"ahler--Ricci soliton modulo the identity component of the biholomorphism group is known by \cite{TZ2} as in the case of K\"ahler--Einstein metrics \cite{BM}, so that we can regard K\"ahler--Ricci soliton as a kind of `canonical metrics' on Fano manifolds. 
The equivalence with K-stability as developed in \cite{DT, Tian1, Ber} and \cite{CDS, Tian2} for K\"ahler-Einstein metrics are also covered for the case of K\"ahler--Ricci soliton in \cite{Xio, BW} and \cite{DatSze}. 
There are large amount of known examples of Fano manifolds admitting K\"ahler--Ricci solitons; Delcroix's infinite series of counter-examples of Odaka--Okada conjecture admit K\"ahler--Ricci solitons, while they do not admit K\"ahler--Einstein metrics. 

We usually characterize moduli spaces in the schematic/complex analytic framework by a universal property. 
A category consisting of families of which we intend to construct the moduli space, which is usually called the \textit{moduli stack}, is a convenient and essential tool for describing the universal property. 
In our moduli problem, we do not work with the usual moduli stack consisting of the usual families of Fano manifolds. 
In order to ensure the separation, and technically in order to apply GIT method, we instead consider another new moduli stack $\K (n)$ consisting of some families of \textit{pairs} $(X, \xi')$ of $n$-dimensional Fano manifolds and holomorphic vector fields, which is natural in view of the theory of K\"ahler--Ricci soliton. 
The moduli stack $\K (n)$ is furthermore divided into clopen (closed and open, but not necessarily connected) sub-stacks $\K_{T, \chi}$, where the associated holomorphic vector fields are deformed holomorphically. 
As we must review the theory of K\"ahler--Ricci soliton (in section 2) before explaining this unfamiliar moduli stack, here we do not explain the detail and postpone the precise description/definition until section 2 and Definition \ref{moduli stack}. 
The author hopes that Appendix A in this article helps the readers unfamiliar to stacks to grasp some fundamental generalities on stacks over the category of complex spaces. 
Example \ref{horo-example} explains that this formulation of the moduli stack is essential and the reason why the usual stack does not serve our purpose. 
The readers will see in Remark \ref{moduli set} that the change of our moduli stacks does \textit{not affect the sets} of what we intend to parametrize ($X$ or $(X, \xi')$) and these are naturally identified to each other (only) \textit{as sets}. 

Let $\mathcal{KR}_{GH} (n)$ be the set of biholomorphism classes of $n$-dimensional Fano manifolds admitting K\"ahler--Ricci solitons. 
We can endow $\mathcal{KR}_{GH} (n)$ with a natural topology induced by the `complexified' Gromov--Hausdorff convergence (cf. \cite{PSS}). 
Note that the set $\mathcal{K}_{0,GH} (n)$ of biholomorphism classes of $n$-dimensional Fano manifolds admitting K\"ahler--Einstein metrics forms a clopen subset of $\mathcal{KR}_{GH} (n)$. 
Our main theorem is the following. 
\begin{mainthm}[Theorem \ref{main theorem} + Proposition \ref{GH}]
The Hausdorff topological space $\mathcal{KR}_{GH} (n)$ admits a natural complex analytic structure which is uniquely characterized by the following universal property of a natural morphism $\K (n) \to \mathcal{KR}_{GH} (n)$ from the moduli stack: any morphism $\K (n) \to X$ to any complex space $X$ holomorphically and uniquely factors through $\mathcal{KR}_{GH} (n)$. 
\end{mainthm}

In contrast to the current known construction (\cite{Oda1, Oda2, LWX1}) of the moduli space of Fano manifolds with K\"ahler--Einstein metrics, our method for construction actually does not depend on the result in \cite{DatSze}, where they proved modified K-polystable Fano manifolds admit K\"ahler--Ricci solitons. 
Although, as some of the readers might prefer algebro-geometric formulation, we formulate things in terms of modified K-stability, which can be translated into the existence of K\"ahler--Ricci solitons via \cite{DatSze}. 

Our main tool for the construction of complex analytic charts on $\mathcal{KR}_{GH} (n)$ is the following moment map. 
\begin{mainprop}[Proposition \ref{moment} + Proposition \ref{soliton}]
Let $(M, \omega)$ be a $2n$-dimensional $C^\infty$-symplectic manifold underlying a Fano manifold with a Hamiltonian action of a closed real torus $T$. 
For any $\xi \in \mathfrak{t}$, there is a moment map 
\[ \mathcal{S}_\xi : \mathcal{J}_T (M, \omega) \to \mathrm{Lie} (\mathrm{Ham}_T (M, \omega))^\vee \]
on the space $\mathcal{J}_T (M, \omega)$ of $T$-invariant almost complex structures with respect to the modified symplectic structure $\Omega_\xi$ (see subsection 3.1) and the action of $\mathrm{Ham}_T (M, \omega)$. 
Moreover, integrable complex structures in $\mathcal{S}^{-1}_\xi (0)$ correspond to K\"ahler--Ricci solitons. 
\end{mainprop}

We firstly construct charts on the quotient space $(\mathcal{S}^\intg_\xi)^{-1} (0)/\mathrm{Ham}_T (M, \omega)$, where $\mathcal{S}^\intg_\xi$ denotes the restriction of the moment map $\mathcal{S}_\xi$ to the subspace $\mathcal{J}^\intg_T (M, \omega) \subset \mathcal{J}_T (M, \omega)$ consisting of integrable almost complex structures. 
The quotient space reveals to be identified with a clopen subspace of $\mathcal{KR}_{GH} (n)$. 

To compare our constructions with \cite{Oda1, Oda2, LWX1}, we briefly review their methods here. 
They firstly prove the Zariski openness of the set of the K-(semi)stable points in any family of Fano manifolds. 
It follows that the usual moduli stack is Artin algebraic, so that they can apply the established theory of \textit{good moduli spaces} of Artin algebraic stacks. 
Secondly they construct \'etale local charts on this stack of the form $[V/G]$, where each $V$ is an affine scheme and $G$ is a reductive algebraic group. 
Each quotient stack $[V/G]$ has the good moduli space $V \sslash G$. 
We can glue them together, just applying the gluing theory of good moduli spaces developed in \cite{Alp2}. 
Technically, the proofs of the Zariski openness and the existence of the \'etale local charts rely on the argument showing that the set of K-(semi/poly)stable points forms a constructible set of the parameter space in the Zariski topology. 
The CM line bundle, whose GIT weight equals to the Donaldson--Futaki invariant (\cite{PT}), is used to prove the constructibility. 
(Compare \cite{Don3} for another proof of the Zariski openness. )

However, in the case of K\"ahler--Ricci soliton, as there is no candidate for the CM-line bundle because of the irrationality of the modified Donaldson--Futaki invariant, we face a problem with the constructibility. 
So we will work with the real topology, in other words, with Artin \textit{analytic} stacks. 
We can still construct local charts on this Artin analytic stack with good moduli spaces, however, the second nuisance appears when gluing the good moduli spaces together: there is no well-established theory of good moduli spaces for Artin analytic stacks so far. 
(At least to the author, it seems not so easy to show the uniqueness (universal) property of good moduli spaces of Artin analytic stacks, if it exists, which is obviously a key property for the good gluing theory (cf. \cite{Alp1, Alp2}). 
The lack of nice counterpart of `quasi-coherent sheaves' on complex analytic spaces seems critical. (cf. \cite[Section 4]{EP})

Alternatively, we glue our charts by a `cooperation of virtual and real'. 
We construct analytic charts not only on the stack $\K (n)$, but also on the topological spaces $(\mathcal{S}^\intg_\xi)^{-1} (0)/\mathrm{Ham}_T (M, \omega)$, which are related in a canonical way. 
The latter `real side' is studied in section 3 and is used to show that the charts are actually homeomorphisms onto open subsets of $(\mathcal{S}^\intg_\xi)^{-1} (0)/\mathrm{Ham}_T (M, \omega)$. 
This is not treated in \cite{Oda1, Oda2, LWX1} as they could apply Alper's gluing work of good moduli spaces, which works `without reality'. 
The former `virtual side' is studied in section 4 and is used to show that the coordinate changes are holomorphic. 
Finding holomorphic relations between the analytic charts are easier on the stack $\K (n)$ than on the topological spaces $(\mathcal{S}^\intg_\xi)^{-1} (0)/\mathrm{Ham}_T (M, \omega)$. 
These holomorphic relations of stacks descend to the actual holomorphic relations between the analytic charts on $(\mathcal{S}^\intg_\xi)^{-1} (0)/\mathrm{Ham}_T (M, \omega)$ thanks to the universality of the local moduli spaces and the fundamental (2-categorical version of) Yoneda's lemma: the natural fully faithful embedding of the category $\Cpx$ of complex analytic spaces to the 2-category of complex analytic stacks. 

\begin{organ}
The remainder of this paper is organized as follows. 
In section 2, we review some known results on K\"ahler--Ricci soliton and rearrange K-stability notion modified to the soliton setting so that it fits into our moduli problem. 
It is explained that the pair $(X, \xi')$ can be converted into the action $X \curvearrowleft T$, where $T$ is the torus generated by the holomorphic vector field $\xi'$. 
We introduce \textit{gentle Fano $T$-manifolds} as Fano $T$-manifolds inseparable from \textit{smooth} Fano $T$-manifolds with K\"ahler--Ricci solitons, which are expected to be K-semistable. 
They form an adequate moduli stack in our moduli problem. 
Finally, we propose Proposition \ref{gentle degeneration}, which states the uniqueness of the central fiber of gentle degenerations. 
It will be proved after we complete Proposition \ref{etale}, and play an essential role in the proof of Theorem \ref{main theorem} in subsection 4.2. 

In section 3, we construct and study an infinite dimensional moment map $\mathcal{S}_\xi$ whose integrable zero points correspond to K\"ahler--Ricci solitons. 
We describe that local slices $\nu :B \to \mathfrak{k}$ of the moment map actually give charts $\nu^{-1} (0)/K \approx BK^c \sslash K^c$ on the topological space consisting of K\"ahler--Ricci solitons. 
To achieve this, we need to study Banach completions of Fr\'echet manifolds, where we must pay attention to the treatment of the completions of $\Ham_T (M, \omega)$ as they are never Banach Lie groups. 
We also prove that, in any family of Fano $T$-manifolds, the set of gentle Fano $T$-manifolds forms an open subset in the parameter space of the family. 

In section 4, the main theorem is proved. 
We introduce the stack $\K_{T, \chi}$ of gentle Fano $T$-manifolds and show that it is an Artin analytic stack. 
We prove Proposition \ref{gentle degeneration} in subsection 4.4, using the results in the former half of subsection 4.2. 
We use this proposition in the proof of the main theorem. 
In subsection 4.3, we show that our moduli space is related to the topological space $\mathcal{KR}_{GH} (n)$ endowed with the `complexified' Gromov--Hausdorff topology, which is studied in \cite{PSS}. 

In section 5, we review some examples of Fano manifolds with K\"ahler--Ricci solitons and propose some future studies. 
In particular, we find an iso-trivial degeneration of a K\"ahler--Einstein Fano manifold to another Fano manifold with non-Einstein K\"ahler--Ricci soliton, which implies that the usual moduli stack is not sufficiently separated and hence our new formulation of moduli stacks $\K (n)$ and $\K_{T, \chi}$ is essential. 
\end{organ}

\begin{acknow}
I am grateful to my supervisor Prof. Akito Futaki for his deep encouragement, helpful advice and constant support. 
I would like to thank Yuji Odaka for his warmful encouragement and comments in several workshops. 
It's my pleasure to thank Masaki Taniguchi and Hokuto Konno for stimulating daily discussions on gauge theory and Floer theory, which inspire some arguments in this paper. 
I would like to express my gratitude to Thibaut Delcroix, Fabio Podest\`a for helpful conversations, Ruadha\'i Dervan and Philipp Naumann for discussion on their related work. 
Finally, I wish to thank the anonymous reviewers for their careful and patient reading of a coarse version of this article, whose comments are indispensable to improve the quality of this article. 
This work was supported by the program for Leading Graduate Schools, MEXT, Japan. 
\end{acknow}

\section{K\"ahler--Ricci soliton and K-stability}

\subsection{K\"ahler--Ricci soliton}

A K\"ahler metric $g$ on a Fano manifold $X$ is called a \textit{K\"ahler--Ricci soliton} if it satisfies the following equation: 
\[ \Ric (g) - L_{\xi'} g = g \]
for some holomorphic vector field $\xi'$. 
The same term sometimes refers the pair $(g, \xi')$. 

A fundamental feature of a K\"ahler--Ricci soliton $(g, \xi')$ is that it gives an eternal solution of the normalized K\"ahler--Ricci flow: 
\[ \partial_t g (t) = - \Ric (g (t)) + g (t). \]
Namely, for the 1-parameter smooth family $\phi_t :X \xrightarrow{\sim} X$ generated by $\mathrm{Re} (\xi')$, the following holds: 
\[ \partial_t (\phi_t^* g) = -\Ric (\phi_t^* g) + \phi_t^* g. \]
On a Fano manifold admitting K\"ahler--Ricci soliton, it is shown in \cite{TZ3, TZZZ, DerSze} that the normalized K\"ahler--Ricci flow converges to a K\"ahler--Ricci soliton, starting from any K\"ahler metric in $2 \pi c_1 (M)$. 

It is shown in \cite{Zhu} that there is a solution $g$ of the equation
\[ \Ric (g) -L_{\xi'} g = g_0 \]
for any initial K\"ahler metric $g_0$. 
Let us consider the following smooth continuity path for K\"ahler--Ricci soliton: 
\begin{align}\label{continuity path} \Ric (g_t) -L_{\xi'} g_t = t g_t + (1-t) g_0. \end{align}
One can prove that
\[ R_{\xi'} (X) := \sup \{ t \in [0,1] ~|~ \text{a solution } g_t \text{ of } (\ref{continuity path}) \text{ exists. } \} \]
is independent of the choice of the initial metrics $g_0$ and has the equality
\begin{equation}
\label{R}
R_{\xi'} (X) = \sup \{ t \in [0,1] ~|~ \exists g \text{ s.t. } \Ric (g) - L_{\xi'} g > t g \}. 
\end{equation}
The proof of this equality is in \cite{Sze3} for $\xi' = 0$ and in Kazuma Hashimoto's master thesis \cite{Has} for the general case ($\xi' \neq 0$). 
A related invariant is also mentioned in \cite{DGSW}. 

\begin{rem}
Kazuma Hashimoto was a master student of University of Tokyo supervised by Prof. Akito Futaki. 
He did not proceed to doctoral course and quit his research position. 
The proof of the equality (2) in his thesis is an analogy of \cite{Sze3}, using the functionals $\mathcal{M}_\xi := \mu_\omega$ originally defined in \cite{TZ2} and the following $\mathcal{J}_{\alpha, \xi}$ instead of $\mathcal{M}, \mathcal{J}_\alpha$ in \cite{Sze3}: 
\begin{align*}
\mathcal{M}_\xi (\phi) 
&:= - \int_0^1 dt \int_X \dot{\phi}_t \Big{(} s (g_{\phi_t}) - n - \mathrm{tr} (\nabla_{g_{\phi_t}} \xi') + \xi' (h_{g_{\phi_t}} - \theta_\xi' (\phi_t)) \Big{)} e^{\theta_\xi' (\phi_t)} \omega_{phi_t}^n, 
\\
\mathcal{J}_{\alpha, \xi} (\phi)
&:= \int_0^1 dt \int_X \dot{\phi}_t (\mathrm{tr}_{\omega_{\phi_t}} \alpha - n + \xi' \varphi_\alpha) e^{\theta'_\xi (\phi_t)} \omega_{\phi_t}^n, 
\end{align*}
where $\varphi_\alpha$ is a function with $\alpha - \omega = \sqrt{-1} \partial \bar{\partial} \varphi_\alpha$. 
\end{rem} 

The uniqueness and the existence results analogous to those of the K\"ahler--Einstein metrics \cite{BM, CDS, Tian2} (and ) hold also for K\"ahler--Ricci solitons. 

\begin{thm}[Uniqueness, \cite{TZ1, TZ2} (and \cite{BW} for $\mathbb{Q}$-Fano variety with $t=1$)]
If $(g_1, \xi'_1)$ and $(g_2, \xi'_2)$ are two K\"ahler--Ricci solitons on a Fano manifold $X$, then there is an element $\phi \in \mathrm{Aut}^0 (X)$ such that 
\[ g_2 = \phi^* g_1,\quad \xi'_2 = \phi^{-1}_* \xi'_1, \]
where $\mathrm{Aut}^0 (X)$ is the identity component of the group $\mathrm{Aut} (X)$ of biholomorphisms of $X$. 
Moreover, a solution $g_t$ of the equation (\ref{continuity path}) is absolutely unique for any initial metric $g_0$ and $t \in [0, 1)$. 
\end{thm}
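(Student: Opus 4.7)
The plan is to split the statement into two logically separate claims: soliton uniqueness modulo $\mathrm{Aut}^0(X)$, and absolute uniqueness along the continuity path (\ref{continuity path}) for $t\in[0,1)$. The two rely on quite different techniques — global convexity of a modified functional for the first, and elliptic monotonicity for the second.

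For the soliton uniqueness I would proceed in three steps. First, reduce both vector fields to a common maximal compact torus: the imaginary part of a soliton vector field $\xi'$ is Killing for $g$ (a direct consequence of taking the symmetric/antisymmetric pieces of $\Ric(g) - L_{\xi'} g = g$ after decomposing $\xi'$ into real and imaginary holomorphic parts), so its flow generates a compact torus $T_{\xi'} \subset \mathrm{Aut}^0(X)$. Since maximal compact subgroups of $\mathrm{Aut}^0(X)$ are mutually conjugate, after precomposing $(g_2,\xi'_2)$ with a suitable $\phi_0 \in \mathrm{Aut}^0(X)$ I may assume $\xi'_1,\xi'_2$ lie in the Lie algebra $\mathfrak{t}$ of a common torus $T$. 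Second, use a modified Futaki-type invariant $F_X$ on $\mathfrak{t}^c$, whose unique zero in a suitable open cone characterizes the soliton vector field; strict monotonicity of $F_X$ along lines in $\mathfrak{t}$ then forces $\xi'_1=\xi'_2=:\xi'$. Third, with the vector field common, introduce the modified Mabuchi $K$-energy $\M_\xi$ (or modified Ding functional) on $T$-invariant potentials, whose critical points are precisely K\"ahler-Ricci solitons with vector field $\xi'$; connecting the potentials of $g_1,g_2$ by a Mabuchi weak geodesic and using the geodesic convexity of $\M_\xi$, strict transverse to the reduced automorphism orbits, yields an element $\phi$ in the centralizer of $\xi'$ with $g_2 = \phi^* g_1$, which after composition with $\phi_0$ gives the desired conclusion.

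For the continuity path claim, fix $g_0$, $\xi'$ and $t\in[0,1)$. Writing $g_t = g_0 + \sqddbar\varphi_t$ with an appropriate normalization, equation (\ref{continuity path}) becomes a twisted complex Monge-Amp\`ere equation schematically of the form $\log\frac{(g_0+\sqddbar\varphi)^n}{g_0^n} = h_0 - \xi'(\varphi) - t\varphi$ for a fixed Ricci potential $h_0$. The essential feature is the strictly monotone term $-t\varphi$ on the right hand side for $t>0$: if $\varphi,\psi$ are two smooth solutions, subtracting the two equations, applying the concavity of $\log\det$, and evaluating at the maxima and minima of $\varphi-\psi$ (together with the fact that $\xi'(\varphi-\psi)$ vanishes at an interior critical point) yields $t(\varphi-\psi)\le 0\le t(\varphi-\psi)$, hence $\varphi\equiv\psi$. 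The residual case $t=0$ is then recovered by a continuity argument letting $t\searrow 0$ along the family, exploiting the uniform a priori estimates coming from the openness of the continuity path on $[0,1)$.

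The main obstacle is the geodesic convexity step for the soliton uniqueness part: weak Mabuchi geodesics have only $C^{1,1}$ regularity, and justifying the strict convexity of the twisted functional $\M_\xi$ along them — especially strict transversality to the orbits of the reduced automorphism group in the presence of the weight $e^{-\theta_\xi}$ coming from the Hamiltonian of $\xi$ — is the delicate analytic heart of the argument. The original proof of \cite{TZ1, TZ2} circumvents this by deforming along a smooth auxiliary continuity path rather than a genuine geodesic, whereas \cite{BW} treats the weak geodesic situation directly and extends the statement to the singular $\mathbb{Q}$-Fano setting. Either route works here; the remaining steps are comparatively routine.
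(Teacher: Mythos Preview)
The paper does not supply a proof of this theorem: it is stated in Section~2.1 purely as background, with attribution to \cite{TZ1, TZ2} and \cite{BW}, and no argument is given. So there is no ``paper's own proof'' to compare your proposal against.

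That said, your outline is a fair sketch of the arguments in the cited references, and the three-step reduction for the soliton part (conjugate into a common torus, pin down the vector field via the modified Futaki invariant, then run convexity of a modified energy) is exactly the Tian--Zhu strategy. One small gap worth flagging: your handling of the $t=0$ case of the continuity path by ``letting $t\searrow 0$'' is circular as written, since continuous dependence on $t$ presupposes the very uniqueness you are proving. In fact the maximum principle argument you give for $t>0$ already covers $t=0$ directly: at an extremum of $\varphi-\psi$ the drift term $\xi'(\varphi-\psi)$ vanishes, and concavity of $\log\det$ forces $\varphi-\psi$ to be constant, which the normalization then kills. No limiting procedure is needed.
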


\begin{thm}[Existence, \cite{DatSze, CSW}]
$R_{\xi'} (X) =1$ for any K-semistable pair $(X, \xi')$. 
If in addition $(X, \xi')$ is K-polystable, there is a K\"ahler--Ricci soliton on $X$ with respect to $\xi'$. 
\end{thm}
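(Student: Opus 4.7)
The plan is to run the continuity method for equation (\ref{continuity path}) and show that the interval on which solutions exist is both open and closed in $[0,1]$. Openness at an interior point follows from the implicit function theorem applied to the linearization of the twisted complex Monge--Amp\`ere equation, since the linearized operator is an isomorphism at a solution with strictly positive modified Ricci (using the characterization (\ref{R})). The substantive content lies in closedness, and in relating its failure to a test configuration whose modified Donaldson--Futaki invariant detects K-stability.

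For the first statement, suppose toward contradiction that $T := R_{\xi'}(X) < 1$, and pick $t_i \nearrow T$ with solutions $g_{t_i}$ of (\ref{continuity path}). The equation forces a uniform Ricci lower bound and, since $c_1(X) > 0$, a uniform non-collapsing and diameter bound; by Cheeger--Colding (in its $J$-enhanced form from \cite{PSS}) a subsequence converges in Gromov--Hausdorff sense to a $\mathbb{Q}$-Fano variety $X_\infty$ carrying a singular K\"ahler--Ricci soliton with respect to (the induced action of) $\xi'$. The key analytic input is a partial $C^0$-estimate uniform in $t$: following Donaldson--Sun, one uses H\"ormander's $L^2$-method to produce peaked sections of $K_X^{-m}$ weighted by $e^{-m\theta_{\xi'}}$, yielding a uniform Kodaira embedding in some $\mathbb{P}^N$ and hence a $T$-equivariant special test configuration $\mathcal{X} \to \mathbb{A}^1$ degenerating $X$ to $X_\infty$. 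Finally, compute the modified Donaldson--Futaki invariant of $\mathcal{X}$ via the Li--Sz\'ekelyhidi-type slope identity: it turns out to be proportional to $-(1-T)$ times a positive energy term, hence strictly negative, contradicting K-semistability of $(X, \xi')$.

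For the second statement, the first part gives $R_{\xi'}(X) = 1$, so choose $t_i \nearrow 1$ with solutions $g_{t_i}$ and repeat the compactness argument to obtain a $T$-equivariant special test configuration $\mathcal{X}$ whose central fiber $X_\infty$ carries a singular K\"ahler--Ricci soliton with vector field $\xi'$ and whose modified Donaldson--Futaki invariant vanishes. By K-polystability of $(X, \xi')$, $\mathcal{X}$ is forced to be a product test configuration, so $X_\infty \cong X$ and the limiting soliton solves (\ref{continuity path}) at $t = 1$ on $X$ itself.

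The main obstacle will be the partial $C^0$-estimate uniform along the soliton continuity path: the equation is only a twisted K\"ahler--Ricci soliton equation, and the drift term $L_{\xi'} g$ must be absorbed into the Bergman kernel and H\"ormander estimates via a carefully chosen Hamiltonian potential $\theta_{\xi'}$. A secondary subtlety is ensuring that the real torus $T$ generated by $\xi'$ extends continuously to the Gromov--Hausdorff limit and to the test configuration, so that $\mathcal{X}$ is genuinely $T$-equivariant and the modified Futaki invariant is meaningful within the K-stability framework for the pair $(X, \xi')$.
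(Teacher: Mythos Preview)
The paper does not give its own proof of this theorem: it is stated in Section~2.1 purely as a review of known results, with attribution to \cite{DatSze, CSW}, and is invoked only to set up the background K-stability framework for K\"ahler--Ricci solitons. There is therefore no proof in the paper against which to compare your proposal.

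Your sketch is broadly in the spirit of the cited works, particularly \cite{DatSze}, which runs the smooth continuity method and uses a partial $C^0$-estimate to extract a limiting $T$-equivariant test configuration whose central fibre carries a soliton, then invokes K-polystability to identify the limit with $X$. A couple of points in your outline would need sharpening if you were to actually carry out the argument: the formula you assert for the modified Donaldson--Futaki invariant (proportional to $-(1-T)$ times a positive energy term) is not established in that clean form in the soliton setting, and the cited papers instead proceed via a lower-semicontinuity argument for the twisted modified Futaki invariant along the path. But since the paper under review treats the theorem as a black box, these are issues for the original references rather than for this paper.
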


We will see the definition of the K-stability of pairs $(X, \xi')$ in the next subsection. 
The above claim on K-semistability is also covered in \cite{C. Li} for the K\"ahler--Einstein case, using \cite{CDS, Tian2}. 
The opposite implication for K-polystablity is proved in \cite{Ber, BW} including the $\mathbb{Q}$-Fano case as follows. 

\begin{thm}[\cite{Ber, BW}]
Let $X$ be a $\mathbb{Q}$-Fano variety. 
If $X$ admits a K\"ahler--Ricci soliton $(g, \xi')$, then $(X, \xi')$ is K-polystable. 
\end{thm}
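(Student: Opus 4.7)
The plan is to follow a variational, pluripotential-theoretic strategy that generalises the known proof of K-polystability for K\"ahler--Einstein Fano varieties. First I would introduce the \emph{modified Ding functional} $D_{\xi'}$ on the space $\mathcal{E}^1 (X, -K_X)$ of finite-energy $\omega$-plurisubharmonic potentials. It is built from the $\xi'$-twisted Monge--Amp\`ere energy $E_{\xi'}$, defined using the weighted measure $e^{\theta_{\xi'}} \omega^n$ (where $\theta_{\xi'}$ is a Hamiltonian for the imaginary part of $\xi'$), and a suitable modified log term $L_{\xi'}$. A direct Euler--Lagrange computation identifies the critical points of $D_{\xi'}$ with K\"ahler--Ricci solitons with respect to $\xi'$; by hypothesis a minimiser $\phi_{KRS}$ exists.

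The next ingredient is convexity of $D_{\xi'}$ along weak $\xi'$-twisted geodesics in $\mathcal{E}^1$. These geodesics are solutions to a degenerate complex Monge--Amp\`ere equation on $X \times \bar{\Delta}^*$ twisted by $\theta_{\xi'}$. Convexity is obtained from Berndtsson-type subharmonicity of direct-image bundles applied to the twisted measure, and is strict modulo the flow of holomorphic vector fields commuting with $\xi'$.

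Given a test configuration $(\mathcal{X}, \mathcal{L})$ of $(X, -K_X)$ equivariant under the torus generated by $\xi'$, one then associates a geodesic ray $\phi_t$ in $\mathcal{E}^1$ emanating from $\phi_{KRS}$, via the pluripotential-theoretic resolution of the Phong--Sturm problem. The asymptotic slope of $D_{\xi'}$ along this ray should equal the modified Donaldson--Futaki invariant $\mathrm{DF}_{\xi'} (\mathcal{X}, \mathcal{L})$: the slope of the log term is computed by lower semicontinuity of log canonical thresholds and volume residues along the central fibre, while the slope of the twisted energy is the intersection-theoretic expression defining the modified DF. Convexity together with the minimality of $\phi_{KRS}$ then forces this slope to be non-negative, which is modified K-semistability.

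The hard part is the polystability (equality) case: one must show that $\mathrm{DF}_{\xi'} (\mathcal{X}, \mathcal{L}) = 0$ forces $(\mathcal{X}, \mathcal{L})$ to be a product. I would use the strict convexity statement to conclude that the geodesic ray is generated by a one-parameter subgroup of holomorphic automorphisms commuting with $\xi'$, and then upgrade this via a regularity argument (using the smoothness of $\phi_{KRS}$ and the $C^{1,1}$-regularity of the resulting geodesic segments) to an equivariant isomorphism between $(\mathcal{X}, \mathcal{L})$ and the product configuration $X \times \mathbb{A}^1$ endowed with the appropriate $\mathbb{G}_m$-action. This step is the delicate one, since strict convexity in the modified setting is weaker than in the unmodified case and must be combined carefully with the uniqueness theorem for K\"ahler--Ricci solitons stated above.
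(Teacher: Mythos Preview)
The paper does not contain a proof of this theorem; it is stated as a result of Berman--Witt Nystr\"om \cite{BW} and is used as a black box. So there is no in-paper proof against which to compare your proposal.

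That said, your outline is a faithful sketch of the variational strategy actually carried out in \cite{BW}: the modified Ding functional $D_{\xi'}$ on $\mathcal{E}^1$, its convexity along weak geodesics via Berndtsson-type positivity, the construction of geodesic rays attached to equivariant test configurations, and the identification of the asymptotic slope of $D_{\xi'}$ with the modified Donaldson--Futaki invariant. Your description of the polystability step is slightly idealised: in \cite{BW} the equality case is handled less through a direct $C^{1,1}$-regularity upgrade and more through the structure of the weighted Monge--Amp\`ere energy together with the uniqueness theorem for K\"ahler--Ricci solitons (Theorem 2.1 above), which forces the ray to come from a one-parameter subgroup of $\mathrm{Aut}^0(X,\xi')$. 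But the overall architecture you describe is correct.
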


In the K\"ahler--Einstein case (i.e. $\xi' = 0$), \cite{Der, C. Li} shows that $X$ is K-semistable if $R (X) = 1$. 
So we can summarize as follows. 
\begin{itemize}
\item $X$ is K-polystable $\iff$ $X$ admits a K\"ahler--Einstein metric. 

\item $X$ is K-semistable $\iff$ $R (X) = 1$. 
\end{itemize}
Only the implication from the right to the left-hand side of the second item is still open for general $(X, \xi')$. 

There is a version of Futaki invariant suitable for K\"ahler--Ricci soliton defined in \cite{TZ2}. 
Let $H^0 (X, \Theta_X)$ denote the space of holomorphic vector fields on $X$. 
Define a linear map $\mathrm{Fut}_{\xi'} : H^0 (X, \Theta_X) \to \mathbb{C}$ by
\[ \mathrm{Fut}_{\xi'} (v') := \int_X v' (h - \theta_{\xi'}) e^{\theta_{\xi'}} \omega^n, \]
where $\omega \in 2 \pi c_1 (M)$ is a K\"ahler form, $h$ is a real valued function satisfying $\sqrt{-1} \partial \bar{\partial} h = \Ric (\omega) - \omega$ and $\theta_{\xi'}$ is a complex-valued function characterized by
\[ \begin{cases} L_{\xi'} \omega = \sqrt{-1} \partial \bar{\partial} \theta_{\xi'} & \\ \int_X e^{\theta_{\xi'}} \omega^n = \int_X \omega^n. & \end{cases} \]
The function $\theta_{\xi'}$ becomes real-valued when $\xi := \mathrm{Im} \xi'$ is a Killing vector. 
This linear function is independent of the choice of $\omega$, so it gives an invariant depending only on $X$ and $\xi'$, which is now called \textit{the modified Futaki invariant}. 
This invariant obviously vanishes when $X$ admits a K\"ahler--Ricci soliton with respect to the vector field $\xi'$. 

The following is a crucial fact in order to properly formulate our moduli problem. 

\begin{prop}[\cite{TZ2}]
\label{TZ K-optimal}
Let $X$ be a Fano manifold, which does not necessarily have a K\"ahler--Ricci soliton, and $K \subset \mathrm{Aut} (X)$ be a compact subgroup. 
Then there is a unique holomorphic vector field $\xi'$ with $\mathrm{Im} (\xi') \in \mathrm{Lie} (K)$ such that
\[ \mathrm{Fut}_{\xi'} (v') = 0, ~ \forall v' \in \mathrm{Lie} (K^c), \]
where $K^c \subset \mathrm{Aut} (X)$ is the complexification of the group $K$. 
\end{prop}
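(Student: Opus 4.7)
The plan is to construct a strictly convex proper functional on a suitable subspace of $\mathrm{Lie}(K)$ whose unique critical point produces the desired $\xi'$. First, by averaging over $K$, fix a $K$-invariant Kähler form $\omega \in 2\pi c_1(X)$ and a $K$-invariant real Ricci potential $h$. For $\xi \in \mathrm{Lie}(K)$, the associated holomorphic vector field $\xi'$ with $\mathrm{Im}(\xi') = \xi$ has real-valued Hamiltonian $\theta_{\xi'}$ (since $\xi$ is Killing), uniquely pinned down by the normalization. Next observe that any $\xi$ satisfying the vanishing condition must lie in the center $\mathfrak{z} := \mathfrak{z}(\mathrm{Lie}(K))$: if $\xi$ works, so does $\mathrm{Ad}_k(\xi)$ for every $k \in K$ by $K$-invariance of the defining data, and eventual uniqueness will then force $K$-fixedness. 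So we look for $\xi \in \mathfrak{z}$.

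On $\mathfrak{z}$ define a Tian--Zhu-type functional, e.g.
\[ F : \mathfrak{z} \to \mathbb{R}, \qquad F(\xi) := \int_X e^{h - \theta_{\xi'}}\, \omega^n. \]
A direct variation calculation shows $dF_\xi(\eta)$ equals $-\mathrm{Fut}_{\xi'}(\eta')$ up to the normalization constant arising from differentiating the volume constraint; thus critical points of $F$ on $\mathfrak{z}$ are precisely the $\xi$ for which $\mathrm{Fut}_{\xi'}$ annihilates $\mathfrak{z}^c$. The key step is strict convexity and properness. The Hessian, after accounting for the $\xi$-dependence of the normalization, can be written as a variance of $\theta_{\eta'}$ against the weight $e^{h - \theta_{\xi'}}\omega^n$, which is nonnegative by Cauchy--Schwarz and vanishes only when $\theta_{\eta'}$ is constant, equivalently $\eta = 0$ (since the center's infinitesimal action on a Fano manifold yields nonconstant Hamiltonians). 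Properness follows from the fact that as $\|\xi\| \to \infty$ the volume normalization forces $\theta_{\xi'}$ to be unbounded below on a region of $X$, making $e^{-\theta_{\xi'}}$ blow up in integral.

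Hence $F$ has a unique critical point $\xi_\ast \in \mathfrak{z}$, producing a unique $\xi'_\ast$ with $\mathrm{Fut}_{\xi'_\ast}$ vanishing on $\mathfrak{z}^c$. Finally, upgrade this to all of $\mathrm{Lie}(K^c)$. Since $\xi_\ast \in \mathfrak{z}$, the data $\omega, h, \theta_{\xi'_\ast}$ are $K$-invariant, whence $\mathrm{Fut}_{\xi'_\ast}$ is $\mathrm{Ad}(K)$-invariant as a linear functional on $\mathrm{Lie}(K^c)$, and by holomorphic extension $\mathrm{Ad}(K^c)$-invariant. Any such invariant linear form on the reductive algebra $\mathrm{Lie}(K^c) = \mathfrak{z}(K^c) \oplus [\mathrm{Lie}(K^c), \mathrm{Lie}(K^c)]$ vanishes automatically on the semisimple commutator (which equals its own derived subalgebra), and combined with vanishing on $\mathfrak{z}^c$ gives vanishing on the full Lie algebra. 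The main obstacle is the convexity/properness step: one must identify the correct $F$ whose variation reproduces $\mathrm{Fut}$ with the right signs, and carefully track how the normalization of $\theta_{\xi'}$ depends on $\xi$ when computing the Hessian.
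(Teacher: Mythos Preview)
The paper does not give its own proof of this proposition; it simply cites \cite{TZ2}. Your approach is in fact the Tian--Zhu argument from that reference: produce a strictly convex, proper functional on the real Lie algebra whose Euler--Lagrange equation is the vanishing of $\mathrm{Fut}_{\xi'}$. So the strategy is correct and is the standard one.

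One logical wrinkle deserves attention. Your reduction to the center $\mathfrak{z}$ invokes ``eventual uniqueness'' before uniqueness has been established, which is circular as written: you cannot conclude $\mathrm{Ad}_k(\xi)=\xi$ from the fact that both solve the problem until you already know the solution is unique. The clean fix is to define $F$ on all of $\mathrm{Lie}(K)$, not just $\mathfrak{z}$. Since $\omega$ and $h$ are $K$-invariant, $F$ is $\mathrm{Ad}(K)$-invariant, and strict convexity then forces the unique minimizer to be $\mathrm{Ad}(K)$-fixed, hence central, a posteriori. This simultaneously yields existence and global uniqueness on $\mathrm{Lie}(K)$, and the extension to $\mathrm{Lie}(K^c)$ is then immediate by $\mathbb{C}$-linearity of $\mathrm{Fut}_{\xi'}$, making your separate semisimple/center argument unnecessary (though it is also correct). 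The convexity and properness steps are right in outline: the Hessian is a weighted variance of the Hamiltonians, and properness comes from the unboundedness of $\theta_{\xi'}$ along rays; these are exactly the computations in \cite{TZ2}. Your caveat about tracking the $\xi$-dependence of the normalization when matching $dF$ with $\mathrm{Fut}_{\xi'}$ is well placed --- this is where the signs and constants need care, and where the specific choice of $F$ matters.
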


\begin{rem}
In general, a reductive algebraic group $K^c$ does not uniquely determine its maximal compact subgroup $K$, but only up to conjugate. 
When $K^c$ is an algebraic torus, which is isomorphic to $(\mathbb{C}^*)^k$, its maximal compact subgroup $(U (1))^k$ is uniquely determined. 
This fact allows us to get away from a formulation relying on structures over the field $\mathbb{R}$ as we see in the next subsection and to formulate things over even a field of positive characteristic, which should be preferred by algebraic geometers. 
\end{rem}

The total biholomorphism group of a Fano manifold $X$ admitting a K\"ahler--Ricci soliton $(g, \xi')$ is not necessarily reductive. 
Instead, we have the following. 

\begin{thm}[\cite{TZ2} (\cite{BW} for the $\mathbb{Q}$-Fano case)]
\label{reductive}
Suppose a $\mathbb{Q}$-Fano variety $X$ has a K\"ahler--Ricci soliton $(g, \xi')$, then the subgroup $\mathrm{Aut}^0 (X, \xi') \subset \mathrm{Aut}^0 (X)$ consisting of $\xi'$-preserving biholomorphisms is a maximal reductive subgroup of $\mathrm{Aut}^0 (X)$. 
Moreover, the complexification of the identity component $\mathrm{Isom}^0 (X, \xi')$ of the group of isometries preserving $\xi'$ coincides with the group $\mathrm{Aut}^0 (X, \xi')$. 
\end{thm}

\begin{rem}
\label{moduli set}
The reductivity of the automorphism groups of geometric structures of which we intend to construct a geometric moduli space, is crucial if one expect to apply local or global GIT to its construction and indeed indispensable in the doctrine of Alper's good moduli space (cf. \cite{Alp1, Alp2}). 

The uniqueness of K\"ahler--Ricci soliton implies that the set consisting of the isomorphism classes of the pairs $(X, \xi')$ with K\"ahler--Ricci solitons can be naturally identified with the set consisting of the biholomorphism classes of Fano manifolds $X$ with K\"ahler--Ricci solitons. 
So there is no change in the \textit{support sets} of `the moduli spaces' of the following two moduli stacks: one is the usual moduli stack associated with Fano manifolds $X$ admitting K\"ahler--Ricci solitons, and the other is the moduli stack associated with Fano pairs $(X, \xi')$ admitting K\"ahler--Ricci solitons. 

However, there are nice geometric features in the latter stack compared to the former stack, such as the separation property and the reductivity of the stabilizer groups at K-polystable points, which is appropriate for the local GIT construction of the good moduli space. 

So we will work with the latter stack, and precisely define it in subsection 4.1, replacing the pairs $(X, \xi')$ with the $T_{ \xi'}$-action on $X$. 
This \textit{may change the topology} of the moduli space, but it turns out that the latter stack is correct with regard to the `complexified' Gromov--Hausdorff convergence considered in \cite{PSS}. 
\end{rem}

\subsection{K-stability}

Here we review the definition of K-stability and formulate it as the stability notion of a Fano manifold \textit{with an algebraic torus action}. 
This enables us to introduce an adequate notion of `deformations of Fano manifolds with K\"ahler--Ricci solitons' and leads us to the proper definition of the stack $\K (n)$. 

Recall that a \textit{$\mathbb{Q}$-Fano variety} $X$ is a reduced irreducible normal complex space $X$ with the following property: there is a positive integer $\ell$ such that the sheaf $i_* ((\det \Theta_{X^\mathrm{reg}})^{\otimes \ell})$, which is denoted by $\mathcal{O} (-\ell K_X)$, is isomorphic to the sheaf of sections of an ample line bundle on $X$, and $X$ has only log terminal singularities (see \cite{EGZ}). 
The minimum $\ell$ satisfying this property is called the \textit{$\mathbb{Q}$-Gorenstein index} of $X$. 
Obviously, $\mathbb{Q}$-Fano varieties can be embedded into some $\mathbb{C}P^N$, hence they are also considered as schemes, but we treat them in the category of complex spaces. 

A \textit{$\mathbb{Q}$-Fano $T$-variety} is a $\mathbb{Q}$-Fano variety $X$ with a holomorphic action $\alpha :X \times T \to X$, where we only consider an algebraic torus $T \cong (\mathbb{C}^*)^k$. 
When $X$ has no singularities, we call it \textit{Fano $T$-manifold}. 
We denote by $\mathrm{Aut}_T (X)$ the centralizer of $T \subset \mathrm{Aut} (X)$: 
\[ \mathrm{Aut}_T (X) := \{ g \in \mathrm{Aut} (X) ~|~ g t = t g \text{ for } \forall t \in T \}. \]

Let $T$ be an algebraic torus. 
We denote the character lattice of $T$ by $M := \mathrm{Hom} (T, \mathbb{C}^*)$ and its dual (1-psg) lattice by $N := \mathrm{Hom} (\mathbb{C}^*, T)$. 
Let $X$ be a $\mathbb{Q}$-Fano $T$-variety. 
Its $T$-action canonically lifts to the sheaf $\mathcal{O} (-m \ell K_X)$ and hence there is an action of $T$ on the cohomologies of $\mathcal{O} (-m \ell K_X)$. 
For a character $u \in M$, put 
\[ H^0_u (X, \mathcal{O} (-m \ell K_X)) := \{ \sigma \in H^0 (X, \mathcal{O} (-m \ell K_X)) ~|~ t. \sigma = u (t) \sigma ~\forall t \in T \} \]
and set 
\begin{align*} 
h^i_X (m) 
&:= \dim H^i (X, \mathcal{O} (-m \ell K_X)), 
\\ 
h^i_{X, u} (m) 
&:= \dim H^i_u (X, \mathcal{O} (-m \ell K_X)). 
\end{align*}

We adopt the following pure algebraic definition of modified Futaki invariant exhibited in \cite{BW}, which is shown to coincide with $\mathrm{Fut}_{\xi'}$ in the previous section, up to a uniform positive factor. 
Note that the modified Futaki invariant for special degenerations is firstly introduced in \cite{Xio} and reformulated in \cite{WZZ}. 
(The author thanks the referees for telling the author these important references. )

\begin{defin}[modified algebraic Futaki invariant]
For a $\mathbb{Q}$-Fano $T$-variety $X$ and an element $\xi \in N_\mathbb{R}$, we define the \textit{modified} (sometimes we omit this word in our $T$-equivariant setup) \textit{algebraic Futaki invariant} $\mathit{F}_{X, \xi} :N \to \mathbb{R}$ by
\[ \mathit{F}_{X, \xi} (\lambda) := - \lim_{m \to \infty} \frac{w_{X, \xi} (m; \lambda)}{m h^0_X (m)}, \]
where
\[ w_{X, \xi} (m; \lambda) := \sum_{u \in M} e^{\langle u, \xi \rangle/m} h^0_{X, u} (m) \langle u, \lambda \rangle. \]
\end{defin}

We define the \textit{Hilbert character} $\chi :\mathbb{Z} \to \mathbb{Z} [M]$ of a Fano $T$-manifold $X$ by
\begin{equation}
\label{Hilb char}
\chi_m := \sum_{i=0}^{\dim X} (-1)^i \sum_{u \in M} h^i_{X, u} (m) u \in \mathbb{Z} [M]. 
\end{equation}
We call a function $\chi: \mathbb{Z} \to \mathbb{Z} [M]$ a \textit{Fano character} if there exists a Fano $T$-manifold whose Hilbert character given in (\ref{Hilb char}) is the given $\chi$. 

\begin{prop}[\cite{TZ2}]
For every Fano $T$-manifold $X$, there exists a unique vector $\xi \in N_\mathbb{R}$ such that the modified algebraic Futaki invariant $\mathit{F}_{X, \xi} (\lambda)$ in the above sense vanishes on the 1-psg lattice $N$ of $T$ (equivalently, the modified Futaki invariant $\mathrm{Fut}_{\xi'}|_{\mathfrak{t}}$ restricted to the Lie algebra $\mathfrak{t}$ of $T$ vanishes). 
We call this vector $\xi$ the \textit{K-optimal vector} of $(X, T)$. 
\end{prop}

Obviously from the definition of the modified algebraic Futaki invariant, the K-optimal vector $\xi$ of a Fano $T$-manifold $X$ depends only on the Hilbert character $(T, \chi)$. 
So it also makes sense to say that $\xi \in N_\mathbb{R}$ is the K-optimal vector of a Fano character $(T, \chi)$, which is a $T$-equivariant deformation invariant. 

\begin{prop}[\cite{TZ2}]
If a Fano manifold $X$ has a K\"ahler--Ricci soliton $(g, \xi')$, then the $\xi'$ is the K-optimal vector with respect to any algebraic torus containing the algebraic torus generated by $\xi'$. 
(Note that the closure of the exponential of the imaginary part of the holomorphic vector $\xi'$ associated to soliton gives a closed real torus, and the algebraic torus generated by $\xi'$ is just the complexification of this closed real torus. )
\end{prop}

We define the K-optimality of Fano character (and in particular the K-optimality of a Fano $T$-manifold), not of vector, as follows. 

\begin{defin}[K-optimal character]
We call a Fano character $(T, \chi)$ \textit{K-optimal} if there is no proper sub-lattice $\tilde{N} \subset N$ with $\xi \in \tilde{N}_\mathbb{R}$ for the K-optimal vector $\xi \in N_\mathbb{R}$ of $(T, \chi)$. 

For a Fano manifold $X$, we call an algebraic action of an algebraic torus $T$ on $X$ is \textit{K-optimal} if it is maximal (as actions on $X$) among all K-optimal characters $(\tilde{T}, \tilde{\chi})$ obtained from $\tilde{T}$-actions on $X$. 
\end{defin}

\begin{rem}
It is possible that both characters $(T_1, \chi_1) \subset (T_2, \chi_2)$ are K-optimal, where $\chi_i$ are related by the projection $\mathbb{Z} [M_2] \to \mathbb{Z} [M_1]$. 
Not only that, there is an example of a Fano manifold $X_1$ with a K\"ahler--Einstein metric $(g_1, \xi_1' = 0)$ admitting a deformation to a Fano manifold $X_2$ with a K\"ahler--Ricci soliton $(g_2, \xi_2')$ (Example \ref{horo-example}), which shows that both actions $X_1 \curvearrowleft T_1 = 0$ and $X_2 \curvearrowleft T_2 = T (\xi_2') \neq 0$ are K-optimal with $(T_1, \chi_{X_1}) \subset (T_2, \chi_{X_2})$. 
This example illustrates that the torus equivariant formulation is essential for the separation of the moduli space of Fano manifolds with K\"ahler--Ricci solitons. 

For a fixed Fano manifold $X$, K-optimal action $T \subset \mathrm{Aut} (X)$ is unique up to adjoint thanks to the uniqueness in Proposition \ref{TZ K-optimal}. 
\end{rem}

Let $X$ be a $\mathbb{Q}$-Fano $T$-variety. 
A pair $(\pi :\mathfrak{X} \to \mathbb{C}, \theta)$ consisting of the following data is called a \textit{special degeneration} of $X$. 
\begin{enumerate}
\item $\mathfrak{X}$ is a normal complex space with an action of $T \times \mathbb{C}^*$ and $\pi :\mathfrak{X} \to \mathbb{C}$ is a $T \times \mathbb{C}^*$-equivariant proper flat $\mathbb{Q}$-Gorenstein surjective morphism whose central fiber $\mathfrak{X}_0$ is a $\mathbb{Q}$-Fano variety, where $T \times \mathbb{C}^*$ acts on $\mathbb{C}$ by $z. (t, s) = sz$. 

\item $\theta$ is a $T \times \mathbb{C}^*$-equivariant isomorphism $\theta : X \times \mathbb{C}^* \xrightarrow{\sim} \pi^{-1} (\mathbb{C}^*)$. 
\end{enumerate}
We also assume that there is a holomorphic line bundle $\mathcal{L}$ on $\mathfrak{X}$ with an isomorphism $\theta^* \mathcal{L}|_{\pi^{-1} (\mathbb{C}^*)} \cong p_1^* \mathcal{O} (-\ell K_X)$ for some $\ell$. 
It is shown in \cite[Lemma 2.2]{Ber} that if such $\mathcal{L}$ exists, then $-\ell K_{\mathfrak{X}}$ becomes $\mathbb{Q}$-Cartier and the tensor bundle $\mathcal{L}^{\otimes m}$ is actually isomorphic to $\mathcal{O} (-m\ell K_{\mathfrak{X}/\mathbb{C}})$ for some $m$. 
So we exclude the datum $\mathcal{L}$ from the data of special degeneration. 

\begin{defin}[K-stability]
Let $\xi \in N_\mathbb{R}$ be the K-optimal vector of a $\mathbb{Q}$-Fano $T$-variety $X$. 
Denote the vector $(\xi, 0) \in (N \times \mathbb{Z})_\mathbb{R}$ by the same symbol $\xi$. 
We call the $\mathbb{Q}$-Fano $T$-variety $X$ 
\begin{itemize}
\item \textit{K-semistable} if for any special degeneration $(\pi :\mathfrak{X} \to \mathbb{C}, \theta)$ of $X$, the modified algebraic Futaki invariant $\mathit{F}_{X, \xi} (\pi, \theta) := \mathit{F}_{\mathfrak{X}_0, \xi} (\lambda)$ of the central fiber $\mathfrak{X}_0$ is nonnegative, where $\lambda$ is the one parameter subgroup defined by $\lambda :\mathbb{C}^* \to T \times \mathbb{C}^* :s \mapsto (1, s)$. 

\item \textit{K-polystable} if $X$ is K-semistable and $\mathit{F}_{X, \xi} (\pi, \theta) = 0$ if and only if there exists a one parameter subgroup $\lambda: \mathbb{C}^* \to \mathrm{Aut}_T (X)$ such that $\theta (x \lambda(t)^{-1}, t)$ extends to an isomorphism of the total space $X \times \mathbb{C} \xrightarrow{\sim} \mathfrak{X}$. 

\item \textit{K-stable} if the Fano $T$-variety $X$ is K-polystable and $\mathrm{Aut}_T^0 (X) = T$. 
\end{itemize}
\end{defin}

\begin{rem}
A pair $(X, \xi')$ of a $\mathbb{Q}$-Fano variety $X$ and a holomorphic vector field $\xi'$ is called K-(semi/poly)stable if $\xi := \mathrm{Im} \xi'$ generates a closed real torus $T_\mathbb{R}$ and $(X, T)$ is K-(semi/poly)stable where $T$ denotes the complexification of the closed real torus $T_\mathbb{R}$. 
In this case, the vector $\xi \in \mathrm{Lie} (T_\mathbb{R}) = N_\mathbb{R}$ is of course K-optimal. 

We call $X$ modified K-(semi/poly)stable if there exists a torus action $X \curvearrowleft T$ which makes $X$ K-(semi/poly)stable with respect to the action. 
\end{rem}

\begin{rem}
Note that a K-(semi/poly)stable Fano $T$-manifold is \textit{not} necessarily a K-(semi/poly)stable Fano manifold (with respect to the trivial torus action), but only a \textit{modified} K-(semi/poly)stable Fano manifold. 
However, suppose $X$ is a Fano $T$-manifold, $\tilde{T} \subset T$ is a sub-torus and the K-optimal vector $\tilde{\xi}$ with respect to the $\tilde{T}$-action coincides with the K-optimal vector $\xi$ with respect to the $T$-action (i.e. $\xi \in \mathrm{Lie} (\tilde{T})$), then the Fano $T$-manifold $X$ is K-(semi/poly)stable if and only if the Fano $\tilde{T}$-manifold $X$ is. 
This is proved in \cite{DatSze} and recently proved by purely algebraic method in \cite{LiXu, LWX3} for the KE case ($\xi=0, T = 0$). 
\end{rem}

We introduce a gentle Fano $T$-manifold as a Fano $T$-manifold inseparable from a smooth Fano $T$-manifold admitting K\"ahler--Ricci soliton. 

\begin{defin}[gentle Fano]
\label{gentle Fano}
A Fano $T$-manifold $X$ is called \textit{gentle} if there is a $T$-equivariant deformation $\mathcal{X} \to \Delta$ with an isomorphism $\mathcal{X}|_{\Delta^*} \cong X \times \Delta^*$ such that its central fiber $\mathcal{X}_0$ is a smooth K-polystable Fano $T$-manifold. 
We call $\mathcal{X} \to \Delta$ a \textit{gentle degeneration}. 
\end{defin}

From GIT viewpoint, it is naturally expected that any gentle Fano $T$-manifold is K-semistable. 
In this paper, we do not pursue this expectation as it is not essential for the construction of our moduli space, while their K-semistability might be philosophically important. 
Note that we always have $R_{\xi'} (X) = 1$ for a gentle Fano $T$-manifold $X$ with the K-optimal vector $\xi$, thanks to the equality (\ref{R}). 
This fact helps us to prove the following proposition. 

\begin{prop}
\label{gentle degeneration}
Let $X$ be a gentle Fano $T$-manifold whose torus action is K-optimal. 
Then any two gentle degenerations of $X$ have the $T$-equivariant biholomorphic central fibers. 
\end{prop}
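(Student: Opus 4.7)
The strategy is to reduce the proposition to the classical uniqueness of the closed orbit in the closure of a reductive group orbit, using the étale local GIT structure on $\K_{T,\chi}$ around K-polystable points supplied by Proposition \ref{etale}. Explicitly, that proposition produces, for each K-polystable Fano $T$-manifold $Y$ of Hilbert character $(T,\chi)$, an étale chart $[V/G] \to \K_{T,\chi}$ with $G = \mathrm{Aut}_T(Y)$ reductive (Theorem \ref{reductive}), equipped with a good moduli space $V {/\!/} G$, and with closed $G$-orbits in $V$ corresponding exactly to the K-polystable Fano $T$-manifolds in the chart.

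Since the Hilbert character is flat-invariant in a $T$-equivariant family, the two central fibers $Y_i := \mathcal{X}^i_0$ share the character $(T,\chi)$ and hence the K-optimal vector $\xi$ of $X$. First I would apply Proposition \ref{etale} at $[Y_1]$ to obtain a chart $[V_1/G_1]$ and, after shrinking $\Delta$, lift $\mathcal{X}^1$ to a holomorphic map $\varphi_1 : \Delta \to V_1$ with $\varphi_1(0)$ over $[Y_1]$ and $\varphi_1(t)$ over $[X]$ for $t\ne 0$. By the standard closed-orbit principle for reductive actions, $[Y_1]$ is then the unique closed $G_1$-orbit in the closure $\overline{G_1\cdot \varphi_1(t)}$. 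Next, using that $\mathcal{X}^2|_{\Delta^*} \cong X \times \Delta^*$ has the same generic class $[X]$, which is already represented by $G_1 \cdot \varphi_1(t)$ in $V_1$, I would lift $\mathcal{X}^2|_{\Delta^*}$ (possibly after a finite étale base change of $\Delta^*$) to a holomorphic $\psi : \Delta^* \to V_1$ whose image lies in $G_1 \cdot \varphi_1(\Delta^*)$, and compose with the good moduli quotient $V_1 \to V_1 {/\!/} G_1$. The resulting composition $\Delta^* \to V_1 {/\!/} G_1$ agrees with the one coming from $\mathcal{X}^1$ and hence extends uniquely over $0$ to the image of $[Y_1]$. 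By the defining separation property of a good moduli space, $[Y_2]$ must sit in the preimage of $[Y_1]$'s image in $V_1 {/\!/} G_1$, and since both $[Y_1]$ and $[Y_2]$ are closed orbits they must agree as points of $\K_{T,\chi}$, yielding the desired $T$-equivariant biholomorphism $Y_1 \cong Y_2$.

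The main obstacle will be making precise that $\mathcal{X}^2$'s central fiber actually appears inside the chart $[V_1/G_1]$ (or a common étale refinement) as a closed orbit, rather than merely in some separate chart around $[Y_2]$. This requires exploiting the K-optimality hypothesis, which guarantees that $V_1$ captures the full relevant $T$-equivariant deformation theory of $X$, together with the Artin-analytic-stack machinery and the good-moduli structure from Proposition \ref{etale}. Once the two central fibers are shown to live in a common slice, the remaining argument is pure Luna-slice/GIT manipulation.
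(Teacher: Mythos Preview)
Your approach has a genuine gap at exactly the point you flag as ``the main obstacle,'' and the fixes you propose do not close it. The \'etale chart $[B_1/R_1]\to\K_{T,\chi}$ from Proposition~\ref{etale} is built from the Kuranishi family of $Y_1$; it only sees Fano $T$-manifolds that arise as small $T$-equivariant deformations of $Y_1$. You can certainly lift $\mathcal{X}^2|_{\Delta^*}$ into this chart (its fibers are all $X$, which is there), and the composition $\Delta^*\to B_1K_1^c\sslash K_1^c$ is then constant equal to the class of $[X]=[Y_1]$. But nothing forces the map $\Delta\to\K_{T,\chi}$ induced by $\mathcal{X}^2$ to factor through $\mathrm{Im}[B_1/R_1]$ at $0$: the central fiber $Y_2$ could lie entirely outside this chart, and then no separation property of $B_1K_1^c\sslash K_1^c$ says anything about $Y_2$. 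Neither K-optimality (which only pins down $\xi$) nor the Artin property (which is purely local) produces the global comparison you need. In effect you are assuming a separated coarse moduli space for $\K_{T,\chi}$ to prove Proposition~\ref{gentle degeneration}, whereas in the paper's logic Proposition~\ref{gentle degeneration} is precisely the input used in Theorem~\ref{main theorem} to \emph{construct} that moduli space and glue the local charts.

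The paper's proof takes a completely different, analytic route that sidesteps this chart-matching problem. It runs the continuity path $\mathrm{Ric}(\omega_t)-L_{\xi'}\omega_t=t\omega_t+(1-t)\alpha$ on $X$ and, using a uniform partial $C^0$-estimate for families (Lemma before the proof, relying on \cite{F. WZ,JWZ,DS}), shows that any subsequential Gromov--Hausdorff limit as $t\to 1$ is a $\mathbb{Q}$-Fano $T$-variety with K\"ahler--Ricci soliton, independent of the initial metric and of the subsequence. The independence is obtained by a connectedness argument in $\mathrm{Hilb}_T/U_T$: assuming two distinct smooth limits, one constructs intermediate almost-soliton sequences whose Hilbert points would have to accumulate on $\partial(\overline{\mathrm{Hilb}(X_\infty)PGL_T})$, contradicting K-polystability via \cite{BW} (this step \emph{does} use Proposition~\ref{etale}, but only the finiteness of fibers, to know there are finitely many candidate limits in a neighborhood). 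Finally, for any gentle degeneration $\mathcal{X}\to\Delta$ one deforms the initial metric along the base and shows the continuity-path limit must equal $\mathcal{X}_0$; since that limit is intrinsic to $X$, all central fibers coincide. The analytic input here --- Gromov--Hausdorff compactness and the partial $C^0$-estimate --- is doing exactly the work your GIT argument cannot: it provides a canonical global ``destination'' for $X$ that does not depend on having already placed both $Y_1$ and $Y_2$ in a common local slice.
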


The proposition will be proved at the end of section 4, using Proposition \ref{etale} and a version of Donaldson-Sun's technology on Gromov--Hausdorff limit, and will be applied to the proof of Theorem \ref{main theorem}. 
The logical order of our argument is ``Proposition \ref{etale} $\Rightarrow$ Proposition \ref{gentle degeneration} $\Rightarrow$ Theorem \ref{main theorem}''. 
It seems also possible to show this proposition without using a finiteness from Proposition \ref{etale} as in \cite{LWX1}. 
However, the author thinks the finiteness simplifies our argument. 

\section{Donaldson-Fujiki set-up}

We call a closed $C^\infty$-symplectic manifold $(M, \omega)$ \textit{symplectic Fano} if its cohomology class $[\omega]$ is equal to $2 \pi$ times the first Chern class $c_1 (M, \omega)$ and there exists an $\omega$-compatible almost complex structure $J$ with positive Ricci curvature. 
Note that we have $b^1 (M) = 0$ from familiar Bochner's theorem or Myers' theorem as we have a metric with $\Ric > 0$. 
Throughout this section, $T$ stands for a closed real torus and $(M, \omega)$ for a symplectic Fano manifold with a Hamiltonian effective action by $T$. 

We denote by $\mathrm{Symp} (M, \omega)$ the group of symplectic diffeomorphisms and $\Ham^0 (M, \omega)$ its subgroup generated by Hamiltonian diffeomorphisms. 
Thanks to Banyaga's theorem, in the case $b^1 (M) = 0$, $\Ham^0 (M, \omega)$ actually coincides with $\mathrm{Symp}^0 (M, \omega)$, the identity connected component of $\mathrm{Symp} (M, \omega)$. 
(Even though it is easy to see that both groups have a natural Fr\'echet Lie group structures and their Lie algebras coincide, the coincidence at the level of Fr\'echet Lie group is \textit{not} trivial because the Fr\'echet Lie group structures are not locally exponential. See \cite{Neeb} for the generalities on Fr\'echet Lie groups. )
We must work with the group $\mathrm{Symp} (M, \omega)$ (resp. $\mathrm{Symp}_T (M, \omega)$) so that the complexification of the stabilizer group of cscK structure $J \in \mathcal{J} (M, \omega)$ (resp. K\"ahler--Ricci soliton structure $J \in \mathcal{J}_T (M, \omega)$) coincides with the biholomorphism group $\mathrm{Aut} (M, J)$ of $(M, J)$ (resp. $\mathrm{Aut}_T (M, J)$), not only it just includes the identity component $\mathrm{Aut}^0 (M, J)$ (resp. $\mathrm{Aut}^0_T (M, J)$). 
Keeping Banyaga's theorem in our mind, we prefer using the notation $\Ham (M, \omega) := \mathrm{Symp} (M, \omega)$, which is \textit{not necessarily connected}, as we always identify its Lie algebra with $C^\infty (M)/\mathbb{R}$. 

We consider the space $\mathcal{J}_T (M, \omega)$ of $T$-invariant $\omega$-compatible almost complex structures and denote by $\mathcal{J}_T^\intg (M, \omega)$ the subspace of integrable complex structures. 
It is well known that $\mathcal{J}_T (M, \omega)$ admits a natural Fr\'echet smooth manifold structure, which is identified with the space of $T$-equivariant sections of an associated $Sp (2n)/U (n)$-fibre bundle (see \cite{Pal} for instance). 
The tangent space at $J \in \mathcal{J}_T (M, \omega)$ can be written as follows. 
\[ T_J \mathcal{J}_T (M, \omega) = \{ A \in \Gamma_T^\infty (\mathrm{End} TM) ~|~ AJ + JA = 0, \omega (A \cdot, \cdot) + \omega (\cdot, A \cdot) = 0 \}. \]

Similarly, the group $\Ham_T (M, \omega)$ of $T$-compatible symplectic diffeomorphisms can be endowed with a Fr\'echet smooth Lie group structure, whose Lie algebra can be identified with $C^\infty_T (M)/\mathbb{R}$. 
The left adjoint action is given by
\[ \mathrm{Ham}_T (M, \omega) \times C^\infty_T (M)/\mathbb{R} \to C^\infty_T (M)/\mathbb{R} :(\phi, f) \mapsto f \circ \phi^{-1}. \]
The following right action
\[ \mathcal{J}_T (M, \omega) \times \mathrm{Ham}_T (M, \omega) \to \mathcal{J}_T (M, \omega) : (J, \phi) \mapsto \phi^* J \]
is also smooth and its derivative is given by
\[ C^\infty_T (M)/\mathbb{R} \to T_J \mathcal{J}_T (M, \omega) :f \mapsto L_{X_f} J, \]
where $X_f$ is the Hamiltonian vector field of $f$: $-df = i (X_f) \omega$.

\subsection{The moment map}

For a given $\xi \in \mathfrak{t} = \mathrm{Lie} (T)$, we let $\theta_\xi$ be a real valued function on $M$ given by
\[ -d \theta_\xi = i_\xi \omega \]
with the prescribed normalization
\[ \int_M \theta_\xi e^{-2 \theta_\xi} \omega^n = 0. \]
This function is invariant under the action of $\Ham_T (M, \omega)$. 

Set $\theta'_\xi := -2 \theta_\xi$.
For each $J \in \mathcal{J}_T (M, \omega)$, 
\[ \xi'_J :=J \xi + \sqrt{-1} \xi \in \mathcal{X}^{1,0} (M, J) \]
satisfies
\[ \sqrt{-1} \bar{\partial} \theta'_\xi = \sqrt{-1} (d (-2 \theta_\xi) + \sqrt{-1} J d(-2\theta_\xi))/2 = i_{\xi'_J} \omega. \]

We consider the following Riemannian metric on $\mathcal{J}_T (M, \omega)$, modified by $\xi$ from the usual one (\cite{Don1}), defined as
\[ (A, B)_\xi := \int_M g^{ij}_J g_{J, kl} A^k_i B^l_j ~e^{-2 \theta_\xi} \omega^n \]
for tangent vectors $A, B \in T_J \mathcal{J}_T (M, \omega)$ and set
\[ \Omega_\xi (A, B) := (JA, B)_\xi. \]
It is easy to see that $\Omega_\xi$ defines a non-degenerate closed $2$-form on $\mathcal{J}_T (M, \omega)$. 
We also consider
\[ (f, g)_\xi := \int_M f g ~e^{-2 \theta_\xi} \omega^n \]
for $f, g \in C^\infty_T (M)$, which defines an inner product on the subspace
\[ C^\infty_{T, \xi} (M, \omega) := \{ f \in C^\infty_T (M) ~|~ \int_M f ~e^{-2 \theta_\xi} \omega^n = 0 \} \cong C^\infty_T (M)/\mathbb{R}. \]

Finally, we denote by $s (J)$ the Hermitian scalar curvature of $J$, defined by Donaldson \cite{Don1}. 
We normalize $s (J)$ by a factor so that it is equal to the K\"ahler scalar curvature $-g_J^{i\bar{j}} \partial_{J, i} \partial_{J, \bar{j}} (\log \det g_J)$ for any integrable $J$, which is the half of the Riemannian scalar curvature. 
We denote by $\Delta_{g_J}$ the usual Riemannian Laplacian with positive eigenvalue, which is the twice of the $\bar{\partial}$-Laplacian $\bar{\Box}_J = - g_J^{i \bar{j}} \partial_{J, i} \partial_{J, \bar{j}}$ when $J$ is integrable. 
Here is the moment map for our modified symplectic structure $\Omega_\xi$. 

\begin{prop}
\label{moment}
Fix $\xi, \zeta \in \mathfrak{t}$. 
For each $J \in \mathcal{J}_T (M, \omega)$, we consider \textit{the modified Hermitian scalar curvature} defined as 
\[ s_{\xi, \zeta} (J) := (s (J) - n) + \Delta_{g_J} \theta'_\xi - \xi'_J \theta'_\xi - \theta'_\xi - \theta'_\zeta, \]
where $\theta'_\zeta$ is normalized as $\int_M \theta'_\zeta e^{\theta'_\xi} \omega^n = 0$. 
Then the map
\[ \mathcal{S}_{\xi, \zeta} :\mathcal{J}_T (M, \omega) \to C^\infty_{T, \xi} (M, \omega)^\vee :J \mapsto (4s_{\xi, \zeta}, \cdot)_\xi \]
satisfies the property of the moment map with respect to the symplectic structure $\Omega_\xi$ and the action of $\Ham_T (M, \omega)$ on $\mathcal{J}_T (M, \omega)$. 
That is, $\mathcal{S}_{\xi, \zeta}$ is a $\mathrm{Ham}_T (M, \omega)$-equivariant smooth map satisfying
\[ - \frac{d}{dt}\Big{|}_{t=0} \langle \mathcal{S}_{\xi, \zeta} (J_t) , f \rangle = \Omega_\xi (L_{X_f} J_0, \dot{J}_0)  \]
for any smooth curve $J_t \in \mathcal{J}_T (M, \omega)$ and $f \in C^\infty (M)$. 
\end{prop}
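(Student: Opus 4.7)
I would structure the proof as a soliton-weighted variant of the classical Donaldson–Fujiki computation, proceeding in three stages.

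\textbf{Reduction and equivariance.} First I would verify that $\mathcal{S}_{\xi,\zeta}$ lands in $C^\infty_{T,\xi}(M,\omega)^*$, which reduces to showing that $\int_M s_{\xi,\zeta}(J)\, e^{-2\theta_\xi}\omega^n$ is constant in $J$ (so the functional kills the constants and descends from $C^\infty_T(M)$ to $C^\infty_T(M)/\mathbb{R}$). Expanding the $e^{-2\theta_\xi}$ factor and using the prescribed normalization $\int_M \theta_\xi e^{-2\theta_\xi}\omega^n = 0$, the $\theta'_\xi$ and $\theta'_\zeta$ contributions are $J$-independent constants, while the soliton-scalar combination $(s(J)-n) + \Delta_{g_J}\theta'_\xi - \xi'_J\theta'_\xi$ integrates to a topological quantity by an integration-by-parts argument against the weighted volume form (this is essentially the weighted version of the total-scalar-curvature identity). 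For $\Ham_T(M,\omega)$-equivariance, every ingredient in the definition of $s_{\xi,\zeta}(J)$ is natural: $\phi^*\theta_\xi = \theta_\xi$ by $T$-invariance of $\phi$, $\phi^*(\xi'_J) = \xi'_{\phi^*J}$, $s$ and $\Delta_g$ are diffeomorphism-natural, and $e^{-2\theta_\xi}\omega^n$ is preserved, so the pairing $(4s_{\xi,\zeta}(\phi^*J), f)_\xi$ equals $(4s_{\xi,\zeta}(J), f\circ\phi^{-1})_\xi$.

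\textbf{The variation formula.} The heart of the argument is to compute, for a smooth curve $J_t$ with $A = \dot J_0$,
\[ -\frac{d}{dt}\Big|_{t=0}\, 4\int_M s_{\xi,\zeta}(J_t)\, f \, e^{-2\theta_\xi}\,\omega^n \]
and match it with $\Omega_\xi(L_{X_f}J_0, A) = (JL_{X_f}J_0, A)_\xi$. I would handle each summand of $s_{\xi,\zeta}$ separately. For the Donaldson term $s(J)-n$, the classical variation formula expresses $\dot s(J)$ as a fourth-order operator applied to $A$ whose formal adjoint, paired against $f$ via the weighted measure, recasts as $(D^*_Jd_f,A)$ where $D_J$ is the Lichnerowicz-type operator relevant to holomorphy potentials. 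For $\Delta_{g_J}\theta'_\xi$ the variation is a second-order operator in $A$ applied to $\theta'_\xi$; for $\xi'_J\theta'_\xi$ it contributes a first-order term coming from $\dot{\xi'_J} = \dot J\cdot \xi$ acting on $\theta'_\xi$, plus a contribution from the varying Hermitian structure. The $\theta'_\zeta$ summand is $J$-independent and drops out. The key bookkeeping device is the weighted Bakry--Émery Laplacian $\Delta_\xi := \Delta_g - 2\,g(\nabla\theta_\xi,\nabla\,\cdot\,)$, which is self-adjoint with respect to $e^{-2\theta_\xi}\omega^n$ and which is the natural operator under which integration by parts against the weight produces no boundary-like error terms.

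\textbf{Closing the identity.} After these integrations by parts, the cross-terms involving $\theta'_\xi$ and the $\xi'_J$-derivative should assemble, exactly on the nose, into the modification needed to turn the classical Donaldson identity
\[ -\int_M \dot s(J)\, f\, \omega^n = (JL_{X_f}J_0, A)_{\mathrm{std}} \]
into its $e^{-2\theta_\xi}$-weighted, $T$-equivariant counterpart $\Omega_\xi(L_{X_f}J_0, A)$. The normalization factor of $4$ in the definition of $\mathcal{S}_{\xi,\zeta}$ is inherited from Donaldson's convention (scalar versus half-scalar, and the coefficient in $\Omega_\xi = (J\,\cdot,\cdot)_\xi$).

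\textbf{The main obstacle.} I expect the main difficulty to be the $\xi'_J\theta'_\xi$ term: since $\xi'_J = J\xi + \sqrt{-1}\xi$ depends on $J$ through $J\xi$, its variation $\dot J\cdot \xi$ is a vector field determined by $A$ and $\xi$ rather than by $A$ alone, and showing that the pairing of its contribution against $f$ reassembles correctly with the weighted $\Delta_\xi$-term requires delicate use of the identity $-d\theta_\xi = i_\xi\omega$, $T$-invariance of $f$ (hence $\xi f = 0$), and the pointwise relation $\omega(A\xi,\cdot) = -\omega(\xi,A\cdot)$ inherited from $A \in T_J\mathcal{J}_T(M,\omega)$. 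Once this term is shown to harmonize with the weight factor $e^{-2\theta_\xi}$, the remaining cancellations are essentially the same as in Donaldson's original computation, now performed on the $T$-invariant subspace.
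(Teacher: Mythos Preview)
Your outline is plausible and would likely close, but it differs in an essential organizational point from the paper's argument, and that difference is worth knowing because it substantially shortens the computation.

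You propose to differentiate each summand of $s_{\xi,\zeta}(J_t)$ separately and then integrate by parts against the weighted measure $e^{-2\theta_\xi}\omega^n$, using the Bakry--\'Emery Laplacian as the organizing operator. This forces you to reprove, with a weight, the content of Donaldson's variation formula for $s(J)$. The paper instead absorbs the weight into the test function: it writes
\[
(4s_{\xi,\zeta}(J_t),f)_\xi = (4s(J_t),\, f e^{-2\theta_\xi}) \;-\; \int_M ((8\Delta_t - 8J_t\xi)\theta_\xi)\, f\, e^{-2\theta_\xi}\omega^n
\]
and applies Donaldson's \emph{unweighted} identity $\tfrac{d}{dt}\big|_{0}(4s(J_t),h) = (L_{X_h}J,JA)$ directly with $h = f e^{-2\theta_\xi}$. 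The whole proof then reduces to expanding $L_{X_{f e^{-2\theta_\xi}}}J$ via the elementary identities $X_{fg}=fX_g+gX_f$, $L_{gX}J = gL_XJ - Jdg\otimes X + dg\otimes JX$, and $L_\xi J=0$, and matching the resulting correction terms (of the form $(Jd\theta_\xi\otimes X_f,JA)_\xi$, $(df\otimes J\xi,JA)_\xi$, etc.) with the variation of the $\Delta_t\theta_\xi$ and $(J_t\xi)\theta_\xi$ pieces. Those matches are six pointwise tensor identities, verified by index calculation.

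What your approach buys is conceptual transparency about \emph{why} the specific combination $\Delta_g\theta'_\xi - \xi'_J\theta'_\xi$ appears: it is exactly what is needed to make the weighted integration by parts close. What the paper's approach buys is that Donaldson's result is used as a black box, so no fourth-order variation of $s(J)$ ever has to be computed; all remaining work is zeroth-order tensor algebra. Your anticipated ``main obstacle'' (the $\xi'_J\theta'_\xi$ term) is, in the paper's scheme, not an obstacle at all: it corresponds to the $(fJd\theta_\xi\otimes\xi,JA)_\xi$ terms that fall out automatically from expanding $L_{fe^{-2\theta_\xi}\xi}J$, and the required identity $\tfrac{d}{dt}|_0(-(J_t\xi)\theta_\xi,f)_\xi = (fJd\theta_\xi\otimes\xi,JA)_\xi$ is a two-line index check. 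One minor point: the paper does not separately verify that the map lands in $C^\infty_{T,\xi}(M,\omega)^*$; this is implicit once the moment-map identity is established (the pairing against constants vanishes because $L_{X_c}J=0$).
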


\begin{proof}
The equivariance of the map readily follows because the coadjoint right action is given by $(s, \cdot)_\xi \cdot \phi = (\phi^* s, \cdot)_\xi$ and $\theta_\xi'$ and $ \theta_\zeta'$ are $\Ham_T (M, \omega)$-invariant. 

The modified Hermitian scalar curvature can be divided in two parts as follows. 
\begin{align}
\frac{d}{dt}\Big{|}_{t=0} (4s_{\xi, \zeta} (J_t), f)_\xi & = \int_M \frac{d}{dt}\Big{|}_{t=0} 4s_{\xi, \zeta} (J_t) f ~e^{-2\theta_\xi} \omega^n \notag
\\ & = \int_M \frac{d}{dt}\Big{|}_{t=0} 4s (J_t) f e^{-2 \theta_\xi} ~\omega^n - \frac{d}{dt}\Big{|}_{t=0} \int_M ((-4 \Delta_t + 4 \xi'_t) \theta'_\xi ) f ~e^{-2 \theta_\xi} \omega^n \notag
\\ \label{moment0} &= \frac{d}{dt}\Big{|}_{t=0} (4s (J_t), f e^{-2 \theta_\xi}) - \frac{d}{dt}\Big{|}_{t=0} \int_M ((8 \Delta_t - 8 J_t \xi) \theta_\xi) f ~e^{-2 \theta_\xi} \omega^n 
\end{align}
Now we use the following Donaldson's famous calculation \cite{Don1} on the Hermitian scalar curvature with respect to the usual symplectic structure: 
\[ \frac{d}{dt}\Big{|}_{t=0} (4s (J_t), f) = (L_{X_f} J, JA) \]
for $A = \dot{J}_0$. 
The factor $4$ comes from our convention of the metric $(\cdot,\cdot)_\xi$ (compare \cite[Proposition 2.2.1.]{Sze1}). 
Combined with the following basic identities: (a) $X_{fg} = f X_g + g X_f$, (b) $L_{f X} J = f L_X J - Jdf \otimes X + df \otimes JX$, (c) $L_\xi J =0$, the first term of (\ref{moment0}) can be arranged as follows. 
\begin{align*}
\frac{d}{dt}\Big{|}_{t=0} (4s (J_t), f e^{-2 \theta_\xi}) &= (L_{X_{f \exp (-2 \theta_\xi)}} J, JA) 
\\ &= ( L_{\exp (-2 \theta_\xi) X_f} J, JA) + (-2) (L_{(f \exp (-2 \theta_\xi) )\xi} J, JA) 
\\ &= (e^{-2 \theta_\xi} L_{X_f} J - J d (e^{-2 \theta_\xi}) \otimes X_f + d (e^{-2 \theta_\xi}) \otimes JX_f, JA) 
\\ &\quad\quad + (-2) ((fe^{-2\theta_\xi}) L_\xi J - J d (f e^{-2 \theta_\xi}) \otimes \xi + d (f e^{-2 \theta_\xi}) \otimes J \xi ,JA) 
\\ &= (L_{X_f} J, JA)_\xi 
\\ &\quad\quad- (-2) (J d\theta_\xi \otimes X_f, JA)_\xi + (-2) (d\theta_\xi \otimes JX_f, JA)_\xi 
\\ &\qquad\quad - (-2) (Jdf \otimes \xi, JA)_\xi + (-2) (df \otimes J \xi, JA)_\xi 
\\ &\qquad\qquad - 4 (f Jd \theta_\xi \otimes \xi, JA)_\xi + 4 (f d \theta_\xi \otimes J\xi, JA)_\xi. 
\end{align*}
Now it suffices to show the following equalities. 
\begin{align}
\label{moment1}
\frac{d}{dt}\Big{|}_{t=0} ((\Delta_t +(-2) J_t \xi) \theta_\xi, f)_\xi &= (J d\theta_\xi \otimes X_f, JA)_\xi 
\\ \label{moment2} &= - (d\theta_\xi \otimes JX_f, JA)_\xi 
\\ \label{moment3} &= (J df \otimes \xi, JA)_\xi 
\\ \label{moment4} &= - (df \otimes J \xi, JA)_\xi
\end{align}
and
\begin{align}
\label{moment5}
\frac{d}{dt}\Big{|}_{t=0} (-(J_t \xi) \theta_\xi, f)_\xi &= (f Jd\theta_\xi \otimes \xi, JA)_\xi 
\\ \label{moment6} &= -(f d\theta_\xi \otimes J\xi, JA)_\xi. 
\end{align}
As for (\ref{moment1}), 
\begin{align*}
(Jd\theta_\xi \otimes X_f, JA)_\xi &= \int_M g^{ij} g_{kl} (Jd \theta_\xi \otimes X_f)^k_i (JA)^l_j ~e^{-2 \theta_\xi} \omega^n 
\\ &= \int_M (Jd\theta_\xi \otimes X_f)^k_i (JA)^i_k ~e^{-2 \theta_\xi} \omega^n 
\\ &= \int_M - \theta_{\xi, p} (f_j \omega^{jk}) A^p_k ~e^{-2 \theta_\xi} \omega^n
\\ &= \int_M - \theta_{\xi, p} f_j \omega^{pk} A^j_k ~e^{-2 \theta_\xi} \omega^n
\\ &= \frac{d}{dt}\Big{|}_{t=0} \int_M g^*_t (d\theta_\xi, df) ~e^{-2 \theta_\xi} \omega^n
\\ &= \frac{d}{dt}\Big{|}_{t=0} ((\Delta_t + (-2) J_t \xi) \theta_\xi, f)_\xi. 
\end{align*}
We obtain (\ref{moment5}) as follows. 
\begin{align*} 
(fJ d\theta_\xi \otimes \xi, JA)_\xi &= \int_M g^{ij} g_{kl} (J d \theta_\xi \otimes \xi)^k_i (JA)^l_j ~f e^{-2 \theta_\xi} \omega^n 
\\ &= \int_M (Jd\theta_\xi \otimes \xi)^k_i (JA)^i_k ~f e^{-2 \theta_\xi} \omega^n 
\\ &= \int_M - \theta_{\xi, p} \xi^k A^p_k ~f e^{-2 \theta_\xi} \omega^n 
\\ &= \frac{d}{dt}\Big{|}_{t=0} (- (J_t \xi) \theta_\xi, f)_\xi. 
\end{align*}
The basic formulas in Appendix B help us to compute the rest of them. 
\end{proof}

Now we observe that our moment map actually corresponds to K\"ahler--Ricci solitons. 

\begin{prop}
\label{soliton}
For simplicity, we let $s_\xi, \mathcal{S}_\xi$ stand for $s_{\xi, 0}, \mathcal{S}_{\xi, 0}$, respectively. 
The following (1)-(3) are equivalent for any integrable $J \in \mathcal{J}^\intg_T (M, \omega)$. 
\begin{enumerate}
\item $(g_J, \xi'_J)$ is a K\"ahler--Ricci soliton on $(M, J)$. 

\item $s_\xi (J) = 0$. 

\item $\mathcal{S}_\xi (J) = 0$. 
\end{enumerate}
\end{prop}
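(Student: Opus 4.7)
The plan is to prove the cycle of equivalences in the order (2) $\Leftrightarrow$ (3) and (1) $\Leftrightarrow$ (2), the former being a moment-map duality and the latter a Bochner-type argument using integrability. The implication (2) $\Rightarrow$ (3) is immediate from the definition $\mathcal{S}_\xi(J) = (4 s_\xi(J), \cdot)_\xi$.

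For (3) $\Rightarrow$ (2), the vanishing of $\mathcal{S}_\xi(J)$ on $C^\infty_{T, \xi}(M, \omega)$ means that $s_\xi(J)$ pairs to zero in $(\cdot, \cdot)_\xi$ against every $T$-invariant function of vanishing weighted mean. Via the weighted-$L^2_\xi$ orthogonal decomposition $C^\infty_T(M) = \mathbb{R} \oplus C^\infty_{T, \xi}(M, \omega)$, this forces $s_\xi(J) \equiv c$ for some constant $c$. To pin down $c = 0$, I will compute $\int_M s_\xi(J) e^{-2\theta_\xi}\omega^n$ directly: the $-\theta'_\xi$ contribution vanishes by the prescribed normalization of $\theta_\xi$, and an integration by parts in the weighted volume $e^{-2\theta_\xi}\omega^n$, using the Hamiltonian relation between $\nabla\theta_\xi$ and $J\xi$ (implicit in the derivation of Proposition \ref{moment}), rewrites $\int(\Delta\theta'_\xi - \xi'_J\theta'_\xi) e^{-2\theta_\xi}\omega^n$ in a form that cancels against $\int(s(J) - n) e^{-2\theta_\xi}\omega^n$ (the latter evaluated via $s - n = \Delta h$ for the Ricci potential $h$ in the integrable case).

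For (1) $\Rightarrow$ (2), the K\"ahler-Ricci soliton equation $\mathrm{Ric}(\omega) - L_{\xi'_J}\omega = \omega$, together with $L_{\xi'_J}\omega = \sqrt{-1}\partial\bar\partial \theta'_\xi$, gives $\sqrt{-1}\partial\bar\partial(h - \theta'_\xi) = 0$, so the Ricci potential satisfies $h \equiv \theta'_\xi + $ const. Tracing the tensor equation against $\omega$ produces $s - n = \Delta \theta'_\xi$, reducing the desired pointwise identity $s_\xi(J) = 0$ to the K\"ahler identity $\Delta\theta'_\xi = \xi'_J \theta'_\xi + \theta'_\xi$ on a K-R soliton. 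This identity is obtained by contracting the full tensor equation $\mathrm{Ric}_{i\bar j} - g_{i\bar j} = \partial_i\partial_{\bar j}\theta'_\xi$ against the lowered potential $V_{\bar j} = \partial_{\bar j}\theta'_\xi$ of $V = \xi'_J$, together with the standard K\"ahler-Bochner commutator for holomorphic vector fields.

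The direction (2) $\Rightarrow$ (1) is the main technical obstacle. From $s_\xi(J) = 0$ I want to conclude that $u := h - \theta'_\xi$ is constant, which via the Ricci potential relation is precisely the tensor soliton equation. The strategy is a weighted Bochner / integration-by-parts argument: pairing $s_\xi(J) = 0$ against $u$ in the inner product $(\cdot, \cdot)_\xi$, and invoking the formal self-adjointness of the drift Laplacian $\Delta - 2\xi'_J$ in the weighted $L^2$ on $T$-invariant real functions, one should obtain the weighted $\bar\partial$-Dirichlet identity
\[ \int_M |\bar\partial u|^2_{g_J} \, e^{-2\theta_\xi}\omega^n = 0, \]
forcing $\bar\partial u = 0$ and hence $u \equiv $ const on the compact manifold $M$. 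The delicate point is identifying the precise algebraic rearrangement of the four terms of $s_\xi$, combined with the weighted K\"ahler identity expressing the drift Laplacian as $2\bar\partial^*\bar\partial$ with respect to the measure $e^{-2\theta_\xi}\omega^n$, that produces this Dirichlet-type integrand. This is the soliton-twisted analogue of the classical Matsushima--Lichnerowicz reduction which, in the K\"ahler-Einstein case $\xi = 0$, degenerates trivially to $\Delta h = 0 \Rightarrow h \equiv $ const on a closed Fano manifold.
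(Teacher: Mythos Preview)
Your overall architecture is close to the paper's and your Bochner/Dirichlet idea for $(2)\Rightarrow(1)$ is exactly the right mechanism. However, your separate step $(3)\Rightarrow(2)$ contains a real gap: the asserted identity $\int_M s_\xi(J)\,e^{-2\theta_\xi}\omega^n=0$ does \emph{not} hold for a general integrable $J$. If one carries out the weighted integration (using the self-adjointness of $\bar\Box-\xi'_J$ with respect to $e^{\theta'_\xi}\omega^n$ and the relation $J\xi=\tfrac12\nabla\theta'_\xi$), one finds
\[
\int_M s_\xi(J)\,e^{\theta'_\xi}\omega^n \;=\; \tfrac12\int_M g\bigl(\nabla\theta'_\xi,\nabla(\theta'_\xi-h)\bigr)\,e^{\theta'_\xi}\omega^n,
\]
which vanishes precisely when $h-\theta'_\xi$ is constant, i.e.\ when (1) already holds. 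So the ``cancellation'' you invoke between $\int(\Delta\theta'_\xi-\xi'_J\theta'_\xi)e^{\theta'_\xi}\omega^n$ and $\int(s-n)e^{\theta'_\xi}\omega^n$ is equivalent to the soliton equation itself, and the argument is circular.

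The paper avoids this by never isolating $(3)\Rightarrow(2)$. Instead it proves $(3)\Rightarrow(1)$ directly via the algebraic rearrangement you flagged as ``the delicate point'':
\[
s_\xi(J)\;=\;-(\bar\Box-\xi'_J)(h-\theta'_\xi)\;+\;c_1,\qquad c_1:=\bar\Box\theta'_\xi-\theta'_\xi-\xi'_J h,
\]
where $c_1$ is constant by holomorphy of $\xi'_J$ (apply $L_{\xi'_J}$ to $\sqddbar h=\mathrm{Ric}-\omega$). From (3) one gets $s_\xi(J)=c_2$ constant, hence $(\bar\Box-\xi'_J)u=c_1-c_2$ with $u=h-\theta'_\xi$; pairing with $1$ in $(\,\cdot\,,\,\cdot\,)_\xi$ forces $c_1=c_2$, so $(\bar\Box-\xi'_J)u=0$, and \emph{then} your pairing with $u$ gives $\|\bar\partial u\|_\xi^2=0$, hence $u$ constant. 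This is precisely your $(2)\Rightarrow(1)$ strategy, run from the weaker hypothesis ``$s_\xi$ constant'' rather than ``$s_\xi=0$''. Once you write down the rearrangement explicitly, your separate $(3)\Rightarrow(2)$ step becomes unnecessary.

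A minor correction in your $(1)\Rightarrow(2)$: tracing the soliton equation gives $s-n=-\bar\Box\theta'_\xi$ (not $s-n=\Delta\theta'_\xi$), so the residual identity to prove is $\bar\Box\theta'_\xi-\xi'_J\theta'_\xi=\theta'_\xi$; the paper obtains this not by a tensor contraction but by applying $L_{\xi'_J}$ to the soliton equation, observing that $\bar\Box\theta'_\xi-\xi'_J\theta'_\xi-\theta'_\xi$ is constant, and fixing the constant to zero via the normalization $\int\theta'_\xi e^{\theta'_\xi}\omega^n=0$ and the self-adjointness of $\bar\Box-\xi'_J$.
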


\begin{proof}
Provided that $g_J$ satisfies the K\"ahler--Ricci soliton equation $\Ric (g_J) -L_{\xi'_J}g_J = g_J$. 
The trace of this formula gives 
\begin{equation}
\label{soliton1} s (J) + \bar{\Box} \theta'_\xi = n. 
\end{equation}
Since $\xi'_J$ is holomorphic, the Lie derivative by $\xi'_J$ can be arranged as follows. 
\[ \sqddbar (\bar{\Box} \theta'_\xi - \xi'_J \theta'_\xi) = \sqddbar \theta'_\xi, \]
and hence $\bar{\Box} \theta'_\xi - \xi'_J \theta'_\xi - \theta'_\xi$ is constant. 
Recall that the operator $(\bar{\Box} -\xi'_J)$ is a formally self-adjoint elliptic operator with respect to the inner product $(\cdot, \cdot)_{\theta'_\xi}$ (see for example \cite[Section 2.4]{Fut-Book}, it is also shown in our Appendix B). 
It follows that the equation $(\bar{\Box} - \xi'_J) u = f$ has a solution $u$ if and only if $\int_M f e^{\theta'_\xi} \omega^n = 0$. 
This shows
\begin{equation}
\label{soliton2} \bar{\Box} \theta'_\xi - \xi'_J \theta'_\xi = \theta'_\xi
\end{equation}
under the normalization condition $\int_M \theta'_\xi e^{\theta'_\xi} \omega^n = 0$. 
Substituting (\ref{soliton2}), the equation (\ref{soliton1}) can be reformulated as
\[ (s (J) -n) + 2 \bar{\Box} \theta'_\xi - (\xi'_J \theta'_\xi + \theta'_\xi) = 0. \]
The left hand side of the equation is nothing but $s_\xi (J)$, and we obtain $s_\xi (J) =0$. 

Conversely, assume $\mathcal{S}_\xi (J) = 0$. 
Take a function $h$ so that $\sqddbar h = \Ric (\omega) - \omega$. 
Since $L_{\xi'_J} \omega = \sqddbar \theta'_\xi$, it is enough to show that $h - \theta'_\xi$ is actually constant. 
Similarly as before, the Lie derivative of $\sqddbar h = \Ric (\omega) - \omega$ gives
\[ \sqddbar \xi'_J h = \sqddbar (\bar{\Box} \theta'_\xi - \theta'_\xi) \]
and hence
\[ c_1 := \bar{\Box} \theta'_\xi - \theta'_\xi - \xi'_J h \]
is constant. 
We can rearrange the modified Hermitian scalar curvature as
\begin{align} 
s_\xi (J) &= -\bar{\Box} h + 2 \bar{\Box} \theta'_\xi - \xi'_J \theta'_\xi - \theta'_\xi \notag
\\  &= -\bar{\Box} h + 2 \bar{\Box} \theta'_\xi - \xi'_J \theta'_\xi - (\bar{\Box} \theta'_\xi - \xi'_J h - c_1) \notag
\\ \label{moment7} &= -\bar{\Box} (h - \theta'_\xi) + \xi'_J (h - \theta'_\xi) + c_1 . 
\end{align}
Now the assumption $\mathcal{S}_\xi (J) = 0$ implies that $c_2 := s_\xi (J)$ is a constant. 
Since $((\bar{\Box} -\xi'_J) (h- \theta'_\xi), 1)_{\theta'_\xi} = 0$, the constant $c_1 - c_2 = (\bar{\Box} - \xi'_J) (h- \theta'_\xi)$ has to be zero and we have shown that $h- \theta'_\xi$ is constant. 
\end{proof}

\begin{rem}
As noted in \cite{Don1} for the cscK (or K\"ahler--Einstein) case, our moment map picture enables us to interpret or even reproduce the following known results from more geometric viewpoint, which were originally proved in \cite{TZ2}. (See also \cite{Wang}. )
\begin{itemize}
\item The invariance of the modified Futaki invariant. 

\item The reductivity of $\mathrm{Aut} (X, \xi')$. (cf. \cite{FO}) 

\item The uniqueness of K\"ahler--Ricci soliton. (cf. \cite{BerBer})
\end{itemize}

For instance, substituting (\ref{moment7}), we obtain
\begin{align*}
\langle \mathcal{S}_\xi (J), f \rangle &= \int_M s_\xi (J) f ~e^{\theta'_\xi} \omega^n
\\ &= \int_M 4 (- (\bar{\Box} - \xi') (h - \theta'_\xi) + c_1) f ~e^{\theta'_\xi} \omega^n
\\ &= -4 \int_M (\bar{\partial} (h-\theta'_\xi), \bar{\partial} f) ~e^{\theta'_\xi} \omega^n
\\ &= -4 \int_M X_f' (h- \theta'_\xi) ~e^{\theta'_\xi} \omega^n
\\ &= -4 c \mathrm{Fut}_{\xi'} (X_f') 
\end{align*}
for $X'_f \in \mathrm{Lie} (\mathrm{Stab} (J))$, where $c = \int e^{\theta'_\xi} \omega^n /\int \omega^n$ is independent of $J$. 
Its invariance can be interpreted as coming from a general fact on moment maps: for any $x \in M$ and $v \in \mathrm{Lie} (K_x^c)$, $\langle \mu (xg), g^{-1} v \rangle$ is invariant for $g \in K^c$, where $\mu :M \to \mathfrak{k}^*$ is a moment map. 



Proposition \ref{moment} in particular shows that $\mathrm{Fut}_{\xi'}|_{\mathfrak{t}}$ is invariant under $T$-equivariant complex deformation: it only depends on the $T$-equivariant symplectic structure. 
\end{rem}

We call a symplectic Fano $T$-manifold $(M, \omega, T)$ \textit{K-optimal} if there is a $T$-invariant $\omega$-compatible integrable complex structure $J_0$ (in this case, $(M, J_0)$ is a Fano manifold) and whose Hilbert character $(T_\mathbb{C}, \chi (M, J_0))$ is K-optimal. 
Its Hilbert character $\chi_m$, seen as a (real analytic) function $\mathfrak{t} \to \mathbb{R}$ by $\xi \mapsto \sum_{i} \sum_{u \in M} h^i_{X, u} (m) \langle u, \xi \rangle$, can be computed by the equivariant Hirzebruch--Riemann--Roch formula (\cite{Mei}): 
\[ \chi_m (\xi) = \int_M \mathit{Ch}_{\mathfrak{t}} (- K_{(M, \omega)}, \xi) \mathit{Td}_{\mathfrak{t}} (M, \omega, \xi) \]
near $\xi = 0$. 
Here the equivariant Chern character $\mathit{Ch}_{\mathfrak{t}} (-K_{(M, \omega)}, \cdot)$ and Todd character $\mathit{Td}_{\mathfrak{t}} (M, \omega, \cdot)$ is defined as the equivariant cohomology classes of the following $T$-equivariant forms, which is independent of the choice of $J \in \mathcal{J}_T (M, \omega)$: 
\begin{align*} 
\mathit{Ch}_{\mathfrak{t}} (-K_{(M, \omega)}, \cdot) 
&:= e^{\mathrm{tr} (\frac{\sqrt{-1}}{2 \pi} F_{\mathfrak{t}} (g_J, \cdot))} =  e^{\omega + \langle \mu, \cdot \rangle}, 
\\
\mathit{Td}_{\mathfrak{t}} (M, \omega, \cdot) 
&:= \det \Big{(} \frac{\frac{\sqrt{-1}}{2\pi} F_\mathfrak{t} (g_J, \cdot)}{1 - e^{- \frac{\sqrt{-1}}{2\pi} F_\mathfrak{t} (g_J, \cdot)}} \Big{)}, 
\end{align*}
where $\mathit{Td}_{\mathfrak{t}} (M, \omega, \xi)$ is defined near $\xi = 0$. 
Here the equivariant curvature $F_{\mathfrak{t}} (g_J, \xi)$ is given by 
\[ F_\mathfrak{t} (g_J, \xi) := F_{g_J} + 2 \pi \sqrt{-1} (L_\xi - \nabla^{g_J}_\xi). \]

Although we firstly use the integrable complex structure $J_0$ to define the Hilbert character, its Hilbert character can be computed by the $T$-equivariant characteristic classes associated to the symplectic $T$-manifold $(M, \omega, T)$, which makes sense at least near $\xi = 0$ even when there is no $T$-invariant integrable complex structures. 
In particular, the Fano character $(T_\mathbb{C}, \chi (M, J))$ is independent of the choice of integrable $J \in \mathcal{J}_T (M, \omega)$ (in other words, it is well-defined for $(M, \omega, T)$) and is K-optimal for every $J$ if $(M, \omega, T)$ is K-optimal (i.e. if $(T_\mathbb{C}, \chi (M, J_0))$ is K-optimal for some $J_0$). 
Beware that even when $(M, \omega, T)$ is K-optimal and the action $(M, J_0) \curvearrowleft T$ is K-optimal for some $J_0$, \textit{the action} $(M, J) \curvearrowleft T$ might be not K-optimal for other integrable complex structure $J \in \mathcal{J}_T (M, \omega)$ as the action might be not maximal among actions with K-optimal characters. 

We denote by $\mathcal{S}_\xi^\intg$ the restriction of the moment map $\mathcal{S}_\xi :\mathcal{J}_T (M, \omega) \to C^\infty_{T, \xi} (M, \omega)^*$ to the subspace $\mathcal{J}_T^\intg (M, \omega)$, which consists of integrable complex structures. 

\begin{prop}
\label{isomorphism class}
Assume the action of $T$ on $(M, \omega)$ is K-optimal. 
Then the following two statements are equivalent for any integrable $J, J' \in (\mathcal{S}_\xi^\intg)^{-1} (0)$. 
\begin{enumerate}
\item There is a $T$-equivariant $C^\infty$-diffeomorphism $\phi :M \xrightarrow{\sim} M$ such that $J = \phi^* J'$. 

\item $[J] = [J'] \in (\mathcal{S}_\xi^\intg)^{-1} (0)/\Ham_T (M, \omega)$. 
\end{enumerate}
\end{prop}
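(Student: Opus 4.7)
The direction (2) $\Rightarrow$ (1) is immediate since $\Ham_T (M, \omega) \subset \mathrm{Diff}_T (M)$, so the real content is (1) $\Rightarrow$ (2). Given a $T$-equivariant diffeomorphism $\phi$ with $J = \phi^* J'$, my plan is to correct $\phi$ by a $T$-equivariant biholomorphism $\psi$ of $(M, J)$ so that $\phi \circ \psi^{-1}$ lies in $\mathrm{Symp}_T (M, \omega) = \Ham_T (M, \omega)$; the element $\psi$ will be furnished by the uniqueness of K\"ahler-Ricci solitons, and its $T$-equivariance will be extracted via the K-optimality hypothesis.

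Note that $\phi$ is in fact a $T$-equivariant biholomorphism $(M, J) \to (M, J')$, and both $J, J' \in (\mathcal{S}_\xi^\circ)^{-1} (0)$ correspond to K\"ahler-Ricci solitons by Proposition \ref{soliton}. Pulling back the soliton on $(M, J', \omega)$ via $\phi$ produces a K\"ahler-Ricci soliton on $(M, J, \phi^* \omega)$ whose soliton vector field is $\phi^* \xi'_{J'} = (\phi^* J')(\phi^* \xi) + \sqrt{-1}\, \phi^* \xi = J \xi + \sqrt{-1}\, \xi = \xi'_J$, using $\phi^* J' = J$ and $\phi^* \xi = \xi$ (the latter from $T$-equivariance). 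Together with the K\"ahler-Ricci soliton $(g_{J, \omega}, \xi'_J)$ already present on $(M, J, \omega)$, this yields two K\"ahler-Ricci solitons on the Fano manifold $(M, J)$ sharing the vector field $\xi'_J$, with K\"ahler forms $\omega$ and $\phi^* \omega$ both in $2 \pi c_1 (M, J)$.

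By the uniqueness theorem for K\"ahler-Ricci solitons there is $\psi \in \mathrm{Aut}^0 (M, J)$ with $\psi^* g_{J, \omega} = \phi^* g_{J', \omega}$ and $\psi^{-1}_* \xi'_J = \xi'_J$; converting metrics to symplectic forms via the shared complex structure $J$ yields $\psi^* \omega = \phi^* \omega$. The relation $\psi_* \xi'_J = \xi'_J$ shows that $\psi$ commutes with the complex one-parameter group generated by $\xi'_J$, in particular with the real flow $\{ \exp (t \xi) : t \in \mathbb{R} \}$. Here the K-optimality of the $T$-action is essential: it is precisely the condition that $\exp (\mathbb{R} \xi)$ be dense in the compact torus $T_\mathbb{R}$, so by continuity $\psi$ centralizes $T_\mathbb{R}$ and is therefore $T$-equivariant. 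Setting $\phi_0 := \phi \circ \psi^{-1}$, one checks $\phi_0$ is $T$-equivariant, $\phi_0^* \omega = (\psi^{-1})^* \psi^* \omega = \omega$, and $\phi_0^* J' = (\psi^{-1})^* J = J$, so $\phi_0 \in \mathrm{Symp}_T (M, \omega) = \Ham_T (M, \omega)$ satisfies $\phi_0^* J' = J$, establishing (2). The main obstacle is upgrading the bare existence of $\psi$ from the uniqueness theorem to its $T$-equivariance, which is exactly where K-optimality enters.
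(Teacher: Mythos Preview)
Your argument is correct and is precisely the approach the paper takes: the one-line proof in the paper invokes the uniqueness of K\"ahler--Ricci solitons together with the equality $\mathrm{Aut}_T(X)=\mathrm{Aut}(X,\xi)$ for K-optimal actions, and your write-up simply unpacks these two ingredients in detail (uniqueness produces $\psi$, and K-optimality---density of $\exp(\mathbb{R}\xi)$ in $T_\mathbb{R}$---upgrades $\psi_*\xi'_J=\xi'_J$ to $T$-equivariance).
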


\begin{proof}
It follows from the uniqueness of K\"ahler--Ricci soliton and $\mathrm{Aut}_T (X) = \mathrm{Aut} (X, \xi)$ from the K-optimal action. 
\end{proof}

\begin{rem}
The above proposition would hold without K-optimal assumption. 
To see this, it suffices to prove the following uniqueness claim. 

Claim: \textit{If $g_1, g_2$ are two $T$-invariant K\"ahler--Ricci solitons on a Fano $T$-manifold $X$ (in this case, we have the same soliton vectors $\xi_1 = \xi_2 \in \mathfrak{t}$), then there is an element $\phi \in \mathrm{Aut}_T^0 (X)$ such that $g_2 = \phi^* g_1$. }

In general, it seems not so easy to verify the K-optimality of a given torus action on a Fano manifold, especially when the dimension of the center of its maximal reductive subgroup is greater than one. 
From this point, it may be better to consider non K-optimal actions for studying explicit description of the moduli space of Fano manifolds with K\"ahler--Ricci solitons in some special cases. 
Indeed, for instance, the claim holds at least for a maximal torus, as the Weyl group $N_T/T$ can be represented by the elements of any maximal compact $K$ including the maximal compact torus $T_\mathbb{R} \subset T$. 
\end{rem}

It follows that the quotient $(\mathcal{S}^\intg_\xi)^{-1} (0)/\Ham_T (M,\omega)$ can be identified with the set of biholomorphism classes of Fano manifolds admitting K\"ahler--Ricci solitons with the fixed underlying symplectic structure $(M, \omega)$, as sets. 
Therefore, this quotient space must be the support set of our moduli space. 
The quotient topology on this set is Hausdorff (cf. \cite{FS1}). 
We exhibit the proof for the readers' convenience. 

\begin{prop}
\label{Hausdorff quotient}
The action of $\Ham_T (M, \omega)$ on $\mathcal{J}_T (M, \omega)$ is proper. 
In particular, the quotient topological space $(\mathcal{S}^\intg_\xi)^{-1} (0)/\Ham_T (M, \omega)$ is Hausdorff. 
\end{prop}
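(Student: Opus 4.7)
The plan is to reduce the properness statement to the classical compactness of sequences of isometries between smoothly converging Riemannian metrics on the fixed compact manifold $M$, and then to obtain Hausdorffness of the quotient on the closed invariant subset $(\mathcal{S}^\circ_\xi)^{-1}(0)$ from the general point-set fact that the orbit space of a proper continuous action on a Hausdorff space is Hausdorff.

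To check properness, I would take arbitrary sequences $J_n \to J$ in $\mathcal{J}_T(M,\omega)$ and $\phi_n \in \Ham_T(M,\omega)$ with $\phi_n^* J_n \to J'$, and aim to extract a convergent subsequence of $\phi_n$. The decisive observation is that, since $\phi_n^* \omega = \omega$ and the compatible metric is $g_J(\cdot,\cdot) = \omega(\cdot, J\cdot)$,
\[ \phi_n^* g_{J_n} = g_{\phi_n^* J_n}, \]
so each $\phi_n$ is an isometry from $(M, g_{\phi_n^* J_n})$ to $(M, g_{J_n})$. Both source and target metrics $C^\infty$-converge on the compact $M$ to $g_{J'}$ and $g_J$ respectively, and are therefore uniformly equivalent to any fixed background Riemannian metric. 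This gives a uniform $C^0$-bound on $|d\phi_n|$, hence equicontinuity of $\{\phi_n\}$, and Arzelà--Ascoli produces a $C^0$-convergent subsequence $\phi_n \to \phi$.

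The limit $\phi$ is a $C^0$-isometry between the smooth limits $g_{J'}$ and $g_J$, and Myers--Steenrod (or equivalently, a direct bootstrap in exponential normal coordinates along the converging metrics) upgrades the convergence to $C^\infty$. The closed conditions $\phi_n^* \omega = \omega$ and $T$-equivariance pass to $\phi$, as does $\phi^* J = J'$; thus $\phi \in \Ham_T(M,\omega)$, completing the proof of properness. For the final clause, $(\mathcal{S}^\circ_\xi)^{-1}(0)$ is closed and $\Ham_T$-invariant in $\mathcal{J}_T(M,\omega)$, being the intersection of the closed subspace $\mathcal{J}^\circ_T(M,\omega)$ of integrable structures with the zero locus of the continuous moment map $\mathcal{S}_\xi$; therefore the restricted action inherits properness, and the Hausdorffness of the quotient follows.

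The main obstacle I anticipate is not conceptual but functional-analytic: since $\Ham_T(M,\omega)$ is only a Fr\'echet (not Banach) Lie group, one should carry out the Arzel\`a--Ascoli compactness and the subsequent elliptic bootstrap after passing to Banach completions of sufficiently high Sobolev or H\"older regularity, and then project the smoothness conclusion back to the $C^\infty$ setting. This is precisely the Banach-completion issue the introduction has already flagged as pervading Section 3, so the compactness argument here should slot into that framework without adding substantively new technical difficulty.
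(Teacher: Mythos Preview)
Your approach is essentially the paper's: reduce properness to compactness of isometries between smoothly converging metrics on a compact manifold, produce a $C^0$ limit, invoke Myers--Steenrod, then bootstrap. Two minor points where the paper is more careful than your sketch. First, the limit map $\phi$ is a priori only distance-preserving and injective; you need surjectivity before Myers--Steenrod applies. The paper handles this by also extracting a limit $\psi$ of $\phi_n^{-1}$ and noting that $\phi\circ\psi$ is a distance-preserving self-map of the compact space $(M,d_{g_J})$, hence surjective. Second, Myers--Steenrod gives smoothness of the limit $\phi$, not $C^\infty$ convergence of the sequence $\phi_n\to\phi$; the paper does an explicit bootstrap, observing that $\phi_n^*g_{J_n}\to\phi_\infty^*g_{J_\infty}$ in $C^\infty$ forces the products $(d\phi_n)^p_i(d\phi_n)^q_j$ to converge in $C^\infty$, whence $C^0$ convergence of $d\phi_n$ feeds back to $C^\infty$ convergence inductively. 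Your parenthetical ``direct bootstrap'' is the right idea, but it is separate from Myers--Steenrod rather than equivalent to it. Finally, your anticipated Banach-completion obstacle does not arise here: the paper's proof of this proposition stays entirely in the Fr\'echet setting, and the Sobolev completions enter only later in Section~3.3 for a different purpose.
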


\begin{proof}
We must show that the map
\[ a :\mathcal{J}_T (M, \omega) \times \mathrm{Ham}_T (M, \omega) \to \mathcal{J}_T (M, \omega) \times \mathcal{J}_T (M, \omega) : (J, \phi) \mapsto (J, \phi^* J) \]
is proper. 
Take a sequence $(J_n, \phi_n)$ so that $J_n, \phi_n^* J_n$ converge to some $J_\infty, J_\infty' \in \mathcal{J}_T (M, \omega)$ in the given order. 
It suffices to show that a subsequence of $\phi_n$ converges to some $\phi_\infty \in \Ham_T (M, \omega)$ satisfying $\phi^*_\infty J_\infty = J_\infty'$. 
Let $g_\infty, g_\infty'$ denote the Riemannian metrics associated to $J_\infty, J_\infty'$, respectively. 

Let us take a dense countable subset $S$ of $M$. 
The diagonal argument shows that we have a subsequence of $\phi_n$ so that $\phi_n (x)$ converges for any $x \in S$. 
We continue to write $\phi_n$ for this subsequence. 
We obtain a distance preserving map $\phi_{S, \infty} :(S, d_{g_\infty'}|_S) \to (M, d_{g_\infty})$ by putting $\phi_{S, \infty} (x) := \lim_{n \to \infty} \phi_n (x)$. 
Then this map can be uniquely extended to a distance preserving map $\phi_\infty :(M, d_{g_\infty'}) \to (M, d_{g_\infty})$. 
Similarly we obtain a distance preserving map $\psi_\infty :(M, d_{g_\infty}) \to (M, d_{g_\infty'})$ as a limit of $\phi_n^{-1}$. 
It follows from \cite[Theorem 1.6.14]{BBI} that the distance preserving endomorphism $\phi_\infty \circ \psi_\infty$ is surjective, and we conclude $\phi_\infty$ is a continuous bijective map. 

Thanks to Myers-Steenrod theorem, we see that $\phi_\infty$ is a $C^\infty$-diffeomorphism with $\phi_\infty^* g_\infty = g_\infty'$. 
Moreover, since $g_n = g_{n, i j} dx^i dx^j, \phi_n^* g_n = g_{n, pq} (d \phi_n)^p_i (d\phi_n)^q_j dx^i dx^j$ respectively converge to $g_\infty, \phi^*_\infty g_\infty$ in $C^\infty$-topology, we see the $C^\infty$-convergence of the coefficients $(d\phi_n)^p_i (d\phi_n)^q_j$ to $(d\phi_\infty)^p_i (d\phi_\infty)^q_j$ with respect to a fixed $C^\infty$-coordinate. 
In particular, we have
\begin{equation}
\label{Hausdorff quotient1} (\partial_k (d\phi_n)^p_i) (d \phi_n)^p_i \xrightarrow{C^\infty} (\partial_k (d\phi_\infty)^p_i) (d\phi_\infty)^p_i
\end{equation}
and $(d\phi_n)^p_i = \sqrt{((d\phi_n)^p_i)^2}$ convereges to $(d\phi_\infty)^p_i$ in $C^0$-topology. 
It follows from (\ref{Hausdorff quotient1}) that the $C^k$-convergence of $d\phi_n$ induces the $C^{k+1}$-convergence of them. 
This shows that $\phi_n$ converges to $\phi_\infty \in \Ham_T (M, \omega)$ in the $C^\infty$-topology and $\phi_\infty^* J_\infty = J'_\infty$. 
\end{proof}

\subsection{Local slice}

The materials in this subsection are parallel to \cite{Sze2}, where the cscK case is treated. 

Let $X$ be a Fano $T$-manifold with a K\"ahler--Ricci soliton $(g, \xi')$, $\phi :(M, J_0) \xrightarrow{\sim} X$ be a biholomorphism, where $M$ is a $C^\infty$-manifold and $J_0$ is a complex structure on $M$. 
Put $\omega := (\phi^* g) (J_0 \cdot, \cdot)$, $K := \{ h \in \mathrm{Ham} (M, \omega) ~|~ h^* J_0 = J_0 \}$, which is a compact Lie group, and $\mathfrak{k} := \{ f \in C^\infty_T (M) ~|~ L_{X_f} J_0 = 0, \int_M f e^{\theta'_\xi} \omega^n = 0 \}$, which can be identified with the Lie algebra of $K$. 
Consider the following $L^2_k$-completion of the moment map in the last subsection
\[ \mathcal{S}_\xi :\mathcal{J}_T (M, \omega)^2_k \to L^2_{k-2, T} (M, \omega)^\vee. \]
We denote by $\Theta$ the holomorphic tangent sheaf of $X$ and by $H^i_T (X, \Theta)$ the $T$-invariant subspace of the $i$-th cohomology $H^i (X, \Theta)$. 
Note that we have $H^i (X, \Theta) = 0$ for every $i \ge 2$ and a smooth Fano manifold $X$, thanks to Serre duality and Kodaira vanishing. 

\begin{prop}
\label{local slice}
There are an open ball $B \subset H^1_T (X, \Theta)$ centered at the origin, a $K$-equivariant holomorphic deformation $\varpi :\mathcal{X} \to B$ of $X$ with a holomorphic morphism $\iota :X \hookrightarrow \mathcal{X}_0$ inducing a biholomorphism to the central fiber, a $K$-equivariant $C^\infty$-smooth map $\mathfrak{J} :B \to \mathcal{J}_T (M, \omega)^2_k$ and a $T_\mathbb{R}$-equivariant $L^2_k$-regular diffeomorphism $\Phi :B \times M \xrightarrow{\sim} \mathcal{X}$ with the following properties. 
\begin{enumerate}
\item The holomorphic family $X \lhook \joinrel \xrightarrow{\iota} \mathcal{X} \xrightarrow{\varpi} (B,0)$ is a semi-universal family of $X$. 

\item For each $b \in B$, $\mathfrak{J} (b)$ is an $L^2_k$-regular integrable complex structure satisfying $s_\xi (\mathfrak{J} (b)) \in \mathfrak{k}$ and $\mathfrak{J} (0) = J_0$. 

\item The diffeomorphism $\Phi$ satisfies $\varpi \circ \Phi^{-1} = p_B$ and $\Phi (0, \cdot) = \phi$, where $p_B: B \times M \to B$ is the projection. 
The restricted map $\Phi (b, \cdot) :(M, \mathfrak{J} (b)) \to \mathcal{X}_b$ is a biholomorphism for each $b \in B$. 
\end{enumerate}
\end{prop}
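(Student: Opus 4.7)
The plan is to combine Kuranishi-style deformation theory with a Moser-type symplectic trivialization and a gauge fixing via the implicit function theorem applied to the moment map $\mathcal{S}_\xi$. Since $X$ is Fano, Kodaira--Nakano vanishing gives $H^i(X, \Theta) = 0$ for $i \ge 2$, so Kuranishi theory produces an unobstructed semi-universal deformation of $X$ parametrized by an open ball in $H^1(X, \Theta)$. Because $K$ is compact one can perform this construction $K$-equivariantly by averaging the auxiliary metrics and harmonic projections; restricting to the $T$-fixed locus $B \subset H^1_T(X, \Theta)$ then yields the desired $T$-equivariant family $\varpi : \mathcal{X} \to B$ with embedding $\iota$, establishing (1). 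For the smooth trivialization, since $b^1(X) = 0$ and $c_1$ is deformation invariant, every fiber carries a $K$-invariant K\"ahler form in $2\pi c_1$; a $K$-equivariant Ehresmann trivialization followed by a fiberwise $T$-equivariant Moser argument produces a $T$-equivariant diffeomorphism $\widetilde{\Phi} : B \times M \to \mathcal{X}$ with $\widetilde{\Phi}|_0 = \phi$, whose pullback of fiberwise complex structures is a smooth map $\widetilde{\mathfrak{J}} : B \to \mathcal{J}_T(M, \omega)$ with $\widetilde{\mathfrak{J}}(0) = J_0$. All subsequent analysis is performed after passing to $L^2_k$-completions.

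The main step, and the main obstacle, is to modify $\widetilde{\mathfrak{J}}$ by a $b$-dependent $T$-equivariant Hamiltonian diffeomorphism so that $s_\xi$ lies in $\mathfrak{k}$. Let $\mathcal{H}_0$ denote the exponential image of the $L^2_{k+1}$-functions in $C^\infty_{T, \xi}(M, \omega)$ that are $(\cdot, \cdot)_\xi$-orthogonal to $\mathfrak{k}$; this is a smooth Banach slice through the identity in the Banach manifold completion of $\Ham_T(M, \omega)$, transverse to $K$. Consider
\[ F : B \times \mathcal{H}_0 \longrightarrow L^2_{k-4, T}(M, \omega)/\mathfrak{k}, \qquad F(b, \psi) := [s_\xi(\psi^* \widetilde{\mathfrak{J}}(b))]. \]
The essential analytic input is that $D_2 F|_{(0, \id)}$ is a topological isomorphism. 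By Propositions \ref{moment} and \ref{soliton}, this linearization is, up to a positive constant, the restriction to $\mathfrak{k}^\perp$ of a fourth-order $\xi'$-modified Lichnerowicz-type operator, elliptic and formally self-adjoint with respect to $(\cdot, \cdot)_\xi$, whose kernel is exactly the complexified Lie algebra $\mathfrak{k}_\mathbb{C}$ of holomorphy potentials of $T$-invariant holomorphic vector fields on $X$. Hence on $\mathfrak{k}^\perp$ it is indeed an isomorphism, and the Banach implicit function theorem produces a unique smooth $\psi : (B, 0) \to (\mathcal{H}_0, \id)$ with $F(b, \psi(b)) \equiv 0$ near the origin.

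Set $\mathfrak{J}(b) := \psi(b)^* \widetilde{\mathfrak{J}}(b)$ and $\Phi(b, x) := \widetilde{\Phi}(b, \psi(b)(x))$, shrinking $B$ if necessary. Property (2) holds by construction, with fiberwise $C^\infty$-regularity of $\mathfrak{J}(b)$ following by elliptic bootstrapping applied to the coupled system of integrability and the slice equation $s_\xi(\mathfrak{J}(b)) \in \mathfrak{k}$. Property (3) is immediate, since Hamiltonian modifications preserve both $\varpi$ and the biholomorphism class of each fiber, so semi-universality of the final family survives. $K$-equivariance is preserved throughout because all choices are made $K$-equivariantly, and both the moment-map equation and the slice $\mathcal{H}_0$ are $K$-invariant.
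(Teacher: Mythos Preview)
There is a genuine gap in your gauge-fixing step. You parametrize your slice $\mathcal{H}_0$ by exponentials of Hamiltonian vector fields of functions $u\in L^2_{k+1}\cap\mathfrak{k}^\perp$, so $\mathcal{H}_0\subset\Ham_T(M,\omega)$. But $s_\xi$ is $\Ham_T(M,\omega)$-equivariant (Proposition~\ref{moment}): for $\psi\in\Ham_T(M,\omega)$ one has $s_\xi(\psi^*J)=\psi^*s_\xi(J)$. Since $s_\xi(J_0)=0$ by Proposition~\ref{soliton}, it follows that $s_\xi(\psi^*J_0)=0$ for \emph{every} $\psi\in\mathcal{H}_0$, and hence $D_2F|_{(0,\id)}\equiv 0$. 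The implicit function theorem therefore does not apply, and the operator you describe---a self-adjoint Lichnerowicz-type operator---is not what actually arises from your construction. (Equivalently, the $\Ham_T$-orbit through a zero of the moment map is $\Omega_\xi$-isotropic, so the image of $P:u\mapsto L_{X_u}J_0$ lies in $\ker(P^*J_0\,\cdot\,)$.)

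The paper avoids this precisely by moving in the \emph{complexified} orbit direction rather than the real one: for a potential $\phi\in L^2_{k+2}$ it builds a Moser-type diffeomorphism $f^{\phi,J}_1$ satisfying $(f^{\phi,J}_1)^*(\omega_0-dJd\phi)=\omega_0$. This $f^{\phi,J}_1$ is \emph{not} an $\omega_0$-symplectomorphism; infinitesimally it moves by the gradient $\nabla\phi=-J_0X_\phi$, so the variation of the complex structure is $J_0P(\phi)$ rather than $P(\phi)$. Composing with $Ds_\xi$ then yields $P^*\!\big(J_0\cdot J_0P(\phi)\big)=-P^*P(\phi)$, which \emph{is} the fourth-order self-adjoint elliptic operator with kernel $\mathfrak{k}$, and the implicit function theorem goes through on $\mathfrak{k}^\perp_{k+2}\to\mathfrak{k}^\perp_{k-2}$. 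Your argument can be repaired by replacing the Hamiltonian exponential with this K\"ahler-potential (complexified) deformation; the regularity bookkeeping then also forces the domain to be $L^2_{k+2}$ rather than $L^2_{k+1}$.
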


\begin{proof}
Let $\varpi: \mathcal{X} \to B$ be the Kuranishi family of $T$-equivariant deformation of $X$ (see \cite{Kur1, Kur2, Dou1} for its construction). 
From its construction, we have a holomorphic $K$-action on $\mathcal{X}$ and $B$ so that $\varpi$ is $K$-equivariant and a holomorphic map $\mu :B \to \mathcal{J}_T (M)^2_k$ whose image $\mu (b)$ is a real analytic (with respect to the real analytic structure on $X$) integrable complex structure for each $b \in B$ with a biholomorphism $\mathcal{X}_b \cong (M, \mu (b))$. 
As $- \ell K_{\mathcal{X}/B}$ is relatively very ample for large $\ell \in \mathbb{N}$, all the higher direct images of $\mathcal{O} (-\ell K_{\mathcal{X}/B})$ vanishes and thus $\varpi_* \mathcal{O} (-\ell K_{\mathcal{X}/B})$ is a $K$-equivariant vector bundle on $B$. 
Taking smaller $B$ and using a $K$-equivariant isomorphism $\varpi_* \mathcal{O} (-\ell K_{\mathcal{X}/B}) \cong \underline{H^0 (X, \mathcal{O} (-\ell K_{\mathcal{X}/B}))}_{B, K} = B \times H^0 (X, \mathcal{O} (-\ell K_{\mathcal{X}/B}))$ of vector bundles on $B$ (see Lemma \ref{equivariant trivialization} for the $K$-action), we can embed these Fano manifolds into a uniform projective space $\mathbb{C}P^N = \mathbb{P} ( H^0 (X, \mathcal{O} (-\ell K_{\mathcal{X}/B}))^\vee)$ so that $\mathcal{X}_{s. g} = \mathcal{X}_s. g$, where in the latter we consider the $K$-action on $\mathbb{C}P^N$ induced from the action on $H^0 (X, \mathcal{O} (-\ell K_{\mathcal{X}/B}))$. 
Pulling back the Fubini-Study metric, we obtain a $K$-equivariant smooth family of K\"ahler metrics $\{ \omega_b \}_{b \in B}$, where each $\omega_b$ can be identified with a K\"ahler metric on $(M, \mu (b))$. 
Taking smaller $B$ again, we can assume that closed forms $\omega_{b, t} := \omega_0 + t (\omega_b - \omega_0)$ are non-degenerate for each $b \in B$ and $t \in [0,1]$. 
Then we can find a $K$-equivariant family of diffeomorphisms $\{ f_b \}_{b \in B}$ so that $f_b^* \omega_b = \omega_0$. 
Putting $\mathfrak{J}' (b) := f^*_b \mu (b)$, we obtain a $K$-equivariant smooth map $\mathfrak{J}' :B \to \mathcal{J}_T (M, \omega)^2_k$, whose image $\mathfrak{J}' (b)$ is a smooth complex structure for each $b \in B$. 

It suffices to show that we can find an equivariant perturbation $\mathfrak{J}$ of $\mathfrak{J}'$ so that $\mathfrak{J} (b) = g_b^* \mathfrak{J}' (b)$ for each $b$ and $s_\xi (\mathfrak{J} (b)) \in \mathfrak{k}$. 
Let $U^2_{k+2} \subset L^2_{T, k+2} (M, \omega)$ be a small ball of the origin. 
For each $\phi \in U^2_{k+2}$ and an almost complex structure $J \in \mathcal{J}_T (M, \omega)$, we can find an $L^2_k$-regular vector field $X^{\phi, J}_t$ on $M$ so that $i (X^{\phi, J}_t) (\omega_0 - tdJd \phi) = - Jd \phi$. 
This vector field is actually $L^2_{k+1}$-regular. 
In fact, it is sufficient to show that $[X_f, X^{\phi, J}_t]$ is $L^2_k$-regular for any smooth function $f$. 
For any smooth vector field $Z$, we have 
\begin{align*}
i ([X_f, X^{\phi, J}_t]) \omega_t (Z) 
&= -(L_{X_f} \omega) (X_{\phi, J, t}, Z) + X_f (\omega (X^{\phi, J}_t, Z)) - \omega (X^{\phi, J}_t, [X_f, Z])
\\ 
&= X_f (-Jd\phi (Z)) + J d\phi ([X_f, Z]) \in L^2_k. 
\end{align*}
Thus $[X_f, X^{\phi, J}_t]$ is $L^2_k$, as we expected. 
The flow $f^{\phi, J}_t$ of this time-dependent vector fields is $L^2_{k+1}$-regular and satisfy $(f^{\phi, J}_t)^* (\omega_0 - t dJd \phi) = \omega_0$. 
To see the regularity, it is sufficient to show that $(f^{\phi, J}_t)_* Y$ is a $L^2_k$-regular vector field for each smooth vector field $Y$ on $M$. 
Note that $(d/dt) (f^{\phi, J}_t)_* Y = [X^{\phi, J}_t, Y]$ is $L^2_k$-regular and $(f^{\phi, J}_t)_* Y$ can be written as $\int_0^t [X^{\phi, J}_s, Y] ds$. 
Then for each $l \le k$, we obtain the following estimate, so $f^{\phi, J}_t$ is $L^2_{k+1}$-regular. 
\begin{align*}
\int_M |\nabla^l (f^{\phi, J}_t)_* Y|^2 \omega^n &= \int_M \left|\int_0^t \nabla^l [X^{\phi, J}_s, Y] ds \right|^2 \omega^n
\\ &\le \int_M t \int_0^t |\nabla^l [X^{\phi, J}_s, Y]|^2 ds~ \omega^n 
\\ &\le t \int_0^t \| [X^{\phi, J}_s, Y] \|^2_{L^2_k} ds < \infty.
\end{align*}
It follows that $(f^{\phi, J}_1)^* J \in \mathcal{J}_T (M, \omega)^2_k$. 
Consider the orthogonal decomposition $L^2 = \mathfrak{k} \oplus \mathfrak{k}_\bot$ with respect to $L^2$-norm $(\cdot, \cdot)_\xi$. 
Put $\mathfrak{k}^2_{k, \bot} := L^2_k \cap \mathfrak{k}_\bot$ and let $\Pi_\bot :L^2_{k-2} \to \mathfrak{k}^2_{k-2, \bot}$ be the $L^2$-projection. 
Note that 
\[ (D (\phi \mapsto (f^{\phi, J_0}_1)^* J_0))_0 (\psi) = \frac{d}{dt} f_t^* J_0 = J_0 P (\psi), \]
where $P$ denotes the linear differential operator $P :L^2_{T, k+2} \to T_J \mathcal{J}_T (M, \omega)^2_k: \psi \mapsto L_{X_\psi} J_0$, and $(Ds_\xi)_J (A) = P^* J A$, where $P^*$ is the formal adjoint of $P$ with respect to the norm $(-, -)_\xi$. 
It follows that 
\[ G: B \times U \to \mathfrak{k}^2_{k-2, \bot} : (b, \phi) \mapsto \Pi_\bot s_\xi ((f^{\phi, \mathfrak{J}' (b)}_1)^* \mathfrak{J}' (b)) \]
is a $K$-equivariant smooth map with the derivative
\[ DG_{(0, 0)} (0, \psi) = - P^* P (\psi). \]
Since $P^* P$ is a self-adjoint fourth order elliptic differential operator, it gives the isomorphism $P^* P :\mathfrak{k}^2_{k+2, \bot} \to \mathfrak{k}^2_{k-2, \bot}$. 
Applying the implicit function theorem, we can find a new $K$-equivariant smooth map $\mathfrak{J} :B \to \mathcal{J}_T (M, \omega)^2_k$ so that $\Pi_\bot s_\xi (\mathfrak{J} (b)) = 0$, hence $s_\xi (\mathfrak{J} (b)) \in \mathfrak{k}$, taking smaller $B$ if necessary. 
\end{proof}

Pulling back the symplectic structure $\Omega_\xi$ on $\mathcal{J}_T (M, \omega)^2_k$ by the $K$-equivariant smooth map $\mathfrak{J} :B \to \mathcal{J}_T (M, \omega)^2_k$, we obtain a $K$-equivariant smooth symplectic structure (by taking smaller $B$ if necessary), which we denote by the same notation. 
Then $\nu :B \to \mathfrak{k}^* :b \mapsto \mathcal{S}_\xi (\mathfrak{J} (b))$ is a moment map with respect to this symplectic structure. 

\begin{prop}[See Postscript Remark below.]
\label{GITpolystable}
If $B \subset H^1_T (X, \Theta)$ in the Proposition \ref{local slice} is sufficiently small, then the following two statements are equivalent for any $b \in B$. 
\begin{enumerate}
\item The fiber $\mathcal{X}_b$ of the family $\varpi :\mathcal{X} \to B$ has a K\"ahler--Ricci soliton. 

\item The orbit $b \cdot \mathrm{Aut}_T (X) \subset H^1_T (X, \Theta)$ is closed. That is, $b$ is polystable with respect to the $\mathrm{Aut}_T (X)$-action. 
\end{enumerate}
\end{prop}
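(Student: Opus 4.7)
The plan is to apply the finite-dimensional Kempf-Ness correspondence to the linear action of the reductive group $K^c = \mathrm{Aut}_T(X)$ (identified via Theorem \ref{reductive}) on $H^1_T(X,\Theta)$, restricted to the Kuranishi slice $B$ equipped with the moment map $\nu$ supplied by Propositions \ref{moment} and \ref{local slice}. The first task is to set up a dictionary between the two conditions in the proposition: for $B$ sufficiently small, $\mathcal{X}_b$ admits a K\"ahler-Ricci soliton if and only if the $K^c$-orbit of $b$ meets $\nu^{-1}(0)$.

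One direction of the dictionary is immediate from Proposition \ref{soliton}: if $b' = b \cdot \phi$ satisfies $\nu(b') = 0$, then the integrable structure $\mathfrak{J}(b') \in (\mathcal{S}^\circ_\xi)^{-1}(0)$ produces a K\"ahler-Ricci soliton on $(M, \mathfrak{J}(b')) \cong \mathcal{X}_{b'} \cong \mathcal{X}_b$. For the other direction, starting from a soliton on $\mathcal{X}_b$, I would use Moser's trick to transport its K\"ahler form back to $\omega$, obtaining an integrable $J'' \in \mathcal{J}^\circ_T(M, \omega)$ with $\mathcal{S}_\xi(J'') = 0$ and $(M, J'') \cong \mathcal{X}_b$ as Fano $T$-manifolds. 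Semi-universality of the $T$-equivariant Kuranishi family (Proposition \ref{local slice}(1)) then expresses $J''$ as $h^* \mathfrak{J}(b')$ for some $h \in \Ham_T(M, \omega)$ and some $b' \in B$, and the isomorphism $\mathcal{X}_{b'} \cong \mathcal{X}_b$ places $b'$ in the $\mathrm{Aut}_T(X)$-orbit of $b$.

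With the dictionary in hand, I would invoke the classical Kempf-Ness theorem: the group $K^c$ acts linearly on the finite-dimensional $K$-Hermitian vector space $H^1_T(X,\Theta)$, the map $\nu$ agrees, to leading order at the origin, with the standard linear moment map for this action, and Kempf-Ness then states that a $K^c$-orbit is closed in $H^1_T(X,\Theta)$ if and only if it contains a zero of the moment map. Composing with the dictionary yields the asserted equivalence.

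The main obstacle is the local-to-global passage: $\nu$ is defined only on the open ball $B$ and with respect to the perturbed symplectic form $\mathfrak{J}^* \Omega_\xi$, while closedness of $b \cdot K^c$ is a global condition in $H^1_T(X,\Theta)$. To bridge the gap I would combine the plurisubharmonicity of $\|\nu\|^2$ along $K^c$-geodesics with a convergence-or-escape analysis of its gradient flow: on a sufficiently small $B$, the flow starting from $b$ either converges inside $B$ to a point of $\nu^{-1}(0) \cap b \cdot K^c$ giving a K\"ahler-Ricci soliton, or escapes every compact subset of $B$, in which case the orbit fails to be closed in $H^1_T(X,\Theta)$. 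Carrying out this alternative rigorously and uniformly in $b$, so as to extract the smallness assumption on $B$, is the technical heart of the argument.
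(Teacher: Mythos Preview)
Your dictionary step --- ``$\mathcal{X}_b$ admits a K\"ahler-Ricci soliton $\Rightarrow$ the $K^c$-orbit of $b$ meets $\nu^{-1}(0)$'' --- contains exactly the gap that the paper's own PostScript Remark flags. From a soliton on $\mathcal{X}_b$ you produce, via Moser and semi-universality, a point $b' \in B$ with $\nu(b') = 0$ and $\mathcal{X}_{b'} \cong \mathcal{X}_b$; you then assert that the isomorphism of fibres forces $b' \in b \cdot \mathrm{Aut}_T(X)$. This last inference is unjustified. Semi-universality of the Kuranishi family only says that every nearby isomorphism class is represented in $B$; it does \emph{not} say that two points of $B$ with $T$-equivariantly biholomorphic fibres lie in the same $K^c$-orbit of $H^1_T(X,\Theta)$. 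The identification of the fibre-isomorphism groupoid on $B$ with the restricted $K^c$-action groupoid is only established later in the paper (Proposition~\ref{etale}, via the \'etale slice theorem and a nontrivial comparison of groupoids). Without that, the $b'$ you obtain may lie only in the closure $\overline{b \cdot K^c}$, and ``soliton $\Rightarrow$ polystable'' does not close. There is also a secondary issue: your Moser step presumes the soliton metric on $\mathcal{X}_b$ is close enough to $\omega$ that the transported $J''$ lands in the range where Kuranishi completeness applies; nothing in the hypotheses guarantees this for an arbitrary soliton on $\mathcal{X}_b$.

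For the converse direction (polystable $\Rightarrow$ soliton) your outline is essentially the paper's. Both arguments invoke classical Kempf--Ness for the \emph{linear} moment map $\nu_0$ on $H^1_T(X,\Theta)$ to find a zero of $\nu_0$ in the orbit inside $B$, and then must perturb to a zero of the nonlinear $\nu$. The paper does this perturbation via an explicit quantitative lemma of Sz\'ekelyhidi and Donaldson: using the expansion $\nu(tb) = \tfrac{t^2}{2}\nu_0(b) + O(t^3)$ together with a uniform bound $\|(\sigma_{tx}^*\sigma_{tx})^{-1}\| \le C' t^{-2}$, one checks the hypothesis $\lambda\|\nu\| < \delta$ for small $t$ and obtains a zero of $\nu$ in the $K^c$-orbit. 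This yields the smallness condition on $B$ cleanly. Your gradient-flow alternative is in the same spirit and could in principle be made rigorous, but as written it does not extract a uniform smallness of $B$, which is what the statement requires.
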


\begin{postrem}
While discussing with R. Dervan and P. Naumann, the author realized that there was a gap in the following proof with regards to the implication ``the existence of K\"ahler--Ricci soliton $\Rightarrow$ GIT-polystability''. 
To be precise, what we prove here is that the following are equivalent for $b \in B$: 
\begin{enumerate}
\item $\nu^{-1} (0) \cap B \cap b \cdot G \neq \emptyset$. 

\item $\nu^{-1}_0 (0) \cap B \cap b \cdot G \neq \emptyset$. 

\item The point $b \in H^1_T (X, \Theta)$ is polystable with respect the $\mathrm{Aut}_T (X)$-action. 
\end{enumerate}

Of course, this (1) implies the existence of K\"ahler--Ricci soliton on $\mathcal{X}_b$. 
On the other hand, however, the existence of K\"ahler--Ricci soliton on $\mathcal{X}_b$ only implies that there is a unique orbit $B \cap b_0 \cdot G$ in the closure $B \cap \overline{b \cdot G}$ such that $\nu^{-1} (0) \cap B \cap b_0 \cdot G \neq \emptyset$ and $\mathcal{X}_{b'}$ is isomorphic to $\mathcal{X}_b$ for any $b' \in B \cap b_0 \cdot G$ (thanks to K-polystability). 
This in particular implies that $\nu^{-1} (0)/K \approx BK^c \sslash K^c$ (Corollary \ref{Kempf-Ness} below) can be naturally identified with the isomorphism classes of Fano manifolds admitting K\"ahler--Ricci solitons who appear in the family $\varpi: \mathcal{X} \to B$ (thanks to Corollary \ref{local-global} below). 

The author emphasizes that we do not use the original statement of Proposition \ref{GITpolystable} to prove all the statements in the rest of this paper and we only use the equivalence stated in this remark. 
The original proof is still fine to show this equivalence. 
\end{postrem}

\begin{proof}[Proof of Proposition \ref{GITpolystable}]
Let $\Omega_0$ be the linearization of $\Omega_\xi$ at $0 \in B$, i.e., $\Omega_0 = (d_0 \mathfrak{J} \cdot, J_0 d_0 \mathfrak{J} \cdot)_\xi$ under the identification $T_b B = \mathbb{H}^1_T = T_0 B$. 
Consider the map $\nu_0 :\mathbb{H}^1_T \to \mathfrak{k}^*$ defined by
\[ \langle f, \nu_0 (b) \rangle = \Omega_0 (L_{X_f} b, b) = (L_{X_f} d\mathfrak{J}_0 b, J_0 d \mathfrak{J}_0 b)_\xi. \]
Then $\nu_0$ is a moment map with respect to the symplectic structure $\Omega_0$. 
The Kempf-Ness theorem says that $b \in \mathbb{H}^1_T$ is polystable with respect to $K^c = \mathrm{Aut}_T (X)$ if and only if $b K^c \cap \nu_0^{-1} (0) \neq \emptyset$. 

Since
\[ \begin{split} \frac{d^2}{dt^2} \Big{|}_{t=0} \langle f, \nu (tb) \rangle &= \frac{d^2}{dt^2} \Big{|}_{t=0} (f, s_\xi (\mathfrak{J} (tb)))_\xi \\ &= \frac{d}{dt} \Big{|}_{t=0} (L_{X_f} \mathfrak{J} (tb), \mathfrak{J} (tb) \dot{\mathfrak{J}} (tb))_\xi \\ &= \langle f, \nu_0 (b) \rangle, \end{split} \]
the moment map $\nu :B \to \mathfrak{k}^*$ can be expanded as
\[ \nu (tb) = \nu (0) + t d_0 \nu (b) + t^2 \nu_0 (b)/2 + O (t^3). \]
Since $0 \in B$ corresponds to Fano manifolds with K\"ahler--Ricci soliton $(M, J_0, \omega)$, $\nu (0) = \mathcal{S}_\xi (J_0) = 0$ from Proposition \ref{soliton}. 
Moreover, since $0$ is a fixed point of the $K$-action, we have $d_0 \nu = 0$. 
Therefore we get
\[ \nu (tb) = t^2 \nu_0 (b)/2 + O (t^3). \]
Since the action of $K$ on $\mathbb{H}^1_T$ is linear, the stabilizer group $K_b \subset K$ of $b$ satisfies $K_{tb} = K_b$. 
So we have
\[ \frac{d}{dt} \langle f , \nu (tb) \rangle = \Omega_{tb} (b, \sigma_{tb} (f)) = 0 \]
for any $f \in \mathfrak{k}_b$, where $\sigma_b :\mathfrak{k} \to T_x B$ is the differential of the action.  
Then it follows that $\nu (b) \in \mathfrak{k}_b^\bot$ and $\nu_0 (b) \in \mathfrak{k}_b^\bot$. 

Now we cite the following general lemma from \cite[Proposition 9]{Sze2} and \cite[Proposition 17. ]{Don4}. 
\begin{lem}
Let $(B, \Omega)$ be a symplectic manifolds with a $K$-action,  $\nu :B \to \mathfrak{k}$ be a moment map with respect to the $K$-aciton ($\mathfrak{k}$ is endowed with a inner product). 
Suppose $b \in B$ satisfies $\nu (b) \in \mathfrak{k}_b^\bot$ and $\lambda, \delta > 0$ with $\lambda \| \nu (b) \| < \delta$ satisfies $\| (\sigma^*_{e^{iv} b} \sigma_{e^{iv} b})^{-1} \| \le \lambda$ for any $v \in \mathfrak{k}$ with $\| v \| < \delta$. 
Then there is $v_b \in \mathfrak{k}$ such that $\nu (e^{i v_b} b) = 0$ and $\| v_b \| \le \lambda \| \nu (b) \|$. 
\end{lem}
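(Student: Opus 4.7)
The plan is a Newton-type flow along the complexified $K$-orbit of $b$. Consider the trajectory $x(t)\in B$ defined by
$$\dot x(t) \;=\; -J\sigma_{x(t)}\bigl((\sigma^*_{x(t)}\sigma_{x(t)})^{-1}\nu(x(t))\bigr),\qquad x(0)=b,$$
and set $\dot v(t):=(\sigma^*_{x(t)}\sigma_{x(t)})^{-1}\nu(x(t))\in\mathfrak{k}$, $v(t):=\int_0^t\dot v(s)\,ds$, so that $x(t)=e^{iv(t)}b$ (in the abelian case this is immediate; in general one interprets the right hand side as the endpoint of the lifted path in $K^c/K_b$, using that the flow direction lies exactly in $J\sigma_{x(t)}(\mathfrak{k}_b^\bot)$).

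Combining the moment-map identity $d\langle\nu,f\rangle_x(\cdot)=\Omega_x(\sigma_x(f),\cdot)$ with the K\"ahler compatibility $\Omega_x(u,Jv)=g_x(u,v)$, a direct computation yields
$$\frac{d}{dt}\nu(x(t))\;=\;-\nu(x(t)),\qquad\text{hence}\qquad\nu(x(t))\;=\;e^{-t}\nu(b).$$
In particular $\nu(x(t))$ remains in $\mathfrak{k}_b^\bot$ throughout (using the hypothesis $\nu(b)\in\mathfrak{k}_b^\bot$ at $t=0$), which keeps $\dot v(t)$ well defined on the complement where $\sigma^*\sigma$ is invertible.

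For the a priori estimate, as long as $\|v(t)\|<\delta$ the hypothesis $\|(\sigma^*\sigma)^{-1}\|\le\lambda$ gives $\|\dot v(t)\|\le\lambda e^{-t}\|\nu(b)\|$, hence
$$\|v(t)\|\;\le\;\int_0^t\|\dot v(s)\|\,ds\;\le\;\lambda\|\nu(b)\|\bigl(1-e^{-t}\bigr)\;<\;\lambda\|\nu(b)\|\;<\;\delta.$$
This \emph{strict} inequality is the standard continuation step: the set $\{t\ge 0:\|v(t)\|<\delta\}$ is open, non-empty, and closed by the displayed bound, hence equal to $[0,\infty)$. Since $\dot v$ is absolutely integrable, $v(t)$ converges to some $v_b\in\mathfrak{k}$ with $\|v_b\|\le\lambda\|\nu(b)\|$, and the exponential decay of $\nu(x(t))$ forces $\nu(e^{iv_b}b)=\lim_{t\to\infty}e^{-t}\nu(b)=0$.

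The main technical point I expect to need most care is the identification of the $B$-trajectory $x(t)$ with a genuine Lie algebra curve $v(t)\in\mathfrak{k}$ whose norm controls the quantity appearing in the conclusion. For abelian $K$ this is automatic via the explicit formula for $v(t)$; in the non-abelian setting it requires the derivative of $\exp_{K^c}$ from $\mathfrak{k}_b^\bot$ to the orbit to be compared with the arclength, but the exponential decay $\|\nu(x(t))\|=e^{-t}\|\nu(b)\|$ confines the entire trajectory to a region of $K^c/K_b$ on which these identifications are uniformly bi-Lipschitz, so no new constants enter the final estimate $\|v_b\|\le\lambda\|\nu(b)\|$.
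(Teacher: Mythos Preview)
The paper does not prove this lemma; it is quoted verbatim as ``the following general lemma from \cite[Proposition 9]{Sze2} and \cite[Proposition 17]{Don4}'' and used as a black box inside the proof of Proposition~\ref{GITpolystable}. Your gradient-flow argument --- solving $\dot x=-J\sigma_x\bigl((\sigma_x^*\sigma_x)^{-1}\nu(x)\bigr)$, obtaining the exact decay $\nu(x(t))=e^{-t}\nu(b)$, and running the open--closed continuation to keep $\|v(t)\|<\lambda\|\nu(b)\|<\delta$ --- is precisely the argument of those references, so there is nothing on the paper's side to contrast with.

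One point where you are deliberately informal deserves a sharper statement. In the non-abelian case the integral $v(t)=\int_0^t\dot v$ does not literally satisfy $x(t)=e^{iv(t)}b$, yet the hypothesis of the lemma is phrased only for points of the form $e^{iv}b$ with $\|v\|<\delta$, so the continuation argument needs that identification, not just a path in the orbit. The clean fix (this is how Donaldson phrases it) is to lift the flow to $g(t)\in K^c$ via $g^{-1}\dot g=-i\dot v(t)$, observe that the Riemannian length of $t\mapsto g(t)K$ in $K^c/K$ is $\int_0^\infty\|\dot v\|\le\lambda\|\nu(b)\|$, and then use that on the ball of radius $\delta$ the map $v\mapsto e^{iv}K$ from $\mathfrak{k}$ to $K^c/K$ is a Riemannian isometry onto its image (the metric on $K^c/K$ being the one induced by the inner product on $\mathfrak{k}$). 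This simultaneously shows that every $x(t)$ is of the form $e^{iv}b$ with $\|v\|<\delta$ (so the hypothesis applies along the whole flow) and that the limit is $e^{iv_b}b$ with $\|v_b\|\le\lambda\|\nu(b)\|$. Your last paragraph gestures at this but introduces an unnecessary ``uniformly bi-Lipschitz'' hedge; no extra constant is needed because the identification is isometric.
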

Fix a small $\delta > 0$ so that there is $C > 0$ such that for any $v \in \mathfrak{k}$ with $\| v \| < \delta$ and any $f \in \mathfrak{k}^\bot_{e^{i v} b}$ 
\[ \| \sigma_{e^{iv} b} (f) \|_{\Omega_0}^2  \ge C \| f \|^2 \]
holds. 
Take smaller $B$ so that $\Omega_\xi \ge \frac{1}{2} \Omega_0$. 
Since $\sigma_{tx} (f) = t \sigma_x (f)$ and 
\[ ( \sigma_{tx}^* \sigma_{tx} (f), f )_\xi = \| \sigma_{tx} (f) \|^2_\Omega \ge \frac{1}{2} C t^2 \| f \|^2, \]
we obtain $\| (\sigma_{tx}^* \sigma_{tx})^{-1} \| \le C' t^{-2}$. 
Replacing $\Omega$ with $\Omega_0$, we obtain the similar estimate for the adjoint of $\sigma$ with respect to $\Omega_0$. 

Suppose $b \in B$ is polystable. 
Then there exists a point $b' \in b K^c \cap \nu^{-1}_0 (0)$.
In regards of the linear symplectic form, $b'$ is given by minimizing the norm of $b'$ in the $K^c$-orbit of $b$, so $b'$ is also in $B$. 
Since the points in the same $K^c$-orbit give the isomorphic complex structures, we can assume $\nu_0 (b) = 0$. 
It follows that $\nu (tb) = O (t^3)$. 
Then we can take $t$ small so that $C' t^{-2} \| \nu (tb) \| < \delta$. 
Applying the above lemma, we find a point $tb' \in B$ in the $K^c$-orbit of $tb$ satisfying $\nu (tb') = 0$. 
It follows that $(M, \mathfrak{J} (tb)) \cong (M, \mathfrak{J} (tb'))$ admits K\"ahler--Ricci soliton. 
Note the polystability of $b$ and $tb$ is equivalent as we consider a linear action. 

Conversely, suppose $(M, \mathfrak{J} (b))$ admits K\"ahler--Ricci soliton. 
Then similarly we can show that there is a point $b' \in bK^c$ satisfying $\nu_0 (b') = 0$. 
This shows $b$ is polystable. 
\end{proof}

The following corollary exhibits one of good features of our $T$-equivariant formulation. 
We use this to show the Artinianity of our moduli stack in the next section. 

\begin{cor}
\label{gentle is open condition}
Any $T$-equivariant small deformation of Fano $T$-manifold with K\"ahler--Ricci soliton is gentle. 
In particular, for any $T$-equivariant family $\mathcal{M} \to S$ of complex manifolds, the following subset  
\[ S^\circ := \{ s \in S ~|~ \mathcal{M}_s \text{ is a gentle Fano manifold } \} \]
is an open subset of $S$ (with respect to the real topology). 
\end{cor}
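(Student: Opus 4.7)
The plan is to reduce both claims, via the Kuranishi family from Proposition \ref{local slice}, to the GIT picture of Proposition \ref{GITpolystable}; first I would prove the first assertion and then bootstrap openness from it.

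Let $X_0$ be a Fano $T$-manifold with K\"ahler-Ricci soliton, and let $\varpi : \mathcal{X}' \to B \subset H^1_T(X_0, \Theta)$ be its Kuranishi family, on which the reductive group $G := \mathrm{Aut}_T(X_0)$ acts linearly. For any $b \in B$ sufficiently close to $0$, standard GIT produces a unique closed orbit $b_0 \cdot G \subset \overline{b \cdot G}$; linearity of the action together with smallness of $b$ places $b_0$ inside $B$. By Proposition \ref{GITpolystable} combined with Proposition \ref{soliton}, the fiber $\mathcal{X}'_{b_0}$ admits a K\"ahler-Ricci soliton, hence is K-polystable by \cite{BW}. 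The Hilbert-Mumford criterion supplies a one-parameter subgroup $\lambda : \mathbb{C}^* \to G$ with $\lim_{t \to 0} \lambda(t) \cdot b = b_0$, and since the weights of $b$ under $\lambda$ are non-negative the whole orbit $\{\lambda(t) \cdot b : t \in \Delta\}$ remains in $B$ once $|b|$ is small. The holomorphic map $f : \Delta \to B$ with $f(t) = \lambda(t) \cdot b$ and $f(0) = b_0$, pulled back along $\varpi$, produces a $T$-equivariant family $f^* \mathcal{X}' \to \Delta$ whose central fiber is the smooth K-polystable $\mathcal{X}'_{b_0}$ and whose restriction to $\Delta^*$ is trivialized as a $T$-equivariant family over $\mathcal{X}'_b \times \Delta^*$ by $(x, t) \mapsto (\lambda(t)^{-1} x, t)$, using that $\lambda(t) \in G$ commutes with $T$. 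This is a gentle degeneration of $\mathcal{X}'_b$, and by versality of $\mathcal{X}' \to B$ every small $T$-equivariant deformation of $X_0$ has gentle fibers, establishing the first assertion.

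To prove openness, take $s_0 \in S^\circ$, set $X := \mathcal{M}_{s_0}$, and fix a gentle degeneration $\mathcal{Y} \to \Delta$ of $X$ whose central fiber $X_0$ carries a K\"ahler-Ricci soliton. Versality of the Kuranishi family $\mathcal{X}' \to B$ of $X_0$ applied to $\mathcal{Y} \to \Delta$ at the central point realizes, for small $t \neq 0$, the isomorphism $X \cong \mathcal{Y}_t \cong \mathcal{X}'_b$ for some $b$ close to $0 \in B$. A second application of semi-universality to $\mathcal{M} \to S$ at $s_0$ produces a morphism $\psi : U \to B$ on a neighborhood $U$ of $s_0$ with $\psi(s_0) = b$ and $\mathcal{M}|_U \cong \psi^* \mathcal{X}'$. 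After shrinking $U$ so that $\psi(U)$ lies in the region of $B$ where the first assertion applies, every fiber $\mathcal{M}_{s'} \cong \mathcal{X}'_{\psi(s')}$ for $s' \in U$ is gentle, hence $U \subset S^\circ$.

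The main technical obstacle is this second application of semi-universality: Kuranishi's theorem only asserts versality of $\mathcal{X}' \to B$ at the chosen central fiber $X_0$, whereas here it is being applied at a nearby fiber $\mathcal{X}'_b$. This is handled by the standard fact from deformation theory that, since the Kodaira-Spencer map of $\mathcal{X}' \to B$ at $0$ is an isomorphism onto $H^1_T(X_0, \Theta)$ while $\dim H^1_T(\mathcal{X}'_b, \Theta)$ is upper semi-continuous in $b$, the Kodaira-Spencer map at $b$ remains surjective for $b$ near $0$, which is precisely what is required for versality of $\mathcal{X}' \to B$ at the fiber $\mathcal{X}'_b$.
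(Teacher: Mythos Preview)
Your proof is correct and follows essentially the same route as the paper: find a polystable point $b_0$ in the orbit closure via GIT on the linear $K^c$-action on $H^1_T(X_0,\Theta)$, invoke Proposition~\ref{GITpolystable} to get a K\"ahler--Ricci soliton on $\mathcal{X}'_{b_0}$, and use a one-parameter subgroup to produce the gentle degeneration; openness then follows from completeness of the Kuranishi family at nearby fibres. Your treatment is in fact more explicit than the paper's in two places --- you spell out why the $\lambda$-orbit stays in $B$ (non-negative weights plus a $K$-invariant norm) and why versality persists at $b\neq 0$ (upper semicontinuity of $\dim H^1_T$) --- whereas the paper compresses the latter into the single clause ``the family $\varpi:\mathcal{X}\to B$ parametrizes all isomorphism classes of complex structures near $\mathcal{X}_b$ for any $b\in B$''.
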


\begin{proof}
Suppose the Fano manifold $(M, \mathfrak{J} (b))$ does not admit K\"ahler--Ricci soliton for the point $b \in B$. 
From the above proposition, $b \in B$ is not polystable. 
Then we can find a polystable point $b_0 \in B$ in the closure of the orbit $bK^c$ by minimizing the norm $\Omega_0 (-, J_0-)$. 
Since $K^c$ is reductive, we can find a regular morphism $\lambda: \mathbb{C}^* \to \mathbb{H}^1_T$ so that $\lambda (t) \to b_0$. 
We can extend this to a regular morphism $\tilde{\lambda} :\mathbb{C} \to \mathbb{H}^1_T$. 
Pulling back the family $\varpi :\mathcal{X} \to B$, we obtain a $T$-equivariant holomorphic family $\mathcal{M} \to \Delta$ whose central fiber $(M, \mathfrak{J} (b_0))$ has K\"ahler--Ricci soliton because there is some $b_0' \in b_0 K^c$ such that $\nu (b_0) = \mathcal{S}_\xi (\mathfrak{J} (b_0')) = 0$. 
So $\mathcal{M} \to \Delta$ gives a gentle degeneration of $\mathcal{X}_b$, hence $\mathcal{X}_b$ is gentle. 
Since the family $\varpi :\mathcal{X} \to B$ parametrizes all isomorphism classes of complex structures near $\mathcal{X}_b$ for any $b \in B$, we have shown the assertion. 
\end{proof}

\subsection{Completion}

The topological space $\Ham_T (M, \omega)^2_{k+1}$ of $L^2_{k+1}$-regular symplectic diffeomorphisms admits a natural Banach smooth manifold structure (cf. \cite{IKT, KriMic}). 
The compositions and the inverses of morphisms in $\Ham_T (M, \omega)^2_{k+1}$ are again in $\Ham_T (M, \omega)^2_{k+1}$. 
However, the following maps
\begin{align*}
\Ham_T (M, \omega)^2_{k+1} \times \Ham_T (M, \omega)^2_{k+1} \to \Ham_T (M, \omega)^2_{k+1} &:(\phi, \psi) \mapsto \phi \circ \psi 
\\ \Ham_T (M, \omega)^2_{k+1} \to \Ham_T (M, \omega)^2_{k+1} &:\phi \mapsto \phi^{-1} 
\end{align*}
are \textit{not} differentiable with respect to the Banach smooth manifold structure, but are just continuous (see \cite{IKT}). 
Therefore we can not treat $\Ham_T (M, \omega)^2_{k+1}$ as a Banach Lie group. 

Nevertheless, we can consider the following $C^1$-smooth map
\[ \mathcal{H} :B \times^K \Ham_T (M, \omega)^2_{k+1} \to \mathcal{J}_T (M, \omega)^2_k :[b, \phi] \mapsto \phi^* \mathfrak{J} (b) \]
by working with a slightly regular target of $\mathfrak{J}$ in Proposition \ref{local slice}, say, by working with $\mathfrak{J}: B \to \mathcal{J}_T (M, \omega)^2_{k+2}$. 
Note, first of all, the quotient $B \times^K \Ham_T (M, \omega)^2_{k+1} := B \times \Ham_T (M, \omega)^2_{k+1}/K$ is endowed with a unique Banach smooth manifold structure whose quotient map is a submersion, as the finite dimensional compact Lie group $K$ acts freely on $B \times \Ham_T (M, \omega)^2_{k+1}$. 
The $C^1$-smoothness of $\mathcal{H}$ follows from the $C^\infty$-smoothness of $\mathfrak{J} :B \to \mathcal{J}_T (M, \omega)^2_{k+2}$ and the $C^1$-smoothness of
\begin{align*} 
\mathcal{J}_T (M, \omega)^2_{k+2} \times \Ham_T (M, \omega)^2_{k+1} 
&\to \mathcal{J}_T (M, \omega)^2_k 
\\ 
\qquad\qquad (J\quad, \quad\phi) \quad\qquad\qquad
&\mapsto \quad \phi^* J, 
\end{align*}
which follows from the main theorem of \cite{IKT}. 

As Proposition \ref{Hausdorff quotient}, the map 
\begin{align*} 
a^2_k 
&: \mathcal{J}_T (M, \omega)^2_k \times \Ham_T (M, \omega)^2_{k+1} \to \mathcal{J}_T (M, \omega)^2_k \times \mathcal{J}_T (M, \omega)^2_k 
\\ 
& \qquad\qquad (J\quad, \quad\phi) \quad\qquad\qquad\longmapsto \qquad\quad (J\quad, \quad\phi^* J), 
\end{align*}
is proper for any large $k$ ($L^2_k \subset C^2$ is sufficient). 
To see this, take a sequence $(J_n, \phi_n) \in \mathcal{J}_T (M, \omega)^2_k \times \Ham_T (M, \omega)^2_{k+1}$ so that $g_n, \phi^*_n g_n$ converge to $g_\infty, g'_\infty$ in $L^2_k$-topology. 
Construct $\phi_\infty$ as in the proof of Proposition \ref{Hausdorff quotient}. 
Again, thanks to Myers-Steenrod theorem, $\phi_\infty$ is $C^2$-smooth and satisfies $\phi_\infty^* g_\infty = g_\infty'$. 
Then $\phi_\infty$ is a harmonic map between $(M, g_\infty)$ and $(M, g_\infty')$. 
Hence it satisfies the elliptic equation
\[ \Delta_{g_\infty'} \phi_\infty^\alpha - \Gamma^\alpha_{\beta \gamma} \frac{\partial \phi_\infty^\beta}{\partial x^i} \frac{\partial \phi_\infty^\gamma}{\partial x^j} {g'}^{ij}_\infty = 0, \]
where the coefficients of the Laplacian $\Delta_{g_\infty'}$ and the Levi-Civita connection $\Gamma^\alpha_{\beta \gamma}$ are $L^2_{k-1}$-regular. 
It follows that $\phi_\infty$ is $L^2_{k+1}$-regular. 

Let us see that $\phi_n$ converges to $\phi_\infty$ in $L^2_{k+1}$-topology. 
Since $g_n \to g_\infty$ and $g_n' := \phi_n^* g_n \to g_\infty'$ in $L^2_k$-topology, we have $\Gamma^\alpha_{\beta \gamma, n} \to \Gamma^\alpha_{\beta \gamma}$ and $\Delta_{g_n'} \to \Delta_{g_\infty'}$ in $L^2_{k-1}$-topology. 
Now we use the following uniform elliptic estimates for the elliptic operators $\Delta_{g_n'}$ ($n=1, 2, \ldots, \infty$) with $L^2_{k-1}$-bounded coefficients and $0 \le \ell \le k-1$. 
\[ \| u \|_{L^2_{\ell+2} (g_0)} \le C_{k-1} (\| \Delta_{g_n'} u \|_{L^2_\ell (g_0)} + \| u \|_{L^2_\ell (g_0)}), \]
where $C_{k-1}$ is independent of $n=1, 2, \ldots, \infty$ and $g_0$ is a fixed reference smooth metric. 
(Note $L^2_{k-1} \subset C^1$. We used this to the above uniform elliptic estimates. See for instance the proof of the elliptic estimates in the Appendix of \cite{Kod}. Note also Sobolev multiplication works. )
First, the $C^1$-convergence of $\phi_n \to \phi_\infty$ follows by the same argument as before. 
Then we know that $\Delta_{g_n'} \phi_\infty^\alpha = \Gamma^\alpha_{\beta \gamma, n} \partial_i \phi^\beta_n \partial_j \phi^\gamma_n g^{ij}_n$ converges to $\Delta_{g_\infty'} \phi_\infty^\alpha = \Gamma^\alpha_{\beta \gamma} \partial_i \phi^\beta_\infty \partial_j \phi^\gamma_\infty g^{ij}_\infty$ in $L^2$-topology (actually in $C^0$-topology). 
Combined with the $L^2_{k-1}$-convergence of $\Delta_{g_n'} \phi_\infty^\alpha \to \Delta_{g_\infty'} \phi_\infty^\alpha$, we obtain $\| \Delta_{g_n'} (\phi^\alpha_n - \phi^\alpha_\infty) \|_{L^2 (g_0)} \to 0$. 
It follows from the above uniform elliptic estimate that 
\[ \| \phi_n^\alpha - \phi_\infty^\alpha \|_{L^2_2 (g_0)} \le C_{k-1} (\| \Delta_{g_n'} (\phi_n^\alpha - \phi_\infty^\alpha) \|_{L^2 (g_0)} + \| \phi^\alpha_n - \phi^\alpha_\infty \|_{L^2 (g_0)}) \to 0, \]
and we obtain $\phi_n \to \phi_\infty$ in $L^2_2$-topology. 
We can repeat this process until we conclude the $L^2_{k+1}$-convergence of $\phi_n \to \phi_\infty$. 

Now we can prove the following. 

\begin{prop}
The $C^1$-smooth map
\[ \mathcal{H} :B \times^K \Ham_T (M, \omega)^2_{k+1} \to \mathcal{J}_T (M, \omega)^2_k \]
is injective for any sufficiently small neighbourhood $B \subset H^1_T (X, \Theta)$ of the origin. 
\end{prop}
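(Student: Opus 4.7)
The plan is to combine an infinitesimal transversality calculation at the basepoint $[0, \id]$ with the properness of $a^2_k$ just established in the discussion preceding the proposition. The transversality gives local injectivity of $\mathcal{H}$ near $[0, \id]$, and properness lets us upgrade this to injectivity everywhere on $B \times^K \Ham_T (M, \omega)^2_{k+1}$ once $B$ is taken small.

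First I would analyze the differential $d \mathcal{H}_{[0, \id]}$. Because $K$ fixes $0 \in B$, the tangent space of the quotient at the basepoint is $H^1_T (X, \Theta) \oplus (L^2_{k+1, T} (M, \omega)/\mathbb{R})/\mathfrak{k}$, and the differential sends $(v, [f])$ to $d \mathfrak{J}_0 (v) + L_{X_f} J_0$. Under the Dolbeault identification $T_{J_0} \mathcal{J}_T (M, \omega)^2_k \cong \Omega^{0,1}_T (T^{1,0} M)^2_k$, the Kuranishi construction of Proposition \ref{local slice} arranges $d \mathfrak{J}_0 (v)$ to be the harmonic representative of the class $v \in H^1_T (X, \Theta)$, while $L_{X_f} J_0$ corresponds to the $\dbar$-exact form $\dbar (X_f^{1,0})$. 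Since harmonic and exact forms are $L^2$-orthogonal, the vanishing $d \mathcal{H}_{[0, \id]} (v, [f]) = 0$ forces $v = 0$ and $L_{X_f} J_0 = 0$; the latter says $X_f$ is a real holomorphic Hamiltonian vector field, hence $f \in \mathfrak{k}$ and $[f] = 0$. Moreover, Hodge theory guarantees that the image of $d \mathcal{H}_{[0, \id]}$ is closed in $T_{J_0} \mathcal{J}_T (M, \omega)^2_k$ with a topological complement, so the $C^1$-inverse function theorem for Banach manifolds, applied after projecting onto the image, produces an open neighborhood $\mathcal{U}$ of $[0, \id]$ on which $\mathcal{H}$ is injective.

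To promote local injectivity into injectivity on all of $B \times^K \Ham_T (M, \omega)^2_{k+1}$ for a suitably small $B$, I argue by contradiction: suppose no such $B$ exists, so that one has sequences of distinct classes $[b_n, \phi_n] \ne [b_n', \psi_n]$ with $\phi_n^* \mathfrak{J} (b_n) = \psi_n^* \mathfrak{J} (b_n')$ and $b_n, b_n' \to 0$. Setting $\chi_n := \phi_n \circ \psi_n^{-1}$, one has $\chi_n^* \mathfrak{J} (b_n) = \mathfrak{J} (b_n')$ with both almost complex structures tending to $J_0$ in $\mathcal{J}_T (M, \omega)^2_k$. The properness of $a^2_k$ extracts, along a subsequence, a limit $\chi_\infty \in \Ham_T (M, \omega)^2_{k+1}$ satisfying $\chi_\infty^* J_0 = J_0$; thus $\chi_\infty$ is an isometry of the Kähler metric $g_{J_0}$, which by Myers-Steenrod places it in the finite-dimensional compact group $K$. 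Using $K$-equivariance of $\mathfrak{J}$, we absorb $\chi_\infty$ into the $K$-action by replacing $(b_n', \psi_n)$ with an equivalent representative, so that $\phi_n \psi_n^{-1} \to \id$. Then both $[b_n, \phi_n]$ and $[b_n', \psi_n]$ eventually lie in $\mathcal{U}$, contradicting the local injectivity from the previous step.

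The main subtlety is in this last step, since $\Ham_T (M, \omega)^2_{k+1}$ is not a Banach Lie group: composition and inversion are only continuous there, so $\chi_n$ cannot be manipulated by smooth group-theoretic maneuvers. What carries us through is that the convergence of $\chi_n$ is extracted strictly from the continuous properness of $a^2_k$, and the identification of the limit $\chi_\infty$ with an element of $K$ uses only Myers-Steenrod plus the bootstrap elliptic regularity argument appearing in the proof of Proposition \ref{Hausdorff quotient}.
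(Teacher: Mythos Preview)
Your proof is correct and follows essentially the same strategy as the paper: first establish that $d\mathcal{H}_{[0,\id]}$ is injective with closed complemented range (via the Hodge-theoretic orthogonality of the harmonic Kuranishi piece and the $\bar\partial$-exact piece), apply the implicit function theorem to get local injectivity near $[0,\id]$, and then upgrade to global injectivity for small $B$ by a contradiction argument using the properness of $a^2_k$ to force the ``difference'' diffeomorphism to converge into $K$. One minor point: the tangent space of $\Ham_T(M,\omega)^2_{k+1}$ should be modeled on $L^2_{k+2}$ Hamiltonians (one regularity level above the diffeomorphisms), not $L^2_{k+1}$, matching the paper's indexing; this does not affect the argument.
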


\begin{proof}
The derivative of $\mathcal{H}$ at $[0, \id]$ is given by
\[ \mathbb{H}^1_T \times L^2_{T, k+2} (M)_0/\mathfrak{k} \to \Omega^{0,1}_T (T^{1,0})^2_k : (\rho, f) \mapsto d \mathfrak{J}_0 (\rho) + \bar{\partial} X_f'. \]
It is easy to see that this map is injective and has a closed split range. 
Then the implicit function theorem shows that $\mathcal{H}$ gives an immersion in a neighbourhood of $[0, \id]$. 
In particular, $\mathcal{H}$ is locally injective at $[0, \id]$. 

Suppose $\mathcal{H}$ is not (globally) injective for any sufficiently small $B$. 
Then we can take sequences $b_n, b_n' \to 0 \in B$ and $\phi_n, \phi_n' \in \Ham_T (M, \omega)^2_{k+1}$ satisfying
\[ [b_n, \phi_n] \neq [b_n', \phi_n'] \quad\text{and}\quad \mathcal{H} ([b_n, \phi_n]) = \mathcal{H} ([b_n', \phi_n']). \] 
In particular, we have $\mathfrak{J} (b_n) = (\phi_n' \circ \phi_n^{-1})^* \mathfrak{J} (b_n')$ and both $\mathfrak{J} (b_n), \mathfrak{J} (b_n')$ converge to $\mathfrak{J} (0)$ in $\mathcal{J}_T (M, \omega)^2_k$. 
From the properness of $a^2_k$, we have a subsequence of $\phi_n' \circ \phi_n^{-1}$ which converges to some $\phi_\infty$ in the stabilizer $K$ of $\mathfrak{J} (0)$. 
Hence, after taking a subsequence, both $[b_n, \id]$ and $[b_n', \phi_n' \circ \phi_n^{-1}]$ converge to the same $[0, \id] = [0, \phi_\infty]$ with the same images $\mathcal{H} ([b_n, \id]) = \mathcal{H} ([b_n', \phi_n' \circ \phi_n^{-1}])$. 
Since $\mathcal{H}$ is injective near $[0, \id]$, we conclude $[b_n, \id] = [b_n', \phi_n' \circ \phi_n^{-1}]$ for sufficiently large $n$. 
This contradicts to the choice of the sequences $[b_n, \phi_n] \neq [b_n', \phi_n']$ and we have shown that $\mathcal{H}$ is injective for some (hence any) sufficiently small $B$. 
\end{proof}

The  restriction of the map $\mathfrak{J} :B \to \mathcal{J}_T (M, \omega)^2_k$ gives a continuous map $\nu^{-1} (0) \to (\mathcal{S}^\intg_\xi)^{-1} (0)^2_k$ and induces another continuous map 
\[ \nu^{-1} (0)/K \to (\mathcal{S}^\intg_\xi)^{-1} (0)^2_k /\Ham_T (M, \omega)^2_{k+1}. \]
The following corollaries are essential in the proof of the main theorem. 

\begin{cor}
\label{local-global}
The induced map $\nu^{-1} (0)/K \to (\mathcal{S}^\intg_\xi)^{-1} (0)^2_k /\Ham_T (M, \omega)^2_{k+1}$ is a homeomorphism onto an open subset. 
\end{cor}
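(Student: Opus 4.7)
The plan is to establish four properties in sequence—continuity, injectivity, openness of the image, and continuity of the inverse on its image—each leveraging a different piece of machinery already developed in this section. Continuity of the induced map is immediate from the continuity of $\mathfrak{J} : B \to \mathcal{J}_T(M, \omega)^2_k$ together with the two quotient projections, so it needs no further argument.

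For injectivity, I would pull the problem back to the map $\mathcal{H}$ whose injectivity has just been established. If $[\mathfrak{J}(b)] = [\mathfrak{J}(b')]$ in the target quotient then $\phi^* \mathfrak{J}(b') = \mathfrak{J}(b)$ for some $\phi \in \Ham_T(M, \omega)^2_{k+1}$, i.e.\ $\mathcal{H}([b, \id]) = \mathcal{H}([b', \phi])$, and injectivity of $\mathcal{H}$ forces $[b, \id] = [b', \phi]$ and hence $b \in K \cdot b'$, so $[b] = [b']$ in $\nu^{-1}(0)/K$.

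For openness of the image, the plan is to invoke the slice property of the Kuranishi construction together with equivariance of the moment map. Given $[\bar{J}]$ close to some $[\mathfrak{J}(b_0)]$ in the image, I choose a representative $\bar{J}$ close to $\mathfrak{J}(b_0)$, which after shrinking $B$ is in turn close to $J_0 = \mathfrak{J}(0)$. Semi-universality of the family $\mathcal{X} \to B$ constructed in Proposition \ref{local slice} lets me write $\bar{J} = \psi^* \mathfrak{J}(b)$ for some $b \in B$ near $0$ and $\psi$ near $\id$. The hypothesis $\mathcal{S}_\xi(\bar{J}) = 0$ combined with $\Ham_T$-equivariance of $\mathcal{S}_\xi$ gives $\nu(b) = \mathcal{S}_\xi(\mathfrak{J}(b)) = 0$, so $[\bar{J}]$ lies in the image.

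The technical crux is continuity of the inverse. Suppose $[\mathfrak{J}(b_n)] \to [\mathfrak{J}(b)]$ in the target quotient; unraveling the quotient topology one extracts (along a subsequence) diffeomorphisms $\phi_n \in \Ham_T(M, \omega)^2_{k+1}$ with $\phi_n^* \mathfrak{J}(b_n) \to \mathfrak{J}(b)$ in $\mathcal{J}_T(M, \omega)^2_k$. Because $B$ is finite-dimensional and can be shrunk to have compact closure, a further subsequence yields $b_n \to b_\infty \in \overline{B}$, hence $\mathfrak{J}(b_n) \to \mathfrak{J}(b_\infty)$ by continuity of $\mathfrak{J}$. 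Applying the properness of $a^2_k$ established just above this corollary then gives, along yet another subsequence, $\phi_n \to \phi_\infty \in \Ham_T(M, \omega)^2_{k+1}$ with $\phi_\infty^* \mathfrak{J}(b_\infty) = \mathfrak{J}(b)$, that is $\mathcal{H}([b_\infty, \phi_\infty]) = \mathcal{H}([b, \id])$. Injectivity of $\mathcal{H}$ finally forces $b_\infty \in K \cdot b$, so $[b_n] \to [b]$ in $\nu^{-1}(0)/K$. The main obstacle is the coordination of these three ingredients—quotient-topology convergence, properness of the Sobolev-completed action, and global injectivity of $\mathcal{H}$—so that the limiting $\phi_\infty$ genuinely lies in the $L^2_{k+1}$-completed group $\Ham_T(M, \omega)^2_{k+1}$ rather than in some weaker completion; this is precisely what the Myers--Steenrod plus elliptic-regularity argument for the properness of $a^2_k$ was designed to deliver.
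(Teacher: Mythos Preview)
Your injectivity argument via $\mathcal{H}$ is exactly the paper's, and your sequential argument for continuity of the inverse is in fact more explicit than the paper's one-line appeal to Hausdorffness of the target. The openness step, however, has a genuine gap.

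You write that semi-universality of $\varpi:\mathcal{X}\to B$ lets you express $\bar J=\psi^*\mathfrak{J}(b)$ and then invoke $\Ham_T$-equivariance of $\mathcal{S}_\xi$ to conclude $\nu(b)=\mathcal{S}_\xi(\mathfrak{J}(b))=0$. But the diffeomorphism $\psi$ that Kuranishi completeness hands you is only a \emph{biholomorphism} $(M,\bar J)\xrightarrow{\sim}(M,\mathfrak{J}(b))$; there is no reason it should preserve $\omega$, so in general $\psi\notin\Ham_T(M,\omega)^2_{k+1}$. The equivariance of the moment map $\mathcal{S}_\xi$ is only under $\Ham_T(M,\omega)$, and a biholomorphism that moves $\omega$ gives you no control over $\mathcal{S}_\xi(\mathfrak{J}(b))$. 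Concretely, tracing through the construction in Proposition~\ref{local slice}, the composite $\psi$ involves the Moser diffeomorphism $f_b$ and the Kuranishi identification, and one checks that $\psi^*\omega$ is a K\"ahler form on $(M,\bar J)$ with no a priori relation to $\omega$.

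The paper takes a different route to openness that sidesteps this issue: from Kuranishi one only learns that the complex manifold $\mathcal{X}_b\cong(M,\bar J)$ admits a K\"ahler--Ricci soliton (since $(M,\bar J,\omega)$ is one). One then invokes Proposition~\ref{GITpolystable} (in the form of the PostScript Remark) to conclude $bK^c\cap\nu^{-1}(0)\neq\emptyset$, and replaces $b$ by a point $b'$ of this intersection. The passage from ``$(M,\mathfrak{J}(b'))\cong(M,\bar J)$ biholomorphically, both with K\"ahler--Ricci solitons for the same $(\omega,\xi)$'' to ``same $\Ham_T$-orbit'' is what finishes the argument. So the finite-dimensional GIT on $B$ is doing essential work that your direct equivariance shortcut cannot replace.
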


\begin{proof}
The injectivity follows from the above Proposition. 
From the Proposition in section 2 of \cite{Kur2}, there is a point $b \in B$ such that $(M, \mathfrak{J} (b)) \cong (M, J)$ for any integrable $L^2_{k}$-regular $J$ sufficiently close to $J_0$ in $L^2_{k}$-topology. 
(Here we can work with $L^2_{l=k}$ rather than $L^2_{l=k+2}$ by taking smaller $B$ if necessary, thanks to the uniqueness of Kuranishi family independent of its construction. )
Furthermore, if $J \in (\mathcal{S}_\xi^\intg)^{-1} (0)^2_k$ we have $(M, \mathfrak{J} (b')) \cong (M, \mathfrak{J} (b))$ for any $b' \in bK^c$ and $b K^c \cap \nu^{-1} (0) \neq \emptyset$, so we can take such $b$ from $\nu^{-1} (0) \subset B$ for any $J \in (\mathcal{S}^\intg_\xi)^{-1} (0)^2_k$. 
Therefore the image of $\nu^{-1} (0)/K$ covers an open neighbourhood of $J_0 \in (\mathcal{S}^\intg_\xi)^{-1} (0)^2_k /\Ham_T (M, \omega)^2_{k+1}$. 
Since $(\mathcal{S}^\intg_\xi)^{-1} (0)^2_k /\Ham_T (M, \omega)^2_{k+1}$ is a Hausdorff space, it follows that the map $\nu^{-1} (0)/K \to (\mathcal{S}^\intg_\xi)^{-1} (0)^2_k /\Ham_T (M, \omega)^2_{k+1}$ becomes a homeomorphism onto an open subset, by taking smaller $B$ if necessary. 
\end{proof}

\begin{cor}
\label{Kempf-Ness}
Suppose the torus action on $(M, \omega)$ is K-optimal. 
The inclusion map $\nu^{-1} (0) \hookrightarrow BK^c$ induces a homeomorphism $\nu^{-1} (0)/K \to BK^c \sslash K^c$. 
\end{cor}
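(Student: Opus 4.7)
The plan is to prove this Kempf--Ness type statement in three steps: construction of the map, bijectivity (with injectivity as the main obstacle), and upgrading the continuous bijection to a homeomorphism via a local slice argument.

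First, since $K \subset K^c$, the inclusion $\nu^{-1}(0) \hookrightarrow BK^c$ composed with the analytic GIT quotient is $K$-invariant and descends to a continuous map $\Psi : \nu^{-1}(0)/K \to BK^c \sslash K^c$. Surjectivity is immediate from the corrected form of Proposition \ref{GITpolystable} explained in the PostScript Remark: each closed $K^c$-orbit in $BK^c$ meets $B$ by construction, and being polystable it intersects $\nu^{-1}(0)$.

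For injectivity, which I expect to be the main obstacle, take $b_1, b_2 \in \nu^{-1}(0)$ lying in a common $K^c$-orbit; since both orbits are closed by polystability, we in fact have $b_2 \in b_1 K^c$. By Proposition \ref{GITpolystable} both fibers $\mathcal{X}_{b_1}$ and $\mathcal{X}_{b_2}$ admit K\"ahler-Ricci solitons, and they are $T$-equivariantly biholomorphic because they represent the same $K^c$-orbit in the semi-universal deformation of $X$. The K-optimality hypothesis together with Proposition \ref{isomorphism class} then produces a $T$-equivariant diffeomorphism $\phi$ with $\phi^* \mathfrak{J}(b_1) = \mathfrak{J}(b_2)$; standard elliptic regularity places $\phi$ in $\Ham_T(M, \omega)^2_{k+1}$. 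Rewritten via the map $\mathcal{H}$ of the previous proposition this reads $\mathcal{H}([b_1, \phi]) = \mathcal{H}([b_2, \mathrm{id}])$, and the injectivity of $\mathcal{H}$ (after shrinking $B$ if necessary) forces $[b_1, \phi] = [b_2, \mathrm{id}]$ inside $B \times^K \Ham_T(M, \omega)^2_{k+1}$. By the very definition of this quotient, $b_1$ and $b_2$ must then represent the same class in $\nu^{-1}(0)/K$.

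To promote the continuous bijection $\Psi$ to a homeomorphism, I would combine two ingredients. On the one hand, the asymptotic expansion $\nu(tb) = t^2 \nu_0(b)/2 + O(t^3)$ already exploited in the proof of Proposition \ref{GITpolystable}, combined with an implicit function argument, identifies $\nu^{-1}(0)/K$ near any polystable point with the classical Kempf-Ness quotient for the linear moment map $\nu_0$, which in turn coincides with the analytic GIT quotient of the linear $K^c$-action on $H^1_T(X, \Theta)$ in a neighborhood of the origin. On the other hand, the Hausdorffness of $\nu^{-1}(0)/K$ granted by Proposition \ref{Hausdorff quotient} together with the Hausdorff property of analytic GIT quotients of reductive actions reduce the openness of $\Psi$ to this local model. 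Making the identification between the topology on $BK^c \sslash K^c$ coming from the analytic GIT construction and the quotient topology invoked in Corollary \ref{local-global} rigorous is the trickiest technical point, but it should follow from the usual analytic Luna slice picture for reductive group actions on finite dimensional complex manifolds.
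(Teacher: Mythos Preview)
Your bijectivity argument is essentially the paper's. For injectivity the paper routes through Corollary~\ref{local-global} rather than invoking $\mathcal{H}$ directly, but since that corollary is itself a consequence of the injectivity of $\mathcal{H}$, the content is identical: two points of $\nu^{-1}(0)$ in the same closed $K^c$-orbit give $T$-equivariantly biholomorphic fibers, Proposition~\ref{isomorphism class} puts their complex structures in the same $\Ham_T$-orbit, and the injectivity of $\mathcal{H}$ (equivalently, of the map in Corollary~\ref{local-global}) forces them into the same $K$-orbit in $B$. Surjectivity is the same on both sides.

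Where you diverge is the homeomorphism step, and there you are working much harder than necessary. The paper simply observes that both $\nu^{-1}(0)/K$ and $BK^c\sslash K^c$ are locally compact Hausdorff, so after shrinking $B$ one can run the bijectivity argument on a slightly larger ball $B'$, note that $\nu^{-1}(0)\cap\overline{B}$ is compact, and use that a continuous bijection from a compact space to a Hausdorff space is a homeomorphism onto its image; restricting back to the open $B$ finishes. Your proposed detour through the asymptotic expansion of $\nu$, an implicit-function comparison with the linear moment map $\nu_0$, and the analytic Luna slice is not wrong in spirit, but it introduces exactly the technical point you yourself flag as ``trickiest'' (matching the analytic GIT topology with the quotient topology), and none of that machinery is needed once you have the continuous bijection in hand.
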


\begin{proof}
The analytic GIT quotient $BK^c \sslash K^c$ is identified as a topological space with the quotient space of $BK^c$ by the equivalence relation $b \sim b' \iff \overline{bK^c} \cap \overline{b'K^c} \neq \emptyset$. 
Take elements $b, b' \in \nu^{-1} (0)$ so that $b \sim b'$. 
Since both $b, b'$ are polystable with respect to the $K^c$-action, their $K^c$-orbits are closed and hence it follows that $bK^c = b'K^c$.
As mentioned before, we obtain $(M, \mathfrak{J} (b)) \cong (M, \mathfrak{J} (b'))$. 
Then it follows from Proposition \ref{isomorphism class} that we get $[\mathfrak{J} (b)] = [\mathfrak{J} (b')] \in (\mathcal{S}^\intg_\xi)^{-1} (0)^2_k /\Ham_T (M, \omega)^2_{k+1}$. 
From the above corollary, we obtain $[b] = [b'] \in \nu^{-1} (0)/K$ and we have shown the map $\nu^{-1} (0)/K \to BK^c \sslash K^c$ is injective. 

We know that $BK^c \sslash K^c$ consists of closed $K^c$-orbits and closed $K^c$-orbit has non-empty intersection with $\nu^{-1} (0)$. 
This shows the surjectivity of $\nu^{-1} (0)/K \to BK^c \sslash K^c$. 
Since both the spaces are locally compact Hausdorff, by taking smaller $B$ if necessary, the map becomes a homeomorphism. 
\end{proof}

\begin{cor}
\label{automorphism}
For any $b \in \nu^{-1} (0)$, $\mathrm{Aut}_T (\mathcal{X}_b)$ can be identified with the complexification of the stabilizer group $K_b$ of the action of $K$. 
\end{cor}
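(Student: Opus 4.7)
The plan is to realize $K_b$ as a maximal compact subgroup of $\mathrm{Aut}_T (\mathcal{X}_b)$ and then to invoke reductivity. Since $b \in \nu^{-1} (0)$, Proposition \ref{soliton} gives a K\"ahler-Ricci soliton on $(\mathcal{X}_b, \mathfrak{J} (b))$; Theorem \ref{reductive} then shows that $\mathrm{Aut}^0 (\mathcal{X}_b, \xi')$ is a maximal reductive subgroup of $\mathrm{Aut}^0 (\mathcal{X}_b)$, and the K-optimality hypothesis (via the K-optimal vector $\xi$ being preserved) identifies this with $\mathrm{Aut}_T (\mathcal{X}_b)$, so the latter is a reductive complex Lie group.

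The inclusion $(K_b)^c \subset \mathrm{Aut}_T (\mathcal{X}_b)$ is essentially tautological: by the $K$-equivariance of the Kuranishi family built in Proposition \ref{local slice}, each element of $K_b$ acts on $\mathcal{X}_b$ as a $T$-equivariant biholomorphism, giving a closed embedding $K_b \hookrightarrow \mathrm{Aut}_T (\mathcal{X}_b)$ as a compact subgroup; reductivity of the ambient group extends this to a closed complex subgroup $(K_b)^c \hookrightarrow \mathrm{Aut}_T (\mathcal{X}_b)$.

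For the reverse inclusion, the strategy is to show that $K_b$ already contains a maximal compact subgroup of $\mathrm{Aut}_T (\mathcal{X}_b)$, so that equality holds upon complexifying. Let $H \subset \mathrm{Aut}_T (\mathcal{X}_b)$ denote the $T$-equivariant isometry group of the K\"ahler-Ricci soliton; by the uniqueness of K\"ahler-Ricci solitons up to $\mathrm{Aut}^0$-conjugation, any compact subgroup of $\mathrm{Aut}_T (\mathcal{X}_b)$ is conjugate into $H$, so $H$ is a maximal compact subgroup. Transporting $H$ via the biholomorphism $\Phi_b$ produces elements $\phi \in \Ham_T (M, \omega)^2_{k+1}$ satisfying $\phi^* \mathfrak{J} (b) = \mathfrak{J} (b)$, hence $\mathcal{H} ([b, \phi]) = \mathcal{H} ([b, \id])$. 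The injectivity of $\mathcal{H}$ established in the proposition preceding Corollary \ref{local-global} forces $[b, \phi] = [b, \id]$ in $B \times^K \Ham_T (M, \omega)^2_{k+1}$; unpacking the equivalence relation yields some $k \in K$ with $kb = b$ and $\phi = k^{-1}$, so $\phi \in K_b$. This shows $H \subset K_b$, and since $K_b$ is itself compact while $H$ is maximal compact, we conclude $H = K_b$. Combined with reductivity this gives the desired identification $\mathrm{Aut}_T (\mathcal{X}_b) = (K_b)^c$.

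The main obstacle is verifying that the soliton isometry group $H$ is a maximal compact subgroup of $\mathrm{Aut}_T (\mathcal{X}_b)$; this will rely on the uniqueness of the K\"ahler-Ricci soliton (Theorem \ref{reductive} and the classical result of Tian-Zhu), applied to the invariant soliton produced by averaging over any prospective larger compact subgroup of automorphisms.
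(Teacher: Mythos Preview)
Your proof is correct and follows essentially the same route as the paper. The paper's argument is slightly more streamlined: it notes (citing the proof of Theorem \ref{reductive}) that $\mathrm{Aut}_T(\mathcal{X}_b)$ is the complexification of $\mathrm{Stab}(\mathfrak{J}(b)) \subset \Ham_T(M,\omega)^2_{k+1}$, observes the inclusion $K_b \subset \mathrm{Stab}(\mathfrak{J}(b))$ from $K$-equivariance of $\mathfrak{J}$, and then runs the identical injectivity-of-$\mathcal{H}$ argument you give to conclude $\mathrm{Stab}(\mathfrak{J}(b)) \subset K_b$. Your group $H$, once transported via $\Phi_b$, \emph{is} $\mathrm{Stab}(\mathfrak{J}(b))$, so the two arguments coincide; the only difference is that you unpack why $H$ is maximal compact via uniqueness and averaging, whereas the paper simply invokes Theorem \ref{reductive} (which already identifies the maximal compact as the isometry group of the soliton).
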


\begin{proof}
From the proof of Theorem \ref{reductive}, we know that $\mathrm{Aut}_T (\mathcal{X}_b) \cong \mathrm{Aut}_T (M, \mathfrak{J} (b))$ is the complexification of the compact group $\mathrm{Stab} (\mathfrak{J} (b)) \subset \Ham_T (M, \omega)^2_{k+1}$. 
Since $\mathfrak{J} :B \to \mathcal{J}_T (M, \omega)^2_{k+2}$ is $K \subset \Ham_T (M, \omega)^2_{k+1}$-equivariant, there is an inclusion $K_b \subset \mathrm{Stab} (\mathfrak{J} (b))$. 
For $\phi \in \mathrm{Stab} (\mathfrak{J} (b))$, we have $\mathcal{H} ([b, \phi]) = \phi^* \mathfrak{J} (b) = \mathfrak{J} (b) = \mathcal{H} ([b, \id])$, then the injectivity of $\mathcal{H}$ shows that $[b, \phi] = [b, \id]$, hence $\phi \in K_b$. 
\end{proof}

\begin{postrem}
Corollary \ref{automorphism} enables us to prove Corollary \ref{expected} under the uniqueness statement of (2) in Conjecture \ref{conj}, which we do not follow in this paper. 
So the last corollary will be never used in any proofs of this paper. 
Recently, R. Dervan and P. Naumann find an another pure analytic approach to construct the moduli space of cscK manifolds that makes use of this corollary. 
\end{postrem}

In the next section, we will construct complex structures on the following Hausdorff topological spaces.  

\begin{defin}
\label{support of moduli}
We set
\begin{align*} 
L^2_k K (M, \omega, T) 
&:= (\mathcal{S}_\xi^\intg)^{-1} (0)^2_k /\Ham_T (M, \omega)^2_{k+1}, 
\\
K (M, \omega, T) 
&:= (\mathcal{S}_\xi^\intg)^{-1} (0)/\Ham_T (M, \omega) 
\end{align*}
and
\[ K_{T, \chi} := \coprod_{\chi (M, \omega, T) = \chi} K (M, \omega, T) \]
for a Fano character $\chi = \{ \chi_m \in \mathbb{Z} [M] \}_{m \in \mathbb{Z}}$ where $(M, \omega, T)$ runs K-optimal symplectic Fano $T$-manifolds whose Hilbert character $\chi (M, \omega, T)$ (see the description before Proposition \ref{isomorphism class}) is equal to the given Fano character $\chi$.  
\end{defin}

Note that the homeomorphism $f^* :K (M', \omega', T) \xrightarrow{\sim} K (M, \omega, T)$ induced by a $T$-equivariant symplectic diffeomorphism $f :(M, \omega) \xrightarrow{\sim} (M', \omega')$ is independent of the choice of $f$, so the space $K_{T, \chi}$ is free from the choice of the representatives $(M, \omega, T)$ in the symplectic diffeomorphism class $[M, \omega, T]$. 

\section{Canonical complex structure}

\subsection{The moduli stack $\K_{T, \chi}$}
\label{The moduli stack}

In this subsection, we prepare some standard terminologies around stack and introduce the moduli stack $\K_{T, \chi}$ and see its Artinianity (Definition \ref{Artin def}). 
Though it is a simple task to check the Artinianity under Corollary \ref{gentle is open condition}, the author believes that knowing its proof must help the readers to properly handle the moduli stack $\K_{T, \chi}$ in the next subsection. 
See Appendix A for generalities on stacks over the category of complex spaces, which we call $\Cpx$-stacks. 

We denote by $\Cpx$ the category of complex analytic spaces which are not assumed to be reduced nor irreducible. 
The set of holomorphic morphisms between complex spaces $U$ and $V$ will be denoted by $\Holo (U, V)$. 
We also denote by $\Cpx_S$ the category of complex spaces over $S$ and by $\Holo_S (U, V)$ its set of morphisms. 

Let $S$ be a complex space. 
A morphism of complex spaces $\pi :\M \to S$ is called a \textit{family of complex manifolds over $S$} if it is surjective, proper, smooth (equivalent to submersive when $\M$ and $S$ are complex manifolds) and has connected fibers. 
Recall that a smooth morphism between singular complex spaces is by definition a morphism of complex spaces $f :X \to Y$ with the following property: 
There are open coverings $\{ V_\alpha \subset Y \}_\alpha$, $\{ U_\alpha \subset X \}_\alpha$ of $Y$ and $X$, respectively, an indexed set of smooth complex manifolds $\{ W_\alpha \}_\alpha$ and an indexed set of biholomorphisms $\{ \phi_\alpha : V_\alpha \times W_\alpha \xrightarrow{\sim} U_\alpha \}_\alpha$ satisfying $f \circ \phi_\alpha = p_1$, where the morphism $p_1$ denotes the projection $V_\alpha \times W_\alpha \to V_\alpha$. 

Let $T \cong (\mathbb{C}^*)^k$ be an algebraic torus. 
A \textit{fibrewise $T$-action on a family $\pi :\M \to S$} is a holomorphic morphism $\alpha :\M \times T \to \M$ which satisfies the following conditions. 
\begin{enumerate}
\item (Fibrewise) The morphism $\alpha$ is an $S$-morphism. Namely, $\pi \circ \alpha = \pi \circ p_1 :\M \times T \to S$. 

\item (Group action) $\alpha \circ (\alpha \times \id_T) = \alpha \circ (\id_\M \times \mu) :\M \times T \times T \to \M$, where $\mu :T \times T \to T$ is the multiplication. 
\end{enumerate}


A fibrewise $T$-action on a family $\pi :\M \to S$ is called \textit{effective} if for every $s \in S$ the induced group morphism $T \to \mathrm{Aut} (\M_s)$ is injective. 
Finally, an \textit{$S$-family of complex $T$-manifolds} is defined to be a family of complex manifolds over $S$ together with an effective fibrewise $T$-action in the above sense. 

Now we introduce the stack $\K_{T, \chi}$. 
A $\Cpx$-stack (Definition \ref{cpx-stack}) is a category $\bm{\F}$ together with a functor $\bm{\F} \to \Cpx$ satisfying some natural geometric axioms. 
Here we give the category of our interest. 

\begin{defin}[category/stack $\K_{T, \chi}$]
\label{moduli stack}
Let $T$ be an algebraic torus and $\chi$ be a Fano character. 
Object in $\K_{T, \chi}$ is a family of complex $T$-manifolds $\pi :\M \to S$ whose fibers are \textit{gentle} (see Definition \ref{gentle Fano}) Fano $T$-manifolds whose Hilbert characters are $\chi$. 

A morphism from $\xi := (\pi :\M \to S, \alpha :\M \times T \to \M)$ to $\xi' := (\pi' :\M' \to S', \alpha' :\M' \times T \to \M')$ is a pair $(f, \phi)$ where $f :S \to S'$ is a morphism of complex spaces and $\phi :\M \to \M'$ is a $T$-equivariant morphism which is compatible with $\pi, \pi', f$ and induces a biholomorphism $\tilde{\phi} : \M \to f^* \M'$, where $f^* \M' := S \times_{f, S', \pi'} \M' \subset S \times \M'$. 
Here, the morphism $\phi$ is said to be $T$-equivariant if $\alpha' \circ (\phi \times \id_T) = \phi \circ \alpha :\M \times T \to \M'$. 
\end{defin}

Note that there may be no gentle Fano manifolds whose K-optimal Hilbert characters coincide with a given K-optimal Fano character $(T, \chi)$. 
In other words, there might be no object in the category $\K_{T, \chi}$ for a K-optimal Fano character $(T, \chi)$. 

We denote by $\K_{T, \chi}^s$ the subcategory of $\K_{T, \chi}$ consisting of families of K-stable Fano $T$-manifolds and by $\K (n)$ the disjoint union of the categories $\K_{T, \chi}$ where $(T, \chi)$ run all the K-optimal Fano characters of $n$-dimensional Fano manifolds. 
Both categories $\K_{T, \chi}$ and $\K^s_{T, \chi}$ are $\Cpx$-stacks with the forgetful functors $\K_{T, \chi}, \K^s_{T, \chi} \to \Cpx$ given by $(\pi: \mathcal{M} \to S) \mapsto S$. 
(See Lemma \ref{moduli category} and \ref{moduli can stack} in Appendix A. )

We denote by $\K_{T, \chi} (S)$ the subcategory consisting of objects $(\pi :\M \to S, \alpha)$ with fixed base $S$ and whose morphisms are pairs $(\id_S, \phi)$. 
For any two objects $\xi = (\M \to S, \alpha), \eta = (\M' \to S, \alpha') \in \K_{T, \chi} (S)$, we define the contravariant functor $\mathit{Isom}_S (\xi, \eta)$ from $\Cpx_S$ to $\Sets$ by mapping an object $f: U \to S$ to the set $\Hom_{\K_{T, \chi}} (f^* \xi, f^* \eta)$ and a morphism $h: (U, f) \to (V, g)$ to the map given by the canonical identifications $f^* \xi \cong h^* g^* \xi$, $f^* \eta \cong h^* g^* \eta$. 

The following definition is just an analogy of Artin algebraic stack. 

\begin{defin}[Artin $\Cpx$-stack]
\label{Artin def}
A $\Cpx$-stack $\mathbfcal{F}$ is called an \textit{Artin stack} if it satisfies the following two conditions. 
\begin{enumerate}
\item The diagonal morphism $\Delta :\mathbfcal{F} \to \mathbfcal{F} \times \mathbfcal{F}$ is representable by complex spaces. 

\item There exists a smooth surjective morphism $U \to \mathbfcal{F}$ from a complex space $U$. 
\end{enumerate}
Or equivalently, 
\begin{enumerate}
\item For every complex space $S$ and any $\xi, \eta \in \Obj (\mathbfcal{F})$, there exists a complex space $S_{\xi, \eta}$ and an isomorphism $\Holo_S (-, S_{\xi, \eta}) \cong \mathit{Isom}_S (\xi, \eta)$ of contravariant functors from $\Cpx_S$ to $\Sets$. 

\item There exists a morphism $U \to \mathbfcal{F}$ of fibred categories from a complex space $U$ such that the 2-fibre product $U \times_{\mathbfcal{F}} V$ is isomorphic as stacks to a complex space $f: V_U \to V$ smooth over $V$ with surjective $f$, for any morphism $V \to \mathbfcal{F}$ from any complex space $V$. 
\end{enumerate}
\end{defin}

The $2$-fibre product $U \times_{\mathbfcal{F}} V$ of stacks is always isomorphic to some complex space $W$ over $V$ and $U$ from the first condition (cf. \cite[Tag 045G]{SPA}). 

We frequently use the following representability result in our analytic category in the rest of this paper. 
The representability of the following moduli functor (and analogical functor in the schematic category) in both analytic/schematic category is well-known and the projectivity is also well-known in the schematic category. 
While the compactness of the Douady space is proved by Fujiki for class $\mathcal{C}$ space, it seems we must see the equivalence of the Douady space and the Hibert scheme, or must imitate the construction of Hilbert scheme in the analytic category, to show the projectivity of the Douady space when $X$ is projective. 
Our concerns here are just whether the analytification of the Hilbert scheme represents the moduli functor of the Douady space, and if we can make things $T$-equivariant. 
Though these concerns might be exhibited somewhere in literatures, we place a proof here since the author could not find an appropriate reference and the author believes it is better for the readers (including the author). 

\begin{prop}[$T$-Hilbert scheme]
Suppose $\pi: \mathcal{X} \to B$ is a holomorphic morphism of complex spaces and $\alpha :\mathcal{X} \times T \to \mathcal{X}$ is a holomorphic action with $f \circ \alpha = f \circ p_1$. 
Consider the functor $\mathit{Hilb}_{T, \pi} :\Cpx_B \to \Sets$ given by 
\[ \mathit{Hilb}_{T, \pi} (S) := \Big{\{} \mathcal{Z} \subset S \times_B \mathcal{X} ~|~ \begin{matrix} \mathcal{Z} \text{ is a  $T$-invariant closed analytic subspace } \\ \text{ and } \mathcal{Z} \to S \text{ is a flat family } \end{matrix} \Big{\}}. \]
Then there exists a Hausdorff complex space $\mathrm{Hilb}_{T, \pi}$ representing the functor $\mathit{Hilb}_{T, \pi}$. 
Moreover, suppose $B = \mathrm{pt}$ and $X = \mathcal{X}$ is projective with an ample line bundle $L$, then the subfunctor $\mathit{Hilb}_{T, \chi, X} \subset \mathit{Hilb}_{T, X}$ consisting of families $\mathcal{Z} \to S$ with a fixed Hilbert polynomial $\chi$ is representable by a projective complex space $\mathrm{Hilb}_{T, \chi, X} \subset \mathbb{C}P^N$. 
\end{prop}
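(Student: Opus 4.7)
The plan is to construct $\mathrm{Hilb}_{T, f}$ as the $T$-fixed locus of the relative Douady space of $\mathcal{X}$ over $B$, and then to specialize to the projective setting using GAGA.

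First I would invoke Douady's classical construction to obtain a Hausdorff complex space $\mathrm{Doua}_{\mathcal{X}/B} \to B$ representing the functor of flat families of compact analytic subspaces of the fibres of $f$, i.e.\ the version of $\mathit{Hilb}_{T,f}$ without the $T$-invariance condition. (A mild properness assumption on $f$, satisfied in all applications of interest, is implicit in this step.) Hausdorffness and functoriality are built into Douady's theorem.

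Next, the $T$-action $\alpha$ on $\mathcal{X}$ over $B$ induces by pushforward of the universal family a holomorphic action $\tilde{\alpha} : \mathrm{Doua}_{\mathcal{X}/B} \times T \to \mathrm{Doua}_{\mathcal{X}/B}$. An $S$-point of $\mathrm{Doua}_{\mathcal{X}/B}$ classifies a $T$-invariant family precisely when it is fixed by $\tilde{\alpha}$, i.e.\ when the two morphisms $\tilde{\alpha}$ and $p_1$ agree on it. Choosing elements $t_1, \ldots, t_N \in T$ generating a Zariski-dense subgroup, this fixed locus coincides with the intersection $\bigcap_i \mathrm{Fix}(\tilde{\alpha}_{t_i})$, where each $\mathrm{Fix}(\tilde{\alpha}_{t_i}) = (\tilde{\alpha}_{t_i}, \id)^{-1}(\Delta)$ is a closed analytic subspace since $\mathrm{Doua}_{\mathcal{X}/B}$ is Hausdorff. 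This intersection then represents $\mathit{Hilb}_{T, f}$ and is itself Hausdorff.

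For the projective version with $S = \mathrm{pt}$ and $X$ projective with ample line bundle $L$, I would restrict to the union of components of $\mathrm{Doua}_X$ with fixed Hilbert polynomial $\chi$ relative to $L$. By GAGA this component agrees with the analytification of Grothendieck's Hilbert scheme $\mathrm{Hilb}^{\chi}_X$, which is projective. Intersecting with the $T$-fixed locus constructed above yields $\mathrm{Hilb}_{T, \chi, X}$ as a closed analytic — hence by GAGA algebraic — subspace of a projective complex space, therefore itself projective.

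The main subtlety I expect is the verification that $\bigcap_i \mathrm{Fix}(\tilde{\alpha}_{t_i})$ captures the correct \emph{scheme-theoretic} $T$-fixed locus rather than merely its set-theoretic underlying space, since flatness in the moduli problem is sensitive to non-reduced structure. This rests on the fact that the stabilizer of any $S$-point of $\mathrm{Doua}_{\mathcal{X}/B}$ is a closed analytic subgroup of $T$, and any closed analytic subgroup of $T$ containing a Zariski-dense subset coincides with $T$; the non-reduced statement then follows by a standard infinitesimal argument on $T$-invariance, transferred to the moduli problem via the $T$-action on the Kodaira-Spencer map.
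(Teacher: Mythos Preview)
Your strategy coincides with the paper's at the top level: start from the relative Douady space $\mathrm{Hilb}_f$ (Douady for $B=\mathrm{pt}$, Pourcin for the relative case), note that $T$ acts on it, and identify $\mathrm{Hilb}_{T,f}$ with the $T$-fixed locus; then compare with Grothendieck's Hilbert scheme for the projective statement. The paper also invokes Fogarty's fixed point scheme result on the algebraic side and transfers projectivity via the equivalence of the two functors on Artin local $\mathbb{C}$-algebras, which is a slightly more careful variant of your GAGA step.

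Where you diverge is in the construction of the fixed locus. The paper works locally: at each $T$-fixed point $x\in\mathrm{Hilb}_f$ it invokes a $T_\mathbb{R}$-equivariant embedding of a neighbourhood into the Zariski tangent space $T_x\mathrm{Hilb}_f$, and then defines the fixed locus as the fibre product with the $T$-invariant linear subspace $W\subset T_x\mathrm{Hilb}_f$. The universal property is then immediate from the character decomposition of $T_x\mathrm{Hilb}_f$. Your route is global: pick finitely many $t_i$ generating a Zariski-dense subgroup and take $\bigcap_i(\tilde\alpha_{t_i},\id)^{-1}(\Delta)$. This is correct, but the point you yourself flag --- that this intersection has the right \emph{non-reduced} structure, i.e.\ that $t_i$-invariance of an $S$-valued point for all $i$ forces $T$-invariance even when $S$ is Artinian --- is exactly where the argument has to do real work. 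Your sketch via ``stabilizers are closed analytic subgroups'' handles reduced $S$; the infinitesimal step you gesture at via Kodaira--Spencer is really the observation that on the $T$-representation $T_x\mathrm{Hilb}_f$ a vector fixed by a Zariski-dense set of torus elements has vanishing components in every nontrivial character space, hence is $T$-fixed. But making that precise (and iterating to all Artinian thickenings) is precisely the paper's local linearization lemma. So your global construction is valid, but its scheme-theoretic justification is most cleanly supplied by the paper's local argument; the two approaches end up leaning on the same ingredient.
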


\begin{proof}
When $T$ is trivial, the existence of the Hausdorff complex space $\mathrm{Hilb}_\pi = \mathrm{Hilb}_{T, \pi}$ follows from \cite{Dou2} for $B = \mathrm{pt}$ case and from \cite{Pou} for general base $B$.  
The projectivity follows from Grothendieck's existence theorem of the Hilbert scheme, which represents an analogical functor defined on the category of schemes $\mathbf{Sch}_\mathbb{C}$, and the coincidence of the functors when they restricted to the subcategory $\mathbf{Def}_\mathbb{C}$ of the spectrum of finitely generated Artin algebras over $\mathbb{C}$, which is naturally embedded into both $\Cpx$ and $\mathbf{Sch}_\mathbb{C}$. 
Actually, a morphism $f: X \to Y$ between complex spaces is an isomorphism if and only if it induces an isomorphism of functors $h_X|_{\mathbf{Def}_\mathbb{C}} \to h_Y|_{\mathbf{Def}_\mathbb{C}}$. 

When $T$ is non-trivial, we can consider the action of $T$ on the set $\mathit{Hilb}_\pi (S)$ for each $S \in \Cpx_B$, whose fixed point subset is nothing but the subset $\mathit{Hilb}_{T, \pi} (S) \subset \mathit{Hilb}_\pi (S)$. 
Then the existence in the category of complex spaces follows from the following two general statements. 
\begin{enumerate}
\item Suppose $H$ is a (not necessarily reduced) complex space with $T$-action and $x \in H$ is a $T$-fixed point. Then there is a $T_\mathbb{R}$-invariant open neighbourhood $U \subset H$ of $x$ and a $T_\mathbb{R}$-equivariant closed embedding $\varphi :U \to V$ into an open neighbourhood $V \subset T_x H$ of the origin, where $T_\mathbb{R}$ denotes the maximal compact subgroup of $T$ and $T_x H$ denotes the Zariski tangent space (\cite[Subsection 2.2]{Akh}). 

\item Let $W \subset T_x H$ be the set of $T$-invariant points, which forms a $T$-invariant linear subspace. Then the complex space $U_T := U \times_V (W \cap V) \subset U$, considered as a closed subspace of $U$, enjoys the following universal property: for any holomorphic morphism $f: S \to H$ invariant under the $T$-action on $H$, the restricted holomorphic morphism $f|_{f^{-1} (U)} :f^{-1} (U) \to H$ holomorphically and uniquely factors through $U_T$. 
\end{enumerate}

On the other hand, the existence in the category of schemes follows from \cite{Fog2}. 
The rest of the proof is parallel to the first paragraph. 
\end{proof}

The proof of the next proposition is a routine for the readers familiar with Artin stack. 
We exhibit the proof for the others. 

\begin{prop}
\label{Artin stack}
The $\Cpx$-stacks $\K_{T, \chi}$ is an Artin $\Cpx$-stack. 
If $(T, \chi)$ is K-optimal, then $\K_{T, \chi}^s$ is also Artin and is an open sub-stack of $\K_{T, \chi}$. 
\end{prop}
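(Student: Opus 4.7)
The plan is to realize $\K_{T, \chi}$ as the stack quotient of an open subspace of a $T$-equivariant Hilbert scheme by the action of a reductive group, from which the Artin property follows essentially formally. First, by boundedness of smooth Fanos with fixed Hilbert polynomial, I fix an integer $\ell$ such that for every gentle Fano $T$-manifold $X$ with Hilbert character $\chi$, the sheaf $\mathcal{O}(-\ell K_X)$ is very ample with vanishing higher cohomology; let $W$ denote the $T$-representation whose character is $\chi_\ell$. Every such $X$ then admits a $T$-equivariant closed embedding $X \hookrightarrow \mathbb{P}(W^*)$, canonical up to the action of the reductive group $G := GL(W)^T$, the centralizer of $T$ in $GL(W)$.

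For the representability of the diagonal, given $\xi = (\M \to S, \alpha)$ and $\eta = (\mathcal{N} \to S, \beta)$ in $\K_{T, \chi}(S)$, a $T$-equivariant fiberwise isomorphism $\M \to \mathcal{N}$ is the same datum as a $T$-invariant closed subspace $\Gamma \subset \M \times_S \mathcal{N}$ flat over $S$ whose projection to each factor is a biholomorphism on fibers. The relative $T$-equivariant Hilbert scheme of $\M \times_S \mathcal{N} \to S$, produced by the invariant Hilbert scheme proposition above, contains an open subspace cutting out exactly such graphs, and this subspace represents $\mathit{Isom}_S(\xi, \eta)$.

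For the smooth atlas, apply the invariant Hilbert scheme to $\mathbb{P}(W^*)$ with Hilbert character induced by $\chi$, yielding a projective complex space $H$. Let $H^\circ \subset H$ be the locus where (a) the subscheme is smooth, (b) the $T$-action is effective, (c) the natural restriction $W \to H^0(-\ell K)$ on the fiber is a $T$-equivariant isomorphism, and (d) the fiber is gentle. Conditions (a)--(c) are open by standard semicontinuity for flat projective families, and (d) is open in the analytic topology by Corollary \ref{gentle is open condition}. The universal family on $H^\circ$ defines a morphism $H^\circ \to \K_{T, \chi}$, surjective by construction, and smooth because its 2-fibre product with any classifying morphism $V \to \K_{T, \chi}$ from a complex space $V$ classifying a family $\mathcal{Y} \to V$ is identified with the $G$-torsor over $V$ parametrizing $T$-equivariant trivializations of the locally free sheaf $(\pi_V)_* \mathcal{O}(-\ell K_{\mathcal{Y}/V})$.

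For the second claim, a gentle Fano $T$-manifold $X$ with K-optimal action is K-stable exactly when $\mathrm{Aut}_T^0(X) = T$ and $X$ admits a K\"ahler-Ricci soliton. The former is open in families by upper semi-continuity of the fiberwise dimension of the automorphism group scheme, which is representable by the diagonal result proved above. At a K-stable point $X$, the group $\mathrm{Aut}_T(X) = T$ acts trivially on the Kuranishi slice $B \subset H^1_T(X, \Theta)$ because $T$-invariant cohomology is pointwise $T$-fixed, so Proposition \ref{GITpolystable} forces every nearby deformation to be GIT-polystable and hence to admit a K\"ahler-Ricci soliton. Thus K-stability is an open condition on $\K_{T, \chi}$, and the open substack $\K_{T, \chi}^s$ inherits the Artin property. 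The main technical obstacle will be the careful treatment of openness and smoothness in the analytic rather than algebraic setting: the invariant Hilbert scheme and the pushforward $(\pi_V)_* \mathcal{O}(-\ell K_{\mathcal{Y}/V})$ must be controlled in $\Cpx$, where the coherent-sheaf formalism is more delicate than for schemes, and the openness of gentleness rests on the real-analytic input from Section 3 rather than any Zariski-constructibility statement.
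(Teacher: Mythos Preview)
Your argument for the Artin property of $\K_{T,\chi}$ is essentially identical to the paper's: representability of the diagonal via graphs inside the relative $T$-invariant Hilbert space, and a smooth atlas from the gentle locus in $\mathrm{Hilb}_{T,\mathbb{P}^N}$, with the $2$-fibre product over a test family identified locally as a principal bundle for the centralizer of $T$ (the paper writes $PGL_T$, you write $GL(W)^T$; same content).

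For the openness of $\K_{T,\chi}^s$ you take a genuinely different route. The paper simply invokes an external result (Li, \textit{Complex deformation of critical K\"ahler metrics}) asserting that K-stability is open in families. You instead argue internally: since $T$ acts trivially on $H^1_T(X,\Theta)$, the residual $K^c$-action on the Kuranishi slice has only closed orbits, so Proposition~\ref{GITpolystable} gives a K\"ahler--Ricci soliton on every nearby fibre; openness of $\mathrm{Aut}_T^0=T$ then comes from semicontinuity of the automorphism-scheme dimension. This is a nice self-contained argument and avoids the external citation. One small correction: K-stability only forces $\mathrm{Aut}_T^0(X)=T$, not $\mathrm{Aut}_T(X)=T$, so the acting group $K^c/T$ is a priori a finite group rather than trivial; your conclusion survives because finite-group orbits are still closed, but the sentence as written overstates the triviality of the action.
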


\begin{proof}
By considering the graphs $\mathcal{M} \times_{\phi, \mathcal{N}, \id} \mathcal{N} \subset \mathcal{M} \times \mathcal{N}$ of morphisms $\phi :\mathcal{M} \to \mathcal{N}$, the functor $\mathit{Isom}_S (\xi, \eta)$ is identified with a subfunctor of $\mathit{Hilb}_{T, \M \times \mathcal{N}/S}$. 
Then it is easy to see that $\mathit{Isom}_S (\xi, \eta)$ is representable by an open subspace of $\mathrm{Hilb}_{T, \M \times \mathcal{N}/S}$ (cf. \cite[5.6.2.]{FGA}). 

Next we construct a smooth surjective morphism $U \to \K_{T, \chi}$. 
Let us consider a uniform $T$-equivariant embedding of Fano manifolds in $\K_{T, \chi}$ into some fixed $\mathbb{C}P^N$. 
Then all Fano manifolds in $\K_{T, \chi}$ emerge in $\mathrm{Hilb}_{T, \mathbb{C}P^N}$. 
From Corollary \ref{gentle is open condition}, there is an open subset $U$ of $\mathrm{Hilb}_{T, \mathbb{C}P^N}$, in the real topology, such that the restricted universal family $\mathcal{U}|_U \to U$ exactly consists of gentle Fano manifolds in $\K_{T, \chi}$. 
Note the family $\mathcal{U}|_U \to U$ naturally carries a $T$-action and hence is considered as an object in $\K_{T, \chi}$. 
So we have the induced morphism $U \to \K_{T, \chi} :(f :S \to U) \mapsto (f^* \mathcal{U}|_U \to S)$, which is readily surjective from its construction. 
Consider a morphism $X \to \K_{T, \chi}$; it is equivalent to give a family of gentle Fano $T$-manifolds $\M \to X$. 
For a sufficiently large $m$, the direct image sheaf $\pi_* (\mathcal{O} (-m K_\M))|_{U_\alpha}$ becomes locally free. 
Take a covering $\U = \{ U_\alpha \}_\alpha$ of $X$ that trivializes the vector bundle $\pi_* (\mathcal{O} (-m K_\M))|_{U_\alpha}$ so that we can consider morphisms $U_\alpha \to U$ corresponding to trivializations of $\pi_* (\mathcal{O} (-m K_\M))|_{U_\alpha}$. 
There is a unique $PGL_T$-equivariant extension $U_\alpha \times PGL_T \to U$ of these morphisms. 
Then from the universality of the $2$-fibre product $U \times_{\K_{T, \chi}} U_\alpha$ , we get morphisms $U_\alpha \times PGL_T \to U \times_{\K_{T, \chi}} U_\alpha$. 
We have the inverse morphisms of these morphisms given as follows. 
Take an object $(S, \xi: S \to U, \eta: S \to U_\alpha, \phi: \xi^* \mathcal{U} \xrightarrow{\sim} \eta^* \M|_{U_\alpha})$ of $U \times_{\K_{T, \chi}} U_\alpha$. 
Since $\eta^* \M|_{U_\alpha}$ can be considered as being embedded in $S \times \mathbb{C}P^N$, the isomorphism $\phi$ corresponds to a morphism $\tilde{\phi}: S \to PGL_T$. 
Then we have a morphism $\eta \times \tilde{\phi}: S \to U_\alpha \times PGL_T$, which gives an object in $\Cpx_{U_\alpha \times PGL_T}$. 
Therefore $U \times_{\K_{T, \chi}} X \to X$ is locally written as $U_\alpha \times PGL_T \to U_\alpha$. 
So it is a smooth morphism. 

It is shown in \cite[Theorem 3.4]{H. Li} that K-stable Fano manifolds form an open subset in the parameter space of any family of complex manifolds, without introducing the K-stability of Fano $T$-manifolds. 
From the exactly same argument as above, we conclude that $\K^s_{T, \chi}$ is Artin and is an open sub-stack of $\K_{T, \chi}$. 
\end{proof}

In the above proof, the only non-routine part is Corollary \ref{gentle is open condition}, i.e. the openness of the subset consisting of gentle Fano manifolds in the parameter space of a family. 
Our method in Corollary \ref{gentle is open condition} cannot be applied to prove the Zariski openness. 
This is the reason why we must work in the category of complex spaces rather than the category of algebraic spaces, so that we cannot apply Alper's theory on good moduli spaces over the category of algebraic spaces, at least so far. 

\subsection{Main construction}

In this subsection, we prove our main theorem. 
First we prepare two general lemmas.

\begin{lem}
\label{lemma}
Let $K$ be a compact Lie group and $K^c$ be its complexification, $V$ be a representation of $K^c$ and $B \subset V$ be a $K$-invariant Stein open neighbourhood of the origin. 
Let $s \times t: R \to B \times B$ be the holomorphic groupoid obtained by pulling back the holomorphic action groupoid $a: V \times K^c \to V \times V :(v, k) \mapsto (v, vk)$ along the inclusion $B \times B \subset V \times V$.  
Then the following holds. 
\begin{enumerate}
\item The stack $[B/R]$ associated to the holomorphic groupoid $(B, R, s, t, c)$ as in Appendix A is isomorphic, induced by the inclusion $B \hookrightarrow B K^c$, to the quotient stack $[BK^c/K^c]$ as $\Cpx$-stacks, where $BK^c$ denotes the $K^c$-orbit $\{ bg \in V ~|~ b \in B, g \in K^c \}$ of $B$, which is $K^c$-invariant open. 

\item There is a natural morphism $[BK^c/K^c] \to BK^c \sslash K^c$ of $\Cpx$-stacks to the analytic GIT quotient $BK^c \sslash K^c$ which enjoys the following universal property: any morphism from the quotient stack $[BK^c/K^c]$ to any complex space $X$ uniquely factors through $BK^c \sslash K^c$. 
\end{enumerate}
\end{lem}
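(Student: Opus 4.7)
The plan for (1) is to show that the inclusion $\iota : B \hookrightarrow BK^c$ descends to an equivalence of quotient stacks. Composing $\iota$ with the atlas $BK^c \to [BK^c/K^c]$ yields a morphism $\bar{\iota} : B \to [BK^c/K^c]$, and one checks directly from the $2$-fibre product description that
\[ B \times_{[BK^c/K^c]} B \;\cong\; \{(b, g) \in B \times K^c : bg \in B\} \;=\; R. \]
Thus $\bar{\iota}$ factors through a morphism $[B/R] \to [BK^c/K^c]$. To promote this to an equivalence I would verify that $\bar{\iota}$ is a surjective submersion of $\Cpx$-stacks: essential surjectivity reduces to the statement that any holomorphic map $U \to BK^c$ from an open subset $U$ of a base $S$ may locally be modified by a $K^c$-valued section to land in $B$, which is immediate from openness of $B$ in $V$ together with the defining equality $BK^c = B \cdot K^c$.

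For (2) the plan is to invoke Heinzner's theory of analytic Hilbert quotients. Under the standing hypotheses---$B$ is a $K$-invariant Stein open neighbourhood of the origin in the finite-dimensional $K^c$-representation $V$---the saturation $BK^c$ is a $K^c$-invariant Stein open subset of $V$, and Heinzner's existence theorem produces a Stein complex space $BK^c \sslash K^c$ together with a $K^c$-invariant holomorphic surjection $\pi : BK^c \to BK^c \sslash K^c$ whose fibres are the equivalence classes of the relation $b \sim b' \iff \overline{bK^c} \cap \overline{b'K^c} \neq \emptyset$, and which is universal among $K^c$-invariant holomorphic maps from $BK^c$ to complex spaces. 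Via the $2$-Yoneda embedding of $\Cpx$ into $\Cpx$-stacks recalled in Appendix A, a morphism from $[BK^c/K^c]$ to a complex space $X$ is precisely a $K^c$-invariant holomorphic map $BK^c \to X$, so the universal factorization through $BK^c \sslash K^c$ is automatic.

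The main obstacle I anticipate lies in (1): although the $2$-fibre product identification is immediate, the descent step requires one to verify that the induced functor on $R$-torsors over arbitrary complex-analytic bases $S$---not merely over $\mathbb{C}$-points---is an equivalence of groupoids. This amounts to showing that every $K^c$-torsor over $S$ equipped with a $K^c$-equivariant map to $BK^c$ admits, locally on $S$, a reduction to an $R$-torsor landing in $B$, which in turn reduces to the pointwise openness argument above but must be phrased carefully within the fibred-category framework of Appendix A. Part (2) is purely formal once the correct version of the analytic Hilbert quotient is in hand, and the only care needed is to pin down a reference covering the present non-compact linear setting rather than the more commonly cited Stein case.
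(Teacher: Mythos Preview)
Your proposal is correct and matches the paper's approach closely. For (1), the paper phrases the argument as showing that $[BK^c/K^c]$ is a stackification of the prestack $[B/R]_p$---checking local essential surjectivity via exactly the openness-of-$B$ argument you describe, and checking that $\mathit{Isom}$ sheafifies---whereas you frame it via the $2$-fibre product $B \times_{[BK^c/K^c]} B \cong R$ and descent; these are equivalent formulations of the same verification, and the ``obstacle'' you anticipate is precisely the content the paper spells out. For (2), the paper also invokes Heinzner (together with Snow) and unwinds the correspondence between stack morphisms $[BK^c/K^c] \to X$ and $K^c$-invariant maps $BK^c \to X$ by local trivialization, just as you do; note that your worry about references is unfounded, since $BK^c$ is itself a reduced Stein space (this is part of Heinzner's complexification theory), so the standard Stein hypothesis is exactly what applies here.
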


\begin{proof}
We identify $[BK^c /K^c]$ with the $\Cpx$-stack $\llbracket BK^c /K^c \rrbracket$ in Example \ref{quotient stack}. 
Consider a morphism $\sigma$ from the fibred category $[B/R]_p$ to $[BK^c/K^c]$ sending an object $\xi: S \to X$ in $[B/R]_p$ to the object $(S, S \times K^c, a \circ (\xi \times \id))$ in $[BK^c/K^c]$ (cf. the description right after Example \ref{quotient stack}). 
Let $S$ be a complex space and $(S, P, \xi')$ be an object in $[BK^c/K^c] (S)$. 
Take a local trivialization $\{ P \cong U_\alpha \times K^c \}_\alpha$ of the principal $K^c$-bundle $P$ and consider the associated $K^c$-equivariant morphisms $\xi'_\alpha: U_\alpha \times K^c \to BK^c$. 
After taking smaller $U_\alpha$, we can find a holomorphic morphism $\xi_\alpha :U_\alpha \to B$ and a holomorphic morphism $g :U_\alpha \to K^c$ so that $\xi_\alpha (x) g (x) = \xi'_\alpha (x, e)$. 
It follows that the object $(U_\alpha, U_\alpha \times K^c, \xi'_\alpha)$ in $[BK^c/K^c]$ is isomorphic to $\sigma (\xi_\alpha) = (U_\alpha, U_\alpha \times K^c, a \circ ((\xi_\alpha g) \times \id_{K^c}))$. 
Moreover, it is easily seen that $\mathit{Isom}_{[B/R]_p, S} (\xi, \eta) \to \mathit{Isom}_{[BK^c/K^c], S} (\sigma (\xi), \sigma (\eta))$ is a sheafification of the functor $\mathit{Isom}_{[B/R]_p, S} (\xi, \eta): \Cpx_S \to \Sets$. 
It follows that $[BK^c/K^c]$ is a stackification of the fibred category $[B/R]_p$. 
Therefore, it is isomorphic to the stackification $[B/R]$ of $[B/R]_p$. 

Since $B$ is a reduced Stein space, $BK^c$ is also a reduced Stein space and there exists a categorical quotient $BK^c \sslash K^c$, which is also a reduced Stein space (see \cite{Heinzner}, \cite{Snow}). 
Take an object $(S, P, \xi)$ in $[BK^c/K^c]$ and a local trivialization $\{ P \cong U_\alpha \times K^c \}_\alpha$ of $P$. 
Then we have holomorphic morphisms $\tilde{\xi}_\alpha :U_\alpha \to U_\alpha \times K^c \to BK^c \to BK^c \sslash K^c$. 
Since $\xi_\alpha :U_\alpha \times K^c \to BK^c$ agree on the overlaps $U_\alpha \cap U_\beta$ up to the action of $K^c$, and $BK^c \to BK^c \sslash K^c$ is $K^c$-invariant, holomorphic morphisms $\tilde{\xi}_\alpha$ coincide on the overlaps $U_\alpha \cap U_\beta$ and define a holomorphic morphism $S \to BK^c \sslash K^c$, glued together. 
This construction gives the morphism $[BK^c/K^c] \to BK^c \sslash K^c$. 
The universal property follows from the fact that any $K^c$-invariant holomorphic morphism $BK^c \to X$ uniquely factors through $BK^c \sslash K^c$. 
\end{proof}

The complex space $BK^c \sslash K^c$ is moreover normal as it is an open subspace of the algebraic GIT quotient $V \sslash K^c$, which is normal whenever $V$ is normal (cf. \cite[Section 0.2]{MFK}). 

\begin{lem}
\label{equivariant trivialization}
Let $K$ be a compact Lie group, $B$ be a complex manifold with holomorphic $K$-action and $E \to B$ be a $K$-equivariant holomorphic vector bundle. 
Suppose $0 \in B$ is a fixed point of $K$-action. 
Since the fiber $E_0$ can be considered as $K$-representation, we can construct a $K$-equivariant holomorphic vector bundle $\underline{E_0}_{B,K} := B \times E_0$ whose action is given by $(b, v)k := (bk, vk)$. 
Then $E$ is $K$-equivariantly isomorphic to $\underline{E_0}_{B, K}$ on some neighbourhood of $0 \in B$. 
\end{lem}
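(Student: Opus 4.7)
The plan is to construct the equivariant trivialization by averaging a non-equivariant local trivialization over $K$ using the Haar measure, a standard technique in the equivariant setting that works because $K$ is compact.

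First I would produce a $K$-invariant open neighborhood of $0$ together with a (non-equivariant) holomorphic trivialization. Choose any open $U \subset B$ containing $0$ on which $E$ admits a holomorphic trivialization; by continuity of the action $B \times K \to B$ together with the tube lemma applied to the compact $\{0\} \times K$, there exists a smaller open $U_0 \ni 0$ with $U_0 \cdot K \subset U$, and then $B_0 := U_0 \cdot K \subset U$ is an open $K$-invariant neighborhood of $0$. Restrict the trivialization to get a holomorphic isomorphism of vector bundles $\phi: B_0 \times E_0 \to E|_{B_0}$ covering $\mathrm{id}_{B_0}$. After composing with $(\phi(0, \cdot))^{-1}$ on the $E_0$-factor, we may assume $\phi(0, v) = v$ for all $v \in E_0$.

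Next I would define the averaged trivialization
\[ \tilde{\phi}(b, v) := \int_K \phi(bk, vk) \cdot k^{-1} \, dk, \]
where $dk$ is the normalized Haar measure on $K$ and $\cdot k^{-1}$ denotes the action of $k^{-1}$ on $E$, sending the fiber $E_{bk}$ back to $E_b$; thus the integrand takes values in the finite-dimensional complex vector space $E_b$, and the integral is well-defined. $K$-equivariance, i.e. $\tilde{\phi}(bh, vh) = \tilde{\phi}(b, v) \cdot h$ for $h \in K$, follows by the change of variables $k \mapsto h^{-1}k$ and the invariance of $dk$. Holomorphicity of $\tilde{\phi}$ follows because the integrand depends holomorphically on $(b,v)$ and continuously on $k$: this can be verified either by checking that $\bar{\partial} \tilde{\phi}$ vanishes (differentiation under the integral is justified since $K$ is compact) or by applying Cauchy's integral formula fiberwise.

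Finally, at $b = 0$ the normalization $\phi(0, v) = v$ together with linearity of the $K$-action on the fiber $E_0$ gives $\tilde{\phi}(0, v) = \int_K (vk)k^{-1} dk = v$, so $\tilde{\phi}$ restricts to the identity on $\{0\} \times E_0$. Since being a fiberwise isomorphism is an open condition on $B_0$, $\tilde{\phi}$ is an isomorphism on $B' \times E_0$ for some open neighborhood $B' \subset B_0$ of $0$; shrinking $B'$ to $\bigcap_{k \in K} B' \cdot k^{-1}$ (equivalently applying the tube lemma once more) yields a $K$-invariant such neighborhood, and $\tilde{\phi}|_{B' \times E_0}$ is the desired $K$-equivariant holomorphic isomorphism $\underline{E_0}_{B',K} \xrightarrow{\sim} E|_{B'}$. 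The only non-routine point is checking holomorphicity of the Haar integral of a holomorphic family, but this reduces to commuting $\bar{\partial}$ past a compactly supported integral, which poses no genuine obstacle here.
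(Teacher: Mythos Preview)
Your proof is correct. The approaches differ: the paper passes to the holomorphic frame bundle $\pi: P \to B$ of $E$, defines a twisted right $K$-action on $P$ for which a chosen frame $p_0 \in P_0$ is a fixed point, and then invokes the existence of a local $K$-equivariant holomorphic section $\sigma: B \to P$ through $p_0$; the resulting isomorphism $B \times GL(r) \to P$ of principal bundles yields the equivariant trivialization of $E$. You instead work directly on the vector bundle and average a non-equivariant trivialization over $K$ with respect to the Haar measure.

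The two routes are close in spirit: the paper's key step (a $K$-equivariant holomorphic submersion admits a local equivariant section through a fixed point over a fixed point) is itself most commonly proved either by Haar averaging of a non-equivariant section or by Bochner linearization, so the averaging is only pushed one layer down. Your argument has the virtue of being entirely self-contained and elementary. The frame-bundle formulation is slightly more structural and makes the passage from principal-bundle to vector-bundle isomorphism transparent, but neither approach has a real advantage over the other for this lemma.
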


\begin{proof}
Consider the frame bundle $\pi :P \to B$ of the holomorphic vector bundle $E$ and fix a point $p_0 \in \pi^{-1} (0) \subset P$. 
We have a right holomorphic action of $K$ on $P$ defined by 
\[ pk :\mathbb{C}^r \xrightarrow{p_0} E_0 \xrightarrow{k^{-1}} E_0 \xrightarrow{p_0^{-1}} \mathbb{C}^r \xrightarrow{p} E_b \xrightarrow{k} E_{bk} \]
for $p :\mathbb{C}^r \xrightarrow{\sim} E_b \in P$ and $k \in K$. 
The point $p_0 \in P$ is a fixed point of this action and $\pi :P \to B$ is a $K$-equivariant submersion. 
So we have a $K$-equivariant holomorphic section $\sigma :B \to P$ with $\sigma (0) = p_0$ by taking smaller $B$ if necessary. 
Now the map $B \times GL (r) \to P :(b, g) \mapsto \sigma (b) g$ gives a $K$-equivariant isomorphism of principal $GL (r)$-bundles and hence induces a $K$-equivariant isomorphism of the adjoint bundles $\underline{\mathbb{C}^r}_{B, K} \xrightarrow{\tilde{p}_0} \underline{E_0}_{B, K}$. 
\end{proof}

Let $X$ be a Fano manifold with a K\"ahler--Ricci soliton $(g, \xi')$, $T \cong (\mathbb{C}^*)^{\times k}$ be the algebraic torus generated by $\xi'$, $K := \mathrm{Isom}_{\xi'} (X, g)$ be the (possibly non-connected) isometry group preserving $\xi'$ and $H^1_T (X, \Theta) \subset H^1 (X, \Theta)$ denote the $T$-invariant subspace of the first cohomology of the tangent sheaf. 
Recall in Proposition \ref{local slice} and in Corollary \ref{gentle is open condition} we obtained a family $\varpi: \mathcal{X} \to B$ of gentle Fano $T$-manifolds over a small ball $B \subset H^1_T (X, \Theta)$. 
Moreover, $\mathcal{X}$ admits a holomorphic $K$-action so that $\varpi$ is $K$-equivariant with respect to the linear action on $B$, and a $T$-equivariant biholomorphism $\mathcal{X}_0 \cong X$. 
This in particular defines a morphism $B \to \K_{T, \chi}$ of Artin $\Cpx$-stacks. 
Now we prove the following. 

\begin{prop}
\label{etale}
Let $X$ be a Fano $T$-manifold with K\"ahler--Ricci soliton and the Hilbert character $(T, \chi)$. 
Then by taking smaller $B$ if necessary, the morphism $B \to \K_{T, \chi}$ factors through an \'etale morphism $[B/R] \to \K_{T, \chi}$ with finite fibres, where $R$ is defined as in Lemma \ref{lemma}. 
In other words, for any morphism $S \to \K_{T, \chi}$ of $\Cpx$-stacks, there is an \'etale morphism $S' \to S$ of complex spaces with finite fibres and an $S$-isomorphism of $\Cpx$-stacks from $S'$ to the 2-base change $S \times_{\K_{T, \chi}} [B/R] \to S$. 
\end{prop}

\begin{proof}
The family $\varpi :\mathcal{X} \to B$ in Proposition \ref{local slice} defines a morphism $B \to \K_{T, \chi}$. 
Now we will show that this morphism factors through the quotient morphism $B \to [B/R]$. 
It is equivalent to the existence of a natural $T$-equivariant $R$-biholomorphism $s^* \mathcal{X} \xrightarrow{\sim} t^* \mathcal{X}$. 
We prove this by relating our analytic family to an algebraic family \textit{as groupoids}. 
As a consequence, the induced morphism $[B/R] \to \K_{T, \chi}$ is shown to be \'etale with finite fibres. 

Since $\varpi :\mathcal{X} \to B$ is a $K$-equivariant family and $\mathcal{O} (-K_{\mathcal{X}/B})$ is relatively ample, we can find a large $\ell \in \mathbb{N}$ so that the direct image sheaf $\varpi_* \mathcal{O} (-\ell K_{\mathcal{X}/B})$ is $K$-equivariantly isomorphic to the sheaf of sections of a $K$-equivariant holomorphic vector bundle $E$. 
Lemma \ref{equivariant trivialization} shows that there is a $K$-equivariant isomorphism $\underline{H^0 (X, \mathcal{O} (-\ell K_X))}_{B, K} \cong E$, so we have a $K$-equivariant $B$-embedding $\mathcal{X} \hookrightarrow B \times \mathbb{P}^N$, where we identify $\mathbb{P}^N$ with $\mathbb{P} (H^0 (X, \mathcal{O} (-\ell K_X))^*)$. 
From the universality of $\mathrm{Hilb}_{T, \mathbb{P}^N}$, we obtain a $K$-equivariant holomorphic morphism $h :B \to \mathrm{Hilb}_{T,\mathbb{P}^N}$ together with an isomorphism $h^* \mathcal{U} \cong \mathcal{X}$. 

From the Euler sequence
\[ 0 \to \mathcal{O}_{\mathbb{P}^N} \to \mathcal{O} (1)^{\oplus N+1} \to \Theta_{\mathbb{P}^N} \to 0, \]
we obtain $H^1_T (X, i^* \Theta_{\mathbb{P}^N}) = 0$ and $H^0_T (X, i^* \Theta_{\mathbb{P}^N}) \cong H^0_T (\mathbb{P}^N, \Theta_{\mathbb{P}^N})$. 
Combining with this with the following exact sequence 
\begin{align*}
0 & \to H^0_T (X, \Theta_X) \to H^0_T (X, i^* \Theta_{\mathbb{P}^N}) \to H^0_T (X, N_{X/\mathbb{P}^N}) 
\\ & \to H^1_T (X, \Theta_X) \to H^1_T (X, i^* \Theta_{\mathbb{P}^N}) \to H^1_T (X, N_{X/\mathbb{P}^N}) \to 0
\end{align*}
shows that the sequence 
\[ 0 \to H^0_T (X, \Theta_X) \to H^0_T (\mathbb{P}^N, \Theta_{\mathbb{P}^N}) \to H^0_T (X, N_{X/\mathbb{P}^N}) \to H^1_T (X, \Theta_X) \to 0 \]
is exact and $H^1_T (X, N_{X/\mathbb{P}^N})$ vanishes. 
So we conclude that $\mathrm{Hilb}_{T, \mathbb{P}^N}$ is smooth at $[X] = h (0) \in \mathrm{Hilb}_{T, \mathbb{P}^N}$, whose tangent space is given by $H^0_T (X, N_{X/\mathbb{P}^N})$ (cf. \cite[subsection 6.4.]{FGA}). 

Now we work in the category of algebraic spaces in the blink of an eye. 
Since $\mathrm{Aut}_T (X)$ is reductive, we can apply the \'etale slice theorem \cite[Theorem 2.1]{AHR}, which generalizes the Luna's \'etale slice theorem to non-affine cases, and then obtain the following: a smooth affine $\mathrm{Aut}_T (X)$-variety $(W, w)$, an $\mathrm{Aut}_T (X)$-equivariant morphism $\phi :(W, w) \to (\mathrm{Hilb}_{T, \mathbb{P}^N}, h (0))$ which induces a $PGL_T (N+1)$-equivariant \'etale morphism $W \times^{\mathrm{Aut}_T (X)} PGL_T \to \mathrm{Hilb}_T$, and a $\mathrm{Aut}_T (X)$-equivariant \'etale morphism $(W, w) \to (H^1_T (X, \Theta), 0)$. 
\[ \begin{tikzcd} W \times^{\mathrm{Aut}_T (X)} PGL_T \ar{d}[swap]{\text{\'etale}} \ar[hookleftarrow]{r} & (W, w) \ar{dr}{\text{\'etale}} \ar{dl}[swap]{\phi} & ~ \\ (\mathrm{Hilb}_T, h(0)) & ~ & (H^1_T (X, \Theta_X), 0) \end{tikzcd} \]
Note that the quotient morphism $W \times PGL_T \to W \times^{K^c} PGL_T$ is a $K^c$-equivariant submersion, under the right action of $K^c = \mathrm{Aut}_T (X)$ on $W \times PGL_T$ defined by $(x, g_0)g_1 = (xg_1, g_1^{-1} g_0 g_1)$ and on $W \times^{K^c} PGL_T$ defined by $[x, g_0]g_1 = [x, g_0 g_1]$.
Since the point $(w, e) \in W \times PGL_T$ is fixed by this $K^c$-action, we obtain a $K$-equivariant holomorphic section $\sigma$ from a neighbourhood of $[w, e] \in W \times^{K^c} PGL_T$ with $\sigma ([w, e]) = (w,e)$. 
Therefore, taking smaller $B$ if necessary, we can assume that $h :B \to \mathrm{Hilb}_T$ factors through $W \times PGL_T \to \mathrm{Hilb}_T$. 
We can moreover assume that the composed morphism $(B, 0) \to (W, w)$ of a lifting $B \to W \times PGL_T$ passing through $(w, e)$ with the projection to the first factor is $K$-equivariant holomorphic open embedding. 
Note that we do not know whether this morphism $(B, 0) \to (W, w)$ is a section of the \'etale morphism $(W, w) \to (H^1_T (X, \Theta_X), 0)$. 

Set $\mathrm{Hilb}^\circ_T := h (B) \cdot PGL_T \subset \mathrm{Hilb}_T$. 
Since $\varpi :\mathcal{X} \to B$ is a complete family at any point $b \in B$, $\mathrm{Hilb}^\circ_T$ is an open subset. 
The restricted $PGL_T$-equivariant universal family $\mathcal{U}^\circ \to \mathrm{Hilb}_T^\circ$ parametrizes only gentle Fano $T$-manifolds and hence induces an open embedding $[\mathrm{Hilb}^\circ_T/PGL_T] \to \K_{T, \chi}$. 
We fix this subset $\mathrm{Hilb}^\circ$ while we later take smaller $B$. 

It follows from \cite[Proposition 5.1]{Snow} that we have a $K^c$-invariant open neighbourhood $W^\circ \subset \phi^{-1} (\mathrm{Hilb}_T^\circ)$ of $w$ so that the restriction $W^\circ \to H^1_T (X, \Theta)$ is an $K^c$-invariant open embedding. 
Taking smaller $B$, we have the restricted morphism $B \to W^\circ \times PGL_T$. 
Let $g :B \to PGL_T$ be the composition of this morphism with the projection to the second factor. 
Denote by $h_\circ :B \to \mathrm{Hilb}^\circ$ the composition $\alpha_{\mathrm{Hilb}_T} \circ (h \times g^{-1}) :B \to \mathrm{Hilb}^\circ_T \times PGL_T \to \mathrm{Hilb}^\circ_T$. 
Then the holomorphic morphism $h_\circ$ is $K$-equivariant and factors through the $K^c$-equivariant holomorphic morphism $W^\circ \to \mathrm{Hilb}^\circ_T$. 
Moreover, we have an induced isomorphism $h_\circ^* \mathcal{U} \cong \mathcal{X}$. 

Since the differential of the induced morphism $B \to W^\circ$ at $0 \in B$ is a $K$-equivariant isomorphism, we can assume that $B \to W^\circ$ is a $K$-equivariant open embedding. 
Let us denote by $\beta :B \to H^1_T (X, \Theta)$ the composition of this morphism $B \to W^\circ$ with $W^\circ \to H^1_T (X, \Theta)$. 
Then $\beta$ is also a $K$-equivariant open embedding. 

Note that both $BK^c \subset H^1_T (X, \Theta)$ and $\beta (B) K^c \subset H^1_T (X, \Theta)$ are the complexification, in the sense of \cite{Heinzner}, of $B$ with respect to the action of $K$ . 
From the uniqueness of the complexification, there is a $K^c$-equivariant biholomorphism $\gamma :B K^c \to \beta (B)K^c$ which is compatible with the $K$-equivariant morphisms $B \subset BK^c$ and $\beta :B \to \beta (B) K^c$. 

Now we have the following cartesian diagrams
\[ \begin{tikzcd} R \ar{r} \ar{d}[swap]{s \times t} & B K^c \times K^c \ar{r}{\hookrightarrow \boxtimes \id} \ar{d}{\alpha_{BK^c}} & H^1_T \times K^c \ar{d}{\alpha_{H^1_T}} \\ B \times B \ar{r}{\hookrightarrow^{\boxtimes 2}} & BK^c \times BK^c \ar{r}{\hookrightarrow^{\boxtimes 2}} & H^1_T \times H^1_T \end{tikzcd} \]
\[ \begin{tikzcd} R_W \ar{r} \ar{d}[swap]{s_W \times t_W} & \beta(B) K^c \times K^c \ar{r}{\hookrightarrow \boxtimes \id} \ar{d}{\alpha_{\beta(B)K^c}} & H^1_T \times K^c \ar{d}{\alpha_{H^1_T}} \\ B \times B \ar{r}{\beta^{\boxtimes 2}} & \beta(B)K^c \times \beta(B)K^c \ar{r}{\hookrightarrow^{\boxtimes 2}} & H^1_T \times H^1_T \end{tikzcd} \]
Since $\gamma :BK^c \xrightarrow{\sim} \beta(B) K^c$ is $K^c$-equivariant, it satisfies $\gamma^{\boxtimes 2} \circ \alpha_{BK^c} = \alpha_{\beta (B) K^c} \circ (\gamma \boxtimes \id)$ and hence gives an isomorphism of the groupoids $(p_1 \circ \alpha_{BK^c}, p_2 \circ \alpha_{BK^c}) :BK^c \times K^c \to BK^c$ and $(p_1 \circ \alpha_{\beta (B) K^c}, p_2 \circ \alpha_{\beta (B) K^c}) :\beta (B)K^c \times K^c \to \beta (B) K^c$. 
It follows that there is an isomorphism $(\rho, \id_B): (R, B) \xrightarrow{\sim} (R_W, B)$ of the groupoids $s \times t :R \to B \times B$ and $s_W \times t_W :R_W \to B \times B$. 
Hence there is an isomorphism $[B/R] \cong [B/R_W]$ of the quotient $\Cpx$-stacks. 

On the other hand, since $\beta :B \to H^1_T$ factors through the $K^c$-equivariant open embedding $W^\circ \to H^1_T$, the groupoids $s_W \times t_W :R_W \to B \times B$ also appears in the following cartesian diagram. 
\[ \begin{tikzcd} R_W \ar{r} \ar{d}[swap]{s_W \times t_W} & W^\circ \times K^c \ar{d}{\alpha_{W^\circ}} \\ B \times B \ar{r}{\beta^{\boxtimes 2}} & W^\circ \times W^\circ \end{tikzcd} \]
Therefore we obtain an open embedding of the quotient $\Cpx$-stacks $[B/R_W] \hookrightarrow [W^\circ/K^c]$. 

Moreover, the \'etale finite morphism $W^\circ \times^{K^c} PGL_T \to \mathrm{Hilb}^\circ$ induces an \'etale finite morphism of the quotient $\Cpx$-stacks $[W^\circ/K^c] \cong [W^\circ \times^{K^c} PGL_T/PGL_T] \to [\mathrm{Hilb}_T^\circ/PGL_T]$. 

Now combining all, we obtain an \'etale morphism $[B/R] \to \K_{T, \chi}$ with finite fibers, which obviously commutes with $B \to \K_{T, \chi}$ and $B \to [B/R]$ from its construction. 
\end{proof}

Here is our main theorem. 

\begin{thm}
\label{main theorem}
There exists a Hausdorff complex analytic space $\mathcal{K}_{T, \chi}$, which we call \textit{the moduli space of Fano manifolds with K\"ahler--Ricci solitons}, and a morphism $\K_{T, \chi} \to \mathcal{K}_{T, \chi}$ from the Artin $\Cpx$-stack $\K_{T, \chi}$ such that any morphism from $\K_{T, \chi}$ to any complex space $X$ holomorphically and uniquely factors through $\mathcal{K}_{T, \chi}$. 
Moreover, this moduli space enjoys the following property. 
\begin{enumerate}
\item The complex space $\mathcal{K}_{T, \chi}$ is normal and homeomorphic to the space $K_{T, \chi}$ in Definition \ref{support of moduli}. We will prove this in Proposition \ref{GH} after constructing a moduli space with the following property. 

\item The morphism $\K_{T, \chi} \to \mathcal{K}_{T, \chi}$ induces a bijection $|\K_{T, \chi}|/\sim ~\to \mathcal{K}_{T, \chi}$ where $|\K_{T, \chi}|$ denotes the set of points of the stack $\K_{T, \chi}$, which is canonically identified with the set of the isomorphism classes of gentle Fano manifolds, and $[X] \sim [X']$ if the central fibers of the gentle degenerations of gentle Fano manifolds $X$ and $X'$ coincide. 
\end{enumerate}
\end{thm}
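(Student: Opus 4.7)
The plan is to construct $\mathcal{K}_{T,\chi}$ by gluing local analytic GIT charts produced from Proposition \ref{etale}, using the \emph{virtual} stack picture for the holomorphic transition data and the \emph{real} moment map picture of Section 3 to identify the result with the topological space $K_{T,\chi}$. Concretely, for each K-polystable gentle Fano $T$-manifold $X_0$ (which represents a generic point by the very definition of gentle), Proposition \ref{local slice} together with Proposition \ref{etale} supplies an étale finite morphism $[B/R] \to \K_{T,\chi}$ from the Kuranishi atlas at $X_0$; Lemma \ref{lemma} identifies $[B/R]$ with $[BK^c/K^c]$ (for $K^c = \mathrm{Aut}_T(X_0)$, by Corollary \ref{automorphism}) and produces a canonical morphism to the normal analytic GIT quotient $V_{X_0} := BK^c \sslash K^c$ satisfying the required local universal property. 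Corollary \ref{Kempf-Ness} identifies the topological space $V_{X_0}$ with $\nu^{-1}(0)/K$, and Corollary \ref{local-global} identifies this with an open neighbourhood of $[X_0]$ in $K_{T,\chi}$. Since every isomorphism class in $K_{T,\chi}$ arises as a fibre of some such family $\varpi : \mathcal{X} \to B$, these open sets cover $K_{T,\chi}$.

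Next I shall glue the charts. Given $[Y] \in V_{X_0} \cap V_{X_0'}$, the $2$-fibre product $B \times_{\K_{T,\chi}} B'$ is a complex space étale over both $B$ and $B'$ (by the representability established in Proposition \ref{Artin stack}); restricting to suitable $K$- and $K'$-saturated open subsets and applying the universal property of Lemma \ref{lemma}(2) twice produces a biholomorphism between open subsets of $V_{X_0}$ and $V_{X_0'}$ covering the identity on the topological overlap. Proposition \ref{gentle degeneration} is essential here: it guarantees that the K-polystable degeneration of a gentle Fano is unique up to $T$-equivariant biholomorphism, so that if $Y$ lies in both $V_{X_0}$ and $V_{X_0'}$ then its unique K-polystable degeneration $Y_0$ appears as a K-polystable central fibre in both Kuranishi atlases, providing a common reference point around which to match the charts. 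Cocycle compatibility on triple overlaps follows by the same universality argument. We thus obtain a Hausdorff (by Proposition \ref{Hausdorff quotient}) normal complex space $\mathcal{K}_{T,\chi}$, together with a morphism $\K_{T,\chi} \to \mathcal{K}_{T,\chi}$ glued from the local universal morphisms $[B/R] \to V_{X_0}$.

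The universal property is then verified étale-locally: any morphism $\K_{T,\chi} \to X$ to a complex space pulls back along each atlas $B \to \K_{T,\chi}$ to a $K^c$-invariant holomorphic morphism, which by Lemma \ref{lemma}(2) factors uniquely through $V_{X_0}$, and these local factorizations agree on overlaps by the same universality. Statement (1) is immediate from the construction. Statement (2) follows from Proposition \ref{gentle degeneration} combined with the observation that within a single chart $V_{X_0}$, two points of $BK^c$ map to the same point of $V_{X_0}$ iff their $K^c$-orbit closures meet, iff (by Kempf-Ness and Proposition \ref{GITpolystable} in its corrected form) they share a common K-polystable degeneration. The main obstacle is precisely the gluing step: as stressed in the introduction, holomorphicity of the transition maps lives on the virtual stack side and is not visible from $K_{T,\chi}$ alone, whereas bijectivity onto open subsets of $K_{T,\chi}$ lives on the real side; Proposition \ref{gentle degeneration} is the bridge that makes the two pictures compatible, and without it the transition data would depend on arbitrary choices of K-polystable representatives.
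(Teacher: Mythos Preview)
Your overall architecture matches the paper's: local charts from Proposition~\ref{etale} and Lemma~\ref{lemma}, topological identification with $K_{T,\chi}$ via Corollaries~\ref{local-global} and~\ref{Kempf-Ness}, and Proposition~\ref{gentle degeneration} as the bridge.  Two points of execution differ from the paper, and in your write-up they are handled too loosely to count as a complete proof.

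First, the gluing mechanism.  You propose to use the $2$-fibre product $B\times_{\K_{T,\chi}} B'$ and ``apply Lemma~\ref{lemma}(2) twice''.  But Lemma~\ref{lemma}(2) gives a universal property for morphisms \emph{from} $[BK^c/K^c]$, so to obtain a holomorphic map $V_{X_0}\to V_{X_0'}$ you need a morphism $[B/R]\to V_{X_0'}$, and the fibre product only gives you an \'etale cover of $B$, not such a morphism.  The paper resolves this differently: it first proves that the morphism $[B/R]\to BK^c\sslash K^c$ \emph{descends} to the image open substack $\mathrm{Im}[B/R]\subset\K_{T,\chi}$, i.e.\ that there is a well-defined morphism $\mathrm{Im}[B/R]\to BK^c\sslash K^c$.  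This descent is exactly where Proposition~\ref{gentle degeneration} enters (and the paper treats the non-reduced base case separately here, which you do not mention).  Once this is done, for a point in the overlap one takes a \emph{third} chart $[B''/R'']$ with $\mathrm{Im}[B''/R'']\subset\mathrm{Im}[B/R]\cap\mathrm{Im}[B'/R']$; composing the inclusion with the descended morphism gives $\mathrm{Im}[B''/R'']\to BK^c\sslash K^c$, and the universality of $\mathrm{Im}[B''/R'']\to B''{K''}^c\sslash{K''}^c$ then yields the transition map.

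Second, you assert the transition maps are biholomorphisms without saying why.  The paper's argument is that the map $B''{K''}^c\sslash{K''}^c\to BK^c\sslash K^c$ is, as a map of sets, compatible with the inclusions $i'',i$ into $K(M,\omega,T)^2_k$, hence a holomorphic homeomorphism onto an open set; since analytic GIT quotients of smooth affines are \emph{normal}, a holomorphic homeomorphism between them is automatically a biholomorphism.  This normality step is essential and should be made explicit.  (Incidentally, the paper remarks that Corollary~\ref{automorphism} is never used in the proof; your invocation of it is harmless but unnecessary.)
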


As we have already noted, the logical order of our argument is ``Proposition \ref{etale} $\Rightarrow$ Proposition \ref{gentle degeneration} $\Rightarrow$ Theorem \ref{main theorem}''. 
Here we apply Proposition \ref{gentle degeneration} before we prove it. 
The author recommend the readers who prefer following the proof in the logical order to read section \ref{promised proof} firstly. 

\begin{proof}
The image of the \'etale morphism $[B/ R] \to \K_{T, \chi}$ defines an open sub-stack $\mathrm{Im} [B/R] \subset \K_{T, \chi}$. 
Object in $\mathrm{Im} [B/R]$ is an object $(\pi: \mathcal{M} \to S, \alpha)$ in $\K_{T, \chi}$ whose fibers are gentle Fano $T$-manifolds appearing in the Kuranishi family $\varpi :\mathcal{X} \to B$. 
Firstly, we prove that the morphism $[B/R] \to BK^c \sslash K^c$ in Lemma \ref{lemma} factors through $\mathrm{Im} [B/R]$. 
We construct a morphism $\mathrm{Im} [B/R] \to BK^c \sslash K^c$. 
Take an object $(\pi: \mathcal{M} \to S, \alpha)$ in $\mathrm{Im} [B/R]$ and consider the following cartesian diagram. 
\[ \begin{tikzcd} \tilde{S} \ar{r} \ar{d} & {[}B/R{]} \ar{d} \\ S \ar{r} & \mathrm{Im} [B/R] \end{tikzcd} \]
Since $[B/R] \to \mathrm{Im} [B/R]$ is \'etale, $\tilde{S} \to S$ is also \'etale. 
Then we can take local slices $s_\alpha :U_\alpha \to \tilde{S}$ of $\tilde{S} \to S$ so that $\{ U_\alpha \}_\alpha$ covers $S$ and obtain morphisms $U_\alpha \to [B/R]$, hence also obtain holomorphic morphisms $\phi_\alpha :U_\alpha \to BK^c \sslash K^c$. 
From its construction, we know that $x \in U_\alpha$ maps to the point $\phi_\alpha (x) \in BK^c \sslash K^c = \nu^{-1} (0)/K$ representing the central fiber of some gentle degeneration of $\mathcal{M}_x$, which is unique due to Proposition \ref{gentle degeneration}. 
So if $S$ is reduced, these morphisms $\phi_\alpha :U_\alpha \to BK^c \sslash K^c$ coincide on the overlaps, hence they give a holomorphic morphism $\phi :S \to BK^c \sslash K^c$, glued together. 
When $S$ is not reduced, since any Fano $T$-manifold has reduced semi-universal family, we can locally extend the morphism $S \to \mathrm{Im} [B/R]$ to some $T \to \mathrm{Im} [B/R]$ with reduced $T$. 
Take a point $x \in U_\alpha \cap U_\beta$ and a small neighbourhood $U$ of $x$ so that $U \to \mathrm{Im} [B/R]$ extends to some $T \to \mathrm{Im} [B/R]$ with reduced domain $T \supset U$ containing $U$ as a closed subspace. 
Taking smaller $T$, sections $s_\alpha|_U$, $s_\beta|_U$ extend to some sections $t_\alpha, t_\beta :T \to \tilde{T}$, where $\tilde{T}$ is given similar as $\tilde{S}$. 
Therefore the morphisms $\phi_\alpha|_U, \phi_\beta|_U : U \to BK^c \sslash K^c$ extend to some morphisms $\psi_\alpha, \psi_\beta :T \to BK^c \sslash K^c$. 
As we have already observed that $\psi_\alpha$ and $\psi_\beta$ coincide, the restrictions $\phi_\alpha|_U = \psi_\alpha \circ i_U, \phi_\beta|_U = \psi_\beta \circ i_U : U \to BK^c \sslash K^c$ also coincide as holomorphic morphisms, where $i_U: U \to T$ is the closed immersion. 
Therefore we obtain a morphism $\phi: S \to BK^c \sslash K^c$ by gluing the morphisms $\phi_\alpha :U_\alpha \to BK^c \sslash K^c$. 
It is easy to see that this construction is functorial, so we obtain the expected morphism $\mathrm{Im} [B/R] \to BK^c \sslash K^c$, which inherits the universal property from the morphism $[B/R] \to BK^c \sslash K^c$. 

Now consider two morphisms $[B/R] \to \K_{T, \chi}$ and $[B'/R'] \to \K_{T, \chi}$ with different domains. 
We also consider two maps $i :BK^c \sslash K^c \to L^2_k K (M, \omega, T)$ and $i' :B'{K'}^c \sslash {K'}^c \to L^2_k K (M, \omega, T)$. 
For any point $x \in L^2_k K (M, \omega, T)$ in the overlaps $\mathrm{Im} i \cap \mathrm{Im} i'$, we can find another \'etale morphism $[B''/R''] \to \K_{T, \chi}$ and a map $i'' :B'' {K''}^c \sslash {K''}^c \to L^2_k K (M, \omega, T)$ with $i'' ([0]) = x$ so that $\mathrm{Im} [B''/R''] \subset \mathrm{Im} [B/R] \cap \mathrm{Im} [B'/R'] \subset \K_{T, \chi}$ and $\mathrm{Im} i'' \subset \mathrm{Im} i \cap \mathrm{Im} i'$. 
Especially we have a natural inclusion morphism $\mathrm{Im} [B''/R''] \to \mathrm{Im} [B/R]$ and hence obtain a morphism $\mathrm{Im} [B''/R''] \to BK^c \sslash K^c$. 
From the universality of the morphism $\mathrm{Im} [B''/R''] \to B'' {K''}^c \sslash {K''}^c$, we obtain a holomorphic morphism $B''{K''}^c \sslash {K''}^c \to BK^c \sslash K^c$. 
This holomorphic morphism is clearly compatible with $i''$ and $i$ as maps, so especially it is a homeomorphism onto its open image, after taking smaller $B''$ if necessary. 
Since the analytic GIT quotient spaces $BK^c \sslash K^c$ are normal, this holomorphic homeomorphism is actually a biholomorphism. 
This argument shows that the coordinate change ${i'}^{-1} \circ i$ is biholomorphic. 
Thus we obtain a complex space $\mathcal{K} (M, \omega, T)$ by giving a complex structure on the topological space $L^2_k K (M, \omega, T)$ defined from the above holomorphic charts. 
Set $\mathcal{K}_{T, \chi} := \coprod_{\chi (M, \omega, T) = \chi} \mathcal{K} (M, \omega, T)$. 
Clearly from its construction, there is a morphism $\K_{T, \chi} \to \mathcal{K}_{T, \chi}$ enjoying the universal property. 
It follows from section 3 and Proposition \ref{gentle degeneration} that this morphism induces a bijection $|\K_{T, \chi}| /\sim~ \to \mathcal{K}_{T, \chi}$. 
We prove in the next subsection that the space $\mathcal{K} (M, \omega, T)$, which is homeomorphic to $L^2_k K (M, \omega, T)$, is actually homeomorphic to $K (M, \omega, T)$. 
\end{proof}

\begin{cor}
The $\Cpx$-stack $\K^s_{T, \chi}$ admits a tame moduli space $\K^s_{T, \chi} \to \mathcal{K}^s_{T, \chi}$ (see \cite[Definition 7.1]{Alp1}), with the same universal property as the moduli space $\K_{T, \chi} \to \mathcal{K}_{T, \chi}$. 
Moreover, the complex space $\mathcal{K}^s_{T, \chi}$ is a Hausdorff complex orbifold. 
\end{cor}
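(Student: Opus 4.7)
The plan is to obtain $\mathcal{K}^s_{T, \chi}$ as an open subspace of $\mathcal{K}_{T, \chi}$ and then check that the local charts constructed in the proof of Theorem \ref{main theorem} are actually orbifold charts when one restricts to the K-stable locus. The openness of $\K^s_{T, \chi} \subset \K_{T, \chi}$ is exactly Proposition \ref{Artin stack}, so it suffices to set $\mathcal{K}^s_{T, \chi}$ to be the image of $|\K^s_{T, \chi}|$ in $\mathcal{K}_{T, \chi}$; the universal property of $\K^s_{T, \chi} \to \mathcal{K}^s_{T, \chi}$ then follows formally from the one for $\K_{T, \chi} \to \mathcal{K}_{T, \chi}$, since any morphism from the open substack to a complex space extends uniquely by universality after choosing local presentations.

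The key structural observation is that for a K-stable Fano $T$-manifold $X$ we have $\mathrm{Aut}_T^0(X) = T$, so the group $K^c = \mathrm{Aut}_T(X)$ appearing in Proposition \ref{local slice} is an extension
\[ 1 \to T \to K^c \to \Gamma \to 1 \]
with $\Gamma$ a finite group. Since the Kuranishi ball $B \subset H^1_T(X, \Theta)$ parametrises $T$-equivariant deformations, the subgroup $T \subset K^c$ acts trivially on $B$, and the $K^c$-action on $B$ factors through the finite group $\Gamma$. Consequently, the local chart provided by Corollary \ref{Kempf-Ness} and Lemma \ref{lemma} simplifies as
\[ \nu^{-1}(0)/K \;\approx\; BK^c \sslash K^c \;=\; B/\Gamma, \]
which is a standard orbifold chart of dimension $\dim H^1_T(X, \Theta)$ with orbifold group $\Gamma$.

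Next I would show that these charts glue into a complex orbifold atlas. The coordinate changes between two such charts $B/\Gamma$ and $B'/\Gamma'$ were already shown to be biholomorphisms of the underlying analytic spaces in the proof of Theorem \ref{main theorem}; the extra orbifold compatibility is automatic because the étale morphisms $[B/R] \to \K^s_{T, \chi}$ from Proposition \ref{etale} respect stabiliser groups, and both $\Gamma$ and $\Gamma'$ are canonically identified with $\mathrm{Aut}_T(X)/T$ for the corresponding point $[X] \in \mathcal{K}^s_{T, \chi}$. Hausdorffness is inherited from $\mathcal{K}_{T, \chi}$ via Theorem \ref{main theorem}(1) and Proposition \ref{Hausdorff quotient}.

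For the tame moduli space assertion, I would argue locally: at each point of $\mathcal{K}^s_{T, \chi}$ the stack $\K^s_{T, \chi}$ is étale-locally equivalent to the quotient stack $[B/K^c]$, and because $T$ acts trivially on $B$ this further reduces to $[B/\Gamma]$ with $\Gamma$ finite. For a finite group quotient the good-moduli morphism $[B/\Gamma] \to B/\Gamma$ trivially satisfies the cohomological-affineness and preservation-of-invariants axioms of Alper, and since $\Gamma$ is linearly reductive in characteristic zero this is in fact a tame moduli space in the sense of \cite[Definition 7.1]{Alp1}. The main obstacle I anticipate is the absence of an established analytic analogue of Alper's theory noted in the introduction, but this is avoided here because the reduction to finite group quotients is elementary and handled directly, so no general gluing theory is needed.
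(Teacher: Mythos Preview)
Your approach is essentially the same as the paper's: both reduce to the observation that for K-stable $X$ one has $\mathrm{Aut}_T^0(X)=T$, so the local model is a finite quotient $B/\Gamma$ with $\Gamma=\mathrm{Aut}_T(X)/T$, giving the orbifold charts $H^1_T(X,\Theta)\sslash(\mathrm{Aut}_T(X)/T)$.

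There is one step you gloss over that the paper makes explicit. You write that $\K^s_{T,\chi}$ is ``\'etale-locally equivalent'' to $[B/K^c]$ and proceed as if this were an open substack presentation; but Proposition~\ref{etale} only gives an \'etale morphism $[B/R]\to\K_{T,\chi}$ with finite fibres, not an open embedding. The paper singles out precisely this point: in the K-stable locus the morphism $[B/R]\to\K^s_{T,\chi}$ \emph{is} an open embedding, and this is deduced from the injectivity of $|[B/R]|\to BK^c\sslash K^c\approx\nu^{-1}(0)/K\to K_{T,\chi}$ together with the bijection $|\K^s_{T,\chi}|\to K_{T,\chi}$ (every K-stable $X$ is already K-polystable, so the map from isomorphism classes to solitons is a bijection here, unlike in the gentle case). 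Without this, your claim that the tame/good moduli property holds ``locally'' does not transfer to $\K^s_{T,\chi}$ itself, since an \'etale cover by stacks admitting good moduli spaces does not formally yield a good moduli space on the target---exactly the gluing problem the introduction warns about. Once the open embedding is established, your argument goes through.

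A minor inaccuracy: $[B/K^c]$ does not literally reduce to $[B/\Gamma]$, since the trivially-acting torus $T$ makes it a $T$-gerbe over $[B/\Gamma]$. This does not affect the good moduli space (which is still $B/\Gamma$, as $K^c$ is reductive), but it is worth stating correctly.
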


This corollary follows from the construction in the proof of the main theorem, the openness property of the K-stable Fano $T$-manifolds and the fact that $[B/R] \to \K_{T, \chi}^s$ is an open embedding in this case, which is an easy consequence of the injectivity of the map $|[B/R]| \to BK^c \sslash K^c \approx \nu^{-1} (0)/K \to K_{T, \chi}$ and the bijection $|\K^s_{T, \chi}| \to K_{T, \chi}$. 
The orbifold coordinates are given by open neighbourhoods of the origin in the spaces $H^1_T (X, \Theta) \sslash (\mathrm{Aut}_T (X)/T)$. 
We can also consider a separated smooth Deligne-Mumford $\Cpx$-stack $\K^*_{T, \chi}$ associated to the $\Cpx$-stack $\K^s_{T, \chi}$.

\subsection{Consistency}

In the previous section, we constructed a complex analytic space structure on the spaces $L^2_k K (M, \omega, T) = (\mathcal{S}^\intg_\xi)^{-1} (0)^2_k /\Ham_T (M, \omega)^2_{k+1}$ and proved that it has a certain universality independent of $k$, which is described in terms of the stack $\K_{T, \chi}$. 
Since the universality determines a complex space uniquely up to biholomorphisms, the complex spaces $(\mathcal{S}^\intg_\xi)^{-1} (0)^2_k /\Ham_T (M, \omega)^2_{k+1}$ are all canonically biholomorphic to each other. 
In particular, we deduce that they are all homeomorphic through the following natural maps
\[ \mathcal{I}_{l, k} :(\mathcal{S}^\intg_\xi)^{-1} (0)^2_l /\mathrm{Ham}_T (M, \omega)^2_{l+1} \to (\mathcal{S}^\intg_\xi)^{-1} (0)^2_k/\mathrm{Ham}_T (M, \omega)^2_{k+1} :[J] \mapsto [J]. \]

Now we show the continuous map 
\[ \mathcal{I}_k :(\mathcal{S}^\intg_\xi)^{-1} (0)/\Ham_T (M, \omega) \to (\mathcal{S}^\intg_\xi)^{-1} (0)^2_k /\Ham_T (M, \omega)^2_{k+1} : [J] \mapsto [J] \]
is also homeomorphic, using that $\mathcal{I}_{l, k}$ is homeomorphic. 

In the proof of the following proposition, we hope to apply an $L^2_k$-version of Newlander--Nirenberg theorem. 
Unfortunately, the author could not find in literatures a precise $L^2_k$-version of Newlander--Nirenberg theorem for our purpose, but only find the reference \cite{NW}. 
As the Newlander--Nirenberg type theorem in \cite{NW} losses some regularity, we make use of the above fact obtained from the universality of our moduli space. 

\begin{prop}
\label{homeo}
The continuous map $\mathcal{I}_k$ is a homeomorphism. 
\end{prop}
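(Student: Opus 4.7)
The plan is to reduce everything to the slice charts $\nu^{-1}(0)/K$ constructed in Proposition \ref{local slice}, which simultaneously parametrize open neighborhoods in both the smooth quotient $K (M,\omega,T)$ and the Sobolev quotient $K(M,\omega,T)^2_k$. Since $\mathcal{I}_k$ is obviously continuous (the inclusion $C^\infty \hookrightarrow L^2_k$ pushes down to the quotients), it suffices to prove $\mathcal{I}_k$ is bijective and open.

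The first preliminary step is to upgrade the slice map $\mathfrak{J}:B\to \mathcal{J}_T(M,\omega)^2_k$ of Proposition \ref{local slice} so that each $\mathfrak{J}(b)$ is a genuinely \emph{smooth} integrable complex structure. In the proof of that proposition one has $\mathfrak{J}(b)=g_b^{\ast}\mathfrak{J}'(b)$ with $\mathfrak{J}'(b)$ smooth (it is pulled back from a projective embedding) and $g_b$ an $L^2_{k+2}$-Hamiltonian diffeomorphism obtained by the implicit function theorem. The constraint $s_\xi(\mathfrak{J}(b))\in\mathfrak{k}\subset C^\infty$ together with Newlander--Nirenberg gives an elliptic system on the pair $(\mathfrak{J}(b),g_b)$, and the standard bootstrap — identical in spirit to the one carried out after Proposition \ref{Hausdorff quotient} to prove properness of $a^2_k$ — produces a smooth lift, so I may and will assume $\mathfrak{J}(b)\in\mathcal{J}_T^\circ(M,\omega)$ for all $b\in B$.

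The second step is the smooth analogue of Corollary \ref{local-global}: the natural map $\nu^{-1}(0)/K\to K(M,\omega,T)$ is a homeomorphism onto an open subset. Injectivity goes through exactly as in Corollary \ref{local-global}, replacing $\Ham_T(M,\omega)^2_{k+1}$ by $\Ham_T(M,\omega)$ throughout and invoking the smooth properness Proposition \ref{Hausdorff quotient}. Continuity is trivial. For local surjectivity, Kuranishi's theorem in the smooth category provides, for every smooth integrable $J$ close to $J_0$, a point $b\in B$ with $(M,J)\cong(M,\mathfrak{J}(b))$; if moreover $J\in(\mathcal{S}_\xi^\circ)^{-1}(0)$ then $\mathcal{X}_b$ admits a K\"ahler--Ricci soliton, so by Proposition \ref{GITpolystable} the orbit $bK^c$ meets $\nu^{-1}(0)$, and we can take $b\in\nu^{-1}(0)$.

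Given these, bijectivity of $\mathcal{I}_k$ is immediate. Surjectivity: any $[J]\in K(M,\omega,T)^2_k$ lies in some chart by Corollary \ref{local-global}, i.e.\ $[J]=[\mathfrak{J}(b)]$, and smoothness of $\mathfrak{J}(b)$ gives a preimage in $K(M,\omega,T)$. Injectivity: if smooth $J_1,J_2$ satisfy $J_2=\phi^{\ast}J_1$ with $\phi\in\Ham_T(M,\omega)^2_{k+1}$, then $\phi^{\ast}g_{J_1}=g_{J_2}$ is a metric isometry between smooth K\"ahler manifolds; $\phi$ is then a harmonic map with $L^2_{k-1}$-coefficients and $L^2_{k+1}$-regular data, and the iterated elliptic estimate already used in the proof of properness of $a^2_k$ improves $\phi$ to $C^\infty$. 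Openness follows from the commutative diagram
\[
\nu^{-1}(0)/K\hookrightarrow K(M,\omega,T)\xrightarrow{\mathcal{I}_k}K(M,\omega,T)^2_k
\]
in which both the composite and the first arrow are homeomorphisms onto open subsets (by Corollary \ref{local-global} and the smooth analogue just established), so $\mathcal{I}_k$ is a local homeomorphism, and being a continuous bijective local homeomorphism between Hausdorff spaces, a homeomorphism. The main obstacle is the smoothness upgrade of $\mathfrak{J}(b)$ in step one; it is routine elliptic bootstrap but must be run carefully because the Hamiltonian gauge lives only in $L^2_{k+2}$ a priori.
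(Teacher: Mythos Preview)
Your approach is genuinely different from the paper's and would be more self-contained if it worked, but there are two real gaps.

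First, the regularity upgrade in step one is not the routine bootstrap you suggest. The condition $s_\xi(\mathfrak{J}(b))\in\mathfrak{k}$ is a single scalar constraint on the tensor $\mathfrak{J}(b)$; together with integrability it does not form a determined elliptic system for $\mathfrak{J}(b)$ or for $g_b$. The analogy with the harmonic-map bootstrap after Proposition~\ref{Hausdorff quotient} is misleading: there the diffeomorphism was an isometry between two \emph{fixed} metrics of known regularity, whereas here the gauge $g_b$ enters through the time-$1$ flow of a $\phi$-dependent vector field, a nonlocal dependence that does not reduce to a local PDE on $g_b$. The paper instead handles regularity for a single $J\in(\mathcal{S}_\xi^\circ)^{-1}(0)^2_k$ via Nijenhuis--Woolf \cite{NW} (to straighten $J$ to a smooth complex structure by a $C^{m,\alpha/n}$ diffeomorphism), then Moser's trick (to correct back to a symplectomorphism), and finally elliptic regularity of the K\"ahler--Ricci soliton equation in the resulting smooth holomorphic coordinates.

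Second, and more seriously, even granting pointwise smoothness of each $\mathfrak{J}(b)$, the continuity of $\nu^{-1}(0)/K\to K(M,\omega,T)$ in step two is \emph{not} trivial: the implicit function theorem only gives $b\mapsto\mathfrak{J}(b)$ continuous into $L^2_k$, and you need $C^\infty$-continuity (up to gauge) to land in $K(M,\omega,T)$. But this is precisely the continuity of $\mathcal{I}_k^{-1}$ restricted to the chart, so the argument becomes circular at this point. The paper sidesteps this by first invoking the universality of $\mathcal{K}_{T,\chi}$ (established just before the proposition) to conclude that the maps $\mathcal{I}_{l,k}$ between different Sobolev-level quotients are already homeomorphisms, and then running a diagonal argument in $l$: given $J_n\to J_\infty$ in $L^2_k$, one finds $\phi_{n,l}$ with $\phi_{n,l}^*J_n\to J_\infty$ in $L^2_l$ for each $l$, perturbs each $\phi_{n,l}$ (using density of $\Ham_T(M,\omega)$ in $\Ham_T(M,\omega)^2_{l+1}$) so that $\phi_{n,l}^*J_n$ becomes smooth, and extracts a diagonal subsequence converging in $C^\infty$.
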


\begin{proof}
Take two elements $J, J' \in (\mathcal{S}^\intg_\xi)^{-1} (0)$ and suppose there is a $L^2_{k+1}$-regular map $\phi \in \Ham_T (M, \omega)^2_{k+1}$ such that $\phi^* J = J'$. 
Then $\phi$ is $C^\infty$-smooth by Myers-Steenrod theorem. 
This shows that $\mathcal{I}_k$ is injective. 

Next we show the surjectivity. 
It is sufficient to show that for any $J \in (\mathcal{S}^\intg_\xi)^{-1} (0)^2_k$, there is a $L^2_{k+1}$-regular map $\phi \in \Ham_T (M, \omega)^2_{k+1}$ such that $\phi^* J \in (\mathcal{S}^\intg_\xi)^{-1} (0)$. 
Take a large integer $m \ge 2$ and $l$ so that $L^2_l \subset C^{m-1, \alpha} \subset C^{m-1} \subset L^2_{k+1}$. 
Since $\mathcal{I}_{l, k}$ is a homeomorphism, we can find a $L^2_{k+1}$-regular map $\phi_0 \in \Ham_T (M, \omega)^2_{k+1}$ so that $\phi_0^* J \in (\mathcal{S}^\intg_\xi)^{-1} (0)^2_l$. 
Then it follows from \cite{NW} that there is a $C^{m, \alpha/n}$-smooth diffeomorphism $\phi_1 :M' \to M$ such that $\phi_1^* \phi^*_0 J$ is a $C^\infty$-smooth integrable complex structure, where on the other hand $\phi_1^* \phi_0^* \omega$ and $\phi^*_1 \phi_0^* g_J$ is only $C^{m-1, \alpha/n}$-regular, in particular, $L^2_{k+1}$-regular. 
We can choose a $C^\infty$-smooth diffeomorphism $\phi_2 :M \to M'$, which we can additionally suppose that it is sufficiently close to $\phi_1^{-1}$ in $C^m$-topology. 
Note that $\phi_0 \circ \phi_1 \circ \phi_2$ is sufficiently close to $\phi_0$ in $L^2_{k+2}$-topology. 
The pull-back metric $\phi^*_2 (\phi_0 \circ \phi_1)^* g_J$ is a $L^2_{k+1}$-regular metric which is a K\"ahler--Ricci soliton with respect to $C^\infty$-smooth integrable complex structure $\phi^*_2 (\phi_0 \circ \phi_1)^* J$. 
The elliptic regularity argument shows that $(\phi_0 \circ \phi_1 \circ \phi_2)^* g_J$ is in fact $C^\infty$-smooth. 
Hence $(\phi_0 \circ \phi_1 \circ \phi_2)^* \omega$ is also $C^\infty$-smooth.
Since we further assume that $(\phi_0 \circ \phi_1 \circ \phi_2)^* \omega$ is close to $\phi^*_0 \omega = \omega$ in $L^2_{k+1}$-topology, both $C^\infty$-smooth symplectic form $\omega, (\phi_0 \circ \phi_1 \circ \phi_2)^* \omega$ have the same cohomology classes and $\omega_t := t\omega + (1-t) (\phi_0 \circ \phi_1 \circ \phi_2)^* \omega$ is non-degenerate for any $t \in [0,1]$. 
From Moser's theorem, we obtain a $C^\infty$-smooth diffeomorphism $\phi_3$ satisfying $\phi_3^* (\phi_0 \circ \phi_1 \circ \phi_2)^* \omega = \omega$, which is close to $\id_M$ in $L^2_{k+1}$-topology as in the proof of Proposition \ref{local slice}. 
Now we have obtained the expected $L^2_{k+1}$-regular map $\phi := \phi_0 \circ \phi_1 \circ \phi_2 \circ \phi_3$. 
From the construction, we know $\phi$ can be taken sufficiently close to $\phi_0$ in $L^2_{k+1}$-topology. 

Finally we prove that $\mathcal{I}_k$ is actually a homeomorphism. 
Take a convergent sequence $J_n \to J_\infty \in (\mathcal{S}^\intg_\xi)^{-1} (0)^2_k$. 
It suffices to show that there are elements $\phi_n, \phi_\infty \in \Ham_T (M, \omega)^2_{k+1}$ such that $\phi^*_n J_n, \phi_\infty^* J_\infty$ belong to $(\mathcal{S}^\intg_\xi)^{-1} (0)$ and the sequence $\phi^*_n J_n$ converges to $\phi^*_\infty J_\infty$ in the $C^\infty$-topology, by taking a subsequence if necessary (thanks to the injectivity of $\mathcal{I}_k$). 
Since $\mathcal{I}_k$ is surjective, we can find an element $\phi_\infty \in \Ham_T (M, \omega)^2_{k+1}$ so that $\phi_\infty^* J_\infty$ is $C^\infty$-smooth and hence there is no loss of generality in supposing $C^\infty$-smoothness of $J_\infty$ from the beginning. 
Since $\mathcal{I}_{l, k}$ is a homeomorphism, we can find a sequence $\phi_{n,l} \in \Ham_T (M, \omega)^2_{k+1}$ so that $\phi_{n, l}^* J_n \in (\mathcal{S}^\intg_\xi)^{-1} (0)^2_l$ and $\phi_{n, l}^* J_n$ converges to $J_\infty$ in the $L^2_l$-topology. 
We define a set
\[ \Sigma_l (J) := \{ \phi \in \mathrm{Ham}_T (M, \omega)^2_{l+1} ~|~ \phi^* J \in (\mathcal{S}^\intg_\xi)^{-1} (0) \} \]
for $J \in (\mathcal{S}^\intg_\xi)^{-1} (0)^2_l$. 
Since $\mathcal{I}_l$ is surjective, $\Sigma_l (J)$ is a non-empty set. 
Moreover, the density of $\Ham_T (M, \omega) \subset \Ham_T (M, \omega)^2_{l+1}$, which we can deduce from Weinstein's tubular neighbourhood theorem, shows that $\Sigma_l (J) \subset \Ham_T (M, \omega)^2_{l+1}$ is also dense. 
Therefore we can perturb $\phi_{n, l}$, as small as $n$ goes to the infinity, so that $\phi_{n, l}^* J_n$ are $C^\infty$-smooth and preserve the $L^2_l$-convergence $\phi_{n, l}^* J_n \to J_\infty \in (\mathcal{S}^\intg_\xi)^{-1} (0)^2_l$. 
Now we can proceed to the diagonal argument with respect to $(n, l)$ and conclude that a subsequence $\phi_{n_l, l}^* J_{n_l}$ converges to $J_\infty$ in $C^\infty$-topology. 
\end{proof}

There is another topological space consisting of biholomorphism classes of Fano manifolds with K\"ahler--Ricci solitons, which is considered in \cite{PSS}. 
\[ \mathcal{KR}_{GH} (n, F) := \Big{\{} [M, J, g, \xi'] ~\Big{|}~ \begin{matrix} (M, J, g, \xi') \text{ is a Fano manifold $(M, J)$ with } \\ \text{a K\"ahler--Ricci soliton $(g, \xi')$ and } \int_M |\xi'|_g^2 \omega^n \le F. \end{matrix} \Big{\}} \]
In \cite{PSS}, a topological compactification $\overline{\mathcal{KR}_{GH} (n, F)}$ of this space is considered in regard to the `complexified' Gromov--Hausdorff convergence. 
It was open whether the space $\mathcal{KR}_{GH} (n, F)$ is stable for large $F$, which was expected in \cite{PSS}. 
As we have the equality $\mathrm{Fut} (\xi') = 2 \int_M |\xi'|_g^2 \omega^n$ for K\"ahler--Ricci soliton $(g, \xi')$, this is equivalent to say that the invariants $\mathrm{Fut} (\xi')$ are uniformly bounded from above for $n$-dimensional Fano manifolds with K\"ahler--Ricci solitons. 

We show that this invariant is actually bounded, not only for Fano manifolds with K\"ahler--Ricci solitons, but also for all $n$-dimensional Fano $T$-manifolds with the maximal $K$-optimal action. 
We also compare $\mathcal{KR}_{GH} (n, F)$ with our $K (M, \omega, T)$ and $K_{T, \chi}$, which a priori have different topologies. 

\begin{prop}
\label{GH}
Set
\[ \mathcal{KR}_{GH} (n) := \Big{\{} [M, J, g, \xi'] ~\Big{|}~ \begin{matrix} (M, J, g, \xi') \text{ is a Fano manifold $(M, J)$ } \\ \text{ with a K\"ahler--Ricci soliton $(g, \xi')$. } \end{matrix} \Big{\}}. \]
Then $\mathcal{KR}_{GH} (n) = \mathcal{KR}_{GH} (n, F)$ for large $F$ and the map
\[ K (M, \omega, T) \to \mathcal{KR}_{GH} (n) :[J] \mapsto [M, J, g_J] \]
gives a homeomorphism onto a clopen (closed and open) subset of $\mathcal{KR}_{GH} (n)$, for any $2n$-dimensional symplectic Fano manifold $(M, \omega)$ with K-optimal $T$-action. 
\end{prop}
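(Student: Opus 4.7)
The plan breaks into four pieces: the uniform Futaki bound, well-definedness and continuity of the map, its injectivity together with openness of the image, and finally closedness of the image.

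First I would establish that $\mathrm{Fut}(\xi') = 2\int_M|\xi'|_g^2 \omega^n$ is uniformly bounded over $[M,J,g,\xi'] \in \mathcal{KR}_{GH}(n)$. The strategy is to invoke boundedness of $n$-dimensional Fano manifolds (Birkar): there exist $m,N,d$ depending only on $n$ such that every $n$-dimensional Fano manifold embeds into $\mathbb{P}^N$ via $|-mK_X|$ as a subvariety of degree at most $d$, and $\mathrm{vol}(X,\omega) = (2\pi)^n(-K_X)^n$ is uniformly bounded. Under such embeddings, the soliton vector $\xi'$ is a K-optimal element in $\mathfrak{pgl}(N+1)$, and the K-optimal vector is determined by the Hilbert character $\chi$, which takes only finitely many values compatible with the bounded degree. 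This controls $|\xi'|$ and hence $\int_M|\xi'|^2\omega^n$. Thus $\mathcal{KR}_{GH}(n) = \mathcal{KR}_{GH}(n,F)$ for some $F = F(n)$.

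Next, the map $\Phi : K(M,\omega,T) \to \mathcal{KR}_{GH}(n)$, $[J] \mapsto [M,J,g_J,\xi'_J]$, is well-defined by Proposition \ref{isomorphism class} and continuous, since $C^\infty$-convergence of integrable $J_n \in \mathcal{J}^\circ_T(M,\omega)$ yields smooth convergence of $(g_{J_n},\xi'_{J_n})$ and hence $J$-enhanced Gromov-Hausdorff convergence. For injectivity, if $(M,J,g_J) \cong (M,J',g_{J'})$ via a biholomorphism $\psi$, then by uniqueness of K\"ahler-Ricci soliton (Theorem of \cite{TZ1, TZ2}) and $T$-canonicity of the K-optimal torus, after composing with an element of $\mathrm{Aut}_T(M,J')$ we may take $\psi$ to be $T$-equivariant and satisfy $\psi^*g_{J'} = g_J$; such a $\psi$ lies in $\Ham_T(M,\omega)$, proving $[J]=[J']$.

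For openness of the image, I would use the local slice constructed in Proposition \ref{local slice}: around any $[J_0] \in K(M,\omega,T)$, the slice $\mathfrak{J} : B \to \mathcal{J}^\circ_T(M,\omega)$ together with Corollary \ref{local-global} gives a neighborhood homeomorphic to $\nu^{-1}(0)/K$. A Fano manifold with K\"ahler-Ricci soliton that is $J$-enhanced-GH close to $(M,J_0,g_{J_0})$ is, by Cheeger-Colding-type regularity in the presence of Ricci bounds (uniform in our smooth setting), smoothly close, hence isomorphic to $(M,\mathfrak{J}(b))$ for some $b \in B \cap \nu^{-1}(0)$ by the Kuranishi completeness used in Corollary \ref{local-global}. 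This simultaneously shows that $\Phi$ is a local homeomorphism onto its image, and therefore a homeomorphism onto the open image.

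For closedness, suppose $[M,J_n,g_{J_n}] \to [X_\infty,J_\infty,g_\infty,\xi'_\infty] \in \mathcal{KR}_{GH}(n)$. Using the $\epsilon$-regularity for K\"ahler-Ricci solitons together with the smoothness of all approximants and the PSS compactification, $X_\infty$ is a smooth Fano manifold with K\"ahler-Ricci soliton. The equivariant Hilbert character is locally constant in such families, so the K-optimal torus of $X_\infty$ has the same character $\chi$ as $T$; using equivariant Moser to match symplectic structures and torus actions with the fixed $(M,\omega,T)$, there is some $J_\infty \in \mathcal{J}^\circ_T(M,\omega)$ with $\Phi([J_\infty]) = [X_\infty,J_\infty,g_\infty,\xi'_\infty]$. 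The main obstacle I expect is precisely this closedness step: pinning down that the $J$-enhanced-GH limit has the \emph{same} symplectic type and $T$-action as $(M,\omega,T)$ (rather than merely some diffeomorphic symplectic Fano manifold with an isomorphic torus action) requires genuine use of equivariant Moser and constancy of the K-optimal torus action in smoothly convergent families, while the Futaki boundedness step leans on the deep boundedness theorem for Fano manifolds.
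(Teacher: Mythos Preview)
Your overall decomposition matches the paper's, and the continuity/injectivity steps are essentially the same. But two of the four pieces are handled differently, and in the closedness step there is a genuine gap.

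\textbf{Futaki bound.} Your sketch says ``the Hilbert character $\chi$ takes only finitely many values compatible with the bounded degree''. This is the right conclusion but the stated reason is too vague: the \emph{equivariant} Hilbert character depends on the torus action, not just on the Hilbert polynomial, so bounded degree alone does not immediately bound it. The paper's mechanism is more specific: fix a maximal torus $T\subset PGL(N+1)$, pass to a $T$-equivariant projective embedding of (the normalization of) $\mathrm{Hilb}$, and observe that the stabilizer subtorus $T_x\subset T$ of any point is cut out by a subset of the finitely many characters appearing in the representation. This yields a finite stratification of $\widetilde{\mathrm{Hilb}}_{\mathrm{Fano}}$ by stabilizer type, on each piece of which the K-optimal vector is constant. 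That finiteness is what gives the bound; your appeal to Birkar is unnecessary (KMM suffices in the smooth case) and does not by itself control the equivariant data.

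\textbf{Openness and closedness.} Here your route diverges from the paper. You try to prove openness directly via Cheeger--Colding regularity plus Kuranishi completeness, and then argue closedness by showing the limit $X_\infty$ carries the same $T$-action and symplectic type. The paper does the opposite: it first proves the map is \emph{closed}, and then deduces openness for free from the finiteness just established (the images for the finitely many K-optimal pairs $(M,\omega,T)$ are disjoint and closed, hence each complement is closed, hence each image is open). This is cleaner because it avoids any direct regularity argument for openness.

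The gap in your closedness argument is the sentence ``the equivariant Hilbert character is locally constant in such families, so the K-optimal torus of $X_\infty$ has the same character $\chi$ as $T$''. You do not have an equivariant family here; you only have $J$-enhanced Gromov--Hausdorff convergence, which a priori carries no information about a torus action on the limit. Invoking equivariant Moser is circular: you would first need to know that $X_\infty$ carries a $T$-action compatible with the limiting metric before you can compare symplectic types equivariantly. The paper bypasses this entirely: it cites \cite{PSS} to upgrade the GH convergence $[M,J_n,g_{J_n}]\to[M_\infty,J_\infty,g_\infty]$ to convergence of the associated points in $\mathrm{Hilb}_T^\circ/U_T$, and then uses the already-constructed holomorphic moduli map $\mathrm{Hilb}_T^\circ\to \mathcal{K}_{T,\chi}\supset K(M,\omega,T)$ (coming from the universality in Theorem~\ref{main theorem}) to pull the convergence back to $K(M,\omega,T)$. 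In other words, the closedness step is not proved by bare hands but by routing through the equivariant Hilbert scheme and the moduli space machinery built earlier in the paper. Your honest flag at the end identifies exactly the right difficulty; the resolution is this detour, not a direct Moser argument.
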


\begin{proof}
Since Fano manifolds are bounded \cite{KMM}, we have a sufficiently large Hilbert scheme $\mathrm{Hilb}$ of $\mathbb{C}P^N$ with bounded Hilbert polynomials so that for any Fano manifold $X$ we can find a point $[X] \in \mathrm{Hilb}$, uniquely up to $\mathrm{Aut} (\mathbb{P}^N) = PGL (N+1)$-action, representing an anti-canonically embedded $X \subset \mathbb{C}P^N$. 
We denote by $\mathrm{Hilb}_\mathrm{Fano}$ the Zariski open locus parametrizing the anti-canonically embedded Fano manifolds. 
Obviously, $PGL (N+1)$ preserves the subset $\mathrm{Hilb}_\mathrm{Fano}$. 

Fix a maximal algebraic torus $T$ of $PGL (N+1)$ and consider its action on $\mathrm{Hilb}_\mathrm{Fano}$. 
Note as we have $\mathrm{Stab} ([X]. g) = g^{-1} \mathrm{Stab} ([X]) g \subset PGL (N+1)$ for $g \in PGL (N+1)$, we can find a point $[X] \in \mathrm{Hilb}_\mathrm{Fano}$ so that $\mathrm{Stab} ([X]) \cap T \subset \mathrm{Stab} ([X]) \cong \mathrm{Aut} (X)$ is a maximal torus. 
Indeed, for a maximal torus $T_X \subset \mathrm{Stab} ([X])$, pick a maximal torus $T' = g T g^{-1} \subset PGL (N+1)$ so that $T' \cap \mathrm{Stab} ([X]) = T_X$, then we have a maximal torus $\mathrm{Stab} ([X]. g) \cap T = g^{-1} T_X g \subset \mathrm{Stab} ([X]. g)$. 

Next, consider the normalization $\widetilde{\mathrm{Hilb}} \to \mathrm{Hilb}$, where $\widetilde{\mathrm{Hilb}}$ is a normal projective variety and the morphism is a finite surjective morphism. 
Then we have a $T$-equivariant embedding of $\widetilde{\mathrm{Hilb}}$ into some $\mathbb{P} (V)$, where $V$ is a $T$-representation (\cite[Corollary 1.6]{MFK}). 
Since $V$ decomposes into $1$-dimensional representations as $V \cong \mathbb{C}_{\chi_1} \oplus \dotsb \oplus \mathbb{C}_{\chi_{\dim V}}$, the stabilizer $T_x \subset T$ of any point $x \in \mathbb{P} (V)$ can be written as $\chi_{i_1}^{-1} (1) \cap \dotsb \cap \chi_{i_j}^{-1} (1)$, hence the possibilities are finite. 
It also follows that every fiber $S_T^{-1} (T')$ of the following map
\[ S_T : \mathbb{P} (V) \to \{ \text{ sub torus of } T ~\} : x \mapsto T_x \]
is a (possibly non irreducible) subvariety in $\mathbb{P} (V)$. 
Therefore, we obtain a finite stratification $\{ S_T^{-1} (T_i) \subset \mathbb{P} (V) \}$ and $\{ H_i \subset \widetilde{\mathrm{Hilb}}_\mathrm{Fano} \}$ by its restriction. 
We refine this stratification by taking connected components of each $H_i$ and continue to write $\{ H_i \subset \widetilde{\mathrm{Hilb}}_\mathrm{Fano} \}$. 
Since the restricted family $\mathcal{U}|_{H_i} \to H_i$ gives a family of Fano $T_i$-manifolds, we can consider the K-optimal vector $\xi_i \in (N_i)_\mathbb{R}$ with respect to the $T_i$-action on the Fano manifolds $X_s$ ($s \in H_i$), which is independent of $s \in H_i$. 
Let $T'_i \subset T_i$ be the sub-torus generated by $\xi_i$. 

Now from the construction, every Fano manifold $X$ with a maximal K-optimal $T'$-action finds some $H_i$ satisfying $[X] \in H_i$ and $T' =T'_i$. 
Since the Futaki invariant of $\xi'_i$ on $X_s$ is independent of the choice of $s \in H_i$, we conclude that there are only finitely many possibilities of the values of $\mathrm{Fut}_X (\xi')$ for the pairs $(X, \xi')$ of Fano manifolds with vanishing modified Futaki invariant $\mathrm{Fut}_{X, \xi'}$. 
In particular, $\mathrm{Fut}_X (\xi')$ is bounded for $(X, g, \xi') \in \mathcal{KR}_{GH} (n)$ and hence $\mathcal{KR}_{GH} (n, F) = \mathcal{KR} (n)$ for large $F$. 

Next we see that the given map $K (M, \omega, T) \to \mathcal{KR}_{GH} (n)$ is a homeomorphism by a standard argument as follows. 
The continuity of the map is obvious. 
For every $[M, J, g] \in \mathcal{KR}_{GH} (n)$ and any two representatives $(M_1, J_1, g_1), (M_2, J_2, g_2) \in [M, J, g]$, we have a diffeomorphism $\phi :M_1 \to M_2$ satisfying $\phi^* J_2 = J_1$, $\phi^* g_2 = g_1$ and $(\phi^{-1})_* \xi'_2 = \xi_1'$, where $\xi_i'$ is the unique holomorphic vector field satisfying $\Ric (g_i) -L_{\xi'_i} g_i = g_i$. 
It follows that the map $K (M, \omega, T) \to \mathcal{KR}_{GH} (n)$ is injective for K-optimal $(M, \omega, T)$, and the images of $K (M_1, \omega_1, T_1), K (M_2, \omega_2, T_2) \to \mathcal{KR}_{GH} (n)$ intersect iff there is an isomorphism $\theta :T_1 \xrightarrow{\sim} T_2$ and a $(T_1, T_2)$-equivariant symplectic diffeomorphism $(M_1, \omega_1) \xrightarrow{\sim} (M_2, \omega_2)$. 

Since the images of the maps for distinct pairs $(M_1, \omega_1, T_1)$, $(M_2, \omega_2, T_2)$ are disjoint, it suffices to prove that the maps are closed. 
Actually, if the maps are closed, then the maps are homeomorphisms onto their images and the images are open from the above finiteness of the possibilities of the K-optimal pairs $(M, \omega, T)$. 
To see the closedness, take a sequence $[J_n] \in K (M, \omega, T)$ which has the convergent images $[M, J_n, g_{J_n}] \to [M_\infty, J_\infty, g_\infty]$ in $\mathcal{KR}_{GH} (n)$. 
As remarked before Proposition 6.1 in \cite{PSS}, we have a sequence $[M, J_n, g_{J_n}] \in \mathrm{Hilb}_T^\circ$ which converges to $[M_\infty, J_\infty, g_\infty] \in \mathrm{Hilb}_T^\circ$, where $\mathrm{Hilb}^\circ_T$ denotes the open subset of $\mathrm{Hilb}_T$ parametrizing gentle Fano $T$-manifolds with bounded Hilbert polynomial. 
Now we have a canonical continuous (holomorphic) map $\mathrm{Hilb}^\circ_T \to K (M, \omega, T) \subset \mathcal{KR}_{T, \chi}$ induced by the universality of $\mathcal{K}_{T, \chi}$. 
The image of the sequence $[M, J_n, g_{J_n}]$ is nothing but the original sequence $[J_n]$, so we obtain the convergence of $[J_n]$ to the image of $[M_\infty, J_\infty, g_\infty]$ in $K (M, \omega, T)$. 
\end{proof}

\begin{rem}
The compactification $\overline{\mathcal{KR}_{GH}} (n)$ of $\mathcal{KR}_{GH} (n)$ constructed in \cite{PSS} is a compact Hausdorff space with a countable basis (cf. \cite{DSII}) and the boundary $\overline{\mathcal{KR}_{GH}} (n) \setminus \mathcal{KR}_{GH} (n)$ is closed. 
The closedness of the boundary is easily confirmed as follows. 
Suppose $[X_n, g_n]$ is a sequence in $\overline{\mathcal{KR}_{GH}} (n) \setminus \mathcal{KR}_{GH} (n)$ converging to $[X_\infty, g_\infty]$ in $\overline{\mathcal{KR}_{GH}} (n)$. 
Take a sequence $[X_{n, i}, g_{n, i}] \in \mathcal{KR}_{GH} (n)$ for each $n$ converging to $[X_n, g_n]$ in $\overline{\mathcal{KR}_{GH}} (n)$. 
We can suppose that $\mathrm{Hilb} (X_{n, i}, g_{n, i}) \to \mathrm{Hilb} (X_n, g_n)$ in $\mathrm{Hilb}_T$. 
Then we can find a subsequence of $[X_n, g_n]$ so that $\mathrm{Hilb} (X_n, g_n) \to \mathrm{Hilb} (X_\infty, g_\infty)$ in $\mathrm{Hilb}_T$ by the diagonal argument. 
Since the subset of $\mathrm{Hilb}_T$ parametrizing singular subspaces of $\mathbb{C}P^N$ forms a closed subset, the limit $[X_\infty, g_\infty]$ must be also singular, hence $[X_\infty, g_\infty] \in \overline{\mathcal{KR}_{GH}} (n) \setminus \mathcal{KR}_{GH} (n)$
\end{rem}

\subsection{The promised proof of Proposition \ref{gentle degeneration}}
\label{promised proof}

If $X$ is a gentle Fano $T$-manifold, then $R_{\xi'} (X) = 1$ for the K-optimal vector $\xi'$. 
So there exists a unique solution $\omega_t = \omega_t (\alpha)$ of the following equation 
\[ \mathrm{Ric} (\omega_t) - L_\xi \omega_t = t\omega_t + (1-t) \alpha \]
for every $t \in [0, 1)$ and any initial metric $\alpha$. 

\begin{lem}
Let $\mathcal{X} \to \bar{\Delta}$ be a family of Fano $T$-manifolds with $R_{\xi'} (\mathcal{X}_\sigma) = 1$ for the K-optimal vector $\xi$ over a compact disc $\bar{\Delta}$ and $\bm{\alpha}$ be a smooth family of $T_\mathbb{R}$-invariant K\"ahler metrics $\alpha_\sigma$ on $\mathcal{X}_\sigma$. 
Then there is a sufficiently divisible $k \in \mathbb{N}$ and a positive constant $c > 0$ which depend only on the pair $(\mathcal{X}, \bm{\alpha})$ such that for any $\sigma \in \bar{\Delta}$ and $t \in [0, 1)$ the following uniform partial $C^0$-estimate holds. 
\[ \rho_{X_\sigma, \omega_t (\alpha_\sigma), k} \ge c, \]
where $\rho_{X_\sigma, \omega_t (\alpha_\sigma), k}$ denotes the function on $X_\sigma$ defined by
\[ \rho_{X_\sigma, \omega_t (\alpha_\sigma), k} (x) := \max |s (x)|_{h_{X_\sigma, \omega_t (\alpha_\sigma), k}}, \]
where $s$ runs over $s \in H^0 (X_\sigma, \mathcal{O} (- k K_X))$ with $\int_{X_\sigma} | s |^2_{h_{X_\sigma, \omega_t (\alpha_\sigma), k}} (k \omega_t (\alpha_\sigma))^n =1$ and $h_{X_\sigma, \omega_t (\alpha_\sigma), k}$ denotes a metric on $-k K_{X_\sigma}$ whose curvature is $k\omega_t (\alpha_\sigma)$. 
\end{lem}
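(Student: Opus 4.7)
The plan is to argue by contradiction, extract a Gromov--Hausdorff limit along a doubly-indexed subsequence, and invoke a soliton version of the Donaldson--Sun partial $C^0$-estimate on the limit to produce the required holomorphic sections upstairs. Suppose the statement fails: then for every $k$ and every $c>0$ one can produce sequences $\sigma_i \in \bar\Delta$, $t_i \in [0,1)$ and points $x_i \in \mathcal{X}_{\sigma_i}$ with $\rho_{\mathcal{X}_{\sigma_i}, \omega_{t_i}(\alpha_{\sigma_i}), k}(x_i) \to 0$. By compactness of $\bar\Delta$ we may assume $\sigma_i \to \sigma_\infty$, and by further diagonalization we may also assume $t_i \to t_\infty \in [0,1]$.

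First I would establish uniform geometric bounds along the continuity path. Rewriting the equation as $\Ric(\omega_t) - L_\xi \omega_t = t\omega_t + (1-t)\alpha_\sigma$ and using the compactness of $\bar\Delta$ to bound $\alpha_\sigma$ uniformly from below and above by a fixed reference metric (for instance the Fubini--Study metric pulled back from a uniform embedding $\mathcal{X} \hookrightarrow \bar\Delta\times \mathbb{C}P^N$ via $H^0(\mathcal{X}, -\ell K_{\mathcal{X}/\bar\Delta})$), one obtains a uniform Bakry--\'Emery Ricci lower bound $\Ric(\omega_t) - L_\xi \omega_t \ge c_0 \omega_t$ independent of $(\sigma, t)$. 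Combined with the uniform $C^0$-bound on the soliton potential $\theta_\xi$---which follows from the uniformity of volume and from the fact that the K-optimal vector $\xi$ depends only on the Hilbert character (hence is locally constant on $\bar\Delta$)---the generalized Myers theorem and Bishop--Gromov inequality in the Bakry--\'Emery setting yield uniform diameter and non-collapsing bounds for $(\mathcal{X}_{\sigma_i}, \omega_{t_i}(\alpha_{\sigma_i}))$.

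Next I would invoke Gromov--Hausdorff compactness to extract a limit $(\mathcal{X}_{\sigma_i}, \omega_{t_i}(\alpha_{\sigma_i}), x_i) \to (Y_\infty, d_\infty, x_\infty)$. The soliton extension of the Donaldson--Sun regularity theory (as worked out by Tian--Zhang, Wang--Zhu and in \cite{PSS} for the enhanced Gromov--Hausdorff topology) identifies $Y_\infty$ with a $\mathbb{Q}$-Fano variety equipped with a weak K\"ahler--Ricci soliton structure with respect to a limiting holomorphic field. On such a $Y_\infty$ the standard H\"ormander $\dbar$-estimate applied to a Gaussian cutoff centered at $x_\infty$ produces, for some fixed large $k$, a holomorphic section of $-kK_{Y_\infty}$ with a definite lower bound on $|s(x_\infty)|$ and unit $L^2$-norm. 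By the Ohsawa--Takegoshi style extension argument (or the Tian peak-section construction carried out on the approximants) these limit sections lift to sections of $-kK_{\mathcal{X}_{\sigma_i}}$ for large $i$ with controlled mass at $x_i$, contradicting the assumption $\rho_{\mathcal{X}_{\sigma_i}, \omega_{t_i}(\alpha_{\sigma_i}), k}(x_i) \to 0$.

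The main obstacle is ensuring that the entire Donaldson--Sun--style convergence and regularity machinery, originally formulated for a single family of metrics on a fixed manifold, works uniformly in the extra family parameter $\sigma \in \bar\Delta$. This is handled by the compactness of $\bar\Delta$ together with the fixed ambient embedding: the initial metrics $\alpha_\sigma$, the vector field $\xi$ and the normalization of $\theta_\xi$ are all uniformly controlled, so every constant appearing in the Cheeger--Colding--Tian theory (volume comparison, cone splitting, Reifenberg-type regularity of good points) can be chosen independently of $\sigma$, and a standard diagonal subsequence extraction closes the contradiction.
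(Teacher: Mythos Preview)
Your overall strategy---contradiction, Gromov--Hausdorff compactness, peak sections on the limit---is in the spirit of the Donaldson--Sun argument that ultimately underlies this estimate, but as written there is a genuine circularity. You invoke the soliton extensions of the Donaldson--Sun theory (as in \cite{PSS}, Wang--Zhu, etc.) to identify the limit $Y_\infty$ as a $\mathbb{Q}$-Fano variety carrying a weak K\"ahler--Ricci soliton, and then propose to run H\"ormander on $-kK_{Y_\infty}$. But those structure results already \emph{use} the partial $C^0$-estimate as an input: it is precisely the partial $C^0$-estimate that lets one embed the sequence uniformly into projective space and take an algebraic limit. So you are assuming what you want to prove. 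The correct Donaldson--Sun mechanism does not do H\"ormander on a global limit identified as a variety; it works on \emph{tangent cones} of the Gromov--Hausdorff limit, which are metric cones by the Cheeger--Colding(--Tian) theory under the Bakry--\'Emery Ricci lower bound, and produces local holomorphic functions there first. Your ``Gaussian cutoff'' remark hints at this picture, but the subsequent sentence about ``a holomorphic section of $-kK_{Y_\infty}$'' collapses a substantial argument into one that presupposes the conclusion.

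The paper takes a different and much shorter route. Rather than redo the Donaldson--Sun machinery, it reduces to the black-box result \cite[Corollary~1.4]{JWZ}, which gives the partial $C^0$-estimate directly for any sequence of \emph{almost K\"ahler--Ricci solitons} in the sense of \cite[Definition~5.1]{F. WZ}. The actual work in the paper's proof is to check that the continuity-path metrics $\omega_t(\alpha_\sigma)$ satisfy this almost-soliton condition with constants \emph{uniform in} $(\sigma,t)$: this is done by tracing through \cite[Lemmas~5.6--5.8]{F. WZ} and observing that the constants there (which come from \cite[Theorem~A]{Mab2} and \cite[Corollary~5.3]{Zhu}) depend on the initial metric only through quantities that vary continuously over the compact base $\bar\Delta$. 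Your final paragraph addresses exactly this uniformity issue, and that part of your argument is fine; what is missing is that you then need to feed it into \cite{JWZ} (or equivalently, reprove \cite{JWZ} via the tangent-cone argument) rather than into the circular step through $Y_\infty$.
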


\begin{proof}
This follows from estimates in the proof of Lemma 5.6, Lemma 5.7 and Lemma 5.8 in \cite{F. WZ}. 
Note that we can uniformly take constants $C$ in Lemma 5.6, $B$ in Lemma 5.7 and $c_1, C$ in Lemma 5.8 independent of $\alpha_\sigma$, since the constants of Theorem A in \cite{Mab} and of Corollary 5.3 in \cite{Zhu} can be uniformly taken. 
Then it follows that any sequence $(X_{\sigma_i}, \omega_{t_i} (\alpha_{\sigma_i}))$ $(t_i \to 1)$ is a sequence of almost K\"ahler--Ricci solitons in the sense of \cite[Definition 5.1]{F. WZ}. 
Now we can deduce our estimate from \cite[Corollary 1.4]{JWZ}, \cite[Lemma 3.4]{DS} and the argument after the lemma. 
\end{proof}

Now we can apply the arguments in \cite{DS} to the metric family 
\[ \{ (X, \omega_t (\alpha_s)) \}_{(t,s) \in [0,1) \times [0,1]}, \]
under the above partial $C^0$-estimate. 
Thus we have a sufficiently divisible number $k \in \mathbb{N}$ with the following properties. 
\begin{enumerate}
\item The pair $(X_\sigma, \omega_t (\alpha_\sigma))$ defines a point $\mathrm{Hilb} (X_\sigma, \omega_t (\alpha_\sigma))$ in the compact Hausdorff topological space $\mathrm{Hilb}_T/U_T$ by embedding $X_\sigma$ into $\mathbb{C}P^N$ using a unitary basis of $H^0 (X_\sigma, \mathcal{O} (-k K_{X_\sigma}))$ with respect to the metric $\omega_t (\alpha_\sigma)$. 

\item For any sequence $(\sigma_i, t_i) \in \Delta \times [0, 1]$, we have a subsequence such that $(X_{\sigma_i}, \omega_{t_i} (\alpha_{\sigma_i}))$ converges in the `complexified' Gromov--Hausdorff topology to some $\mathbb{Q}$-Fano variety $X_\infty$ with a K\"ahler--Ricci soliton $(\omega_\infty, \xi'_\infty)$. 

\item After taking a further subsequence, the sequence $\mathrm{Hilb} (X_{\sigma_i}, \omega_{t_i} (\alpha_{\sigma_i})) \in \mathrm{Hilb}_T/U_T$ converges in $\mathrm{Hilb}_T/U_T$ to the point $\mathrm{Hilb} (X_\infty, \omega_\infty)$ which is similarly defined using a unitary embedding $X_\infty \hookrightarrow \mathbb{C}P^N$. 
\end{enumerate}

\begin{proof}[\textbf{Proof of Proposition \ref{gentle degeneration}}]
Let $\{ (X, \omega_t (\alpha)) \}_{t \in [0, 1)}$ be the family of solutions of the continuity method with an initial metric $\alpha$. 
Suppose there is a smooth Fano $T$-manifold with K\"ahler--Ricci soliton $(X_\infty, \omega_\infty)$ which is the limit of a Gromov--Hausdorff convergent subsequence $(X, \omega_{t_i} (\alpha))$. 
First we show that the limit $(X_\infty, \omega_\infty)$ is uniquely determined independent of the choice of the initial metrics $\alpha$ and the subsequences $(X, \omega_{t_i} (\alpha))$. 
Suppose $\alpha'$ is another K\"ahler metric on $X$ and $(X, \omega_{t_i'} (\alpha')) \to (X_\infty', \omega_\infty')$ be a convergent subsequence to a $\mathbb{Q}$-Fano $T$-variety with K\"ahler--Ricci soliton. 
Set $\alpha_s := s \alpha' + (1-s) \alpha$. 
As we noted right before this proof, we can find a sufficiently divisible number $k_{\bm{\alpha}} \in \mathbb{N}$ so that all $(X, \omega_t (\alpha_s))$ can be uniformly embedded using the unitary basis of $H^0 (X, \mathcal{O} (- k_{\bm{\alpha}} K_X))$ with respect to $\omega_t (\alpha_s)$, which defines a point $\mathrm{Hilb} (X, \omega_t (\alpha_s)) \in \mathrm{Hilb}_T/U_T$. 
Moreover, we can assume $(X_\infty, \omega_\infty)$ and $(X_\infty', \omega_\infty')$ also define points $\mathrm{Hilb} (X_\infty, \omega_\infty) \in \mathrm{Hilb}_T/U_T$, $\mathrm{Hilb} (X_\infty', \omega_\infty') \in \mathrm{Hilb}_T/U_T$ respectively, and $\mathrm{Hilb} (X, \omega_{t_i} (\alpha)) \to \mathrm{Hilb} (X_\infty, \omega_\infty) \in \mathrm{Hilb}_T/U_T$, $\mathrm{Hilb} (X, \omega_{t_i'} (\alpha')) \to \mathrm{Hilb} (X_\infty', \omega_\infty') \in \mathrm{Hilb}_T/U_T$. 
These embeddings clearly define a continuous map $[0,1) \times [0,1] \to \mathrm{Hilb}/U_T : (t, s) \mapsto \mathrm{Hilb} (X, \omega_t (\alpha_s))$. 

Suppose $X_\infty \ncong X_\infty'$. 
If $\overline{\mathrm{Hilb} (X_\infty, \omega_\infty) PGL_T} \cap \mathrm{Hilb} (X_\infty', \omega_\infty') PGL_T \neq \emptyset$, then we obtain a test configuration of $X_\infty$ with the central fiber $X_\infty'$ from the reductivity of the stabilizer $\mathrm{Aut}_T (X_\infty')$, which allows to apply the \'etale slice theorem \cite[Theorem 2.1]{AHR} and the Hilbert-Mumford theorem. 
Since the central fiber admits a K\"ahler--Ricci soliton, the modified algebraic Futaki invariant of this test configuration is zero. 
However, as $X_\infty$ has K\"ahler--Ricci soliton and hence K-polystable, $X_\infty'$ must be isomorphic to $X_\infty$. 
This contradicts to our assumption. 

So we have $\overline{\mathrm{Hilb} (X_\infty, \omega_\infty) PGL_T} \cap \mathrm{Hilb} (X_\infty', \omega_\infty') PGL_T = \emptyset$. 
Then in particular we can take open neighbourhoods $B_\varepsilon (\overline{\mathrm{Hilb} (X_\infty, \omega_\infty) PGL_T})$, $B_{\varepsilon'} (\mathrm{Hilb} (X_\infty', \omega_\infty') U_T)$ separating the two closed subsets $\overline{\mathrm{Hilb} (X_\infty, \omega_\infty) PGL_T}$ and $\mathrm{Hilb} (X_\infty', \omega'_\infty) U_T$. 
Here we use a $U_T$-invariant distance on $\mathrm{Hilb}_T$ to consider $B_\varepsilon$ and fix this distance. 
Take $U_T$-invariant open neighbourhoods $V \Subset V' \subset \mathrm{Hilb}_T$ of $\mathrm{Hilb} (X_\infty, \omega_\infty) U_T$ so that $\mathcal{U}|_{V'} \to V'$ parametrizes Fano $T$-manifolds appearing in the family $\varpi: \mathcal{X} \to B$ with central fiber $\mathcal{X}_0 = X_\infty$. 
We can assume $\mathrm{Hilb} (X, \omega_{t_i} (\alpha)) \in V/U_T$. 
From the finiteness of the fibers of the morphism $[B/R] \to \K_{T, \chi}$ in Proposition \ref{etale}, there are only finitely many isomorphism classes of Fano $T$-manifolds with K\"ahler--Ricci solitons in this family that can be the central fiber of some gentle degeneration of $X$. 
Putting $\omega_i (\sigma) := \omega_{\sigma t_i' + (1-\sigma)t_i} (\alpha_\sigma)$, we have a continuous curve $\mathrm{Hilb} (X, \omega_i (-)) :[0,1] \to \mathrm{Hilb}_T/U_T$. 
Furthermore, putting
\[ \sigma_i := \sup \{ \sigma \in [0,1] ~|~ \mathrm{Hilb} (X, \omega_i (-))|_{[0, \sigma)} \subset B_\varepsilon (\overline{\mathrm{Hilb} (X_\infty, \omega_\infty) PGL_T}/U_T) \}, \]
we obtain a sequence of almost K\"ahler--Ricci solitons in the sense of \cite{F. WZ}. 
So after taking a subsequence, we have a sequence $(X, \omega_i (\sigma_i))$ converging to some $\mathbb{Q}$-Fano $T$-variety admitting K\"ahler--Ricci soliton $(X_\infty'', \omega_\infty'')$ with the convergent corresponding sequence $\mathrm{Hilb} (X, \omega_i (\sigma_i)) \to \mathrm{Hilb} (X_\infty'', \omega_\infty'')$ in $\mathrm{Hilb}_T/U_T$. 
Replacing $\varepsilon$ with $\varepsilon/2^k$, we can construct $\sigma_{i, k}$ and $X_{\infty, k}''$ by the same process. 

Suppose there is infinitely many $i$ for each $k$ such that $\mathrm{Hilb} (X, \omega_i (\sigma_{i, k})) \notin VPGL_T/U_T$. 
After taking subsequence, we know that
\begin{align}
\label{gentle degeneration1}
\mathrm{Hilb} (X, \omega_i (\sigma'_{i, k})) \in \partial (\overline{V PGL_T}/U_T) \cap B_{\varepsilon/2^{k-1}} (\overline{\mathrm{Hilb} (X_\infty, \omega_\infty) PGL_T}/U_T) 
\end{align}
for
\[ \sigma_{i, k}' := \sup \{ \sigma \in [0,\sigma_{i,k}] ~|~ \mathrm{Hilb} (X, \omega_i (-))|_{[0, \sigma)} \subset V PGL_T/U_T \}. \]
Since $(X, \omega_i (\sigma_{i,k}'))$ is a sequence of almost K\"ahler--Ricci solitons for each $k$, we can assume $(X, \omega_i (\sigma_{i, k}')) \to (X_{\infty,k}''', \omega_{\infty, k}''')$ for some $\mathbb{Q}$-Fano $T$-variety with K\"ahler--Ricci soliton $(X_{\infty,k}''', \omega_{\infty, k}''')$. 
The diagonal argument shows that there is a subsequence $\{ (X, \omega_{i_k} (\sigma_{i_k, k}')) \}_{k=1}^\infty$ of $\{ (X, \omega_i (\sigma_{i, k}')) \}_{i, k}$ and a $\mathbb{Q}$-Fano $T$-variety $X_{\infty, \infty}'''$ such that $(X, \omega_{i_k} (\sigma_{i_k, k}')) \to (X_{\infty, \infty}''', \omega_{\infty, \infty}''')$ and $\mathrm{Hilb} (X, \omega_{i_k} (\sigma_{i_k, k}')) \to \mathrm{Hilb} (X_{\infty, \infty}''', \omega_{\infty, \infty}''')$. 
Now from the property (\ref{gentle degeneration1}), we conclude $\mathrm{Hilb} (X_{\infty, \infty}''', \omega_{\infty, \infty}''') \in \overline{\mathrm{Hilb} (X_\infty, \omega_\infty)PGL_T} \setminus \mathrm{Hilb} (X_\infty, \omega_\infty) PGL_T$. 
But this is absurd in the same way as we have seen before. 

Therefore we can assume that for any large $k$, $\mathrm{Hilb} (X, \omega_i (\sigma_{i, k}))$ is in the neighbourhood $VPGL_T/U_T$ except for only finitely many $i$. 
In this case, the convergent sequence $(X, \omega_i (\sigma_{i, k})) \to (X_{\infty,k}'', \omega_{\infty, k}'')$ defines a convergent sequence $\mathrm{Hilb} (X, \omega_i (\sigma_{i, k})) \to \mathrm{Hilb} (X_{\infty,k}'', \omega_{\infty, k}'')$ in $\mathrm{Hilb}_T/U_T$ that is uniformly away from $\mathrm{Hilb} (X_\infty, \omega_\infty)PGL_T$ because $\mathrm{Hilb} (X, \omega_i (\sigma_{i, k})) \in \partial B_{\varepsilon/2^k} (\overline{\mathrm{Hilb} (X_\infty, \omega_\infty) PGL_T}/U_T)$. 
It follows that $X_{\infty, k}'' \ncong X_\infty$. 
Since $\mathrm{Hilb} (X_\infty'', \omega_{\infty, k}'') \in V' PGL_T/U_T$ and each there is a gentle degeneration of $X$ with its central fiber $X_{\infty, k}''$, there is only finitely many isomorphism classes in $\{ X_{\infty, k}'' \}_{k=1}^\infty$. 
So we can assume $X_{\infty, k}''$ is all isomorphic after taking subsequence. 
From the uniqueness of K\"ahler--Ricci soliton, the sequence $(X_{\infty, k}'', \omega_{\infty,k}'')$ is constant and hence converges to the limit $(X_{\infty, \infty}'', \omega_{\infty, \infty}'') \cong (X_{\infty, k}'', \omega_{\infty,k}'')$. 
It follows that $\mathrm{Hilb} (X_{\infty, \infty}'', \omega_{\infty, \infty}'') \in \overline{\mathrm{Hilb} (X_\infty, \omega_\infty) PGL_T}$ from the fact
\[ \mathrm{Hilb} (X_{\infty, k}'', \omega_{\infty, k}'') \in \overline{B_{\varepsilon/2^k}} (\overline{\mathrm{Hilb} (X_\infty, \omega_\infty) PGL_T}). \]
This is the last contradiction in this argument, which is now familiar to us. 
Finally, we conclude $X_\infty' \cong X_\infty$, so the limit is independent of the choice of the initial metrics $\alpha$ and the subsequences $t_i$. 

Now we proceed to prove the uniqueness of the central fibers of gentle degenerations of $X$. 
Let $\mathcal{X} \to \Delta$ be a gentle degeneration. 
We have a smooth family of K\"ahler metrics $\alpha_s$ on $\mathcal{X}_s$ which extends the K\"ahler--Ricci soliton $\alpha_0$ on the central fiber $\mathcal{X}_0$, thanks to the stability argument of the K\"ahler condition in any sufficiently small deformation (see the last chapter of \cite{KodMor}). 
The uniqueness of the continuity path, proved in \cite{TZ1}, shows that $\omega_t (\alpha_0) = \alpha_0$, so we can find a sequence $t_i \to 1$ and $s_i \to 0 \in \Delta$ so that $(X, \omega_{t_i} (\alpha_{s_i}))$ converges to $(\mathcal{X}_0, \alpha_0)$. 
We can show that the sequence $(X, \omega_{t_i'} (\alpha_{s_i}))$ also converges to $(\mathcal{X}_0, \alpha_0)$ for any sequence $t_i' \to 1$ by a similar argument as above (compare \cite[Lemma 6.9. (1)]{LWX1}). 
Consider some convergent sequence $(X, \omega_{t_m} (\alpha_{s_i})) \xrightarrow{t_m \to 1} (X_{\infty,i}, \omega_{\infty, i})$ and a sequence $t_i' \to 1$ so that $d_{GH} ((X_{\infty, i}, \omega_{\infty, i}), (X, \omega_{t_i'} (\alpha_{s_i}))) < 1/i$. 
The diagonal argument shows that $(X_{\infty, i}, \omega_{\infty, i}) \to \mathcal{X}_0$. 
Since $\mathcal{X}_0$ is a smooth Fano $T$-manifold, $X_{\infty, i}$ is also smooth for large $i$. 
From what we have shown in the above argument, it follows that for any fixed K\"ahler metric $\alpha$ on $X$, we obtain $(X, \omega_t (\alpha)) \xrightarrow{t \to 1} (X_{\infty, i}, \omega_{\infty, i})$ for each $i$, so especially $(X_{\infty, i}, \omega_{\infty, i})$ are all isomorphic to each other. 
Now we conclude $(X, \omega_t (\alpha)) \xrightarrow{t \to 1} (\mathcal{X}_0, \alpha_0)$ where the limit is independent of the choice of the initial metrics $\alpha$ and the sequence is also independent of the choice of the central fibers $(\mathcal{X}_0, \alpha_0)$ of the gentle degenerations. 
So for another central fiber $(\mathcal{X}_0', \alpha_0')$ of another gentle degeneration $\mathcal{X}' \to \Delta$ of $X$, we also have $(X, \omega_t (\alpha)) \xrightarrow{t \to 1} (\mathcal{X}_0', \alpha_0')$. 
It follows that $(\mathcal{X}_0', \alpha_0')$ is isomorphic to $(\mathcal{X}_0, \alpha_0)$ from the uniqueness of the limit. 
This is what we expected. 
\end{proof}

\section{Discussions}

\subsection{On some examples}

Here we observe step by step some known examples of Fano manifolds admitting K\"ahler--Ricci solitons. 
Although the existence is known, as far as the author knows, even the associated holomorphic vector fields $\xi'$ are not explicitly gievn in almost all examples. 

\begin{exam}
The blowing-up of $\mathbb{C}P^2$ at one point is a typical example of Fano manifold admitting non-Einstein K\"ahler--Ricci solitons. 
This seems the first example of a compact complex manifold proved to admit K\"ahler--Ricci solitons, which was found by Koiso \cite{Koi} and Cao \cite{Cao}, independently. 
\end{exam}

\begin{exam}[toric Fano manifolds]
It is shown in \cite{X-J. WZ} and reproved by \cite{DatSze} from the K-stability viewpoint that every toric Fano manifold admit K\"ahler--Ricci soliton and it is K\"ahler--Einstein if and only if the barycenter of the canonical polytope coincides with the origin. 
Note that the maximal torus action on a toric Fano manifold is not necessarily K-optimal. 

Every toric Fano manifold is rigid, i.e. $H^1 (X, \Theta_X) = 0$, where $\Theta_X$ denotes the tangent sheaf (\cite[Proposition 4.2.]{BieBri}). 
It follows that toric Fano manifolds give discrete points in the moduli space $\mathcal{KR}_{GH} (n)$. 
\end{exam}

\begin{exam}[Fano homogeneous toric bundles]
It is shown in \cite{PS} and recovered in \cite{Hua} that Fano homogeneous toric bundles have K\"ahler--Ricci solitons. 
This is a generalization of the main result in \cite{X-J. WZ}. 
It is again proved in \cite[Proposition 4.2.]{BieBri} that Fano homogeneous toric bundles are rigid (see also \cite[Proposition 2.2.1.]{BieBri}, \cite[Example 3.10.]{Del}). 
\end{exam}

\begin{exam}[horospherical Fano manifolds]
\label{horo-example}
It is shown in \cite{Del} from the K-stability viewpoint and reproved by \cite{Fra} that every horospherical Fano manifold admits K\"ahler--Ricci soliton. 
This is a generalization of one of the main results in \cite{PS}. 

Horospherical Fano manifolds with Picard number one ($b^2 = 1$) are classified in \cite{Pas}. 
There is a unique horospherical Fano manifold (with an action of the complex $G_2$ group) in this classified class which admits a non-trivial small deformation. 
We can see as follows (or just by checking the criterion in \cite{Del}) that the K\"ahler--Ricci soliton on this horospherical Fano manifold $X_0$ is not K\"ahler--Einstein. 
It is shown in \cite{PP} that the Kuranishi family of this horospherical Fano manifold $X_0$ is given by an iso-trivial degeneration $\mathcal{X} \to \mathbb{C}$ of the orthogonal Grassmanian $Gr_q (2, 7)$. 
As the Grassmanian $Gr_q (2, 7)$ is homogeneous, it admits K\"ahler--Einstein metric (\cite{Mat2}). 
If $X_0$ admits K\"ahler--Einstein metric, then we get a contradiction by the separation property of K\"ahler--Einstein Fano manifolds (\cite{SSY, LWX1}) as the deformation $\mathcal{X} \to \mathbb{C}$ is iso-trivial and the general fibre admits K\"ahler--Einstein metric. 
Thus we conclude that $X_0$ cannot admit K\"ahler--Einstein metrics, while it admits K\"ahler--Ricci soliton explained as above. 

This example shows that the family $\mathcal{X} \to \mathbb{C}$ is not in the category $\K (n)$, though any fibers in the family, which are isomorphic to either $Gr_q (2,7)$ or $X_0$, admit K\"ahler--Ricci solitons. 
We have to separate them into two pieces $\mathcal{X}^* \to \mathbb{C}^*$ and $X_0 \to \{ 0 \}$ as the associated holomorphic vector fields jump at the origin. 

It seems interesting to study whether any horospherical Fano manifolds are K-rigid, which means $H^1_T (X, \Theta_X) = 0$ for a K-optimal action $X \curvearrowleft T$. 
\end{exam}

\begin{exam}[Fano manifolds with complexity one]
It is shown in \cite{IS} and \cite{CabSus} that complexity one Fano threefolds of type 2.30, 2.31, 3.8*, 3.18, 3.21, 3.22, 3.23, 3.24, 4.5* and 4.8 from Mori and Mukai's classification \cite{MM} admit non-Einstein K\"ahler--Ricci soliton. 

Especially 3.8 and 4.5 admit deformations, so $H^1_T (X, \Theta) \sslash \mathrm{Aut}_T (X)$ might be not mere a point. 
\end{exam}

The product $X \times Y$ of two Fano manifolds $X, Y$ with K\"ahler--Ricci solitons admits K\"ahler--Ricci solitons. 
So for instance, suppose $X$ is a Del Pezzo surface of degree $1 \le d \le 4$ and $Y$ is the blowing-up of $\mathbb{C}P^2$ at one point, then $X \times Y$ admits non-Einstein K\"ahler--Ricci solitons. 
By deforming $X$ while fixing $Y$, we get a $T$-equivariant deformation of $X \times Y$ where $X \times Y \curvearrowleft T$ is induced from the K-optimal action $Y \curvearrowleft T$. 
So $X \times Y$ provides a non discrete point in the moduli space $\mathcal{KR}_{GH} (n)$ outside of the subset $\mathcal{K}_{0, GH} (n)$ consisting of K\"ahler--Einstein Fano manifolds. 

Dancer--Wang's examples \cite{DW} may also provide non discrete points in the moduli space. 

\subsection{Future studies}

\subsubsection{Questions on the structure of the moduli space}

\begin{quest}
Study explicit examples of $(T, \chi)$ or $(M, \omega, T)$ with non-trivial $T$ whose moduli space $\mathcal{K}_{T, \chi}$ or $\mathcal{K} (M, \omega, T)$ has positive dimension and has a concrete description on its structure. 
\end{quest}

The author does not have any concrete description of positive dimensional moduli spaces $\mathcal{K} (M, \omega, T)$ so far. 
Related studies in the K\"ahler--Einstein case (i.e. $T=0$) are explored by \cite{OSS, SS, LiuXu}. 

\begin{quest}
Is the complex analytic space $\mathcal{K}_{T, \chi}$ actually quasi-projective?
\end{quest}

This question is related to the result in \cite{LWX2} where the quasi-projectivity of the moduli space of Fano manifolds with K\"ahler--Einstein metrics is proved. 

When $T$ is non-trivial, even the finiteness of the number of the connected components of $\mathcal{K}_{T, \chi}$ is still unknown, even though it has a natural topological compactification as a moduli space. 

\begin{quest}
Is there a canonical complex analytic structure on the compact topological space $\overline{\mathcal{KR}_{GH}} (n)$? 
How about on the space $\overline{\mathcal{KR}_{GH}} (n) \setminus \mathcal{KR}_{GH} (n)$? 
Can we identify them with algebraic spaces, or moreover with projective schemes?
\end{quest}

This is related to the work of \cite{Oda2, LWX1}. 
The techniques in this paper do not work, at least directly, in the singular setting. 

\begin{quest}
\label{singular moduli}
Is there a canonical complex analytic (or algebraic) moduli space of $\mathbb{Q}$-Fano varieties with K\"ahler--Ricci solitons? 
\end{quest}

In all questions, it seems better to investigate modified K-stability from more algebro-geometric perspectives, possibly with some help of differential geometry. 

\subsubsection{Questions related to the extent of the moduli space}

\begin{quest}
Are there any non-gentle/modified K-unstable examples of Fano manifolds with Picard number one? 
How about birationally rigid Fano manifolds with Picard number one? 
\end{quest}

This is a refined question related to the Odaka-Okada conjecture \cite{OO}. 
Two modified K-unstable examples are given in \cite{Del}, but both have the Picard number greater than one. 
The following is an optimistic conjecture towards a framework for classification of K-unstable $\mathbb{Q}$-Fano varieties. 

\begin{conj}
\label{conj}
Let $X$ be a $\mathbb{Q}$-Fano variety. 

\begin{enumerate}
\item If $X$ is not modified K-semistable, there is a (non-equivariant) $\mathbb{R}$-degeneration (cf. \cite{DerSze, CSW}) of $X$ whose central fiber is a modified K-semistable $\mathbb{Q}$-Fano variety whose H-invariant attains the infimum of the H-invariants over all $\mathbb{R}$-degenerations. 
Moreover, these degenerations are unique up to isomorphisms. 

\item If $X$ is modified K-semistable with respect to a torus action $T$, then there is a $T$-equivariant degeneration $\mathcal{X}' \to \Delta$ of $X$ whose central fiber $X_0'$ is a K-polystable $\mathbb{Q}$-Fano $T$-variety (modified K-polystable with respect to the $T$-action). 
Moreover, any two such $T$-equivariant degenerations $\mathcal{X}_1' \to \Delta, \mathcal{X}_2' \to \Delta$ are equivalent up to scaling in the sense of the $T$-equivariant version of \cite[Definition 6.1.]{BHJ}, not only they have isomorphic central fibres. 
\end{enumerate}
\end{conj}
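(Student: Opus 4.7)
The plan is to address the two parts separately, in each case adapting the strategies used in the K\"ahler--Einstein setting and in the smooth case already treated by Proposition \ref{gentle degeneration}. For part (1), I would run the normalized K\"ahler--Ricci flow $\partial_t g = -\Ric(g) + g$ starting from a suitable initial metric in $2\pi c_1(X)$. A singular Hamilton--Tian type theorem should produce a Gromov--Hausdorff limit which is a $\mathbb{Q}$-Fano variety $X_\infty$ admitting a K\"ahler--Ricci soliton $(g_\infty, \xi'_\infty)$; using a uniform partial $C^0$-estimate to uniformly embed $X$ and the metrics $g_t$ into a fixed $\mathbb{C}P^N$, one then algebraizes this limit to an $\mathbb{R}$-degeneration of $X$ to $X_\infty$ inside an appropriate invariant Hilbert scheme. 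Optimality of the modified Donaldson--Futaki invariant should follow by identifying it with the $H$-invariant (Tian--Zhang--Zhang--Zhu, Han--Li), which is defined as an infimum and is attained at the flow limit, while uniqueness of $X_\infty$ comes from the uniqueness theorem of K\"ahler--Ricci solitons recalled in Section 2.

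For part (2), the plan is to mimic the proof of Proposition \ref{gentle degeneration} line by line, with the smooth $X$ replaced by a $\mathbb{Q}$-Fano $T$-variety. Since $X$ is K-semistable with respect to $T$, Datar--Sz\'ekelyhidi and Chen--Sun--Wang give $R_{\xi'}(X) = 1$, so the continuity path $\Ric(g_t) - L_{\xi'} g_t = tg_t + (1-t)\alpha$ admits Gromov--Hausdorff subsequential limits as $t \to 1$ consisting of $\mathbb{Q}$-Fano $T$-varieties with K\"ahler--Ricci solitons; by Berman--Witt Nystr\"om's theorem, such limits are K-polystable with respect to $T$, and a uniform partial $C^0$-estimate algebraizes the degeneration inside $\mathrm{Hilb}_{T, \chi, \mathbb{C}P^N}$. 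For the uniqueness statement, I would combine the reductivity of $\mathrm{Aut}_T^0(X_0')$ (Theorem \ref{reductive}) with the \'etale slice theorem and Hilbert--Mumford machinery exactly as in subsection 4.4, concluding that two K-polystable central fibers must be $T$-equivariantly isomorphic and that the filtrations they induce on the anticanonical section ring agree in the $T$-equivariant BHJ sense.

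The hard part will be the singular, $T$-equivariant, soliton-type Hamilton--Tian convergence theorem and the accompanying uniform partial $C^0$-estimate on general $\mathbb{Q}$-Fano varieties. In the smooth setting of subsection 4.4 these inputs are by now standard, but their extensions to arbitrary $\mathbb{Q}$-Fano varieties are only partially available and constitute the principal analytic difficulty. A secondary, more formal, subtlety lies in part (2): one must formulate ``$T$-equivariant BHJ-equivalence'' purely algebraically and verify that the analytic uniqueness of the central fiber translates into this equivalence when $\mathrm{Aut}_T^0(X_0')$ strictly contains $T$, which requires a $T$-equivariant refinement of the valuative/filtration language of \cite{BHJ}.
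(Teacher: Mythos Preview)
The statement you are trying to prove is labelled a \emph{Conjecture} in the paper, and the paper does not prove it. There is no ``paper's own proof'' to compare against: the author states Conjecture~\ref{conj} in the final discussion section as an open problem, accompanied only by remarks on partial results and analogies (Harder--Narasimhan/Jordan--H\"older filtrations, \cite{CSW}, \cite{DatSze}, \cite{LWX1}). So your proposal is not a competing proof but a sketch of how one might attack an open question.

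That said, your outline is broadly consistent with the paper's own commentary. For part (1) the paper notes that \cite{CSW} already produces, for smooth $X$, an $\mathbb{R}$-degeneration to some $X_0$ and a further degeneration to a modified K-polystable $X_0'$; what is missing is precisely the modified K-semistability of $X_0$, which the paper reduces to a ``stability of K-semistability under small deformation'' statement. Your appeal to the $H$-invariant and a singular Hamilton--Tian theorem is a reasonable alternative route, but you correctly flag that the required singular analytic inputs (flow convergence and uniform partial $C^0$-estimates on general $\mathbb{Q}$-Fano varieties) are not available in the literature the paper cites. For part (2) the paper says the existence is known in the smooth case via \cite{DatSze} and that uniqueness of the central fibre ``could be demonstrated by the same methods in \cite{LWX1}'' --- exactly the Proposition~\ref{gentle degeneration} template you propose --- but it explicitly distinguishes this from the \emph{stronger} BHJ-equivalence of the degenerations themselves, which it leaves open. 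Your final paragraph acknowledges this gap; the paper does not claim to close it either. In short: your plan matches the paper's heuristics, but neither you nor the paper has a proof.
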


This conjecture is related to \cite[Conjecture 3.7.]{CSW} and is an analogy of the Harder-Narasimhan filtration for torsion-free coherent sheaves and the Jordan-H\"older filtration for semistable coherent sheaves (see \cite{HL}) as already observed in \cite[Remark 3.6.]{DerSze}. 
We include the singular case for the future application to the Question \ref{singular moduli}. 

For the first item, \cite{CSW} shows that every smooth Fano manifold $X$ has an $\mathbb{R}$-degeneration with the $\mathbb{Q}$-Fano central fiber $X_0$ and there is an another degeneration $\mathcal{X}' \to \Delta$ of $X_0$ with the modified K-polystable $\mathbb{Q}$-Fano central fiber $X_0'$ with the K-optimal vector $\xi'$, which can be extended to $X_0$ with the vanishing modified Futaki invariant (see also \cite{DerSze}). 
So as for the existence, it suffices to prove the modified K-semistability of $X_0$. 
Since $(X_0', \xi')$ is K-polystable, the problem is reduced to the `stability of K-semistability in small deformations', which is related to the Artinianity of the $\Cpx$-stack consisting of K-semistable $\mathbb{Q}$-Fano $T$-varieties, as in Proposition \ref{Artin stack}. 
It is remarkable that if $X$ is K-unstable (with respect to the trivial torus action), then $X_0'$ must be endowed with non-Einstein K\"ahler--Ricci solitons (\cite[p. 17]{CSW}). 

The existence part of the second item is confirmed in \cite{DatSze} for smooth modified K-semistable Fano $T$-manifolds. 
The uniqueness of the central fiber in this case could be demonstrated by the same methods in \cite{LWX1}, which is a role model of our proof of Proposition \ref{gentle degeneration}. 
(We worked with the smooth central fiber because the author felt that it makes arguments clear. ) 

The uniqueness assertion in the second item is stronger than the uniqueness of the central fiber.
This stronger uniqueness (for every smooth gentle Fano $T$-manifold $X$) has the following application. 

\begin{cor}[of the uniqueness statement of Conjecture \ref{conj} (2)]
\label{expected}
The moduli space $\K_{T, \chi} \to \mathcal{K}_{T, \chi}$ we constructed in Theorem \ref{main theorem} is good in the sense of Alper \cite{Alp1}. (In our case, the cohomological affineness should be defined as the exactness of the push-forward functor $\mathit{Coh} (\K_{T, \chi}) \to \mathit{Mod} (\mathcal{K}_{T, \chi})$. ) 
\end{cor}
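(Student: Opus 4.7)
The plan is to reduce the statement to the étale-local picture from Proposition \ref{etale} and to leverage Corollary \ref{automorphism} together with the uniqueness part of Conjecture \ref{conj}(2). The strategy mirrors Alper's local-to-global argument for good moduli spaces, adapted to the analytic category.

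First, I would verify that the analytic GIT morphism $[BK^c/K^c] \to BK^c \sslash K^c$ of Lemma \ref{lemma} is itself a good moduli space morphism in the sense of the corollary: exactness of the pushforward on coherent sheaves follows from the reductivity of $K^c$ and the Heinzner--Snow theory of Stein analytic Hilbert quotients (taking $K^c$-invariants is exact on $K^c$-equivariant coherent sheaves over a reductive Stein $K^c$-space), while the structure sheaf condition $\mathcal{O}_{BK^c \sslash K^c} \xrightarrow{\sim} \phi_* \mathcal{O}_{[BK^c/K^c]}$ is the defining universal property of the analytic categorical quotient.

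Second, and crucially, I would upgrade the étale morphism $[B/R] \to \K_{T,\chi}$ of Proposition \ref{etale} to a \emph{stabilizer-preserving} chart: at every polystable point $b \in \nu^{-1}(0)$ the automorphism group in $\K_{T,\chi}$ should coincide with the stabilizer of $b$ in $R$. This matching is exactly the content of Corollary \ref{automorphism}. Combined with the $T$-equivariant uniqueness of the K-polystable degeneration asserted in Conjecture \ref{conj}(2), this allows me to prove that the induced morphism $BK^c \sslash K^c \to \mathcal{K}_{T,\chi}$ is an open embedding and that the square with $[B/R] \to \K_{T,\chi}$ on top and $BK^c \sslash K^c \to \mathcal{K}_{T,\chi}$ on the bottom is cartesian. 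Without the uniqueness assumption, one cannot a priori rule out two distinct $K^c$-orbits in $BK^c$ from collapsing to the same point of $\mathcal{K}_{T,\chi}$ via an automorphism not visible in $K^c$.

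Third, with such cartesian squares providing an étale cover of $\mathcal{K}_{T,\chi}$, both conditions defining a good moduli space descend: cohomological affineness (in the sense indicated by the author) and the structure sheaf identity can be checked étale-locally on the target, and both hold in the local analytic GIT model by the first step. The main obstacle lies in the second step, where the uniqueness of Conjecture \ref{conj}(2) is indispensable, together with the delicate matching of the holomorphic stabilizer with the complexified isotropy of the Kempf--Ness slice. A secondary technical obstacle is that Alper's descent theorem for cohomological affineness is written for algebraic stacks; one must reprove it for Artin analytic stacks, invoking coherence of pushforwards under proper analytic morphisms and the exactness of $K^c$-invariants for Stein $K^c$-spaces. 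Once these local-to-global transfers are in place, the corollary follows.
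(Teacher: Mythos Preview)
Your proposal is essentially correct and aligns with the paper's own (very brief) argument, though you unpack more of the mechanism than the paper does. The paper's sketch says only: use the BHJ-type uniqueness of Conjecture~\ref{conj}(2) to upgrade the \'etale morphism $[B/R] \to \K_{T,\chi}$ of Proposition~\ref{etale} to an \emph{open embedding}; then goodness is inherited from the local GIT model $[BK^c/K^c] \to BK^c \sslash K^c$. You arrive at the same open-embedding conclusion, but you make explicit the two separate ingredients required---injectivity on isomorphism classes (from the uniqueness of the degeneration) and bijectivity on automorphism groups (from Corollary~\ref{automorphism}). The paper's PostScript Remark after Corollary~\ref{automorphism} confirms this is exactly the intended role of that corollary.

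There is one genuine simplification available that you partly obscure. You frame the last step as an analytic analogue of Alper's \'etale descent for good moduli spaces and flag reproving that descent as a technical obstacle. The paper sidesteps this entirely: once $[B/R] \to \K_{T,\chi}$ is an open embedding, the charts $\mathrm{Im}[B/R]$ form a Zariski-open cover of $\K_{T,\chi}$ by substacks that are literally isomorphic to $[BK^c/K^c]$, and goodness is checked on an open cover of the target---no \'etale descent is needed. So your cartesian-square formulation is correct but slightly heavier than necessary; the open-embedding statement already delivers everything.
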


Actually, using the uniqueness of the degeneration in the sense of \cite{BHJ}, we can show that the \'etale morphism $[B/R] \to \K_{T, \chi}$ is an open embedding. 
Then the corollary follows from the fact that $[BK^c /K^c] \to BK^c \sslash K^c$ is a good moduli space. 
Recall that we have already shown the central fiber of the degeneration is unique, which we used to prove that the morphism $[B/R] \to BK^c \sslash K^c$ factors through $\mathrm{Im} [B/R] \subset \K_{T, \chi}$. 
There may be other ways to show this naturally expected corollary. 

\section{Appendix}

\subsection*{A. $\Cpx$-stack}

In this Appendix A, we briefly review some general notions and examples of stacks which we used in section 4. 
As we work only over the category (or more precisely, the site) $\Cpx$ of complex spaces, we do not introduce stacks in full generality, which actually work over any site such as the \'etale sites of schemes or algebraic spaces, the site of $C^\infty$-manifolds and so on. 
The interested readers should also refer to \cite{SPA, FGA} for stacks in full generality. 

\subsubsection*{\textbf{A-1.} Fibred category}

Recall that we denote by $\Cpx$ the category of complex spaces, which are not assumed to be reduced nor irreducible. 
The set of holomorphic morphisms between complex spaces $U$ and $V$ is denoted by $\Holo (U, V)$. 

\begin{defin}[fibred category]
Let $\bm{\F}$ be a category and $p :\bm{\F} \to \Cpx$ be a functor to the category of complex spaces. 
The functor $p :\bm{\F} \to \Cpx$ is called a \textit{fibred category} over $\Cpx$ if it satisfies the following properties. 
For any holomorphic morphism $f: X \to Y$ between complex spaces and any object $\eta \in \Obj (\bm{\F})$, there exists an object $\xi \in \Obj (\bm{\F})$ and a strongly cartesian morphism $\phi :\xi \to \eta \in \Mor (\bm{\F})$ over $f$. 
\end{defin}

Here the morphism $\phi: \xi \to \eta$ is called \textit{strongly cartesian} if it enjoys the following universal property: for any complex space $X'$, any holomorphic morphism $g :X' \to X$, any object $\xi' \in \Obj (\bm{\F})$ with $p (\xi') = X'$ and any morphism $\phi' :\xi' \to \eta \in \Mor (\bm{\F})$ with $p (\phi') = f \circ g$, there exists a unique morphism $\chi :\xi' \to \xi$ such that $\phi' = \phi \circ \chi$ and $p (\chi) = g$. 
\[ \begin{tikzcd} \bm{\F} \ar{d}[description]{p} & \xi' \ar[bend left=20]{rr}{\phi'} \ar[dashed]{r}[description]{\chi} & \xi \ar{r}[description]{\phi} & \eta \\ \Cpx & X' \ar{r}[description]{g} \ar[bend left=20]{rr}{f \circ g} & X \ar{r}[description]{f} & Y \end{tikzcd} \]

Let $X$ be a complex space and $p :\bm{\F} \to \Cpx$ be a fibred category. 
We denote by $\bm{\F} (X)$ the subcategory of $\bm{\F}$ consisting of objects $\xi \in \Obj (\bm{\F})$ with $p (\xi) = X$ and morphisms $\phi$ with $p (\phi) = \id_X$. 
We call $\bm{\F}$ (or more precisely $\bm{\F} \to \Cpx$) a \textit{category fibred in groupoids} if morphisms in $\bm{\F} (X)$ are all invertible for any complex space $X$. 

A functor $f :\bm{\F} \to \bm{\G}$ between two fibred categories is called a \textit{morphism} of fibred categories if $p_{\bm{\F}} = p_{\bm{\G}} \circ f$ (strictly) and $f$ maps strongly cartesian morphisms in $\bm{\F}$ to strongly cartesian morphisms in $\bm{\G}$. 
We can also consider \textit{$2$-morphisms} between two ($1$-)morphisms $f, g: \bm{\F} \to \bm{\G}$ which are just natural transformations $t :f \to g$ satisfying $p_{\bm{\G}} (t_\xi: f (\xi) \to g (\xi)) = \id_{p_{\bm{\F}} (\xi)}$ for all $\xi \in \Obj (\bm{\F})$. 

The functor $\Cpx_X \to \Cpx : (\xi: S \to X) \mapsto S$, where $\Cpx_X$ denotes the category of holomorphic morphisms $\xi :S \to X$, is a typical example of category fibred in groupoids (actually in sets). 
A holomorphic morphism of complex spaces $f :X \to Y$ gives the morphism $\Cpx_X \to \Cpx_Y$ which maps an object $\xi: S \to X$ to the object $f \circ \xi: S \to Y$. 
On the other hand, a morphism $f :\Cpx_X \to \Cpx_Y$ as fibred categories gives a holomorphic morphism $f (\id_X) :X \to Y$. 
Therefore, we have a canonical fully faithful embedding of $\Cpx$ to the ($2$-)category of fibred categories. 
So we often abbreviate $\Cpx_X$ as $X$. 

\begin{exam}
Let $a :X \times G \to X$ be a holomorphic action of a complex Lie group $G$ to a complex space $X$. 
We denote by $[X/G]_p$\footnote{The symbol $_p$ means that this fibred category is not a stack in general; it is just a pre-stack. } the fibred category (in groupoids) defined as follows. 
\begin{enumerate}
\item Its objects are holomorphic morphisms $\xi :S \to X$ from some complex spaces $S$. 

\item Its morphisms $\xi_S \to \eta_T$ are the pairs $(f, \phi)$ of holomorphic morphisms $f :S \to T$ and $\phi :S \to X \times G$ satisfying $p_1 \circ \phi = \xi$ and $a \circ \phi = \eta \circ f$. 

\item Its functor $[X/G]_p \to \Cpx$ maps objects $\xi_S$ to $S$ and morphisms $(f, \phi) :\xi_S \to \eta_T$ to $f :S \to T$. 
\end{enumerate}
\end{exam}

Objects in the fibred category $[X/G]_p$ coincide with $X = \Cpx_X$, but morphisms are different. 
For instance, two objects $x, y :\mathrm{pt} \to X$ in $[X/G]_p$ are isomorphic if and only if there exists an element $g \in G$ with $xg = y$. 
We have the morphism $X \to [X/G]_p$ of fibred categories defined by $\xi_S \mapsto \xi_S$. 

There is another related fibred category $\llbracket X/G \rrbracket$ with a good geometric feature. 

\begin{exam}
\label{quotient stack}
We denote by $\llbracket X/G \rrbracket$ the fibred category (in groupoids) defined as follows. 
\begin{enumerate}
\item An object consists of a triple $(S, P, \xi)$ where $S$ is a complex space, $P$ is a principal $G$-holomorphic bundle over $S$ and $\xi :P \to X$ is a $G$-equivariant holomorphic morphism. 

\item A morphism $(S, P, \xi) \to (T, Q, \eta)$ is a pair $(f, \phi)$ where $f :S \to T$ is a holomorphic morphism and $\phi: P \to Q$ is a $G$-equivariant holomorphic morphism over $f$ which induces an biholomorphism $P \cong S \times_T Q$, and satisfies $\xi = \xi' \circ \phi$. 

\item Its functor $\llbracket X/G \rrbracket \to \Cpx$ maps objects $(S, P, \xi)$ to $S$ and morphisms $(f, \phi) :(S, P, \xi) \to (T, Q, \eta)$ to $f :S \to T$. 
\end{enumerate}
\end{exam}

We have the morphism $[X/G]_p \to \llbracket X/G \rrbracket$ of fibred categories which maps an object $\xi :S \to X$ to the object $(S, S \times G, a \circ (\xi \times \id_G))$ and a morphism $(f, \phi): \xi_S \to \eta_T$ to the morphism $(f, f \times \phi)$. 
This is a typical example of the `stackification' we treat in the next subsection. 
The fibred category $\llbracket X/G \rrbracket$ is called a \textit{quotient stack}. 

When the action is proper free, then there exists a complex space $X / G$, a holomorphic morphism $X \to X/G$ and an isomorphism $X/G \cong \llbracket X/G \rrbracket$ of fibred categories, which is compatible with $X \to X/G$ and $X \to \llbracket X/G \rrbracket$. 

Let us see another example generalizing $[X/G]_p$. 
A \textit{holomorphic groupoid} consists of the following data $(X, R, s, t, c)$: 
\begin{enumerate}
\item $X$ and $R$ are complex spaces. 

\item $s$ and $T$ are holomorphic morphisms from $R$ to $X$. 

\item $c : R \times_{s, X, t} R \to R$ is a holomorphic morphism. 
\end{enumerate}
These data are to satisfy the following rules for any complex space $S$: 
\begin{enumerate}
\item For every holomorphic morphism $\xi \in \Holo (S, X)$, there exists a holomorphic morphism $e_\xi \in \Holo (S, R)$ such that $c \circ (e_\xi \times \phi) = \phi$ and $c \circ (\psi \times e_\xi) = \psi$ for any pairs $(e_\xi, \phi)$, $(\psi, e_\xi)$ with $s \circ e_\xi = t \circ \phi$ and $s \circ \psi = t \circ e_\xi$. 

\item The equality $c \circ ((c \circ (\phi \times \psi)) \times \chi) = c \circ (\phi \times (c \circ (\psi \times \chi)))$ holds for any $\phi, \psi, \chi \in \Holo (S, R)$ with $s \circ \phi = t \circ \psi$ and $s \circ \psi = t \circ \chi$. 

\item For any $\phi \in \Holo (S, R)$, there exists a $\psi \in \Holo (S, R)$ such that $s \circ \phi = t \circ \psi = \xi$, $s \circ \psi = t \circ \phi = \eta$ and $c \circ (\phi \times \psi) = e_\eta$, $c \circ (\psi \times \phi) = e_\xi$. 
\end{enumerate}
This condition is equivalent to say that $\Holo (S, X)$ forms an abstract groupoid whose morphisms $\xi \to \eta$ are $\phi \in \Holo (S, R)$ with $s \circ \phi = \xi$ and $t \circ \phi = \eta$, and composition is given by $c$. 

A holomorphic group action $a :X \times G \to X$ gives an example of holomorphic groupoid with $R = X \times G$, $s = p_1$, $t = a$ and $c = \id \times \mu :X \times G \times G \to X \times G$. 
If $u: U \to X$ is a holomorphic morphism, then we can consider the pull-back holomorphic groupoid $(U, (U \times U) \times_{u \times u, X \times X, s \times t} R, s', t', c')$. 

\begin{exam}
We denote by $[X/R]_p$ the fibred category (in groupoids) defined as follows. 
\begin{enumerate}
\item Its objects are holomorphic morphisms $\xi :S \to X$ from some complex spaces $S$. 

\item Its morphisms $\xi_S \to \eta_T$ are the pairs $(f, \phi)$ of holomorphic morphisms $f :S \to T$ and $\phi :S \to R$ satisfying $s \circ \phi = \xi$ and $t \circ \phi = \eta \circ f$. 

\item Its functor $[X/R]_p \to \Cpx$ maps objects $\xi_S$ to $S$ and morphisms $(f, \phi): \xi_S \to \eta_T$ to $f: S \to T$. 
\end{enumerate}
\end{exam}

Here is our interested fibred category from Definition \ref{moduli stack}. 

\begin{lem}
\label{moduli category}
The category $\K_{T, \chi}$ and $\K^s_{T, \chi}$ forms a fibred category by the functor $\K_{T, \chi}^{(s)} \to \Cpx: (\pi: \M \to S, \alpha) \mapsto S$. 
\end{lem}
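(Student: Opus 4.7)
The plan is to verify the two defining conditions of a fibred category: that the functor $p: \K_{T,\chi}^{(s)} \to \Cpx$ is well defined on a bona fide category, and that for every morphism $f: X \to Y$ in $\Cpx$ and every object $\eta$ over $Y$, there is a strongly cartesian lift over $f$. The whole thing hinges on showing that pullback along $f$ preserves membership in $\K_{T,\chi}^{(s)}$ and provides the universal strongly cartesian arrow.

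First I would fix $\eta = (\pi': \mathcal{M}' \to Y, \alpha')$ in $\K_{T,\chi}$ and $f: X \to Y$, and consider the fibre product $f^*\mathcal{M}' := X \times_{f, Y, \pi'} \mathcal{M}' \subset X \times \mathcal{M}'$. The first projection $\pi: f^*\mathcal{M}' \to X$ is a proper flat surjection with connected fibres, and smoothness is preserved under base change in $\Cpx$ since smoothness here is defined via local product decompositions which clearly restrict to any preimage. The $T$-action $\alpha'$ lifts to an action $\alpha$ on $f^*\mathcal{M}'$ via $\id_X \times \alpha'$, which remains effective and commutes with $\pi$ since it did so with $\pi'$. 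For each $x \in X$ the fibre $(f^*\mathcal{M}')_x$ is canonically isomorphic as a complex $T$-manifold to $\mathcal{M}'_{f(x)}$, so it is a gentle Fano $T$-manifold with Hilbert character $\chi$. Thus $\xi := (\pi, \alpha) \in \K_{T,\chi}(X)$, and the second projection $\phi: f^*\mathcal{M}' \to \mathcal{M}'$ is a $T$-equivariant morphism compatible with $\pi, \pi', f$ which tautologically induces a biholomorphism $f^*\mathcal{M}' \to f^*\mathcal{M}'$; hence $(f, \phi): \xi \to \eta$ is a morphism in $\K_{T,\chi}$.

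Next I would verify that $(f,\phi)$ is strongly cartesian. Given any $g: X' \to X$, any $\xi' = (\pi'': \mathcal{N} \to X', \beta) \in \K_{T,\chi}(X')$, and any morphism $(f \circ g, \phi'): \xi' \to \eta$, the datum $\phi'$ is by definition a $T$-equivariant morphism $\mathcal{N} \to \mathcal{M}'$ inducing a biholomorphism $\mathcal{N} \xrightarrow{\sim} (f \circ g)^*\mathcal{M}'$. By the universal property of fibre products in $\Cpx$ we have a canonical $T$-equivariant identification $(f \circ g)^*\mathcal{M}' = g^*(f^*\mathcal{M}')$, and the composition $\mathcal{N} \xrightarrow{\sim} g^*(f^*\mathcal{M}')$ together with $g$ gives a morphism $\chi: \xi' \to \xi$ over $g$ such that $\phi \circ \chi = \phi'$ and $p(\chi) = g$. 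Uniqueness of $\chi$ is forced: the condition $p(\chi) = g$ fixes the base part, and $\phi \circ \chi = \phi'$ together with the biholomorphism condition on the underlying family-morphisms determines the family part via the universal property of $f^*\mathcal{M}'$.

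The same argument applies verbatim to $\K^s_{T,\chi}$ because K-stability is a fibrewise condition, so $f^*\mathcal{M}'$ inherits it from $\mathcal{M}'$. I do not expect any real obstacle here: the whole statement is formal once one has set up pullbacks and observed that ``gentle Fano $T$-manifold with Hilbert character $\chi$'' and ``K-stable Fano $T$-manifold'' are fibrewise properties stable under base change. The only mild point to be careful about is that the morphism part of $\K_{T,\chi}$ requires the induced map to the pullback to be a biholomorphism rather than merely a morphism; but this is automatic for the canonical projection $\phi: f^*\mathcal{M}' \to \mathcal{M}'$ and is preserved under composition with the biholomorphism supplied by $\phi'$.
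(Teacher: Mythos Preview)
Your proof is correct and follows precisely the same route as the paper: the paper simply notes that the fibre-product square $X \times_Y \M \to \M$ over $f:X\to Y$ supplies the required cartesian lift, while you spell out in detail why the pulled-back family stays in $\K_{T,\chi}^{(s)}$ and why the universal property of the fibre product yields the strongly cartesian condition. There is no substantive difference beyond your level of detail.
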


This is just because the following cartesian diagram gives a cartesian morphism for any holomorphic morphism $f :X \to Y$ between complex spaces and any object $(\pi :\M \to Y, \alpha) \in \K^{(s)}_{T, \chi}$. 
\[ \begin{tikzcd} X \times_Y \M \ar{d} \ar{r} & \M \ar{d}{\pi} \\ X \ar{r}{f} & Y \end{tikzcd} \]

The correspondence $(\pi :\M \to Y, \alpha) \mapsto (f^* \pi: X \times_Y \M \to X, f^* \alpha)$ gives a functor $\K_{T, \chi} (Y) \to \K_{T, \chi} (X)$. 
It looks like that this provides a functor $X \mapsto \K_{T, \chi} (X)$ from the category $\Cpx$ to the ``category'' of groupoids, but actually does not. 
This nuisance comes from the set theoretical fact that $X \times_{f, Y} (Y \times_{g, Z} \M) \neq X \times_{g \circ f, Z} \M$; they are not exactly the same objects but just naturally isomorphic. 
This is the reason why we should formulate things in terms of fibred category. 

\subsubsection*{\textbf{A-2.} Descent data}

We introduce descent data of a fibred category over $\Cpx$. 

\begin{defin}[descent data]
Let $p :\bm{\F} \to \Cpx$ be a fibred category, $X$ be a complex space, $\U := \{ i_\alpha :U_\alpha \hookrightarrow X \}_{\alpha \in A}$ be an open cover of $X$ (in the real topology). 
We denote by $u_{\alpha, \beta} :U_\alpha \cap U_\beta \hookrightarrow U_\alpha$ the inclusion morphism to the first factor and by $u_{\alpha \beta, \gamma} :U_\alpha \cap U_\beta \cap U_\gamma \hookrightarrow U_\alpha \cap U_\beta$ the inclusion morphism to the intersection of the first and second factor ($u_{\alpha \beta, \gamma}= u_{\beta \alpha, \gamma}$). 
Put $A_2 := A \times A/\mathfrak{S}_2$, $A_3 := A \times A \times A/\mathfrak{S}_3$. 

A \textit{descent datum} of $\bm{\F}$ over $(X, \U)$ consists of the following data $\mathfrak{D} = (\Xi_1, \Xi_2, \Xi_3, \Theta_2, \Theta_3)$: 
\[ \begin{split} \Xi_1 &:= \{ \xi_\alpha \in \bm{\F} (U_\alpha) \}_{\alpha \in A} \\ \Xi_2 &:= \{ \xi_{\alpha \beta} \in \bm{\F} (U_\alpha \cap U_\beta) \}_{\{ \alpha, \beta \} \in A_2} \\ \Xi_3 &:= \{ \xi_{\alpha \beta \gamma} \in \bm{\F} (U_\alpha \cap U_\beta \cap U_\gamma) \}_{\{ \alpha, \beta, \gamma \} \in A_3} \end{split} \]
are sets of objects in $\bm{\F}$
and
\[ \begin{split} \Theta_2 &:= \{ \theta_{\alpha, \beta} : \xi_{\alpha \beta} \to \xi_\alpha ~|~ \theta_{\alpha, \beta} \text{ is cartesian over } u_{\alpha, \beta} \}_{(\alpha, \beta) \in A^2} \\ \Theta_3 &:= \{ \theta_{\alpha \beta, \gamma} :\xi_{\alpha \beta \gamma} \to \xi_{\alpha \beta} ~|~ \theta_{\alpha \beta, \gamma} \text{ is cartesian over } u_{\alpha \beta, \gamma} \}_{(\{ \alpha, \beta \}, \gamma) \in A_2 \times A} \end{split} \]
are sets of cartesian morphisms in $\bm{\F}$. 
These data must satisfy
\[ \theta_{\alpha, \beta} \circ \theta_{\alpha \beta, \gamma} = \theta_{\alpha, \gamma} \circ \theta_{\gamma \alpha, \beta} \]
for any $\alpha, \beta, \gamma \in A$. 
\[ \begin{tikzcd} ~ & \bm{\F} \ar{rrrr}{p} & ~ & ~ & ~ & \Cpx & ~ & ~ \\ 
\xi_{\alpha \beta \gamma} \ar{rr}[description]{\theta_{\beta \gamma, \alpha}} \ar{dd}[description]{\theta_{\gamma \alpha, \beta}} \ar{dr}[description]{\theta_{\alpha \beta, \gamma}} & ~ & \xi_{\beta \gamma} \ar[gray]{dd}{\theta_{\gamma, \beta}} \ar{dr}{\theta_{\beta, \gamma}} & ~ & U_{\alpha \beta \gamma} \ar{rr}[description]{u_{\beta \gamma, \alpha}} \ar{dd}[description]{u_{\gamma \alpha, \beta}} \ar{dr}[description]{u_{\alpha \beta, \gamma}} & ~ & U_{\beta \gamma} \ar[gray]{dd}{u_{\gamma, \beta}} \ar{dr}[description]{u_{\beta, \gamma}} & ~ \\ 
~ & \xi_{\alpha \beta} \ar[crossing over]{rr}[description]{\theta_{\beta, \alpha}} & ~ & \xi_\beta &~ & U_{\alpha \beta} \ar[crossing over]{rr}[description]{u_{\beta, \alpha}} & & U_\beta \ar[gray!30]{dd}[description]{i_\beta}  \\ 
\xi_{\gamma \alpha} \ar[gray]{rr}{\theta_{\gamma, \alpha}} \ar{dr}[description]{\theta_{\alpha, \gamma}} & & {\color[gray]{0.5} \xi_\gamma} & ~ & U_{\gamma \alpha} \ar[gray]{rr}{u_{\gamma, \alpha}} \ar{dr}[description]{u_{\alpha, \gamma}} & ~ & {\color[gray]{0.5} U_\gamma} \ar[gray!30]{dr}[description]{i_\gamma} & ~ \\ 
~ & \xi_\alpha \ar[<-, crossing over]{uu}[description]{\theta_{\alpha, \beta}} & ~ & ~ &~ & U_\alpha \ar[<-, crossing over]{uu}[description]{u_{\alpha, \beta}} \ar[gray!30]{rr}[description]{i_\alpha} & ~ & {\color[gray]{0.7} X} \end{tikzcd} \]
\end{defin}

A descent datum $\mathfrak{D} = (\Xi_1, \Xi_2, \Xi_3, \Theta_2, \Theta_3)$ is called \textit{effective} if there exists an object $\xi \in \bm{\F} (X)$ and a set of morphisms 
\[ \Theta_1 := \{ \theta_\alpha : \xi_\alpha \to \xi ~|~ \theta_\alpha \text{ is cartesian over } i_\alpha \}_{\alpha \in A} \]
satisfying
\[ \theta_\alpha \circ \theta_{\alpha, \beta} = \theta_\beta \circ \theta_{\beta, \alpha} \]
for any $\alpha, \beta \in A$. 
We define an \textit{effective descent datum} of $\bm{\F}$ over $(X, \U)$ to be an object consisting of data $\mathfrak{D}_+ = (\mathfrak{D}, \xi, \Theta_1) = (\xi, \Xi_1, \Xi_2, \Xi_3, \Theta_1, \Theta_2, \Theta_3)$ as above. 

\begin{rem}
Note that
\begin{itemize}
\item Every descent datum of $\Cpx_X$ is effective. 

\item There are descent data of $[X/G]_p$ which are not effective, in general. 

\item Every descent datum of $\llbracket X/G \rrbracket$ is effective.  
\end{itemize}

As for the second item, consider the fibred category $[(\mathbb{C}^2 \setminus \{ 0 \})/\mathbb{C}^*]_p$ for example. 
More explicitly, let $\U$ be an open cover of $\mathbb{C}P^1$ defined by two open subsets $U_\alpha := \{ (z_1: z_2) ~|~ z_2 \neq 0 \}, U_\beta := \{ (z_1: z_2) ~|~ z_1 \neq 0 \}$ and let $\xi_\alpha: U_\alpha \to \mathbb{C}^2 \setminus \{ 0 \}, \xi_\beta: U_\beta \to \mathbb{C}^2 \setminus \{ 0 \}$ be morphisms defined by $\xi_\alpha (z_1: z_2) := (z_1/z_2, 1), \xi_\beta (z_1: z_2) := (1, z_2/z_1)$, respectively. 
Consider a descent datum over $(\mathbb{C}P^1, \U)$ with $\Xi_1 := \{ \xi_\alpha, \xi_\beta \}$ given by an obvious way. 
In order to be effective, this descent datum should define a non-constant morphism $\mathbb{C}P^1 \to \mathbb{C}^2 \setminus \{ 0 \}$ which is \textit{isomorphic} (not equal) to $\xi_\alpha, \xi_\beta$ when restricted to each open set, but this is impossible as every holomorphic map $\mathbb{C}P^1 \to \mathbb{C}^2 \setminus \{ 0 \}$ is constant. 
So this descent datum is not effective in this fibred category. 

On the other hand, the corresponding descent datum in the fibred category $\llbracket (\mathbb{C}^2 \setminus \{ 0 \})/\mathbb{C}^* \rrbracket$ becomes effective, completed by the object $\mathbb{C}P^1 \xleftarrow{\mathbb{C}^*} (\mathbb{C}^2 \setminus \{ 0 \}) \xrightarrow{\id} (\mathbb{C}^2 \setminus \{ 0 \})$ in $\llbracket (\mathbb{C}^2 \setminus \{ 0 \})/\mathbb{C}^* \rrbracket$. 
Actually, $\llbracket (\mathbb{C}^2 \setminus \{ 0 \}) / \mathbb{C}^* \rrbracket$ is isomorphic to $\mathbb{C}P^1$ as fibred categories. 
\end{rem}

\begin{defin}[morphism of descent data]
Let $\mathfrak{D} = (\Xi_1, \Xi_2, \Xi_3, \Theta_2, \Theta_3)$, $\mathfrak{D}' = (\Xi_1', \Xi_2', \Xi_3', \Theta_2', \Theta_3')$ be two descent data of $\bm{\F}$ over $(X, \U)$. 
A \textit{morphism} from $\mathfrak{D}$ to $\mathfrak{D}'$ is a triple $\Phi = (\Phi_1, \Phi_2, \Phi_3)$ of sets of morphisms
\[ \begin{split} \Phi_1 &:= \{ \phi_\alpha : \xi_\alpha \to \xi'_\alpha \in \bm{\F} (U_\alpha) ~|~ \xi_\alpha \in \Xi_1, \xi_\alpha' \in \Xi'_1 \}_{\alpha \in A} \\ \Phi_2 &:= \{ \phi_{\alpha \beta} :\xi_{\alpha \beta} \to \xi'_{\alpha \beta} \in \bm{\F} (U_\alpha \cap U_\beta) ~|~ \xi_{\alpha \beta} \in \Xi_2, \xi_{\alpha \beta}' \in \Xi_2' \}_{\{ \alpha, \beta \} \in A_2} \\ \Phi_3 &:= \{ \phi_{\alpha \beta \gamma} : \xi_{\alpha \beta \gamma} \to \xi_{\alpha \beta \gamma}' \in \bm{\F} (U_\alpha \cap U_\beta \cap U_\gamma) ~|~ \xi_{\alpha \beta \gamma} \in \Xi_3, \xi'_{\alpha \beta \gamma} \in \Xi_3' \}_{ \{ \alpha, \beta, \gamma \} \in A_3} \end{split} \]
in $\bm{\F}$ satisfying
\[ \phi_\alpha \circ \theta_{\alpha, \beta} = \theta'_{\alpha, \beta} \circ \phi_{\alpha \beta} ~\text{ and }~ \phi_{\alpha \beta} \circ \theta_{\alpha \beta, \gamma} = \theta'_{\alpha, \beta, \gamma} \circ \phi_{\alpha \beta \gamma} \]
for all $\alpha, \beta, \gamma \in A$. 
Descent data naturally form a category with these morphisms. 
We denote by $\bm{\F}_\des (X, \U)$ the category of descent data. 
\end{defin}

Let $\mathfrak{D}_+ = (\mathfrak{D}, \xi, \Theta_1)$, $\mathfrak{D}'_+ = (\mathfrak{D}', \xi', \Theta_1')$ be two effective descent data of $\bm{\F}$ over $(X, \U)$. 
A \textit{morphism} from $\mathfrak{D}_+$ to $\mathfrak{D}_+'$ is a quadruple $\Phi_+ = (\phi, \Phi_1, \Phi_2, \Phi_3)$ where $(\Phi_1, \Phi_2, \Phi_3)$ gives a morphism of corresponding descent data and $\phi :\xi \to \xi'$ is a morphism in $\bm{\F} (X)$ satisfying 
\[ \phi \circ \theta_\alpha = \theta'_\alpha \circ \phi_\alpha \]
for any $\alpha \in A$. 
We denote by $\bm{\F}_\eff (X, \U)$ the category of effective descent data. 

\subsubsection*{\textbf{A-3.} $\Cpx$-stack}

We can consider the forgetful functors $\bm{\F}_\eff (X, \U) \to \bm{\bm{\F}}_\des (X, \U)$ defined by $\mathfrak{D}_+ = (\mathfrak{D}, \xi, \Theta_1) \mapsto \mathfrak{D}$ and $\bm{\F}_\eff (X, \U) \to \bm{\F} (X)$ defined by $\mathfrak{D}_+ = (\mathfrak{D}, \xi, \Theta_1) \mapsto \xi$. 
The latter functor $\bm{\F}_\eff (X, \U) \to \bm{\F} (X)$ is fully faithful and essentially surjective. 
Therefore there is an inverse functor $\bm{\F} (X) \to \bm{\F}_\eff (X, \U)$ (assuming the axiom of global choice). 
As for our fibred category $\K_{T, \chi}$, there is a canonical choice\footnote{This is well-defined because $U_\alpha \cap U_\beta = U_\beta \cap U_\alpha$ as complex spaces, in particular as sets. On the other hand, $U_\alpha \times_X U_\beta \neq U_\beta \times_X U_\alpha$ as sets, though they are canonically isomorphic, because of the set theoretical fact $(a, b) = \{ \{ a \}, \{ a, b \} \} \neq \{ \{ b \}, \{ b, a \} \} = (b, a)$. } of the inverse functor defined by 
\[ (\pi: \M \to S) \mapsto (\M \to S, \{ \pi^{-1} (U_\alpha) \to U_\alpha \}_\alpha, \{ \pi^{-1} (U_\alpha \cap U_\beta) \to U_\alpha \cap U_\beta \}_{\alpha, \beta}, \ldots). \]
However, in general there is no canonical choice of this inverse functor; there needs an additional choice of $(\mathfrak{D}, \Theta_1)$ compatible to $\xi$, which is not unique as object but unique only up to isomorphisms. 

\begin{defin}[$\Cpx$-stack]
\label{cpx-stack}
A fibred category $p :\bm{\F} \to \Cpx$ is called a \textit{stack over $\Cpx$} or just \textit{$\Cpx$-stack} if it satisfies the following two conditions for any complex space $X$ and any open cover $\U$ of $X$. 
\begin{enumerate}
\item The functor $\bm{\F}_\eff (X, \U) \to \bm{\F}_\des (X, \U)$ is fully faithful. 

\item The functor $\bm{\F}_\eff (X, \U) \to \bm{\F}_\des (X, \U)$ is essentially surjective. 
\end{enumerate}
\end{defin}

\begin{rem}
If we have a choice of pull back $f \mapsto f^* \xi$ with a morphism $f^* \xi \to \xi$ so that it is cartesian over $f :S \to X$, we can consider a contravariant functor defined by
\[ \mathit{Mor}_X (\xi, \eta) :\Cpx_X^\op \to \Sets :(f: S \to X) \mapsto \Hom_{\bm{\F} (S)} (f^* \xi, f^* \eta), \]
where $\Cpx_X^\op$ stands for the opposite category of $\Cpx_X$. 
Then the first condition of the above definition is equivalent to say that the functor $\mathit{Mor}_X (\xi, \eta)$ is a sheaf on the site $\Cpx_X$. 

It is customary to denote by $\mathit{Isom}_X (\xi, \eta)$ the functor $\mathit{Mor}_X (\xi, \eta)$ when $\bm{\F}$ is a category fibred in groupoids, as every morphism in $\bm{\F} (X)$ is an isomorphism. 

We can consider a related fibred category $\bm{\mathcal{M}or}_X (\xi, \eta)$ (in setoids) without a choice of pull back. 
The category consists of objects $(f_S, \phi_\xi, \phi_\eta)$ where $f :S \to X$ is a holomorphic morphism of complex spaces and $\phi_\xi: \xi_S \to \xi, \phi_\eta :\eta_S \to \eta \in \Mor (\bm{\F})$ are cartesian arrows over $f$. 
Its morphisms $(f_S, \phi_\xi, \phi_\eta) \to (f_{S'}, \phi_\xi', \phi_\eta')$ are triples $(g, \psi_\xi, \psi_\eta)$ where $g :S \to S'$ is a holomorphic morphism of complex spaces and $\psi :\xi_S \to \xi_{S'}, \psi_\eta :\eta_S \to \eta_{S'}$ are cartesian arrows over $g$ satisfying $f_S = f_{S'} \circ g$, $\phi_\xi = \phi_\xi' \circ \psi_\xi$ and $\phi_\eta = \phi_\eta' \circ \psi_\eta$. 
\end{rem}

For any fibred category, we can always associate a stack in a canonical way. 
Here is the fact from \cite[TAG 02ZN, 0435]{SPA}. 

\begin{prop}
Let $p :\bm{\F} \to \Cpx$ be a fibred category. 
Suppose we have a choice of pull back $(f, \xi) \mapsto f^* \xi$ (just for simplicity). 
Then there exists a $\Cpx$-stack $\bm{\F}'$ (with a choice of pull back) and a morphism $s: \bm{\F} \to \bm{\F}'$ of fibred categories with the following properties. 
\begin{enumerate}
\item For every complex space $X$ and any $\xi, \eta \in \Obj (\bm{\F} (X))$, the morphism of presheaf $\mathit{Mor}_X (\xi, \eta) \to \mathit{Mor}_X (s (\xi), s (\eta))$ is a sheafification of $\mathit{Mor}_X (\xi, \eta)$. 

\item For every complex space $X$ and any $\xi' \in \Obj (\bm{\F}' (X))$, there exists an open cover $\U = \{ i_\alpha :U_\alpha \to X \}_\alpha$ of $X$ such that $i_\alpha^* \xi'$ is isomorphic to $s (\xi_\alpha)$ for some $\xi_\alpha \in \Obj (\bm{\F})$ for every $\alpha$. 

\item Given a $\Cpx$-stack $\bm{\G}$ and a morphism $g :\bm{\F} \to \bm{\G}$ of fibred categories, there exists a morphism $g': \bm{\F}' \to \bm{\G}$ of fibred categories such that there exists a $2$-isomorphism between $g$ and $g' \circ s$. 
\end{enumerate}
\end{prop}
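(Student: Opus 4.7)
The plan is to construct $\bm{\F}'$ explicitly as the category of descent data of $\bm{\F}$ (patched across open covers), and then verify the three axioms. First I would define, for each complex space $X$, the groupoid $\bm{\F}'(X)$ whose objects are pairs $(\U, \mathfrak{D})$ consisting of an open cover $\U$ of $X$ and a descent datum $\mathfrak{D} \in \bm{\F}_\des(X, \U)$ in the sense of the excerpt, and whose morphisms $(\U, \mathfrak{D}) \to (\U', \mathfrak{D}')$ are defined via passage to a common refinement: using the chosen pullback, restrict both descent data to a common refinement $\V$ of $\U$ and $\U'$, then take morphisms $\mathfrak{D}|_\V \to \mathfrak{D}'|_\V$ in $\bm{\F}_\des(X, \V)$, modulo the evident equivalence of further refinement. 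To make $\bm{\F}'$ fibred over $\Cpx$, for $f : Y \to X$ I would pull back $(\U, \mathfrak{D})$ to $(f^{-1}\U, f^*\mathfrak{D})$ using the chosen pullback on $\bm{\F}$; associativity holds up to canonical isomorphism built from the cartesian property. The morphism $s : \bm{\F} \to \bm{\F}'$ then sends $\xi \in \bm{\F}(X)$ to the trivial descent datum over the one-element cover $\{X \hookrightarrow X\}$.

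Next I would verify that $\bm{\F}'$ is actually a $\Cpx$-stack. The fully faithful part amounts to: given two descent data on $X$ and a family of descent-datum morphisms on an open cover that agree on double overlaps, we can glue them; this is immediate from the construction because a morphism of descent data is already defined locally cover-by-cover. The essential surjectivity part is the assertion that a descent datum in $\bm{\F}'$ itself is effective; this unwinds to saying that one can paste a system of local descent data of $\bm{\F}$ on an open cover of $X$, together with isomorphisms on overlaps, into a single descent datum on a common refinement. This is a routine ``descent of descent'' manipulation: take a simultaneous refinement of all the local covers, choose pullbacks, and assemble the triangles $\Xi_1, \Xi_2, \Xi_3, \Theta_2, \Theta_3$ by taking the given local ones and composing with the overlap isomorphisms; the cocycle identity for the resulting global descent datum follows from the cocycle identities of the inputs together with the naturality of the chosen pullbacks.

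Now I would verify properties (1)--(3). Property (1) is essentially built into the definition of morphisms in $\bm{\F}'$ via refinements: $\mathit{Mor}_X(s(\xi), s(\eta))$ at an object $f : S \to X$ is exactly the set of morphisms between the trivial descent data on covers of $S$ modulo refinement, which is precisely the sheafification (``local sections modulo common refinement'') of $\mathit{Mor}_X(\xi, \eta)$. Property (2) holds by design: an object of $\bm{\F}'(X)$ is by construction a descent datum over some cover $\U = \{U_\alpha\}$, and restricted to $U_\alpha$ it is isomorphic in $\bm{\F}'(U_\alpha)$ to the trivial datum on $\{U_\alpha \hookrightarrow U_\alpha\}$ attached to $\xi_\alpha \in \bm{\F}(U_\alpha)$, i.e., to $s(\xi_\alpha)$. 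For property (3), given $g : \bm{\F} \to \bm{\G}$ with $\bm{\G}$ a $\Cpx$-stack, I send a descent datum $(\U, \mathfrak{D})$ of $\bm{\F}$ to its image descent datum $(g(\U), g(\mathfrak{D}))$ in $\bm{\G}$, which by the stack property of $\bm{\G}$ assembles to a global object $g'(\U, \mathfrak{D}) \in \bm{\G}(X)$, uniquely up to canonical isomorphism; on morphisms this is again furnished by the stack condition on $\bm{\G}$. The $2$-isomorphism $g \simeq g' \circ s$ is inherited from the trivial effectivity of trivial descent data.

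The main obstacle is bookkeeping the cover refinements and the coherence of chosen pullbacks in the construction of $\bm{\F}'$, so that composition of morphisms is well-defined and associative, and so that $\bm{\F}' \to \Cpx$ is genuinely a fibred category rather than only a ``pseudo'' one. In particular the proof that descent data of $\bm{\F}'$ are effective requires carefully choosing a simultaneous refinement of covers of $X$ indexed by an arbitrary index set of local sections, and then checking that the cocycle condition for the assembled datum follows from the cocycle conditions of the pieces together with the compatibility of the overlap isomorphisms---this is the combinatorial heart of the argument. Once this is set up, everything else reduces to standard sheaf-theoretic manipulations; the reference \cite[TAG 02ZN, 0435]{SPA} carries this out in the general topos-theoretic setting, and the complex-analytic case is a direct specialization.
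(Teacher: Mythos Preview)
The paper does not give its own proof of this proposition; it simply records it as a fact from \cite[Tag 02ZN, 0435]{SPA} and remarks afterward that property (3) follows from (1) and (2). Your outline is precisely the standard two-step stackification carried out in that reference---first sheafify the morphism presheaves, then formally adjoin objects for descent data---so there is nothing to compare: your approach \emph{is} the one implicitly invoked by the citation, and the bookkeeping you flag (coherence of refinements and of chosen pullbacks) is exactly what those tags handle in the general site-theoretic setting.
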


The last property actually follows from the first two properties.  
A stack $\bm{\F}'$ with the last property is called a \textit{stackification} of $\bm{\F}$ and the stack $\bm{\F}'$ constructed in the proof of this proposition as \textit{the stackification} of $\bm{\F}$ (a fixed construction is in mind). 
We denote by $[X/G]$, $[X/R]$ the stackification of the fibred category $[X/G]_p$, $[X/R]_p$ respectively and call them \textit{the quotient stack}. 
The stack $\llbracket X/G \rrbracket$ is a stackification of the fibred category $[X/G]_p$, so it is (canonically) isomorphic to the stackification $[X/G]$. 

A 2-fibre product \cite[003Q]{SPA} of fibred categories can be calculated as follows. 
We refer to this construction as \textit{the 2-fibre product of fibred categories}. 

\begin{prop}
Let $\bm{\F}, \bm{\G}, \bm{\mathcal{H}}$ be fibred categories over $\Cpx$ and $f :\bm{\F} \to \bm{\mathcal{H}}$, $g: \bm{\G} \to \bm{\mathcal{H}}$. 
The fibred category $\bm{\mathcal{E}}$ defined as follows enjoys the universal property of $2$-fibre product. 
\begin{enumerate}
\item An object of $\bm{\mathcal{E}}$ is a quadruple $(X, \xi, \eta, \phi)$ where $X$ is a complex space, $\xi$ is an object in $\bm{\F} (X)$, $\eta$ is an object in $\bm{\G} (X)$ and $\phi : f (\xi) \to g (\eta)$ is an isomorphism in $\bm{\mathcal{H}} (X)$. 

\item A morphism $(X, \xi, \eta, \phi) \to (Y, \xi', \eta', \phi')$ is a pair $(\sigma, \tau)$ where $\sigma: \xi \to \xi'$ is a morphism in $\bm{\F}$ and $\tau :\eta \to \eta'$ is a morphism in $\bm{\G}$ satisfying $p (\sigma) = p (\tau) :X \to Y$ and $g (\tau) \circ \phi = \phi' \circ f (\sigma)$. 
\end{enumerate}
\end{prop}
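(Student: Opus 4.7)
The plan is to verify three things in sequence: that the prescribed $\bm{\mathcal{E}}$ is really a fibred category over $\Cpx$, that it comes equipped with the canonical projections together with a $2$-isomorphism that renders the relevant square $2$-commutative, and that any competing candidate factors essentially uniquely through $\bm{\mathcal{E}}$. All steps are essentially diagram-chasing, using only that $f$ and $g$ take strongly cartesian morphisms to strongly cartesian morphisms and that strongly cartesian lifts are unique up to unique isomorphism in the fiber.

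First I would check the fibred category property. Given a holomorphic morphism $h: X \to Y$ in $\Cpx$ and an object $(Y, \xi', \eta', \phi')$ of $\bm{\mathcal{E}}$, I choose strongly cartesian lifts $\sigma: \xi \to \xi'$ in $\bm{\F}$ and $\tau: \eta \to \eta'$ in $\bm{\G}$ lying over $h$. Since $f(\sigma)$ and $g(\tau)$ are then strongly cartesian in $\bm{\mathcal{H}}$ over $h$, the universal property of $g(\tau)$ applied to $\phi' \circ f(\sigma): f(\xi) \to g(\eta')$ yields a unique isomorphism $\phi: f(\xi) \to g(\eta)$ in $\bm{\mathcal{H}}(X)$ with $g(\tau) \circ \phi = \phi' \circ f(\sigma)$. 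The quadruple $(X, \xi, \eta, \phi)$ together with the pair $(\sigma, \tau)$ is a candidate for the lift, and one verifies it is strongly cartesian in $\bm{\mathcal{E}}$ by pulling back the defining universal properties of $\sigma$ and $\tau$ and invoking the uniqueness of $\phi$.

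Next I would introduce the projections $p_1: \bm{\mathcal{E}} \to \bm{\F}$ sending $(X, \xi, \eta, \phi) \mapsto \xi$ and $p_2: \bm{\mathcal{E}} \to \bm{\G}$ sending $(X, \xi, \eta, \phi) \mapsto \eta$, both of which are morphisms of fibred categories because the cartesian lifts constructed above are built componentwise. The tautological assignment $(X, \xi, \eta, \phi) \mapsto \phi$ defines a natural transformation $\alpha: f \circ p_1 \Rightarrow g \circ p_2$ which is a $2$-isomorphism by construction.

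Finally, for the universal property, suppose given a fibred category $\bm{\mathcal{X}}$, morphisms $a: \bm{\mathcal{X}} \to \bm{\F}$, $b: \bm{\mathcal{X}} \to \bm{\G}$ and a $2$-isomorphism $\beta: f \circ a \Rightarrow g \circ b$. I define $F: \bm{\mathcal{X}} \to \bm{\mathcal{E}}$ on objects by $x \mapsto (p_{\bm{\mathcal{X}}}(x), a(x), b(x), \beta_x)$ and on morphisms by $u \mapsto (a(u), b(u))$, which makes sense because the naturality of $\beta$ is precisely the compatibility $g(b(u)) \circ \beta_x = \beta_y \circ f(a(u))$ required in $\bm{\mathcal{E}}$. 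Then $p_1 \circ F = a$ and $p_2 \circ F = b$ strictly, and the whisker of $\alpha$ by $F$ equals $\beta$. For essential uniqueness, any other factorization $F'$ yields, object by object, a canonical isomorphism $F(x) \cong F'(x)$ in $\bm{\mathcal{E}}(p_{\bm{\mathcal{X}}}(x))$ assembled from the given identifications with $a(x)$ and $b(x)$; the compatibility of $\beta$ shows this datum is a $2$-morphism $F \Rightarrow F'$, unique with this property. The main subtlety throughout is purely bookkeeping in the $2$-categorical setting, namely tracking coherence of the isomorphism $\phi$ in the definition of $\bm{\mathcal{E}}$ with the cartesian lifts produced in $\bm{\F}$ and $\bm{\G}$; once one invokes the uniqueness clause in ``strongly cartesian'' to rigidify $\phi$, all other checks reduce to identities that hold on the nose.
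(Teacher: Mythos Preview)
Your verification is correct and follows the standard route: build strongly cartesian lifts componentwise, read off the canonical projections and $2$-isomorphism, then construct the factorization and check essential uniqueness. The paper itself does not give a proof of this proposition; it simply records the construction and cites \cite[003Q]{SPA} for the universal property. So there is nothing to compare against beyond noting that what you wrote is exactly the argument one finds in the Stacks Project reference.
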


When $\bm{\F}, \bm{\G}, \bm{\mathcal{H}}$ are all $\Cpx$-stacks, the stackification of the fibred category $\bm{\mathcal{E}}$ is denoted by $\bm{\F} \times_{f, \bm{\mathcal{H}}, g} \bm{\G}$. 
The $\Cpx$-stack $\bm{\F} \times_{f, \bm{\mathcal{H}}, g} \bm{\G}$ satisfies the universal property of $2$-fibre product in the $2$-category of $\Cpx$-stacks. 

The following verifying process might help the readers' better understanding of the notion of descent. 
See Definition \ref{moduli stack} for the definition of $\K_{T, \chi}, \K_{T, \chi}^s$. 

\begin{lem}
\label{moduli can stack}
The fibred categories $\K_{T, \chi}$, $\K_{T, \chi}^s$ are $\Cpx$-stacks. 
\end{lem}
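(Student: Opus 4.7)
The plan is to verify the two conditions in Definition \ref{cpx-stack} for $\K_{T, \chi}$; the claim for $\K^s_{T, \chi}$ will follow by the same argument together with the observation that K-stability is a fiberwise property. Fix a complex space $X$ and an open cover $\U = \{i_\alpha : U_\alpha \hookrightarrow X\}_{\alpha \in A}$.

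First, for full faithfulness (the sheaf property of morphisms), suppose $\xi = (\pi: \M \to X, \alpha)$ and $\eta = (\pi': \N \to X, \alpha')$ are two objects in $\K_{T, \chi}(X)$, and that we are given $T$-equivariant $U_\alpha$-isomorphisms $\phi_\alpha: \M|_{U_\alpha} \xrightarrow{\sim} \N|_{U_\alpha}$ that agree on every overlap $U_\alpha \cap U_\beta$. I would encode each $\phi_\alpha$ by its graph $\Gamma_{\phi_\alpha} \subset \M|_{U_\alpha} \times_{U_\alpha} \N|_{U_\alpha}$, which is a closed $T$-invariant analytic subspace flat over $U_\alpha$. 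Since closed analytic subspaces form a sheaf in the analytic topology, the local graphs glue to a single closed $T$-invariant analytic subspace $\Gamma \subset \M \times_X \N$; the projections $\Gamma \to \M$ and $\Gamma \to \N$ are local biholomorphisms (they are so after restriction to each $U_\alpha$), hence biholomorphisms, and the composition is the desired global $T$-equivariant isomorphism. Uniqueness of the glued morphism is automatic from the uniqueness part of sheafification. This proves that $\mathit{Isom}_X(\xi, \eta)$ is a sheaf on the analytic site, equivalently that $\bm{\F}_{\eff}(X, \U) \to \bm{\F}_{\des}(X, \U)$ is fully faithful.

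Second, for essential surjectivity (effectivity of descent data), suppose we are given a descent datum $\mathfrak{D} = (\Xi_1, \Xi_2, \Xi_3, \Theta_2, \Theta_3)$ with $\xi_\alpha = (\pi_\alpha: \M_\alpha \to U_\alpha, a_\alpha) \in \K_{T, \chi}(U_\alpha)$ and cartesian gluing isomorphisms satisfying the cocycle condition. Here I would apply the standard fact that complex analytic spaces and holomorphic morphisms can be glued along analytic open covers: the cocycle $\{\theta_{\alpha, \beta}\}$ produces a complex space $\M$ over $X$ together with open immersions $\M_\alpha \hookrightarrow \M$ identifying $\M_\alpha$ with $\pi^{-1}(U_\alpha)$. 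Because every $\theta_{\alpha, \beta}$ is $T$-equivariant, the local $T$-actions $a_\alpha$ glue to a global $T$-action $\alpha: \M \times T \to \M$. Properness, surjectivity, flatness, smoothness, connectedness of fibers, and effectivity of the $T$-action are all local on the base, so $\pi: \M \to X$ is an $X$-family of complex $T$-manifolds in the sense of subsection 4.1. Finally, the Hilbert character $(T, \chi)$ and the gentle condition are fiberwise, hence automatically inherited from the $\xi_\alpha$; so $(\pi, \alpha)$ lies in $\K_{T, \chi}(X)$, together with the canonical cartesian morphisms $\M_\alpha \to \M$ giving the effectivization of $\mathfrak{D}$.

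The only substantive input is the analytic gluing of complex spaces along an open cover with cocycle data, which is entirely standard; the bookkeeping that $T$-equivariance, the Hilbert character, and gentleness all survive gluing is immediate since each is either local on the base or a property of fibers. The substack $\K^s_{T, \chi} \subset \K_{T, \chi}$ is handled by the same argument: K-stability of a Fano $T$-manifold is a condition on each fiber, and a descent datum with K-stable fibers yields a glued family whose fibers are exactly those same K-stable Fano $T$-manifolds. The step that required the most care to write down was the gluing of $T$-equivariant isomorphisms in the first part, which is what motivated the graph-subspace formulation above.
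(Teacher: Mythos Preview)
Your proposal is correct and follows essentially the same approach as the paper: verify full faithfulness via the sheaf property of $\mathit{Isom}$ and essential surjectivity via the standard gluing of complex spaces along an open cover with cocycle data, noting that the relevant properties ($T$-equivariance, Hilbert character, gentleness, K-stability) are fiberwise or local on the base. The paper dismisses full faithfulness in a single sentence (``It is easy to see that $\mathit{Mor}_S(\M,\M')$ is a sheaf''), whereas you spell it out via the graph-subspace trick; this is a harmless elaboration of the same point.
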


\begin{proof}
For abbreviation, we let $\M$ stand for $(\pi :\M \to S, \alpha) \in \K_{T, \chi}$. 
Let $S$ be a complex space, $\U = \{ U_\alpha \}_\alpha$ be an open cover of $S$ and $\mathfrak{D} = (\Xi_1, \Xi_2, \Xi_3, \Theta_2, \Theta_3)$ be a descent datum of $\K_{T, \chi}$ over $(X, \U)$. 
Since $\theta_{\alpha, \beta} \in \Theta_2$ is cartesian, it induces an isomorphism
\[ \tilde{\theta}_{\alpha, \beta} : \M_{\alpha \beta} \xrightarrow{\sim} \M_\alpha|_{U_\alpha \cap U_\beta}. \]
So we obtain an isomorphism
\[ \theta'_{\beta \alpha} := \tilde{\theta}_{\beta, \alpha} \circ \tilde{\theta}_{\alpha, \beta}^{-1} : \M_\alpha|_{U_\alpha \cap U_\beta} \xrightarrow{\sim} \M_\beta|_{U_\alpha \cap U_\beta}. \]

Similarly, we obtain an isomorphism
\[(\tilde{\theta}_{\beta, \alpha}|_{U_{\alpha \beta \gamma}} \circ \tilde{\theta}_{\beta \alpha, \gamma}) \circ (\tilde{\theta}_{\alpha, \beta}|_{U_{\alpha \beta \gamma}} \circ \tilde{\theta}_{\alpha \beta, \gamma})^{-1} : \M_\alpha|_{U_{\alpha \beta \gamma}} \xrightarrow{\sim} \M_\beta|_{U_{\alpha \beta \gamma}} \]
which we denote by $\theta'_{\beta \alpha, \gamma}$, from the cartesian arrow $\theta_{\alpha \beta, \gamma} \in \Theta_3$. 

From the condition $\theta_{\alpha, \beta} \circ \theta_{\alpha \beta, \gamma} = \theta_{\alpha, \gamma} \circ \theta_{\gamma \alpha, \beta}$, we obtain $\theta'_{\gamma \beta, \alpha} \circ \theta'_{\beta \alpha, \gamma} = \theta'_{\gamma \alpha, \beta}$ and $\theta'_{\beta \alpha}|_{U_\alpha \cap U_\beta \cap U_\gamma} = \theta'_{\beta \alpha, \gamma}$. 
So we can glue $\M_\alpha$ together by gluing maps $\theta'_{\beta \alpha}$ and obtain a complex space $\M$ with a natural set of morphisms $\Theta_1 := \{ \theta_\alpha :\M_\alpha \to \M \}_{\alpha \in A}$ such that $(\M, \Xi_1, \Xi_2, \Xi_3, \Theta_1, \Theta_2, \Theta_3)$ is an effective descent datum. 
Therefore the forgetful functor $\F_\eff (S, \U) \to \F_\des (S, \U)$ is essentially surjective. 

It is easy to see that $\mathit{Mor}_S (\M, \M')$ is a sheaf on the site $\Cpx_X$. 
\end{proof}

\begin{exam}
This example is cited from \cite[Example 8.2]{Alp1} and must help the readers to understand that gluing good moduli spaces is a non-trivial task. 
Consider the $\mathbb{C}^*$-action on $\mathbb{C}^2$ by the scalar multiplication. 
The quotient stack $[(\mathbb{C}^2 \setminus \{ 0 \}) /\mathbb{C}^*]$ is naturally an open sub-stack of the quotient stack $[\mathbb{C}^2/\mathbb{C}^*]$. 
Both stacks admit good moduli spaces $[(\mathbb{C}^2 \setminus \{ 0 \}) /\mathbb{C}^*] \to \mathbb{C}P^1$, $[\mathbb{C}^2/\mathbb{C}^*] \to \mathbb{C}^2 \sslash \mathbb{C}^* = \mathrm{pt}$ respectively. 
In spite of the openness of the morphism $[(\mathbb{C}^2 \setminus \{ 0 \})/\mathbb{C}^*] \to [\mathbb{C}^2 / \mathbb{C}^*]$, the induced morphism $\mathbb{C}P^1 \to \mathrm{pt}$ of good moduli spaces is not open. 
\end{exam}

\subsection*{B. Fundamental equalities for Proposition \ref{moment}}

\label{Calculation}

Let $\omega$ be a symplectic form, $J$ be a $\omega$-compatible almost complex structure, $g = \omega (-, J-)$ be the Riemannian metric associated to $(\omega, J)$ on a manifold $M$. 
Let $A$ be an endomorphism of the tangent bundle $TM$ satisfying $JA + AJ = 0$ and $\omega (AX, Y) = \omega (AY, X)$. 

On a local coordinate, we denote by $\omega^{kj}$ the tensor satisfying $\omega^{kj} \omega_{ij} = \delta^k_i$ and by $g^{kj}$ the tensor satisfying $g^{kj} g_{ij} = \delta^k_i$. 
The following rules are basic for our calculations. 
\begin{enumerate}
\item[A.] (a) $\omega_{ij} = - \omega_{ji}$, (b) $J_i^j J_j^k = - \delta^k_i$, (c) $g_{ij} = g_{ji}$. 

\item[B.] (a) $\omega_{ij} = g_{pj} J^p_i = - g_{ip} J^p_j$, (b) $g_{ij} = \omega_{iq} J^q_j = - \omega_{qj} J^q_i$. 

\item[C.] (a) $\omega^{kj} = - g^{qj} J_q^k = g^{kq} J^j_q = - \omega^{jk}$, (b) $g^{kj} = \omega^{pj} J^k_p = - \omega^{kp} J^j_p = g^{jk}$. 

\item[D.] (a) $\omega^{kj} \omega_{ij} = \omega^{jk} \omega_{ji} = \delta^k_i$, (b) $g^{kj} g_{ij} = g^{jk} g_{ji} = \delta^k_i$. 

\item[E.] $f_j = - X_f^i \omega_{ij} = X_f^i g_{pi} J^p_j = - X_f^i J^p_i g_{pj}$. 

\item[F.] $X_f^k = -f_j \omega^{kj} = f_j g^{qj} J^k_q = - f_j J^j_q g^{qk}$. 

\item[G.] (a) $(JA)^k_i = J^k_p A^p_i = - J^p_i A^k_p$, (b) $\omega_{kj} A^k_i = \omega_{ki} A^k_j$. 

\item[H.] $g^{ij} g_{kl} (JA)^l_j = (JA)^i_k$. 
\end{enumerate}

\end{document}